\newtheorem{proposition}{Proposition}[chapter]
\newtheorem{theorem}[proposition]{Theorem}
\newtheorem{corollary}[proposition]{Corollary}
\newtheorem{conjecture}{Conjecture}[section]
\newtheorem{remark}[proposition]{Remark}
\theoremstyle{definition}
\newtheorem{definition}{Definition}
\titleformat{\chapter}[display]
    {\normalfont\huge\bfseries}{\chaptertitlename\ \thechapter}{10pt}{\LARGE}
\titlespacing*{\chapter}{0pt}{0pt}{20pt}
\begin{document} % Inizio documento
\pagenumbering{gobble} % Disabilita numerazione pagine
\begin{titlepage}
\begin{center}
    % --- TOP: University info and logo, vertically centered ---
    \vspace*{1.5cm}
    {\normalsize University of Insubria}\\[2mm]
    {\normalsize Department of Theoretical and Applied Sciences}\\[3mm]
    {\normalsize Ph.D. in Computer Science and Mathematics of Computation}\\[1.8cm]

   \includegraphics[width=5cm]{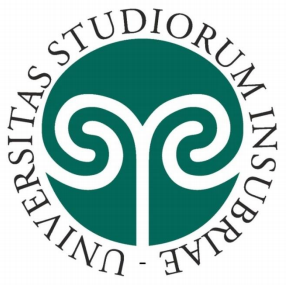}

    % --- TITLE ---
    {\LARGE\bfseries GLT matrix-sequences and few emblematic applications}\\
    \vspace*{\fill}

    % --- SUPERVISORS & CANDIDATE ---
    \begin{minipage}{0.45\textwidth}
        \begin{flushleft}
            {\normalsize\textbf{Supervisor:}}\\[2mm]
            {\normalsize Prof. Stefano Serra-Capizzano}
        \end{flushleft}
    \end{minipage}%
    \hfill
    \begin{minipage}{0.45\textwidth}
        \begin{flushright}
            {\normalsize\textbf{Presented by:}}\\[2mm]
            {\normalsize Muhammad Faisal Khan}\\
            {\normalsize 755852}
        \end{flushright}
    \end{minipage}

    \vspace{1.5cm}

    % --- YEAR ---
    {\normalsize\textbf{Academic year:}}\\[2mm]
    {\normalsize 2024/2025}
\end{center}
\end{titlepage} % PAGINA FRONTESPIZIO
\chapter*{Dedication}
\begin{center}
\thispagestyle{empty}
\vspace{1cm}

I dedicate this thesis to the loving memory of my late father, whose guidance and values continue to inspire me every day.\\[0.5cm]
To my mother, brother, and sisters, for their unconditional love, support, and encouragement throughout this journey.\\[0.5cm]
And to my supervisor, Stefano Serra-Capizzano, for his invaluable guidance, mentorship, and trust, which made this work possible.

\end{center}
\clearpage
 % PAGINA DEDICA
\chapter*{Acknowledgments}

\begin{center}
\thispagestyle{empty}
\vspace{1cm}
I would like to express my deepest gratitude to my wonderful supervisor, Stefano Serra-Capizzano, 
for his invaluable technical guidance and his constant moral support. Especially at the beginning of my Ph.D., when everything seemed uncertain, 
he was always there to encourage me and to give me hope. 
He has been like a father to me, and I will always remain grateful for his kindness and mentorship.

My heartfelt thanks go to my beloved friend, Uswah Yasin. 
Her patience, support, and constant care have been my anchor throughout this journey. She stood by me in every challenge, always encouraging me and inspiring me with her kindness.

I am also grateful to my dear friends, Asim Ilyas and Yasir Khan. 
They have been like brothers to me, always supporting me in every stage of this journey. 
Their encouragement and companionship have been a source of strength, 
especially during a very difficult period in October 2024, when their care and loyalty 
helped me overcome one of the hardest phases of my life.  

To all of you, I owe my deepest gratitude.  

\end{center}
\chapter*{Abstract}
This thesis advances the spectral theory of structured matrix-sequences within the framework of Generalized Locally Toeplitz (GLT) $\ast$-algebras, focusing on the geometric mean of Hermitian positive definite (HPD) GLT matrix-sequences and its applications in mathematical physics. For two HPD sequences \(\{A_n\}_n \sim_{\mathrm{GLT}} \kappa\) and \(\{B_n\}_n \sim_{\mathrm{GLT}} \xi\) in the same \(d\)-level, \(r\)-block GLT \(*\)-algebra, we prove that, when \(\kappa\) and \(\xi\) commute, the sequence of geometric means \(\{G(A_n,B_n)\}_n\) is a GLT sequence with symbol \((\kappa\xi)^{1/2}\), without requiring the almost-everywhere invertibility of either symbol, thereby settling \cite[Conjecture 10.1]{garoni2017} for \(r=1\), \(d \geq 1\). In degenerate cases, where symbols vanish on sets of positive measure, we identify conditions ensuring that the geometric mean retains a GLT structure in the commuting setting, so having \(\{G(A_n, B_n)\}_n \sim_{\mathrm{GLT}} G(\kappa, \xi)\). Conversely, for \(r>1\) with degenerate, non-commuting symbols, we provide numerical evidence that the resulting sequence still admits a spectral symbol, with \(G(\kappa,\xi)\) being not well defined. The latter implies that the result \(\{G(A_n, B_n)\}_n \sim_{\mathrm{GLT}} G(\kappa, \xi)\) in the commuting setting is maximal. Numerical experiments in scalar and block settings, in one and two dimensions, confirm the theoretical predictions and illustrate spectral behaviour. We sketch also the case of $k\geq 2$ matrix-sequences, by considering the Karcher mean. Preliminary results and numerical experiments indicate that \(\{G(A_n^{(1)},\ldots A_n^{(k)})\}_n \sim_{\mathrm{GLT}} G(\kappa_{1},\ldots \kappa_{k})\), if \(\{A_n^{(j)}\}_n \sim_{\mathrm{GLT}} \kappa_{j}\), for \( j = 1,\ldots,k \).

The GLT framework is further applied to mean-field quantum spin systems, with particular attention to the quantum Curie--Weiss model. In this context, we show that the structured matrices arising from the model form GLT sequences, enabling an explicit determination of their spectral distributions in both unrestricted and symmetry-restricted cases. Numerical simulations validate the analysis and reveal additional spectral features such as eigenvalue localization and extremal behaviour.

In terms of mathematical tools, we use the axioms characterizing the \(d\)-level, \(r\)-block GLT \(\ast\)-algebra, the notion of approximating classes of sequences and the important two-sided ideal of zero-distributed matrix-sequences.

\tableofcontents  % Genera l'indice
\newpage % Nuova pagna
%%%%%%%%%%%%%%%%%%%
%\pagenumbering{roman} % Rearrange the numbering so that it begins with the first chapter
\pagenumbering{arabic}
\setcounter{chapter}{0} % Fa risultare l'introduzione come capitolo 0
% ------------------
% ---- CHAPTERS ----
% ------------------
\listoffigures
\addcontentsline{toc}{chapter}{List of figures}
\listoftables
\addcontentsline{toc}{chapter}{List of tables}
\chapter*{General notations and acronyms}

\begin{itemize}
    % Sets and spaces
    \item $\mathbb{N}$, $\mathbb{R}$, $\mathbb{C}$: Sets of natural, real, and complex numbers, respectively.
    \item $\mathbb{R}^+$, $\mathbb{N}^+$: Sets of positive real and natural numbers.
    \item $\mathbb{Z}$: Set of integers.
    \item $\mathbb{K}$: A field, usually $\mathbb{R}$ or $\mathbb{C}$.
    \item $\mathbb{K}^r$: Space of vectors of length $r$ with entries in $\mathbb{K}$.
    \item $\mathbb{K}^{r \times s}$: Space of $r \times s$ matrices with entries in $\mathbb{K}$.
    \item $M_n(\mathbb{K})$: Set of $n \times n$ matrices over $\mathbb{K}$.
    \item $\bm{x}, \bm{y}$: Vectors in $\mathbb{K}^r$ (bold letters usually denote vectors).
    \item $A, B$: Matrices in $M_n(\mathbb{K})$ unless specified otherwise.
    \item $\mathbb{I}_n$ or $I_n$: $n \times n$ identity matrix.
    \item $\mathbf{0}_n$ or $0_n$: $n \times n$ zero matrix.
    % Operators and matrix properties
    \item $(\cdot)^{\top}$: denotes the transpose of a vector or matrix.
    \item $(\cdot)^*$: denotes the conjugate transpose of a vector or matrix.
    \item $(\cdot)^\dagger$: denotes the Moore-Penrose pseudoinverse of a square matrix.
    \item $\lambda_j(A)$: denotes the $j$-th eigenvalue of $A \in M_n(\mathbb{K})$, for $j = 1, \dots, n$.
    \item $\sigma_j(A)$: denotes the $j$-th singular value of $A \in M_n(\mathbb{K})$, for $j = 1, \dots, n$.
     \item $\mathcal{H}$: Hilbert space.
  \item $\sigma_x, \sigma_y, \sigma_z$: Pauli matrices.
  \item $\sigma^{(i)}$: Pauli operator acting on the $i$-th spin.
  \item $H$: Hamiltonian operator of the system.
  \item $\rho(H)$: empirical spectral measure of $H$.
  \item $\chi$: Magnetic susceptibility.
  \item $\beta$: Inverse temperature parameter.
  \item $Z$: Partition function.
  \item $\langle \cdot \rangle$: Expectation with respect to the Gibbs measure.
    \item $A \geq B$, where $A, B \in M_n(\mathbb{K})$, means that $A - B$ is positive semidefinite. Likewise, $A \leq B$ means that $A - B$ is negative semidefinite.
    \item $A > B$, where $A, B \in M_n(\mathbb{K})$, means that $A - B$ is positive definite. Likewise, $A < B$ means that $A - B$ is negative definite.
    \item $A \sim B$, where $A, B \in M_n(\mathbb{K})$, denotes that $A$ and $B$ are similar matrices (i.e., there exists an invertible $P$ such that $A = PBP^{-1}$).

    % Norms and products
    \item $\| \cdot \|_p$: denotes the standard $p$-norm of a vector or matrix, $1 \leq p \leq \infty$.
    \item $\| \cdot \|$: denotes the spectral norm (largest singular value), coincides with $\| \cdot \|_2$. If the matrix is normal, the spectral norm coincides with the spectral radius.
   
    \item $\otimes$: Kronecker (tensor) product. For $A \in \mathbb{C}^{p \times q}$, $B \in \mathbb{C}^{r \times s}$,
          \[
            A \otimes B =
            \begin{bmatrix}
                a_{11} B & \cdots & a_{1q} B \\
                \vdots   &        & \vdots   \\
                a_{p1} B & \cdots & a_{pq} B
            \end{bmatrix}
            \in \mathbb{C}^{pr \times qs}
          \]
    % Spaces and function classes
    \item $L^p(D)$: Space of $p$-integrable functions on domain $D$.
    \item $C([a,b])$: Space of continuous functions on $[a,b]$.
    \item $\mathcal{O}(\cdot)$, $o(\cdot)$: Big-$O$ and little-$o$ notation.
    \item $\mathcal{C}_c(\mathbb{R})$, $\mathcal{C}_c(\mathbb{C})$: The set of continuous, complex-valued functions with compact support on $\mathbb{R}$ or $\mathbb{C}$.
    \end{itemize}
    \textbf{Multi-index notation:}
\begin{itemize}
    \item $\bm{i}$ denotes a multi–index $(i_1, \ldots, i_d) \in \mathbb{Z}^d$. Its size will be clear from context.
    \item $\bm{0}, \bm{1}, \bm{2}, \ldots$ are vectors of all zeros, ones, twos, etc.
    \item $\bm{h} + \bm{k}$ is the multi–index whose components are $h_r + k_r$ for $r = 1, \ldots, d$. More generally, operations between multi-indices such as subtraction, multiplication, division, and so on, are performed componentwise.
    \item $\bm{h} \leq \bm{k}$ means that $h_r \leq k_r$ for all $r = 1, \ldots, d$. More generally, relations between multi-indices are evaluated componentwise.
    \item $\bm{h}, \ldots, \bm{k}$ denotes the multi-index range $\{\bm{j} \in \mathbb{Z}^d : \bm{h} \leq \bm{j} \leq \bm{k}\}$. With $\bm{j} = \bm{h}, \ldots, \bm{k}$ we mean that $\bm{j}$ varies in this range from $\bm{h}$ to $\bm{k}$ according to the standard lexicographic ordering:
    \[
    \left[
        \cdots
        \left[
            \left[
                (j_{1},\ldots,j_{d})\right]_{j_d = h_d, \ldots, k_d}
        \right]_{j_{d-1} = h_{d-1}, \ldots, k_{d-1}}
        \cdots
    \right]_{j_1 = h_1, \ldots, k_1}
    .\]
    \item $\bm{m} \to \infty$ means that $\min(\bm{m}) = \min_{j=1, \ldots, d} \bm{m}_j \to \infty$, or equivalently, that all the components tend to infinity.
    \item $N(\bm{m})$ is the product of all the components of $\bm{m}$, i.e., $N(\bm{m}) := \prod_{j=1}^d m_j$.
\end{itemize}
\begin{itemize}
    \item $\{A_n\}_n$: Sequence of matrices indexed by size/discretization parameter $n$.
    % Structured matrices and GLT
    \item $T_n(f)$: Toeplitz matrix of size $n$ generated by symbol $f$.
    \item $C_n(f)$: Circulant matrix of size $n$ generated by symbol $f$.
    \item $H_n(f)$: Hankel matrix of size $n$ generated by symbol $f$.
     \item Symbol $(\kappa)$: Measurable function associated with a matrix-sequence, e.g., $\{A_n\}_n \sim_{\mathrm{GLT}} \kappa$.
    \item Symbol $(\xi)$: Measurable function associated with a matrix-sequence, e.g., $\{B_n\}_n \sim_{\mathrm{GLT}} \xi$.
    \item $D = [0,1]^d \times [-\pi, \pi]^d$: Typical domain for the symbol of a $d$-level GLT sequence.
    % Distributions
    \item $\sim_\sigma$: Singular value distribution.
    \item $\sim_\lambda$: Eigenvalue distribution.
    \item $\sim_{\mathrm{GLT}}$: Matrix-sequence is a GLT sequence with symbol.
    \item $\operatorname{ran} (M)$: The column space (range) of a matrix $M$.
    \item $\operatorname{ess\,Ran}(\cdot)$: The essential range of a Grassmannian-valued measurable map, i.e., the set of subspaces which are not avoided by the function on a set of positive measure.
\end{itemize}

\textbf{Acronyms and Abbreviations:}
\begin{itemize}
    \item GLT: Generalized Locally Toeplitz
    \item LT: Locally Toeplitz
    \item HPD: Hermitian positive definite
    \item SPD: Symmetric positive definite
    \item a.c.s.: Approximating class of sequences
    \item FDE / PDE: Fractional / Partial Differential Equation
    \item CW: Curie–Weiss (model)
    \item TFIM: Transverse-field Ising model.
  \item LSD: Limiting spectral distribution.
  \item a.e.: almost everywhere.
  \item w.h.p.: with high probability.
\end{itemize}

\addcontentsline{toc}{chapter}{General notations and acronyms}
%\pagenumbering{arabic}
\chapter*{Introduction}

The asymptotic spectral distribution of large, structured matrices is a central theme in matrix analysis and computational mathematics, particularly in the context of discretized differential equations. This theme dates back to the early twentieth century with the pioneering work of Gábor Szegő, whose limit theorem rigorously describes the eigenvalue distribution of $n \times n$ Hermitian Toeplitz matrices generated by a real-valued symbol $f \in L^\infty([-\pi, \pi])$. More precisely, for $f$ real-valued and essentially bounded, the classical Szegő Limit Theorem as discussed in \cite{GrenanderSzego1984} states that for the Toeplitz sequence $\{T_n(f)\}_n$, it holds
\begin{equation}
    \lim_{n \to \infty} \frac{1}{n} \sum_{j=1}^n F\big(\lambda_j(T_n(f))\big) = \frac{1}{2\pi} \int_{-\pi}^{\pi} F(f(\theta)) \, d\theta,
\end{equation}
for every continuous function $F$, where $\lambda_j(T_n(f))$ are the eigenvalues of $T_n(f)$; see \cite[Theorem~1.1]{Tilli1998}. This profound results laid the groundwork for connecting discrete matrix structures with continuous spectral data, a connection of fundamental importance in many applications, including the approximation of differential and integral operators.

Over time, the need to extend these foundational results to broader function spaces and more general matrix structures became apparent. A major advancement was made by Eugene Tyrtyshnikov in 1996 \cite{Tyrtyshnikov1996}, who developed a unifying approach for the spectral and singular value distribution of Toeplitz and multilevel Toeplitz matrices generated by symbols in $L^2([-\pi, \pi]),  d\geq1$. In particular, Tyrtyshnikov proved that for the wide class of Toeplitz matrix-sequences with symbol $f \in L^2([-\pi, \pi]^{d})$, the limiting singular value distribution is still governed by integrals over the symbol, even when $f$ is unbounded or complex-valued. The eigenvalue distribution is also proved for $\{T_{n}(f)\}_n$ and $f \in L^2([-\pi, \pi]^{d})$, but with the restriction that $f$ is essentially real-valued. By relaxing the assumption on the generating function to the minimal requirement $f\in L^1([-\pi,\pi]^{d})$, the singular value distribution was extended in \cite{tyrtyshnikov1998}, while, for the eigenvalue distribution, the function $f$ has to be essentially real-valued. This framework not only extends the reach of the classical Szegő and Avram–Parter theorems \cite{Bottcher2008}, but also justifies the use of weak spectral limits and singular value analysis in a broad array of applications, from signal processing to the numerical analysis of partial differential equations (PDEs).

In terms of mathematical techniques, in \cite{tyrtyshnikov1998}, Tyrtyshnikov and Zamarashkin introduced matrix-theoretical approximation tools which anticipate the notion of approximating classes of sequences \cite{SerraCapizzano2001Dist}. A further significant contribution which is mathematically very elegant was provided by Paolo Tilli in \cite{tilli1998}, where he extended Szegő-type distribution in the case where the generating function is Lebesgue integrable and matrix-valued. Again, the eigenvalue distribution is proven for Hermitian $d$-level, $r$-block Toeplitz matrix-sequences, $d,r \geq 1$. Interesting results for the eigenvalue distribution in the non-Hermitian setting can be found in \cite{Bottcher2005,Bogoya2024Fast,Bogoya2024Matrixless,serra97,Tilli1999Complex}, where completely different techniques and tools are used. In such cases, the resulting matrices exhibit Toeplitz or Toeplitz-like structures, and the above theory applies directly. But when the operator involves variable coefficients the translation invariance is lost, and the resulting matrices no longer belong to the Toeplitz class. This limitation naturally motivated the introduction of a more general framework capable of capturing the asymptotic spectral behavior of matrices arising from the discretization of variable-coefficient PDEs. To address this challenge, Tilli introduced the theory of Locally Toeplitz (LT) sequences in \cite{Tilli1998}. This framework extends classical Toeplitz theory by associating each LT sequence with a pair of measurable functions $(a,f)$, called the local symbol, where $f$ plays the role of the generating function, analogous to the symbol in Toeplitz theory, while $a$ acts as a weight function capturing spatial variations due to variable coefficients. Tilli proved that LT sequences admit weighted Szegő-type formulas for both eigenvalues and singular values, thus providing a robust mathematical foundation for the analysis of matrix-sequences arising from the discretization of PDEs with variable coefficients. Importantly, classical Toeplitz sequences $\{T_{n}(f)\}_{n}$ are recovered as a special case of LT sequences by taking $a(x) \equiv 1,$ when $f \in L^2([-\pi, \pi])$ . This theory marked a paradigm shift in structured matrix analysis, enabling the application of spectral distribution results to a vastly broader class of discretization schemes and structured matrices relevant in modern numerical analysis.

Since these foundational contributions by Tyrtyshnikov and Tilli, there has been a surge of interest in the spectral analysis of structured matrix-sequences, leading to the development of the comprehensive theory of Generalized Locally Toeplitz (GLT) sequences~\cite{SerraCapizzano2003, SerraCapizzano2006}. This theoretical framework has rapidly become central in both pure and applied mathematics, due to its remarkable ability to capture the asymptotic singular value and eigenvalue distributions of matrices arising from virtually any meaningful approximation of (fractional) partial differential equations ((F)PDEs); see, e.g., the books and review papers~\cite{barbarino2020block1d, barbarino2020blockmulti, garoni2017, garoni2018, tutorial1} and references therein. In fact, virtually any practical discretization scheme, such as finite differences, finite elements of any order, discontinuous Galerkin methods, finite volumes, or isogeometric analysis, produces GLT matrix-sequences. Mathematically, for fixed positive integers $d$ and $r$, the set of $d$-level, $r$-block GLT matrix-sequences forms a maximal $*$-algebra of matrix-sequences, which is isometrically equivalent to the space of $d$-variate, $r \times r$ matrix-valued measurable functions defined on $[0,1]^d \times [-\pi,\pi]^d$. Each such GLT sequence $\{A_n\}_n$ is uniquely associated with a measurable, matrix-valued function $\kappa$ the GLT symbol on the domain $D = [0,1]^d \times [-\pi, \pi]^d$.
Notice that the set $[0, 1]^d$ can be replaced by any bounded Peano-Jordan measurable subset of $\mathbb{R}^d$ %$\Omega\times [-\pi, \pi]^d$, $\Omega \subset [0, 1]^d$ 
as occurring with the notion of reduced GLT $*$-algebras, see \cite[pp. 398-399, formula (59)]{SerraCapizzano2003} for the first occurrence with applications of approximated PDEs on general non-Cartesian domains in $d$ dimensions, \cite[Section 3.1.4]{SerraCapizzano2006} for the first formal proposal, and \cite{Barbarino2022} for an exhaustive treatment, containing both the $*$-algebra theoretical results and a number of applications. This symbol provides a powerful tool for analyzing the singular value and eigenvalue distributions when the matrices $A_n$ are part of a matrix-sequence of increasing size. The notation $\{A_n\}_n \sim_{\mathrm{GLT}} \kappa$ indicates that $\{A_n\}_n$ is a GLT sequence with symbol $\kappa$. Notably, the symbol of a GLT sequence is unique in the sense that if $\{A_n\}_n \sim_{\mathrm{GLT}} \kappa$ and $\{A_n\}_n \sim_{\mathrm{GLT}} \xi$, then $\kappa = \xi$ almost everywhere in $[0, 1]^d \times [-\pi, \pi]^d$ \cite{barbarino2020block1d,barbarino2020blockmulti,garoni2017,garoni2018}. Furthermore, by the $*$-algebra structure, $\{A_n\}_n \sim_{\mathrm{GLT}} \kappa$ and $\{B_n\}_n \sim_{\mathrm{GLT}} \kappa$ implies that $\{A_n-B_n\}_n \sim_{\mathrm{GLT}} 0$, i.e., the matrix-sequence $\{A_n-B_n\}_n$ is zero-distributed and the latter is very important for building explicit matrix-sequences approximating a GLT matrix-sequence and whose inversion is computationally cheap, in the context of preconditioning of large linear systems. It must be noticed that the $d$-level, $r$-block GLT $\ast$-algebra includes all Toeplitz sequences $\{T_n(f)\}_n$ with $f$ matrix-valued and $f \in L^{1}([-\pi,\pi]^{d})$. Hence, even when $r=d=1$, the LT matrix-sequences are strictly contained in the GLT class with $r=d=1$.

For other structured matrix-sequences such as Hankel, flipped Toeplitz, and sampling dense matrix-sequences, further distributional results exist and can be related to the study of GLT matrix-sequences; see \cite{FasinoTilli2000} for Hankel structures generated by $f \in L^2([-\pi,\pi]^d)$, $d \ge 1$, also of rectangular matrix-valued type. For flipped and symmetrized Toeplitz structures, and for the related development of preconditioning techniques and asymptotic spectral analyses applied to evolutionary and optimal-control PDEs, see \cite{FungHon2024, FerrariFurciHon2019, FerrariFurci2021, HonFungDong2024, MazzaPestana2021, HonDongSerra2023, HonLiSormani2025, LiHon2024,MazzaPestana2019, McWathen2020, McPestana2018, Pestana2019, PestanaWathen2015}, where spectral properties of flipped or symmetrized Toeplitz and multilevel Toeplitz matrices are analyzed, and structure-exploiting preconditioners are proposed and tested. Finally, see \cite{AlFhaidSerra2014,SalinelliSerra2016} for dense sampling matrix-sequences with applications to integral operators, including models arising in mathematical finance.

%%%%%%%%%%%%%%%%%%5%

For other structured matrix-sequences like Hankel, flipped Toeplitz, sampling dense matrix-sequences, further distributional results exist and they can be
related to the study of GLT matrix-sequences; see \cite{FasinoTilli2000} for Hankel structures generated by $f \in L^2([-\pi, \pi]^d), d \ge 1$, also of rectangular matrix-valued type, see \cite{FerrariFurciHon2019,FerrariFurci2021,HonFungDong2024,MazzaPestana2021} for flipped Toeplitz structures and related applications to discretized
evolutionary PDEs, see \cite{AlFhaidSerra2014,SalinelliSerra2016} for dense
sampling matrix-sequences with applications to integral operators also in finance models.

A particularly rich connection of the GLT theory is with that related to the concept of geometric means of matrices. This notion was originally formalized by Ando, Li, and Mathias (ALM)~\cite{Ando2004}. The concept of the matrix geometric mean has attracted considerable attention from a large number of researchers in recent decades due to its elegant theoretical foundations and growing significance in a wide range of fields, including mathematics, engineering, and applied sciences. More in detail, matrix geometric means appear in diffusion tensor imaging (DTI)~\cite{Batchelor2005}, radar detection~\cite{Yang2010,LapuyadeLahorgue2008}, image processing~\cite{rathi2007}, elasticity~\cite{Moakher2006}, machine learning~\cite{Iannazzo2019}, brain-computer interfaces~\cite{Yger2017}, network analysis~\cite{Fasi2018}, and many more applications.
The geometric mean provides a meaningful way of averaging positive definite matrices while preserving critical structural properties. Historically, the notion of the geometric mean dates back to ancient mathematics, where it was primarily applied to positive numbers. For positive real numbers $a_1, a_2, \ldots, a_k$, the geometric mean is defined as
\[
g = \left( \prod_{i=1}^k a_i \right)^{1/k},
\]
a concept that can be extended to matrices~\cite{Moakher2005, bhatia2007}. However, this extension is far from straightforward, primarily due to the lack of commutativity in matrix multiplication. 

Initially introduced implicitly in the context of a functional calculus for sesquilinear maps by Pusz and Woronowicz~\cite{PuszWoronowicz1975}, but popularized as a mean by Kubo and Ando~\cite{KuboAndo1980}, the matrix geometric mean was rigorously formalized as follows: for two positive definite matrices $A$ and $B$, the geometric mean $G(A, B)$ is defined as
\[
G(A, B) = A^{1/2} (A^{-1/2} B A^{-1/2})^{1/2} A^{1/2} = G(B, A).
\]
The rigorous definition has been analyzed by Ando, Li, and Mathias (ALM)~\cite{Ando2004}, who also identified essential axiomatic properties~\cite[Section~3]{bini2024survey} that a proper matrix geometric mean should satisfy. These axioms serve as a foundation for several established definitions, including the ALM mean itself, the Nakamura–Bini–Meini–Poloni (NBMP) mean~\cite{Nakamura2009}, and the Karcher mean, recognized as a matrix geometric mean through Riemannian geometry as introduced in~\cite{Bini2013, Moakher2005}. Further detailed historical developments and generalizations can be found in~\cite{bhatia2007}.

Building upon these foundational works, it becomes essential to address the challenges that arise when considering more than two matrices. In principle, if $k > 2$, the ALM mean can be computed through a recursive process, where at each step the geometric mean is formed by reducing the problem to $k-1$ matrices. However, this method has significant limitations: it only achieves linear convergence and entails high computational cost, as each recursive step requires a large number of iterations. As a result, extending the ALM geometric mean beyond two matrices quickly becomes computationally infeasible~\cite{Poloni2010}. To overcome these challenges, the Karcher mean~\cite{Bini2013} was introduced as a true generalization of the geometric mean to multiple positive definite matrices. For HPD matrices $A_1, A_2, \ldots, A_k$, the Karcher mean is defined as the unique positive definite solution 
\[
\sum_{i=1}^k \log(A_i^{-1} X) = 0,
\]
as established by Moakher~\cite[Proposition 3.4]{Moakher2005}. 

With these generalizations in hand, a natural and central question arises: How does the geometric mean behave asymptotically when applied to sequences of structured matrices, such as those in the GLT class? In particular, does the geometric mean of two (or more) GLT sequences itself form a GLT sequence, and what is its symbol?

These foundational questions form the cornerstone of the present thesis. The main questions addressed in this thesis concern the asymptotic behavior and spectral distribution of geometric means of GLT matrix-sequences, which are the focus of Chapters~\ref{GM GL1}, \ref{GM GLT2}. The results can be summarized as follows: the extension of distribution results to the geometric mean of HPD GLT matrix-sequences, establishing that under appropriate conditions the sequence of geometric means is itself a GLT sequence, with a symbol given by the geometric mean of the input symbols; the removal of unnecessary assumptions on invertibility and clarification of the role of commutativity in the distribution results. This is done in a complete way for $k=2$, while for the case of more than two matrix-sequences is still open, even though partial theoretical results and numerical tests indicate that such a generalization is possible.

In addition, the thesis includes an application-oriented Chapter~\ref{chapter:quantum} that demonstrates the power of GLT theory in the spectral analysis of large quantum spin systems. For instance, in mean-field models such as the Curie–Weiss model, the asymptotic spectral properties of large Hamiltonian matrices play a central role in understanding phase transitions and collective phenomena in the thermodynamic limit. Traditionally, analyzing these spectral properties relied on specialized analytical or combinatorial techniques. By interpreting discretized Hamiltonians as GLT matrix-sequences, however, one can leverage the powerful machinery of structured matrix theory: the spectral distribution of the quantum system is described by the symbol associated with the GLT sequence. This approach enables precise predictions of eigenvalue distributions and asymptotic behavior, bridging the gap between abstract mathematical theory and concrete models in mathematical physics.
\section*{Thesis structure}

The thesis is organized as follows:

\begin{itemize}
    \item Chapter~\ref{chapter:Structured Matrix Sequences} introduces foundational concepts and definitions related to matrix-sequences, focusing on singular value and eigenvalue distributions, approximation classes, and multi-level structures. This chapter sets the mathematical background necessary for the study of structured matrices and their spectral properties.

    \item Chapter~\ref{chap:Structured matrices} reviews classical structured matrices, including Toeplitz, circulant, $\omega$-circulant, $\tau$, and Hankel matrices. The chapter details their definitions, algebraic and spectral properties, and discusses their importance in numerical linear algebra and asymptotic linear algebra.

    \item Chapter~\ref{GLT} develops the theoretical framework of Generalized Locally Toeplitz (GLT) sequences, which serves as the mathematical backbone for the spectral analysis of structured matrix-sequences. The chapter introduces the GLT notion and presents GLT algebras and their key properties, establishing the essential axioms and algebraic structure that define the behavior of GLT sequences. It distinguishes between matrix-sequences with explicit and hidden asymptotic structures and discusses fundamental concepts such as zero-distributed sequences, block Toeplitz and block diagonal sampling matrices, and the construction of $d$-level $r$-block GLT $\ast$-algebras.

    \item Chapter~\ref{GM GL1} investigates the spectral distribution of geometric means of Hermitian positive definite GLT matrix-sequences. For the scalar case $(r = 1,\ d \geq 1)$, it is shown that the geometric mean of two GLT sequences remains in the GLT class, with the symbol given by the geometric mean of the input symbols. This result is extended to the block and multilevel case $(r,\ d \geq 1)$. Extensions to multiple matrices via the Karcher mean are also discussed. Theoretical findings are supported by numerical experiments in one and two dimensions, confirming the asymptotic spectral behavior.

    \item Chapter~\ref{GM GLT2} extends the study of the spectral distribution of geometric means of Hermitian positive definite GLT matrix-sequences to the maximal setting, including degenerate and non-commuting GLT symbols. It proves that if the GLT symbols commute, the geometric mean remains a GLT sequence with symbol given by the geometric mean of the input symbols, without requiring invertibility almost everywhere, completing the proof of a long-standing conjecture for the scalar and block cases $(r, d \geq 1)$. Through detailed numerical experiments, the chapter also demonstrates that when the symbols do not commute or are both degenerate, the classical relation may fail, revealing new phenomena and providing a candidate symbol for the geometric mean in these settings. 

    \item Chapter~\ref{chapter:quantum} applies the GLT framework to mean-field quantum spin models, especially the Curie–Weiss model. This chapter demonstrates how GLT theory and spectral distribution results can be used to analyze the asymptotic spectral properties of large Hamiltonians, connecting abstract theory with quantum statistical mechanics.
\end{itemize}
In terms of mathematical tools, Theorem~\ref{4: th:two - r=1,d general GM1}, Theorem~\ref{4: th:two - r,d general GM1}, and Theorem~\ref{main result-bis} are proved only by using the GLT axioms. The case where both GLT symbols are degenerate requires an elegant use of the notion of approximating classes of sequences combined with the GLT axioms; see Theorem~\ref{theorem 1} and Theorem~\ref{th:two - r,d general}. Finally, Theorem~\ref{main result} has been the most difficult, since the verification that the considered matrix-sequences are GLT with zero symbol has been performed through an explicit computation.

The thesis ends with a final section where we summarize the results and indicate open questions and future directions research.

\addcontentsline{toc}{chapter}{Introduction}
\chapter{Matrices and matrix-sequences
}\label{chapter:Structured Matrix Sequences}
 Matrix-sequences are essential to the mathematical investigation and practical application of numerical techniques for differential equations along with applications.  A matrix-sequence is formally defined as a collection $\{A_n\}_n $ of square matrices, where each matrix $A_n$ is of size $d_n \times d_n$, and the dimension $d_n$ increases with $n$.  This increasing dimension signifies the enhancement of discretizations, demonstrated by the reduction of mesh size in finite difference or finite element methods to attain improved accuracy.\\
 The origins of matrix-sequences originated in the discretization of continuous mathematical models.  For example, when an ordinary or partial differential equation is discretized on a grid with $n$ points, a linear system is often produced, with the coefficient matrix size specified by the number of unknowns in the discretization.  As the discretization refines (i.e., as the grid size diminishes or the number of nodes increases), the resultant matrices expand in size, creating a sequence indexed by the discretization parameter. These sequences are not random; the matrices frequently possess algebraic or analytic features derived from the original differential operator or the problem's geometry.  Prominent cases contain Toeplitz and circulant matrices in scenarios involving constant coefficients and periodic domains, as well as broader categories such as banded, block, or multi-level matrices.  Recently, the theory has been widened to include GLT sequences, which can represent a broader range of operators.\\
 The main motivation for examining matrix-sequences is their spectral characteristics as the matrix dimensions increase. The asymptotic behavior of eigenvalues and singular values, expressed through matrix symbols and limiting spectral measures, provides valuable insight into the stability and convergence of numerical schemes. For instance, the link between the von Neumann stability of linear approximation methods and the asymptotic spectral distribution results is discussed in \cite[Section 3.4]{SerraCapizzano2006}. When considering stationary iterative solvers, the convergence can be read in the symbol. If the eigenvalues remain confined within bounded regions of the complex plane and the spectral radius of the iteration matrix is strictly less than one, then the resulting iterative technique is convergent. In terms of the symbol, a necessary condition is that its infinity norm is less than one. In contrast, broad spectral distributions or eigenvalue accumulation near zero usually indicate poor conditioning, reduced convergence speed, or numerical instability. Furthermore, comprehending the fundamental structure of a matrix-sequence is essential for developing efficient algorithms, including rapid direct solvers, preconditioned iterative approaches (both stationary and of Krylov type), and multi-grid techniques or multi-iterative methods.\\
 This chapter seeks to explain the essential principles and motives underlying the study of matrix-sequences, thereby preparing the reader for the more complex content in subsequent chapters. Section~\ref{sig-eig} focuses on the concepts of eigenvalue and singular value distributions, which are essential instruments for the asymptotic spectral analysis of matrix-sequences.  Section~\ref{acs} explains the notion of convergence in the realm of matrix-sequences, emphasizing the robust framework for approximating classes of sequences. Section~\ref{multi level} ultimately examines the multilevel structural dimensions of the matrices presented in this thesis. Readers who desire a comprehensive and formal exploration of these topics are directed to the following {\rm \cite[Chapter 2]{garoni2018}}.

 \section{Singular value and eigenvalue distributions of a matrix-sequence}\label{sig-eig}
 In numerous cases, a matrix-sequence may be associated with an eigenvalue or singular value distribution.  We provide a formal definition of this idea and clarify its intuitive meaning.
 
\begin{definition}\label{99}
    (Singular value and eigenvalue distribution of a matrix-sequence). Let $\{A_n\}_n$ be a matrix-sequence, with $A_n$ of size $d_n$, and let $\psi : D \subset \mathbb{R}^t \to \mathbb{C}^{r \times r}$ be a measurable function defined on a set $D$ with $0 < \mu_t(D) < \infty, $ where $ \mu_t(\cdot) $  denotes the standard Lebesgue measure in $\mathbb{R}^t$.
\begin{itemize}
    \item We say that $\{A_n\}_n$ has an (asymptotic) singular value distribution described by $\psi$, and we write $\{A_n\}_n \sim_\sigma \psi$, if
    \[
    \lim_{n \to \infty} \frac{1}{d_n} \sum_{i=1}^{d_n} F(\sigma_i(A_n)) = \frac{1}{\mu_t(D)} \int_D \frac{1}{r} \sum_{i=1}^r F(\sigma_i(\psi(\bm{x}))) \, d\bm{x}, \quad \forall F \in C_c(\mathbb{R}).
    \]

    \item We say that $\{A_n\}_n$ has an (asymptotic) spectral (or eigenvalue) distribution described by $\psi$, and we write $\{A_n\}_n \sim_\lambda \psi$, if
    \[
    \lim_{n \to \infty} \frac{1}{d_n} \sum_{i=1}^{d_n} F(\lambda_i(A_n)) = \frac{1}{\mu_t(D)} \int_D \frac{1}{r} \sum_{i=1}^r F(\lambda_i(\psi(\bm{x}))) \, d\bm{x}, \quad \forall F \in C_c(\mathbb{C}).
    \]

    \item If $\psi$ describes both the singular value and eigenvalue distribution of $\{A_n\}_n$, we write $\{A_n\}_n \sim_{\sigma, \lambda} \psi$.
\end{itemize}
    In this case, the function $\psi$ is referred to as the \textit{eigenvalue (or spectral) symbol} of $\{A_n\}_n$.\\
    The same definition when the considered matrix-sequence shows a multilevel structure. In that case $n$ is replaced by $\bm{n}$ uniformly in $A_n$ and $d_n$.
\end{definition}

The formal definitions shown above define precisely the mathematical constraints under which a function, commonly referred to as the symbol, characterizes the asymptotic spectral (eigenvalue) or singular value distribution of a sequence of matrices.

The informal meaning behind the spectral distribution definition is as follows: if $\psi$ is continuous, then a suitable ordering of the eigenvalues $\{\lambda_j(A_n)\}_{j=1,\dots,d_n}$, assigned in correspondence with an equispaced grid on $D$, reconstructs approximately the $r$ surfaces $\bm{x} \mapsto \lambda_i(\psi(\bm{x}))$, $i = 1, \dots, r$. For example, in the simplest case where $t = 1$ and $D = [a, b]$, $d_n = nr$, the eigenvalues of $A_n$ are approximately equal up to a few potential outliers to $\lambda_i(\psi(x_j))$, where
\[
x_j = a + j \frac{(b-a)}{n}, \quad j = 1, \dots, n, \quad i = 1, \dots, r.
\]
If $t = 2$ and $D = [a_1, b_1] \times [a_2, b_2]$, $d_n = n^2 r$, the eigenvalues of $A_n$ are approximately equal again up to a few potential outliers to $\lambda_i(\psi(x_{j_1}, y_{j_2}))$, where
\[
x_{j_1} = a_1 + j_1 \frac{(b_1 - a_1)}{n}, \quad y_{j_2} = a_2 + j_2 \frac{(b_2 - a_2)}{n}, \quad j_1, j_2 = 1, \dots, n, \quad i = 1, \dots, r.
\]
A analogous understanding pertains to the definition of the singular value distribution.\\
If the considered structure is two-level then the subscript is $\bm{n}=(n_1,n_2)$ and $d_n = n_1 n_2 r$.  
Furthermore, for $t \geq 3$, a similar reasoning applies. 

Finally we report an observation which is useful in the following derivations.
\begin{remark}\label{rem: range}
The relation $\{A_n\}_n \sim_\lambda \psi$ and $\Lambda(A_n) \subseteq S$ for all $n$ imply that the range of $\psi$ is a subset of the closure $\bar{S}$ of 
$S$. In particular $\{A_n\}_n \sim_\lambda \psi$ and $A_n$ positive definite for all $n$ imply that $\psi$ is nonnegative definite almost everywhere, simply nonnegative almost everywhere if $r=1$. The same applies when a multilevel matrix-sequence $\{A_{\bm{n}}\}_{\bm{n}}$ is considered and similar statements hold when singular values are taken into account.
\end{remark}
As we will experience, computing the spectral distribution is often simpler for Hermitian
matrix-sequences. However, there exist several effective tools for analyzing matrix-sequences
where the non-Hermitian component is somehow negligible; as in the following proposition
stating that small-norm perturbations of Hermitian sequences do not alter the spectral
distribution.
\begin{proposition}[{\!\!\cite[Corollary 2]{Barbarino2020}}]
Let $\{A_{n}\}_n$ be a matrix-sequence such that  $A_{n} = B_{n} + C_{n}$, where $B_{n}$ is Hermitian $\forall n \in \mathbb{N}$. Suppose that
\begin{itemize}
    \item $\{B_{n}\}_n \sim_\lambda \psi$,
    \item $\|C_{n}\|=o(1)$.
\end{itemize}
Then $\{A_{n}\}_n \sim_\lambda \psi$.
\end{proposition}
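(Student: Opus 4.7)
The plan is to unwind Definition~\ref{99}: for every $F \in C_c(\mathbb{C})$ it suffices to establish that
\[
E_n(F) := \frac{1}{d_n}\sum_{i=1}^{d_n} F(\lambda_i(A_n)) - \frac{1}{d_n}\sum_{i=1}^{d_n} F(\lambda_i(B_n)) \xrightarrow[n\to\infty]{} 0,
\]
since the assumed limit of the second term then transfers to the first, giving $\{A_n\}_n \sim_\lambda \psi$.

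The perturbation input is the Bauer--Fike theorem applied to the normal (in fact Hermitian) matrix $B_n$: every eigenvalue $\mu$ of $A_n = B_n + C_n$ satisfies $\operatorname{dist}(\mu, \Lambda(B_n)) \leq \|C_n\|$, with $\Lambda(B_n) \subset \mathbb{R}$. Because $\|C_n\| \to 0$, the spectrum of $A_n$ asymptotically collapses onto a shrinking tube around $\Lambda(B_n)$ inside the real axis.

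Given $\varepsilon > 0$, uniform continuity of $F$ (compactly supported and continuous) yields $\delta > 0$ with $|F(z) - F(w)| < \varepsilon$ whenever $|z - w| < \delta$, and $F$ vanishes outside some ball $\overline{B_R(0)}$. For $n$ so large that $\|C_n\| < \delta$, I would produce a bijection $\pi_n$ of $\{1,\ldots,d_n\}$ pairing every eigenvalue of $A_n$ in the effective support of $F$ with an eigenvalue of $B_n$ within distance $\delta$; eigenvalues outside the support contribute zero to both sums and may be matched arbitrarily. Uniform continuity then gives $|E_n(F)| \leq \varepsilon + o(1)$, and letting $\varepsilon \downarrow 0$ concludes.

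The hard part is the bijective matching $\pi_n$: Bauer--Fike is inherently one-sided, and because $A_n$ need not be normal, the reverse assignment is subtle. A clean way to produce $\pi_n$ is Hall's marriage theorem applied to the bipartite graph whose edges join eigenvalues of $A_n$ and $B_n$ at mutual distance at most $\|C_n\|$. The Hall condition reduces to a two-sided counting estimate: for every test window $W \subset \mathbb{C}$, the number of eigenvalues of $A_n$ inside $W$ is bounded by the number of eigenvalues of $B_n$ inside a $\|C_n\|$-enlargement of $W$ and vice versa. The first direction is immediate from Bauer--Fike; the second, the nontrivial one, follows from the continuity of the roots of the characteristic polynomial in the matrix entries, combined with $\|C_n\| \to 0$. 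An alternative route, more in the spirit of the GLT literature, is to avoid the explicit matching by recasting the claim as weak convergence of empirical eigenvalue measures and showing directly that $\mu_{A_n} - \mu_{B_n} \to 0$ in the L\'evy--Prokhorov metric, using again Bauer--Fike and the continuity of spectra.
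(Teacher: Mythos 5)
Your reduction to showing $E_n(F)\to 0$ for each $F\in C_c(\mathbb{C})$ is the right opening move, and Bauer--Fike is a natural tool: since $B_n$ is Hermitian (hence normal), $\operatorname{spec}(A_n)$ lies inside the closed $\|C_n\|$-neighbourhood of $\operatorname{spec}(B_n)\subset\mathbb{R}$, so the spectrum of $A_n$ collapses onto the real line. The bijective-matching step, however, has a genuine gap. Bauer--Fike is a one-sided, set-level containment; it does not yield Hall's condition, which would require that every $k$-element collection of eigenvalues of $A_n$ has at least $k$ eigenvalues of $B_n$ (with multiplicity) within distance $\|C_n\|$. The ``continuity of roots'' you invoke does give a two-sided counting identity via a Rouch\'e/homotopy argument along $t\mapsto B_n+tC_n$, but only for regions whose boundary avoids the entire tube $\operatorname{spec}(B_n)+\overline{D(0,\|C_n\|)}$, i.e.\ for regions that separate whole spectral clusters. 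The matching this produces is therefore only intra-cluster, and its distance is bounded by the cluster diameter, not by $\|C_n\|$; when the eigenvalues of $B_n$ are densely packed (the generic GLT situation) a single cluster can have diameter comparable to $\|B_n\|$. Uniform continuity of $F$ at scale $\|C_n\|$ is then useless at the cluster scale, so the claimed bound $|E_n(F)|\le\varepsilon+o(1)$ does not follow. Your L\'evy--Prokhorov alternative founders on exactly the same point: the inequality $\mu_{A_n}(W)\le\mu_{B_n}(W^\varepsilon)+\varepsilon$ \emph{is} the missing two-sided counting bound, not something Bauer--Fike supplies.

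The cited reference avoids eigenvalue matching altogether. In the spirit of the Golinskii--Serra-Capizzano line of argument on which it builds, one controls $\operatorname{tr}F(A_n)-\operatorname{tr}F(B_n)$ directly for smooth $F$ via a resolvent (almost-holomorphic / Helffer--Sj\"ostrand) representation: the difference is expressed as
\[
\frac{1}{\pi}\int_{\mathbb{C}}\bar\partial\tilde F(w)\,\operatorname{tr}\bigl[(wI-A_n)^{-1}C_n(wI-B_n)^{-1}\bigr]\,dA(w),
\]
and is estimated using $\|(wI-B_n)^{-1}\|\le 1/|\operatorname{Im}w|$ (Hermitianity of $B_n$), a Neumann-series bound on $\|(wI-A_n)^{-1}\|$ away from the strip $|\operatorname{Im}w|\le\|C_n\|$, the trace-norm estimate $\|C_n\|_1\le d_n\|C_n\|$, and the rapid decay of $\bar\partial\tilde F$ near the real axis, which absorbs the resolvent singularities. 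Dividing by $d_n$ gives $|E_n(F)|=O(\|C_n\|)\to 0$, and density of smooth functions in $C_c(\mathbb{C})$ finishes. The one-sided Bauer--Fike information enters only to justify the Neumann-series bound; the two-sided information you were trying to manufacture by matching is replaced by a Schatten-norm trace estimate, and the normalization by $d_n$ is where $\|C_n\|=o(1)$ actually does its work.
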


The non-uniqueness of the distribution function for a matrix-sequence occurs because any measurable function that yields the same integral relationship when evaluated against all compactly supported continuous functions is also considered a distribution.  Once a distribution function $\psi$ is established, there exist infinitely many alternative functions $\phi$, potentially specified on distinct domains or possessing varying dimensionalities, that produce identical asymptotic spectral information.  For instance, any permutation of $\psi$, specifically any measurable function $\varphi : E \subset \mathbb{R}^s \to \mathbb{C}$ that meets the specified condition
\begin{equation*}
\frac{1}{\mu_t(D)} \int_D F(\psi(\bm{x}))\, d\bm{x} = \frac{1}{\mu_s(E)} \int_E F(\varphi(\bm{x}))\, d\bm{x}, \quad \forall F \in C_c(\mathbb{C}),
\end{equation*}
is also classified as a distribution. Alternative representations, which may involve varying numbers of variables or be matrix-valued; see e.g.\ \cite{barbarino2020block1d, barbarino2020blockmulti}, are deemed valid if they fulfill the aforementioned integral criterion.
\section{Approximating classes of sequences}\label{acs}

We present the idea of approximation class of sequences (a.c.s.), which is essential in the contemporary theory of matrix-sequences.  Generally, an (a.c.s.)  scheme for a specified matrix-sequence $\{A_n\}_n$ comprises a family of matrix sequences that can increasingly approximate $\{A_n\}_n$, aside from the potential existence of error terms: one of low rank relative to the overall matrix dimensions, and another possessing a negligible norm.

This concept can be regarded as a particular form of convergence, or more formally, as a method of establishing a topology on the space of matrix-sequences. The formal concept of approximation classes of sequences and their associated convergence results was initially defined in prior research see e.g.\ \cite{SerraCapizzano2001, Tilli1998}.  For additional literature and contemporary finding, refer to, for instance \cite{garoni2017, garoni2018}.
\begin{definition}[Approximating class of sequences]\label{def:acs}
Let $\{A_n\}_n$ be a matrix-sequence and let $\{\{B_{n,j}\}_n\}_j$ be a class of matrix-sequences, with $A_n$ and $B_{n,j}$ of size $d_n$. We say that $\{\{B_{n,j}\}_n\}_j$ is an approximating class of sequences (a.c.s.) for $\{A_n\}_n$ if the following condition is met: for every $j$, there exists $n_j$ such that, for every $n \geq n_j$,
\[
A_n = B_{n,j} + R_{n,j} + N_{n,j},
\]
\[
\text{rank}(R_{n,j}) \leq c(j) d_n \quad \text{and} \quad \|N_{n,j}\| \leq \omega(j),
\]
where $n_j$, $c(j)$, and $\omega(j)$ depend only on $j$, and
\[
\lim_{j \to \infty} c(j) = \lim_{j \to \infty} \omega(j) = 0.
\]
$
\{\{B_{n,j}\}_n\}_j \xrightarrow{\text{a.c.s. wrt } j} \{A_n\}_n
$ denotes that $\{\{B_{n,j}\}_n\}_j$ is an a.c.s. for $\{A_n\}_n$.
\end{definition}
%%%%%%%%%

In more intuitive words, we assert that $\{\{B_{n,j}\}_n\}_j$ constitutes an approximating class of sequences (a.c.s.) for $\{A_n\}_n$ if, for any sufficiently large value of $j$, the sequence $\{B_{n,j}\}_n$ yields an asymptotic approximation to $\{A_n\}_n$.  This indicates that each $A_n$ can be expressed as the sum of the corresponding $B_{n,j}$, a matrix with a rank significantly lower than that of $A_n$, and an additional matrix with a tiny norm.  As $j$ rises, the rank and norm of these error terms diminish significantly for large $n$.

%%%%%%%%%%%%
The following theorem represents the expression of a related convergence theory and it is a powerful tool
used, for example, in the construction of the GLT $\ast$-Algebra.
\begin{theorem}\label{th: acs}
Let $\{A_n\}_n$, $\{B_{n,j}\}_n$, with $j, n \in \mathbb{N}$, be matrix-sequences and let $\psi, \psi_j : D \subset \mathbb{R}^d \to \mathbb{C}^{r \times r}$ be measurable functions defined on a set $D$ with positive and finite Lebesgue measure. Suppose that:
\begin{enumerate}
    \item $\{B_{n,j}\}_n \sim_{\sigma} \psi_j$ for every $j$;
    \item $\{\{B_{n,j}\}_n\}_j \xrightarrow{\text{a.c.s. wrt } j} \{A_n\}_n$;
    \item $\psi_j \to \psi$ in measure.
\end{enumerate}
Then
\[
\{A_n\}_n \sim_{\sigma} \psi.
\]
\end{theorem}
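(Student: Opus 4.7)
The plan is to reduce the theorem to a three-term triangle inequality that interpolates between $A_n$ and $\psi$ through the intermediate pair $(B_{n,j},\psi_j)$, and to let first $n\to\infty$ and then $j\to\infty$.

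Fix $F\in C_c(\mathbb{R})$; since $F$ is uniformly continuous and bounded, denote by $\omega_F$ its modulus of continuity and by $\|F\|_\infty$ its sup-norm. Set
\[
\Phi_n(A):=\frac{1}{d_n}\sum_{i=1}^{d_n}F(\sigma_i(A)),\qquad \Psi(\phi):=\frac{1}{\mu_t(D)}\int_D\frac{1}{r}\sum_{i=1}^r F(\sigma_i(\phi(\bm{x})))\,d\bm{x}.
\]
The goal is to show $\Phi_n(A_n)\to\Psi(\psi)$. I would split
\[
|\Phi_n(A_n)-\Psi(\psi)|\le |\Phi_n(A_n)-\Phi_n(B_{n,j})|+|\Phi_n(B_{n,j})-\Psi(\psi_j)|+|\Psi(\psi_j)-\Psi(\psi)|
\]
and control each piece.

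The first piece is the matrix-theoretic core. Using the a.c.s.\ decomposition $A_n=B_{n,j}+R_{n,j}+N_{n,j}$ with $\mathrm{rank}(R_{n,j})\le c(j)d_n$ and $\|N_{n,j}\|\le\omega(j)$, I would invoke Weyl's singular value interlacing: the rank-bounded perturbation $R_{n,j}$ affects at most $2c(j)d_n$ singular values (in order), while the norm-bounded perturbation $N_{n,j}$ shifts the remaining singular values by at most $\omega(j)$. Consequently,
\[
|\Phi_n(A_n)-\Phi_n(B_{n,j})|\le 4c(j)\|F\|_\infty+\omega_F(\omega(j))=:\varepsilon(j),
\]
which is \emph{independent of $n$} and vanishes as $j\to\infty$ by the a.c.s.\ definition. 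This step is the one I expect to be the main obstacle, in the sense that the Weyl-type book-keeping (separating the rank correction from the norm correction and pairing the right singular values) must be carried out carefully; everything else is a standard limit argument.

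The second piece vanishes as $n\to\infty$ for each fixed $j$ by hypothesis (1), since $\{B_{n,j}\}_n\sim_\sigma\psi_j$ applied to the test function $F$ gives exactly $\Phi_n(B_{n,j})\to\Psi(\psi_j)$. The third piece is handled by hypothesis (3): the singular value map $M\mapsto(\sigma_1(M),\dots,\sigma_r(M))$ is continuous on $\mathbb{C}^{r\times r}$, hence $\psi_j\to\psi$ in measure on $D$ implies $F(\sigma_i(\psi_j))\to F(\sigma_i(\psi))$ in measure for each $i$; since $F$ is uniformly bounded by $\|F\|_\infty$ and $D$ has finite Lebesgue measure, the bounded convergence theorem yields $\Psi(\psi_j)\to\Psi(\psi)$ as $j\to\infty$.

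To conclude, given $\varepsilon>0$ I would first choose $j$ so large that $\varepsilon(j)<\varepsilon/3$ and $|\Psi(\psi_j)-\Psi(\psi)|<\varepsilon/3$, and then take $n$ large enough (depending on this fixed $j$) that $|\Phi_n(B_{n,j})-\Psi(\psi_j)|<\varepsilon/3$. Summing the three contributions gives $|\Phi_n(A_n)-\Psi(\psi)|<\varepsilon$ eventually, which by arbitrariness of $F\in C_c(\mathbb{R})$ and Definition~\ref{99} yields $\{A_n\}_n\sim_\sigma\psi$.
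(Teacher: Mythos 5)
The paper states Theorem~\ref{th: acs} without proof (it is a standard result from the GLT literature), so there is no in-paper argument to compare against; what follows assesses your proposal on its own. Your skeleton --- interpolating through $(B_{n,j},\psi_j)$, letting $n\to\infty$ for fixed $j$ via hypothesis~(1), then $j\to\infty$ via a.c.s.\ control and convergence in measure --- is the standard and correct route, and the second and third pieces are handled correctly (in particular, for the third piece the combination of continuity of the symmetric singular-value functional on $\mathbb{C}^{r\times r}$ with bounded convergence on the finite-measure domain $D$ is exactly right).

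The genuine gap is in the first piece. The claimed estimate $|\Phi_n(A_n)-\Phi_n(B_{n,j})|\le 4c(j)\|F\|_\infty+\omega_F(\omega(j))$ is \emph{false} for general $F\in C_c(\mathbb{R})$, and the stated justification --- that a rank-$k$ perturbation ``affects at most $2k$ singular values in order'' --- is not correct: a rank-one positive perturbation of a Hermitian matrix can shift \emph{every} sorted eigenvalue by one interlacing slot. Concretely, fix a piecewise-linear $F\in C_c(\mathbb{R})$ with $\|F\|_\infty=1$, equal to $+1$ at $0.5,1.5,\dots,K-0.5$, equal to $-1$ at $1,2,\dots,K-1$, and vanishing outside $[0,K]$; let $B=\mathrm{diag}(0.5,1.5,\dots,K-0.5)$ and let $A=B+R$ be a rank-one positive update with eigenvalues $1,2,\dots,K$ (the interlacing $i-0.5\le i\le i+0.5$ holds, so such an $R$ exists). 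Then $c=1/K$, $\omega=0$, yet $\tfrac1K\bigl|\sum_i F(\sigma_i(A))-\sum_i F(\sigma_i(B))\bigr|=(2K-1)/K$, which exceeds $4/K$ for every $K\ge 3$. What rank boundedness actually controls is the empirical counting function $N_C(t):=\#\{i:\sigma_i(C)>t\}$, through $\sup_t|N_{A_n}(t)-N_{C_{n,j}}(t)|\le c(j)d_n$ where $C_{n,j}:=B_{n,j}+N_{n,j}$; converting that into a bound on $\sum_i F(\sigma_i(\cdot))$ costs a \emph{variation}-type constant in $F$, not merely $\|F\|_\infty$. For $F$ Lipschitz with compact support one obtains, by writing $\sum_i F(\sigma_i(\cdot))-d_nF(0)=\int_0^\infty F'(t)N_{(\cdot)}(t)\,dt$, the bound $|\Phi_n(A_n)-\Phi_n(B_{n,j})|\le\mathrm{TV}(F)\,c(j)+\mathrm{Lip}(F)\,\omega(j)$ for $n\ge n_j$. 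Since $C_c(\mathbb{R})$ contains functions of unbounded variation, you cannot stop there: the proof must be closed with a density step, using that Lipschitz compactly supported functions are $\|\cdot\|_\infty$-dense in $C_c(\mathbb{R})$ and that both $F\mapsto\Phi_n(\cdot)$ and $F\mapsto\Psi(\cdot)$ are $1$-Lipschitz with respect to $\|\cdot\|_\infty$ on test functions. With this amendment (together with the minor observation that the $\varepsilon(j)$ bound is available only for $n\ge n_j$, so your concluding choice should read ``first pick $j$, then pick $n\ge\max(n_j,n')$''), the argument becomes complete.
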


\begin{theorem}\label{th: acs eig}
Given a Hermitian matrix-sequence $\{A_n\}_n$, let $\{B_{n,j}\}_n$, be a class of
Hermitian matrix-sequences and let $\psi, \psi_j : D \subset \mathbb{R}^d \to \mathbb{C}^{r \times r}$ be measurable functions defined on a set $D$ with positive and finite Lebesgue measure. Suppose that:
\begin{enumerate}
    \item $\{B_{n,j}\}_n \sim_{\lambda} \psi_j$ for every $j$;
    \item $\{\{B_{n,j}\}_n\}_j \xrightarrow{\text{a.c.s. wrt } j} \{A_n\}_n$;
    \item $\psi_j \to \psi$ in measure.
\end{enumerate}
Then
\[
\{A_n\}_n \sim_{\lambda} \psi.
\]

\end{theorem}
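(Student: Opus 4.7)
The plan is to fix a test function $F \in \mathcal{C}_c(\mathbb{C})$ and prove that
\[
\frac{1}{d_n}\sum_{i=1}^{d_n} F(\lambda_i(A_n)) \longrightarrow \frac{1}{\mu_d(D)}\int_D \frac{1}{r}\sum_{i=1}^r F(\lambda_i(\psi(\bm{x})))\,d\bm{x}
\]
by inserting the intermediate quantities indexed by $j$ and using a triangle inequality. Concretely, I would split the error into three pieces:
\[
\Bigl|\text{LHS for }A_n \text{ at size }d_n\Bigr| \le (\mathrm{I})_{n,j} + (\mathrm{II})_{n,j} + (\mathrm{III})_j,
\]
where $(\mathrm{I})_{n,j}$ compares the eigenvalue averages of $A_n$ and $B_{n,j}$, $(\mathrm{II})_{n,j}$ is the quantity that vanishes as $n\to\infty$ by assumption~1, and $(\mathrm{III})_j$ compares the integral of $F\circ\lambda_i\circ\psi_j$ to that of $F\circ\lambda_i\circ\psi$. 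A preliminary step is to replace the decomposition $A_n = B_{n,j} + R_{n,j} + N_{n,j}$ by one with \emph{Hermitian} error terms: since $A_n - B_{n,j}$ is Hermitian, defining $\widetilde R_{n,j} := (R_{n,j} + R_{n,j}^*)/2$ and $\widetilde N_{n,j} := (N_{n,j} + N_{n,j}^*)/2$ yields $A_n - B_{n,j} = \widetilde R_{n,j} + \widetilde N_{n,j}$ with $\mathrm{rank}(\widetilde R_{n,j}) \le 2 c(j) d_n$ and $\|\widetilde N_{n,j}\| \le \omega(j)$. This is essential because only then can the Hermitian Weyl-type inequalities be applied.

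For piece $(\mathrm{I})_{n,j}$ I would use two standard Hermitian perturbation estimates. First, by the Cauchy interlacing / min-max theorem, a Hermitian perturbation of rank at most $k$ alters at most $2k$ of the eigenvalues, hence
\[
\left|\frac{1}{d_n}\sum_{i=1}^{d_n} F(\lambda_i(B_{n,j} + \widetilde N_{n,j} + \widetilde R_{n,j})) - \frac{1}{d_n}\sum_{i=1}^{d_n} F(\lambda_i(B_{n,j} + \widetilde N_{n,j}))\right| \le 4 c(j)\,\|F\|_\infty.
\]
Second, by Weyl's inequality, $|\lambda_i(B_{n,j} + \widetilde N_{n,j}) - \lambda_i(B_{n,j})| \le \omega(j)$ for all $i$, and the uniform continuity of the compactly supported $F$ gives
\[
\left|\frac{1}{d_n}\sum_{i=1}^{d_n} F(\lambda_i(B_{n,j} + \widetilde N_{n,j})) - \frac{1}{d_n}\sum_{i=1}^{d_n} F(\lambda_i(B_{n,j}))\right| \le \omega_F(\omega(j)),
\]
where $\omega_F$ is the modulus of continuity of $F$. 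These two estimates handle $(\mathrm{I})_{n,j}$ uniformly in $n$. Then $(\mathrm{II})_{n,j} \to 0$ as $n\to\infty$ for each fixed $j$ by hypothesis~1, while $(\mathrm{III})_j$ is dealt with by noting that the map $M \mapsto \frac{1}{r}\sum_{i=1}^r F(\lambda_i(M))$ is continuous on $\mathbb{C}^{r\times r}$ (since symmetric functions of the eigenvalues, such as coefficients of the characteristic polynomial, are polynomials in the entries, and $F$ is continuous), bounded by $\|F\|_\infty$; hence $\psi_j \to \psi$ in measure together with bounded convergence yields $(\mathrm{III})_j \to 0$ as $j\to\infty$. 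Taking first $n\to\infty$ with $j$ fixed and then $j\to\infty$, using $c(j),\omega(j)\to 0$, completes the argument.

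The main obstacle is the low-rank term: for Hermitian sequences one cannot treat it by a small-norm perturbation, and the key subtlety is precisely the passage from the possibly non-Hermitian decomposition in Definition~\ref{def:acs} to a Hermitian one. Once the perturbations are symmetrized and both Weyl's theorem and the interlacing bound are applied, the remainder of the proof is a clean double limit. A minor technical point worth spelling out is the continuity of $M \mapsto \sum_{i=1}^r F(\lambda_i(M))$ on $\mathbb{C}^{r\times r}$: although individual eigenvalues are only continuous up to reordering, the symmetric sum is genuinely continuous, and this is what makes step~$(\mathrm{III})_j$ work under mere convergence in measure of the matrix-valued functions $\psi_j$.
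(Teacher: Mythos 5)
Your proof is correct, and it follows the same route as the standard proof in the literature (e.g.\ Garoni--Serra-Capizzano, \emph{GLT Sequences} Vol.~I, Thm.~5.6), which the paper cites without reproducing: the paper states Theorem~\ref{th: acs eig} without proof. The symmetrization $\widetilde R_{n,j}=(R_{n,j}+R_{n,j}^*)/2$, $\widetilde N_{n,j}=(N_{n,j}+N_{n,j}^*)/2$ is precisely the lemma that converts a generic a.c.s.\ decomposition into a Hermitian one when both $A_n$ and $B_{n,j}$ are Hermitian, and the subsequent interlacing/Weyl split, together with the continuity of $M\mapsto \frac1r\sum_i F(\lambda_i(M))$ and bounded convergence on the finite-measure domain $D$, is exactly how the result is established. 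One could quibble over the numerical constant in the rank bound ($\operatorname{rank}\widetilde R_{n,j}\le 2c(j)d_n$ yields at worst $8c(j)\|F\|_\infty$ rather than $4c(j)\|F\|_\infty$ after interlacing), but this is immaterial since $c(j)\to 0$. The argument is complete.
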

We end this section by observing that the same definition can be given and corresponding results (with obvious changes) hold, when the involved matrix-sequences show a multilevel structure. In that case $n$ is replaced by $\bm{n}$ uniformly in $A_n$, $B_{n,j}$, $d_n$.

To prevent ambiguity, it is essential to specify that, in some instances, we regard ``constant'' classes of sequences of the kind $\{\{B_{n}\}_n\}_j$, where the fundamental sequence $\{B_n\}_n$ is independent of the parameter $j$.  Thus, for each value of $j$, the sequence stays invariant.  For instance, in the class $\{\{I_n\}_n\}_j$, each member is simply the series of identity matrices $\{I_n\}_n$ for all $j \in \mathbb{N}$.  
\section{Multilevel discretization matrices}\label{multi level}

The discretization of partial differential equations (PDEs) on multi-dimensional domains, such as rectangles, cubes, or higher-dimensional hyperrectangles, inherently results in the formation of multi-level matrices. These matrices are characterized by their hierarchical structure, reflecting the underlying geometry and grid of the computing domain. Multi-level matrices are not simply collections of numbers; they are recursively structured into blocks: each matrix is segmented into larger blocks, which can be further subdivided into smaller sub-blocks, and so on. When a PDE is discretized on a $d$-dimensional grid comprising $n_1, n_2, \ldots, n_d$ points in each spatial direction, the resultant matrix is generally of size $N \times N$, where $N = n_1 n_2 \cdots n_d$.

At the highest level, the matrix consists of $n_1^2$ blocks of size $N/n_1 $, corresponding to subdivisions along the first coordinate; each block at this level is further broken down into $n_2^2$ blocks of size $N/(n_1 n_2) $, according to the second coordinate, and this recursive process continues until, at the final level, we arrive at individual scalar entries. In general, at the $d$-th level, each block is partitioned into $n_d^2$ blocks of order $N/(n_1 \cdots n_d)$. This multi-level structure is not just a mathematical curiosity: it is vital for the efficient storage, computation, and theoretical analysis of the matrices that arise from multi-dimensional problems.

Using the conventional notation of two scalar indices $i, j = 1, \ldots, N$ to refer to the entries of a multi-level matrix quickly becomes impractical. To address this, mathematicians use multi-index notation, where each position in the matrix or vector is labeled by a $d$-tuple, or multi-index, $\bm{i} = (i_1, \ldots, i_d)$ in $\mathbb{Z}^{d}$, that records its location along every dimension of the grid. This approach not only makes the notation more concise and natural but also aligns perfectly with the recursive construction of multi-level matrices. Using multi-indices, we can systematically and clearly describe the positions of entries in both vectors and matrices: for example, a vector $\bm{x} \in \mathbb{C}^{N(\bm{n})}$ with $ N(\bm{n}) =n_1, \ldots, n_d$ can be indexed as $\bm{x} = [ x_{\bm{i}}]_{\bm{i=1}}^{\bm{n}}$, The first component is $x_{(1,\ldots,1)}$, the second is $x_{(1,\ldots,1,2)}$, and so on, until $x_{(n_1,\ldots,n_d)}$. For instance, the component in position $n_d + 3$ will be $x_{(1,\ldots,1,2,3)}$. Likewise, a $d$-level matrix $A \in M_{N(\bm{n})}(\mathbb{C})$ can be written as $A = [a_{\bm{i}\bm{j}}]_{\bm{i},\bm{j}=1}^{\bm{n}}$, where $\bm{n} = (n_1, \ldots, n_d)$.

In summary, a multi-level matrix-sequence is a family of matrices $\{A_{\bm{n}}\}_{\bm{n}}$, whose size and structure are governed by a multi-index $\bm{n} \to +\infty$ that reflects the refinement of the underlying grid in each coordinate direction, with $A \in M_{N(\bm{n})}(\mathbb{C})$.  With this framework in place, we are now prepared to explore the different types of structured matrices that play a central role in the numerical analysis of PDEs on multi-dimensional domains.
\chapter{Structured matrices}\label{chap:Structured matrices}
 %%%%%%%%%%%%%%%%%%
%\section{Structured matrices}\label{sec:Structured}

Matrices generated by common discretization techniques, such as finite difference schemes, often possess distinctive structures whose theoretical analysis is crucial for efficient numerical computation. Indeed, it is very important to understand these structures from a mathematical point of view because it leads to faster numerical processes and new ideas about how numerical schemes work.  Toeplitz matrices, which have constant entries along each diagonal, are one of the most well-known of these types.  This diagonal constant is a discretized form of the translation invariance quality that is built into partial differential equations (PDEs) and fractional differential equations (FDEs) with constant coefficients. A standard Toeplitz matrix is automatically made when a uniform grid spacing is used in a one-dimensional discretization setting.  This is easy because the discrete approximation of derivative operators keeps the stencil patterns the same along the grid, which means that diagonals have the same entries.

Toeplitz structures with more than one level are created when discretizing PDEs or FDEs in higher dimensions using uniform grid spacing over rectangular areas.  As these matrices get more complicated, they turn into block Toeplitz matrices, where each block has its own Toeplitz structure.  Each block is basically a Toeplitz matrix with fewer dimensions that is nested inside a matrix structure with more dimensions. These multi-level Toeplitz matrices have a direct relationship between the number of dimensions of the subject being studied and the number of recursive levels.  For instance, problems in two dimensions produce matrices with two levels of recursion, while problems in three dimensions produce matrices with three levels of stacked levels.  Understanding and using this multi-level Toeplitz structure is important for making numerical methods work well and for making the best use of computing strategies in real-world situations.

This chapter begins with Section \ref{TM}. It contains a rigorous definition of multi-level Toeplitz matrices, followed by a concise overview of their key spectral and algebraic characteristics, based on the substantial research that has developed in this area over time. Unless otherwise noted, these foundational results are comprehensively discussed in \cite[Chapter~3]{garoni2018}. Additionally, the chapter explores several related matrix structures some of which are specific subclasses of Toeplitz matrices, while others exhibit strong algebraic and spectral links to Toeplitz matrices highlighting their significance in numerical computations. In particular, circulant and $\omega$-circulant matrices are introduced in Sections~\ref{cir mat} and \ref{omega cir mat}, respectively, whereas Sections~\ref{tau-mat} and \ref{hank matr} focus on $\tau$ matrices and Hankel matrices.
\section{Toeplitz matrices}\label{TM}
Given $\bm{n} \in \mathbb{N}^d$, a matrix of the form
\[
T_{\bm{n}} := [a_{\bm{i}-\bm{j}}]_{\bm{i},\bm{j}=1}^{\bm{n}} \in M_{N(\bm{n})}(\mathbb{C}),
\]
where the element in position $(\bm{i}, \bm{j})$ depends only on the difference $\bm{i}-\bm{j}$, is called a multi-level or $d$-level Toeplitz matrix.

In the simplest case where $d = 1$ and $\bm{n} = n$, a 1-level or uni-level Toeplitz matrix is just a classical Toeplitz matrix, whose entries remain constant along the diagonals:
\[
T_n = [a_{i-j}]_{i,j=1}^n =
\begin{bmatrix}
a_0 & a_{-1} & a_{-2} & \cdots & \cdots & a_{1-n} \\
a_1 & a_0 & a_{-1} & a_{-2}&  & \vdots \\
a_2 & a_1 & \ddots &\ddots & \ddots & \vdots \\
\vdots & a_2&\ddots&\ddots&\ddots&a_{-2}\\
\vdots &  & \ddots & \ddots & \ddots &a_{-1} \\
a_{n-1} & \cdots & \cdots & a_{2} & a_1&a_{0}
\end{bmatrix}
\in M_n(\mathbb{C}).
\]
Note that $T_n$ is completely specified by its $2n - 1$ coefficients $a_k$, $k = 1-n, \ldots, n-1$, or equivalently by its first row and column.

For $d = 2$ and $\bm{n} = (n_1, n_2)$, we have a 2-level Toeplitz matrix:
\[
T_n = \bigg[\left[ a_{(i_1 - j_1,\, i_2 - j_2)} \right]_{i_2, j_2 = 1}^{n_2}\bigg]_{i_1, j_1 = 1}^{n_1}
= \left[ A_{i_1 - j_1} \right]_{i_1, j_1 = 1}^{n_1}\]
\[
= 
\begin{bmatrix}
A_0 & A_{-1} & A_{-2} & \cdots & \cdots & A_{1-n_{1}} \\
A_1 & A_0 & A_{-1} & A_{-2}&  & \vdots \\
A_2 & A_1 & \ddots &\ddots & \ddots & \vdots \\
\vdots & A_2&\ddots&\ddots&\ddots&A_{-2}\\
\vdots &  & \ddots & \ddots & \ddots &A_{-1} \\
A_{n_{1}-1} & \cdots & \cdots & A_{2} & A_1&A_{0}
\end{bmatrix}
\in M_{n_1 n_2}(\mathbb{C}),
\]
where each block $A_k$ is itself a Toeplitz matrix:\\
$
A_k = \left[ a_{(k,\, i_2 - j_2)} \right]_{i_2, j_2 = 1}^{n_2}$

\[=
\begin{bmatrix}
a_{(k, 0)} & a_{(k, -1)} & a_{(k, -2)} & \cdots & \cdots & a_{(k, 1-n_{2})} \\
a_{(k, 1)} & a_{(k, 0)} & a_{(k, -1)} & a_{(k, -2)}&  & \vdots \\
a_{(k, 2)} & a_{(k, 1)} & \ddots &\ddots & \ddots & \vdots \\
\vdots & a_{(k, 2)}&\ddots&\ddots&\ddots&a_{(k, -2)}\\
\vdots &  & \ddots & \ddots & \ddots &a_{(k, -1)} \\
a_{(k, n_{2}-1)} & \cdots & \cdots & a_{(k, 2)} & a_{(k, 1)}&a_{(k, 0)}
\end{bmatrix}\in M_{n_2}(\mathbb{C}).
\]
This type of matrix is also known as a \emph{block Toeplitz matrix with Toeplitz blocks} or BTTB matrix.\\
In the case where $d = 3$, a 3-level Toeplitz matrix is made up of blocks, and each block has its own 2-level Toeplitz structure.  A more general way to describe a $d$-level Toeplitz matrix is as a block Toeplitz matrix whose blocks are $(d-1)$-level Toeplitz matrices.  In this recursive design, the coefficients in the first row and first column of the matrix tell you everything you need to know about it, just like in the case of a single-level matrix.
From the computational perspective, this makes the storage requirement for Toeplitz matrices drop to only $O(N(\bm{n}))$, compared to $O(N(\bm{n})^2)$ for a generic dense matrix.\\
A compact expression for a multi-level Toeplitz matrix can be formulated by defining, for $n \in \mathbb{N}$ and $k \in \mathbb{Z}$, the matrix $J_n^{(k)}$ as follows:

\[
[ J_n^{(k)}]_{i,j}:= \delta_{i-j-k}=\begin{cases}
  1 & \text{if } i - j = k, \\\\
  0 & \text{otherwise},
\end{cases}
\qquad i, j = 1, \ldots, n,
\]
where $\delta_{r}$ is the Kronecker delta function. In other words, $J_n^{(k)}$ is a matrix with all ones on the $k$-th diagonal and zeros elsewhere. The main diagonal is numbered as 0, diagonals above as $-1, -2, ..., -n+1$, and those below as $1, 2, ..., n-1$. For the multi-level case, where $\bm{n} \in \mathbb{N}^{d}$ and $\bm{k} \in \mathbb{Z}^{d}$, are both $d$-dimensional vectors (for example, $\bm{n} = (n_1, ..., n_d)$ and $\bm{k} = (k_1, ..., k_d)$), we define its multi-level version:
\[
J_{\bm{n}}^{(\bm{k})} = J_{n_1}^{(k_1)} \otimes J_{n_2}^{(k_2)} \otimes \cdots \otimes J_{n_d}^{(k_d)}.\]

Then, $T_{\bm{n}}$ admits the following representation:
\begin{equation}\label{T comp}
T_{\bm{n}} = [a_{\bm{i}-\bm{j}}]_{\bm{i},\bm{j=1}}^n = \sum_{\bm{k=1-n}}^{\bm{n-1}} a_{\bm{k}} J_{\bm{n}}^{(\bm{k})}.
\end{equation}
This means the matrix is a sum over all possible diagonals, where each term involves the coefficient $a_{\bm{k}}$ multiplied by the matrix $J_{\bm{n}}^{(\bm{k})}$, which places ones along the $\bm{k}$-th diagonal and zeros elsewhere. This result can be demonstrated by examining each entry $(\bm{i}, \bm{j})$ of the matrix on the right-hand side and using the definition of $J_{\bm{n}}^{(\bm{k})}$ together with the properties of the Kronecker product.\\

Given a function $f : [-\pi, \pi]^d \to \mathbb{C}$ belonging to the space of integrable functions $L^1([-\pi, \pi]^d)$, it is possible to construct a class of multi-level Toeplitz matrices associated with $f$. Let $f_{\bm{k}}$ denote the $\bm{k}$-th Fourier coefficient of $f$, defined for any $\bm{k} \in \mathbb{Z}^d$ as
\[
f_{\bm{k}} := \frac{1}{(2\pi)^d} \int_{[-\pi,\pi]^d} f(\bm{\theta})\, e^{-i \bm{k} \cdot \bm{\theta}} \, d\bm{\theta},
\]
where $i^2 = -1$ and $\bm{k} \cdot \bm{\theta} = k_1\theta_1 + \cdots + k_d\theta_d$.

For any $\bm{n} \in \mathbb{N}^d$, the $\bm{n}$-th Toeplitz matrix associated with $f$ is the $d$-level Toeplitz matrix, denoted by $T_{\bm{n}}(f)$, whose entries are the Fourier coefficients of $f$, arranged along the diagonals according to the following definition:
\[
T_{\bm{n}}(f) := [f_{\bm{i}-\bm{j}}]_{\bm{i},\bm{j}=\bm{1}}^{\bm{n}} \in M_{N(\bm{n})}(\mathbb{C}).
\]
The function $f$ is referred to as the generating function of $T_{\bm{n}}(f)$, since its Fourier coefficients determine the matrix entries. In the broader GLT framework, the term symbol instead denotes the function that describes the asymptotic singular value distribution of a matrix-sequence and the asymptotic eigenvalue distribution under certain condition; see \textbf{GLT1} and \textbf{GLT5}. In this direction, we notice that $f$ is also the GLT symbol of the matrix-sequences $\{T_{\bm{n}}(f)\}_{\bm{n}}$. Furthermore, using Equation (\ref{T comp}), $T_{\bm{n}}(f)$ can also be represented as
\[
T_{\bm{n}}(f) = \sum_{\bm{k} = \bm{1-n}}^{\bm{n-1}} f_{\bm{k}} J_{\bm{n}}^{(\bm{k})}= \sum_{k_1 = 1 - n_1}^{n_1 - 1} \cdots \sum_{k_d = 1 - n_d}^{n_d - 1} f_{(k_1, \ldots, k_d)} \left( J_{n_1}^{(k_1)} \otimes \cdots \otimes J_{n_d}^{(k_d)} \right).
\]
\subsection{Relationship between the Toeplitz matrix and its generating function}
By constructing a Toeplitz matrix $T_{\bm{n}}(f)$ from a given multivariate function $f$ in $L^1([-\pi, \pi]^d)$, we are essentially creating a map that takes functions and produces matrices:
\[
T_{\bm{n}}(\cdot) : L^1([-\pi, \pi]^d) \to M_{N(\bm{n})}(\mathbb{C})
\]
for any fixed $\bm{n} \in \mathbb{N}^d$. The properties of the function $f$ are closely related to those of the Toeplitz matrix.
To start, the operator $T_{\bm{n}}(\cdot)$ is linear. This follows directly from the linearity of the Fourier coefficient operation.
\begin{proposition}
For any $\alpha, \beta \in \mathbb{C}$ and $f, g \in L^1([-\pi, \pi]^d)$, it holds that
\[
T_{\bm{n}}(\alpha f + \beta g) = \alpha\, T_{\bm{n}}(f) + \beta\, T_{\bm{n}}(g).
\]
Moreover, further results follow from the identities
\[
(f(-\bm{\theta}))_{\bm{k}} = f_{-\bm{k}}
\quad \text{and} \quad
(\overline{f})_{\bm{k}} = \overline{f_{-\bm{k}}},
\quad \text{for all } \bm{k} \in \mathbb{Z}^d,
\]
where $\overline{f}$ denotes the complex conjugate of $f$.
\end{proposition}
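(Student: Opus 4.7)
The plan is to unwind the definitions and work directly with the integral formula for the Fourier coefficients. The key point is that the map $f \mapsto f_{\bm{k}}$ is itself a linear functional on $L^1([-\pi,\pi]^d)$, so linearity of $T_{\bm{n}}(\cdot)$ will follow as soon as we check that this passes entrywise through the construction $T_{\bm{n}}(f) = [f_{\bm{i}-\bm{j}}]_{\bm{i},\bm{j}=\bm{1}}^{\bm{n}}$.

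First I would handle the linearity statement. For $\alpha,\beta \in \mathbb{C}$ and $f,g \in L^1([-\pi,\pi]^d)$, linearity of the Lebesgue integral gives $(\alpha f + \beta g)_{\bm{k}} = \alpha f_{\bm{k}} + \beta g_{\bm{k}}$ for every $\bm{k} \in \mathbb{Z}^d$. Evaluating entrywise at $\bm{k} = \bm{i}-\bm{j}$ and stacking into a matrix yields $T_{\bm{n}}(\alpha f + \beta g) = \alpha T_{\bm{n}}(f) + \beta T_{\bm{n}}(g)$. This is essentially a one-line verification.

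For the identity $(f(-\bm{\theta}))_{\bm{k}} = f_{-\bm{k}}$, I would perform the change of variables $\bm{\phi} = -\bm{\theta}$ in the defining integral. The cube $[-\pi,\pi]^d$ is symmetric under this reflection, the Jacobian has absolute value one, and the exponential transforms as $e^{-i\bm{k}\cdot\bm{\theta}} = e^{i\bm{k}\cdot\bm{\phi}} = e^{-i(-\bm{k})\cdot\bm{\phi}}$, so the integral becomes exactly $f_{-\bm{k}}$. For the second identity, $(\overline{f})_{\bm{k}} = \overline{f_{-\bm{k}}}$, I would pull complex conjugation outside the integral and observe that $\overline{e^{-i\bm{k}\cdot\bm{\theta}}} = e^{i\bm{k}\cdot\bm{\theta}} = e^{-i(-\bm{k})\cdot\bm{\theta}}$, which gives exactly the Fourier coefficient of $f$ at index $-\bm{k}$, conjugated.

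Since each piece of the proposition is a direct consequence of elementary linearity, change-of-variables, and complex-conjugation arguments at the level of the Fourier coefficients, there is no serious obstacle here; the only care needed is bookkeeping on signs and on the symmetry of the integration domain under $\bm{\theta} \mapsto -\bm{\theta}$. The statement is essentially a restatement of standard properties of Fourier coefficients, transported to the matrix setting through the entrywise definition of $T_{\bm{n}}(\cdot)$.
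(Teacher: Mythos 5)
Your proof is correct and matches the paper's own (implicit) reasoning: the paper states only that linearity of $T_{\bm{n}}(\cdot)$ follows directly from linearity of the Fourier-coefficient functional, and your entrywise argument, change of variables $\bm{\phi}=-\bm{\theta}$, and conjugation-under-the-integral computation are exactly the standard verifications one would supply. No gaps.
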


\begin{proposition}
For any $f \in L^1([-\pi, \pi]^d)$, the following properties hold:
\[
T_{\bm{n}}(f(\bm{\theta)})^{\top} = T_{\bm{n}}(f(-\bm{\theta})),
\]
\[
T_{\bm{n}}(f)^* = T_{\bm{n}}(\overline{f}),
\]
where $T_{\bm{n}}(f)^*$ is the Hermitian (conjugate transpose) of $T_{\bm{n}}(f)$.
\end{proposition}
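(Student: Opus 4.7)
The plan is to prove both identities by direct entry-wise verification, leveraging the two Fourier coefficient identities stated in the preceding proposition. Since every $d$-level Toeplitz matrix generated by a symbol is completely characterized by the rule $[T_{\bm{n}}(g)]_{\bm{i},\bm{j}} = g_{\bm{i}-\bm{j}}$, it suffices to check that the entries of the matrices on both sides of each identity coincide, so the argument reduces to a simple index manipulation combined with an application of the two Fourier identities $(f(-\bm{\theta}))_{\bm{k}} = f_{-\bm{k}}$ and $(\overline{f})_{\bm{k}} = \overline{f_{-\bm{k}}}$.

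For the first identity, I would start by writing $[T_{\bm{n}}(f)^{\top}]_{\bm{i},\bm{j}} = [T_{\bm{n}}(f)]_{\bm{j},\bm{i}} = f_{\bm{j}-\bm{i}}$. On the other hand, by the definition of $T_{\bm{n}}(\cdot)$ applied to the function $f(-\bm{\theta})$, the $(\bm{i},\bm{j})$ entry of $T_{\bm{n}}(f(-\bm{\theta}))$ equals $(f(-\bm{\theta}))_{\bm{i}-\bm{j}}$, which by the first Fourier identity equals $f_{-(\bm{i}-\bm{j})} = f_{\bm{j}-\bm{i}}$. The two sides thus agree entrywise for every multi-index pair $(\bm{i},\bm{j})$ in $\{\bm{1},\ldots,\bm{n}\}^{2}$.

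For the second identity, I would proceed analogously. The entry $[T_{\bm{n}}(f)^{*}]_{\bm{i},\bm{j}}$ equals $\overline{[T_{\bm{n}}(f)]_{\bm{j},\bm{i}}} = \overline{f_{\bm{j}-\bm{i}}}$. By the definition of $T_{\bm{n}}(\overline{f})$ and the second Fourier identity, $[T_{\bm{n}}(\overline{f})]_{\bm{i},\bm{j}} = (\overline{f})_{\bm{i}-\bm{j}} = \overline{f_{-(\bm{i}-\bm{j})}} = \overline{f_{\bm{j}-\bm{i}}}$, which matches. The integrability assumption $f \in L^{1}([-\pi,\pi]^{d})$ is used only to guarantee that the Fourier coefficients $f_{\bm{k}}$ are well defined for all $\bm{k} \in \mathbb{Z}^{d}$, so no convergence issue arises.

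Honestly, there is no genuine obstacle in this proof: it is a routine reduction to entries, and both identities follow immediately from the Fourier coefficient relations that have just been stated. If one wanted a more intrinsic argument, one could instead use the compact representation $T_{\bm{n}}(f) = \sum_{\bm{k}} f_{\bm{k}} J_{\bm{n}}^{(\bm{k})}$ together with the elementary facts $(J_{\bm{n}}^{(\bm{k})})^{\top} = J_{\bm{n}}^{(-\bm{k})}$ and the fact that the $J_{\bm{n}}^{(\bm{k})}$ are real, and then apply the linearity of $T_{\bm{n}}(\cdot)$ proven in the first proposition to factor the sum; but the entrywise approach above is cleaner.
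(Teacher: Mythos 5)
Your proof is correct, and it follows essentially the same route the paper intends: the preceding proposition states the two Fourier coefficient identities precisely so that this result "follows from" them, and your entrywise verification is the standard way to make that deduction explicit. The alternative route you sketch via the expansion $T_{\bm{n}}(f)=\sum_{\bm{k}}f_{\bm{k}}J_{\bm{n}}^{(\bm{k})}$ is also fine, but nothing further is needed.
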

Specifically, if the function $f$ is real-valued a.e., then the corresponding Toeplitz matrix $T_{\bm{n}}(f)$ is Hermitian.  This indicates that the matrix is identical to its conjugate transpose, a characteristic that guarantees all of its eigenvalues are real numbers.  Consequently, when initiating with a real function, the Toeplitz matrix generated will always possess real eigenvalues.\\
Furthermore, the spectrum of $T_{\bm{n}}(f)$, comprising its eigenvalues, will always exist within the range of values assumed by $f$, namely inside the interval defined by the essential minimum and essential maximum of $f$.  This offers an effective method to estimate or constrain the potential eigenvalues of the Toeplitz matrix based exclusively on the characteristics of the generating function.
\begin{theorem}\label{the 2.3}
Given $f \in L^1([-\pi, \pi]^d)$, real a.e., let $m_f := \operatorname{ess\,inf} f$ and $M_f := \operatorname{ess\,sup} f$. Then, for any $\bm{n} \in \mathbb{N}^d$, it holds that
\[
\lambda_j \big(T_{\bm{n}}(f)\big) \in [m_f, M_f], \quad \forall\, j = 1, \ldots, N(\bm{n}).
\]
If, in addition, $m_f < M_f$, or equivalently $f$ is not constant a.e., then it holds
\[
\lambda_j \big(T_{\bm{n}}(f)\big) \in (m_f, M_f), \quad \forall\, j = 1, \ldots, N(\bm{n}).
\]
\end{theorem}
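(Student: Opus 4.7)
The plan is to exploit Hermitianness and derive the bounds from a Rayleigh quotient calculation, which can be expressed as an integral involving $f$.

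First, since $f$ is real a.e., the previous proposition gives $T_{\bm{n}}(f)^*=T_{\bm{n}}(\overline f)=T_{\bm{n}}(f)$, so $T_{\bm{n}}(f)$ is Hermitian and all its eigenvalues are real. By the standard min--max characterization, every eigenvalue lies between the minimum and the maximum of the Rayleigh quotient $R(\bm v)=\bm v^*T_{\bm{n}}(f)\bm v/\bm v^*\bm v$ over nonzero $\bm v\in\mathbb C^{N(\bm n)}$. So the whole theorem reduces to bounding $R(\bm v)$.

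Next, I would write $R(\bm v)$ as an integral. Using $T_{\bm n}(f)=[f_{\bm i-\bm j}]$ and the definition of Fourier coefficients,
\[
\bm v^*T_{\bm n}(f)\bm v=\sum_{\bm i,\bm j}\overline{v_{\bm i}}\,f_{\bm i-\bm j}\,v_{\bm j}
=\frac{1}{(2\pi)^d}\int_{[-\pi,\pi]^d} f(\bm\theta)\,|q_{\bm v}(\bm\theta)|^2\,d\bm\theta,
\]
where $q_{\bm v}(\bm\theta):=\sum_{\bm j=\bm 1}^{\bm n} v_{\bm j}\,e^{\mathrm{i}\,\bm j\cdot\bm\theta}$ is a trigonometric polynomial. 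By Parseval's identity, $\bm v^*\bm v=\frac{1}{(2\pi)^d}\int_{[-\pi,\pi]^d}|q_{\bm v}(\bm\theta)|^2\,d\bm\theta$. Therefore
\[
R(\bm v)=\frac{\int f(\bm\theta)\,|q_{\bm v}(\bm\theta)|^2\,d\bm\theta}{\int |q_{\bm v}(\bm\theta)|^2\,d\bm\theta}.
\]
Since $m_f\le f(\bm\theta)\le M_f$ a.e.\ and $|q_{\bm v}|^2\ge 0$, the ratio lies in $[m_f,M_f]$, giving the first claim.

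For the strict inclusion when $f$ is not a.e.\ constant, suppose for contradiction that $R(\bm v)=M_f$ for some $\bm v\ne \bm 0$. Then $\int (M_f-f(\bm\theta))\,|q_{\bm v}(\bm\theta)|^2\,d\bm\theta=0$ with a nonnegative integrand, so $|q_{\bm v}|^2=0$ a.e.\ on the set $E:=\{\bm\theta: f(\bm\theta)<M_f\}$. Because $m_f<M_f$, the set $E$ has positive Lebesgue measure. Thus $q_{\bm v}$ vanishes on a set of positive measure; the standard fact that a nonzero trigonometric polynomial has zero set of measure zero (which can be proved by induction on $d$, using that the coefficients are holomorphic after the substitution $z_k=e^{\mathrm{i}\theta_k}$, or by Fourier uniqueness) forces $q_{\bm v}\equiv 0$, hence $\bm v=\bm 0$, a contradiction. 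The same argument with $f-m_f$ rules out $R(\bm v)=m_f$, so $R(\bm v)\in(m_f,M_f)$ for all $\bm v\ne\bm 0$, and the bound transfers to every eigenvalue.

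The main obstacle is the strict inclusion: the weak bound is immediate from the integral formula, but concluding strictness requires the non-trivial fact that a nontrivial trigonometric polynomial (in several variables) cannot vanish on a set of positive measure. I would isolate this as a preliminary lemma to keep the proof clean.
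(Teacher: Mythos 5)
Your proof is correct and is precisely the classical Rayleigh-quotient/quadratic-form argument (going back to Grenander--Szeg\H{o}) that the cited reference \cite[Chapter 3]{garoni2018} uses, so you have independently reproduced the standard proof: write $\bm v^*T_{\bm n}(f)\bm v$ as $\frac{1}{(2\pi)^d}\int f\,|q_{\bm v}|^2$, bound by $m_f,M_f$, and use that a nonzero trigonometric polynomial cannot vanish on a set of positive measure for strictness. One small caveat: your parenthetical ``or by Fourier uniqueness'' does not quite give the key lemma as stated, since Fourier uniqueness requires $q_{\bm v}=0$ a.e.\ rather than merely on a set of positive measure; the holomorphy/induction route you give first (or, equivalently, real-analyticity of $q_{\bm v}$) is the one that actually closes the argument.
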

An important extension of the previous theorem concerns the behavior of the extreme eigenvalues of the Toeplitz matrix $T_{\bm{n}}(f)$ as the matrix dimension increases.  Specifically, for any integer $j \geq 1$, as $\bm{n} \to \infty$, the $j$ smallest eigenvalues of $T_{\bm{n}}(f)$ converge to the basic infimum of the generating function $f$.  As the matrix size increases, its smallest eigenvalues converge towards the minimum value that $f$ attains almost uniformly.  Likewise, the $j$ largest eigenvalues converge to the fundamental supremum of $f$, indicating they approach the maximum value of $f$ within its domain.\\
 This outcome offers an exact asymptotic characterization of the spectrum of large Toeplitz matrices, indicating that their extreme eigenvalues are determined by the minimum and maximum values of the generating function.
\begin{theorem}
Given $f \in L^1([-\pi, \pi]^d)$, real a.e., let $m_f := \operatorname{ess\,inf} f$ and $M_f := \operatorname{ess\,sup} f$, and let
\[
\lambda_1 \big(T_{\bm{n}}(f)\big) \leq \lambda_2 \big(T_{\bm{n}}(f)\big) \leq \cdots \leq \lambda_{N(\bm{n})} \big(T_{\bm{n}}(f)\big)
\]
be the eigenvalues of $T_{\bm{n}}(f)$ arranged in increasing order. Then, for any fixed $j \geq 1$, it holds that
\[
\lim_{\bm{n} \to \infty} \lambda_j\big(T_{\bm{n}}(f)\big) = m_f, \qquad
\lim_{\bm{n} \to \infty} \lambda_{N(\bm{n})-j+1}\big(T_{\bm{n}}(f)\big) = M_f.
\]
\end{theorem}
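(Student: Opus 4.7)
The plan is to combine Theorem~\ref{the 2.3}, which already supplies the lower bound $\lambda_j(T_{\bm{n}}(f)) \ge m_f$ for every index $j$ and every $\bm{n}$, with the asymptotic Szegő-type eigenvalue distribution $\{T_{\bm{n}}(f)\}_{\bm{n}} \sim_\lambda f$, valid for real a.e.\ $f \in L^1([-\pi,\pi]^d)$ and recalled in the Introduction (Tyrtyshnikov--Zamarashkin / Tilli). First I would reduce to the minima statement only: by linearity of $T_{\bm{n}}(\cdot)$ and the identity $T_{\bm{n}}(c) = c\,I_{N(\bm{n})}$ for a constant $c$, one has $T_{\bm{n}}(M_f - f) = M_f\,I_{N(\bm{n})} - T_{\bm{n}}(f)$, hence $\lambda_j\bigl(T_{\bm{n}}(M_f - f)\bigr) = M_f - \lambda_{N(\bm{n})-j+1}\bigl(T_{\bm{n}}(f)\bigr)$. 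Since $\operatorname{ess\,inf}(M_f - f) = 0$, the second limit follows from the first applied to $M_f - f$, so I can focus on proving $\lambda_j(T_{\bm{n}}(f)) \to m_f$.

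Fix $j \ge 1$ and $\varepsilon > 0$. By Theorem~\ref{the 2.3} it remains to exhibit some $\bm{n}_0$ such that $\lambda_j(T_{\bm{n}}(f)) \le m_f + \varepsilon$ for all $\bm{n} \ge \bm{n}_0$. Since $m_f$ is the essential infimum of $f$, the sublevel set $E_\varepsilon := \{\bm{\theta} \in [-\pi,\pi]^d : f(\bm{\theta}) < m_f + \varepsilon/2\}$ has strictly positive Lebesgue measure, and I set $\mu := \mu_d(E_\varepsilon)/(2\pi)^d > 0$. I would then choose a cutoff $F \in C_c(\mathbb{R})$ with $0 \le F \le 1$, identically $1$ on $[m_f, m_f + \varepsilon/2]$, and vanishing outside $[m_f - 1, m_f + \varepsilon]$, and exploit the distribution relation to get
$$ \lim_{\bm{n}\to\infty} \frac{1}{N(\bm{n})} \sum_{i=1}^{N(\bm{n})} F\bigl(\lambda_i(T_{\bm{n}}(f))\bigr) = \frac{1}{(2\pi)^d}\int_{[-\pi,\pi]^d} F(f(\bm{\theta}))\,d\bm{\theta} \;\ge\; \mu \;>\; 0. $$
Because $F$ is supported in $(-\infty,\,m_f+\varepsilon]$ and $F \le 1$, the left-hand sum is bounded above by $|\{i : \lambda_i(T_{\bm{n}}(f)) \le m_f + \varepsilon\}|/N(\bm{n})$. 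Thus, for $\bm{n}$ large enough, at least $\mu N(\bm{n})/2$ eigenvalues lie in $[m_f, m_f + \varepsilon]$; in particular this count eventually exceeds the fixed integer $j$, which forces $\lambda_j(T_{\bm{n}}(f)) \le m_f + \varepsilon$.

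Sending $\varepsilon \to 0^+$ and combining with Theorem~\ref{the 2.3} yields $\lim_{\bm{n}\to\infty} \lambda_j(T_{\bm{n}}(f)) = m_f$, and the symmetry reduction of the first paragraph produces $\lim_{\bm{n}\to\infty} \lambda_{N(\bm{n})-j+1}(T_{\bm{n}}(f)) = M_f$. The hard part of the scheme is really the input result $\{T_{\bm{n}}(f)\}_{\bm{n}} \sim_\lambda f$ for real a.e.\ $L^1$ symbols; everything else is a routine pinching capturing the fact that, asymptotically, a positive density of eigenvalues accumulates near any value attained by $f$ on a set of positive measure, so the fixed-index extremes must converge to the essential range extremes. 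A self-contained alternative avoiding the distribution theorem would apply the Courant--Fischer min-max principle together with the quadratic-form identity $v^* T_{\bm{n}}(f) v = (2\pi)^{-d}\int_{[-\pi,\pi]^d} f(\bm{\theta})\,|p_v(\bm{\theta})|^2\,d\bm{\theta}$, where $p_v$ is the trigonometric polynomial whose coefficients are the entries of $v$; the technical crux of this route is building, for each large $\bm{n}$, a $j$-dimensional trial subspace $V_{\bm{n}} \subset \mathbb{C}^{N(\bm{n})}$ whose representatives $p_v$ have their $L^2$-mass concentrated on disjoint positive-measure subsets of $E_\varepsilon$ while remaining approximately orthogonal, which is feasible but notationally heavier.
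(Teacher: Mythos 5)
The paper states this theorem without proof, deferring implicitly to the cited references (\cite{garoni2018}, Chapter~3), so there is no internal argument to compare against; I review your proposal on its own merits. Your strategy is sound and the details check out for symbols with finite essential extrema: Theorem~\ref{the 2.3} supplies $\lambda_j(T_{\bm{n}}(f)) \geq m_f$ for every $\bm{n}$, and the Szeg\H{o} eigenvalue distribution $\{T_{\bm{n}}(f)\}_{\bm{n}} \sim_\lambda f$, tested against a continuous cutoff $F$ with $0 \leq F \leq 1$, $F \equiv 1$ on $[m_f, m_f+\varepsilon/2]$ and supported in $[m_f-1, m_f+\varepsilon]$, forces a fraction of the eigenvalues at least $\mu_d\bigl(\{f < m_f+\varepsilon/2\}\bigr)/(2\pi)^d > 0$ into $[m_f, m_f+\varepsilon]$ asymptotically; this count eventually exceeds the fixed index $j$, giving $\lambda_j \leq m_f+\varepsilon$. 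The reduction of the maximal statement through $T_{\bm{n}}(M_f - f) = M_f I_{N(\bm{n})} - T_{\bm{n}}(f)$ and $\operatorname{ess\,inf}(M_f - f) = 0$ is correct. Invoking the distribution theorem, even though the paper lists it after the present statement, is not circular since it is an independent foundational result; your Courant--Fischer alternative is the more self-contained route, and both are standard.

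The one issue to close concerns unbounded symbols. The hypothesis is only $f \in L^1([-\pi,\pi]^d)$ real a.e., and the preceding Theorem~\ref{the 2.3} is stated in exactly this generality, so $m_f = -\infty$ or $M_f = +\infty$ are allowed, with the conclusion read as divergence to $\mp\infty$. Your reduction requires $M_f - f$ to be a genuine $L^1$ function, hence $M_f < +\infty$; and your bump is built on $[m_f, m_f+\varepsilon]$, hence assumes $m_f > -\infty$. Neither case causes trouble, but the proof must be adapted rather than quoted: for $m_f = -\infty$, fix any $K \in \mathbb{R}$, choose $F \in C_c(\mathbb{R})$ with $F \equiv 1$ on $[-M, K-1]$ and support in $[-M-1, K]$ for $M$ large enough that $\mu_d(\{-M \leq f < K-1\}) > 0$ (possible since $f$ is finite a.e.), and the same counting gives $\lambda_j \leq K$ eventually, so $\lambda_j \to -\infty$ by arbitrariness of $K$; the case $M_f = +\infty$ is symmetric and bypasses the subtraction reduction. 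As written your argument silently assumes finiteness, so either make these cases explicit or state that you restrict to $f$ essentially bounded.
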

From Theorem~\ref{the 2.3}, we immediately deduce the positivity and monotonicity of the Toeplitz operator.
\begin{corollary}
For any $f, g \in L^1([-\pi, \pi]^d)$, the following holds:
\[
f \geq 0 \text{ a.e.} \iff T_{\bm{n}}(f) \geq O_{N(\bm{n})}, \quad \forall \bm{n} \in \mathbb{N}^d,
\]
and, consequently,
\[
f \geq g \text{ a.e.} \iff T_{\bm{n}}(f) \geq T_{\bm{n}}(g) \quad \forall \bm{n} \in \mathbb{N}^d.
\]
\end{corollary}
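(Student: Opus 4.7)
The plan is to prove the positivity statement ($f \geq 0$ a.e. iff $T_{\bm n}(f) \geq O$ for all $\bm n$) first, and then deduce the monotonicity version by an application of the linearity of the Toeplitz operator proved in the preceding proposition.

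For the forward direction of the positivity statement, I would assume $f \geq 0$ a.e. Then $f$ is real a.e., so by the Hermitian structure recorded in the preceding proposition (since $\overline{f}=f$ a.e.), every $T_{\bm n}(f)$ is Hermitian. Moreover $m_f=\operatorname{ess\,inf} f \geq 0$, so Theorem~\ref{the 2.3} gives $\lambda_j(T_{\bm n}(f)) \in [m_f,M_f] \subseteq [0,+\infty)$ for all $j$. A Hermitian matrix with nonnegative eigenvalues is positive semidefinite, which yields $T_{\bm n}(f) \geq O_{N(\bm n)}$.

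For the converse, I would first observe that $T_{\bm n}(f) \geq O$ forces Hermitianness of $T_{\bm n}(f)$, and the identity $T_{\bm n}(f)^* = T_{\bm n}(\overline f)$ from the same proposition yields $T_{\bm n}(\overline f - f) = 0$ for every $\bm n$. By the injectivity of the map sending a generating function to its Fourier coefficients (the entries of $T_{\bm n}$ as $\bm n$ varies recover every $(\overline f - f)_{\bm k}$), we conclude $\overline f = f$ a.e., i.e., $f$ is real a.e. Then the extreme eigenvalue theorem immediately following Theorem~\ref{the 2.3} applies, giving $\lim_{\bm n \to \infty}\lambda_1(T_{\bm n}(f)) = m_f$; since $T_{\bm n}(f) \geq O$ implies $\lambda_1(T_{\bm n}(f)) \geq 0$, we obtain $m_f \geq 0$, hence $f \geq m_f \geq 0$ a.e.

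The monotonicity part then follows effortlessly. By linearity of $T_{\bm n}(\cdot)$, $T_{\bm n}(f) - T_{\bm n}(g) = T_{\bm n}(f-g)$, so the chain of equivalences $f \geq g$ a.e. $\iff f-g \geq 0$ a.e. $\iff T_{\bm n}(f-g) \geq O_{N(\bm n)}$ for all $\bm n \iff T_{\bm n}(f) \geq T_{\bm n}(g)$ for all $\bm n$ closes the argument. The only slightly delicate step is the reverse direction of the positivity statement; the key obstacle is ruling out that $f$ could be negative on a set of positive measure while all the associated Toeplitz matrices remain positive semidefinite. This is precisely what the asymptotic convergence $\lambda_1(T_{\bm n}(f)) \to m_f$ handles cleanly, provided one first argues that $f$ must be real-valued a.e. via the Hermitianness-plus-uniqueness observation.
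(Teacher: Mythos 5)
Your proof is correct. The paper itself supplies no explicit argument for this corollary — it merely states that it follows ``from Theorem~\ref{the 2.3}'' — so your write-up serves as a valid reconstruction rather than a comparison. Two remarks are worth making.

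First, you rightly notice that Theorem~\ref{the 2.3} by itself does \emph{not} give the converse implication. The localization $\lambda_j(T_{\bm n}(f)) \in [m_f, M_f]$ (or strictly inside when $m_f < M_f$) says nothing about the sign of $m_f$ when all the $\lambda_j$ are nonnegative; one genuinely needs the asymptotic statement $\lambda_1(T_{\bm n}(f)) \to m_f$ from the theorem that follows Theorem~\ref{the 2.3}. Your use of that extreme-eigenvalue convergence, together with the Hermitian-plus-Fourier-uniqueness argument showing $f$ must be real a.e., fills the gap cleanly. The phrase ``from Theorem~\ref{the 2.3} immediately'' in the paper is thus slightly generous: both theorems preceding the corollary are needed, and you have correctly identified and deployed both. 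The reduction of the monotonicity equivalence to the positivity equivalence via $T_{\bm n}(f) - T_{\bm n}(g) = T_{\bm n}(f-g)$ is exactly right.

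Second, an alternative and somewhat more elementary route — not needed here, but worth knowing — bypasses the eigenvalue theorems entirely by using the quadratic-form identity
\[
u^* T_{\bm n}(f)\, u \;=\; \frac{1}{(2\pi)^d}\int_{[-\pi,\pi]^d} f(\bm\theta)\,\bigl|p_u(\bm\theta)\bigr|^2\, d\bm\theta,
\qquad p_u(\bm\theta) := \sum_{\bm j} u_{\bm j}\, e^{\iota\,\bm j\cdot\bm\theta}.
\]
The forward implication is then immediate (and insensitive to whether $m_f$ is finite), and the converse follows by density: if the integral is nonnegative for every trigonometric polynomial modulus-squared, a Fejér-kernel approximation forces $f \geq 0$ a.e. Your argument via the extreme eigenvalues buys nothing over this when it is available, but it has the merit of relying only on results already stated verbatim in the text.
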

These results remain valid even if all of the inequalities are replaced by strict inequalities. In other words, if you consider situations where $f \geq 0$ a.e., or $f \geq g$ a.e., the corresponding conclusions about the Toeplitz matrices still hold when using the strict inequality signs.

The spectral norm of a Toeplitz matrix is connected with the $\infty$-norm of its generating function through the following inequality, proved in \cite{serra97}.

\begin{proposition}
For any $f \in L^{\infty}((-\pi, \pi]^d)$ and any $\bm{n} \in \mathbb{N}^d$, it holds that
\[
\|T_{\bm{n}}(f)\| \leq \|f\|_{\infty}.
\]
\end{proposition}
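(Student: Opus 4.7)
The plan is to prove the norm inequality by passing from the matrix to an integral representation via Fourier series, where the bound becomes essentially a consequence of Cauchy--Schwarz and Parseval's identity. The only algebraic identity that needs some care is the explicit formula linking a sesquilinear form in $T_{\bm n}(f)$ with an integral of $f$ against a product of trigonometric polynomials.

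First I would fix arbitrary vectors $\bm v, \bm w \in \mathbb{C}^{N(\bm n)}$ and associate to them the trigonometric polynomials
\[
p(\bm\theta) := \sum_{\bm j = \bm 1}^{\bm n} v_{\bm j}\, e^{i\bm j \cdot \bm\theta},
\qquad
q(\bm\theta) := \sum_{\bm i = \bm 1}^{\bm n} w_{\bm i}\, e^{i\bm i \cdot \bm\theta}.
\]
Inserting the integral representation of the Fourier coefficient $f_{\bm i - \bm j}$ into the expression $\bm w^{*} T_{\bm n}(f)\bm v = \sum_{\bm i, \bm j} \overline{w_{\bm i}}\, f_{\bm i-\bm j}\, v_{\bm j}$ and exchanging sum and integral (justified since both sums are finite) yields the compact representation
\[
\bm w^{*} T_{\bm n}(f)\bm v = \frac{1}{(2\pi)^{d}} \int_{[-\pi,\pi]^{d}} f(\bm\theta)\, \overline{q(\bm\theta)}\, p(\bm\theta)\, d\bm\theta.
\]

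Next I would take absolute values, pull out $\|f\|_\infty$, and apply the Cauchy--Schwarz inequality in $L^{2}([-\pi,\pi]^{d})$ to $p$ and $q$:
\[
|\bm w^{*} T_{\bm n}(f)\bm v|
\leq \|f\|_{\infty}\,\frac{1}{(2\pi)^{d}} \|p\|_{L^2}\, \|q\|_{L^2}.
\]
By Parseval's identity, $\|p\|_{L^2}^{2} = (2\pi)^{d}\|\bm v\|_{2}^{2}$ and analogously for $q$ and $\bm w$, so the $(2\pi)^{d}$ factor cancels and we obtain $|\bm w^{*} T_{\bm n}(f)\bm v| \leq \|f\|_{\infty}\,\|\bm v\|_{2}\|\bm w\|_{2}$. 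Taking the supremum over all unit vectors $\bm v, \bm w$ gives precisely $\|T_{\bm n}(f)\| \leq \|f\|_{\infty}$.

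The main (and only mild) obstacle is getting the integral representation of $\bm w^{*} T_{\bm n}(f)\bm v$ right: one must track conjugates and the direction of the exponentials carefully so that the identity $p(\bm\theta)\overline{q(\bm\theta)}$ emerges naturally from the double sum. Conceptually, the same inequality can also be read as saying that $T_{\bm n}(f)$ is the matrix, in the orthonormal basis $\{(2\pi)^{-d/2}e^{i\bm j \cdot \bm\theta}\}_{\bm j=\bm 1}^{\bm n}$, of the compression $P_{\bm n} M_{f} P_{\bm n}$ of the multiplication operator $M_{f}$ on $L^{2}([-\pi,\pi]^{d})$, whose operator norm is exactly $\|f\|_{\infty}$; the bound $\|P_{\bm n} M_{f} P_{\bm n}\|\leq \|M_{f}\|$ is then automatic and recovers the claim.
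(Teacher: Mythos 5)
Your proof is correct, and it is the standard one. Note that the paper itself does not prove this proposition; it simply cites \cite{serra97}. Your explicit Cauchy--Schwarz/Parseval argument and your closing remark about the compression $P_{\bm n} M_f P_{\bm n}$ of the multiplication operator are really the same proof at two levels of abstraction, and both are fine: the compact-form statement $\|P_{\bm n} M_f P_{\bm n}\| \leq \|M_f\| = \|f\|_\infty$ is exactly what the explicit computation verifies. The one identity you flag as needing care, namely
\[
\bm w^{*} T_{\bm n}(f)\bm v = \frac{1}{(2\pi)^{d}} \int_{[-\pi,\pi]^{d}} f(\bm\theta)\, \overline{q(\bm\theta)}\, p(\bm\theta)\, d\bm\theta,
\]
checks out: inserting $f_{\bm i - \bm j} = \frac{1}{(2\pi)^d}\int f(\bm\theta) e^{-i\bm i\cdot\bm\theta} e^{i\bm j\cdot\bm\theta}\,d\bm\theta$ into the double sum, the factor $\sum_{\bm i}\overline{w_{\bm i}}e^{-i\bm i\cdot\bm\theta}$ is exactly $\overline{q(\bm\theta)}$ and the factor $\sum_{\bm j}v_{\bm j}e^{i\bm j\cdot\bm\theta}$ is $p(\bm\theta)$, with the conjugates and signs of the exponents consistent. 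Parseval then gives $\|p\|_{L^2}^2 = (2\pi)^d\|\bm v\|_2^2$ (and likewise for $q$), so the $(2\pi)^d$ cancels and you land on $|\bm w^{*}T_{\bm n}(f)\bm v| \leq \|f\|_\infty\|\bm v\|_2\|\bm w\|_2$. No gaps.
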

Finally, we present a noteworthy result that clarifies how the separability of the function $f$ affects the structure of the associated multi-level Toeplitz matrices.

\begin{proposition}
Let $f \in L^1([-\pi, \pi]^d)$. If $f$ is separable, i.e.,
\[
f = f_1 \otimes \cdots \otimes f_d,
\]
then, for any $\bm{n} \in \mathbb{N}^d$, it holds that
\[
T_{\bm{n}}(f) = T_{n_1}(f_1) \otimes \cdots \otimes T_{n_d}(f_d).
\]
\end{proposition}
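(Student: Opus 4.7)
The plan is to prove the identity by passing through the Fourier-coefficient representation of $T_{\bm{n}}(f)$ and exploiting the well-known multiplicativity of the Kronecker product on rank-one tensors of diagonal-indicator matrices $J_{\bm{n}}^{(\bm{k})}$.

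First I would show that separability of the symbol propagates to its Fourier coefficients. Writing $f(\bm{\theta}) = f_1(\theta_1)\cdots f_d(\theta_d)$, the integrability of each factor (which follows from $f \in L^1([-\pi,\pi]^d)$ together with the product structure and a standard Fubini-type argument) lets me split the $d$-fold integral into $d$ one-dimensional integrals, obtaining
\[
f_{\bm{k}} = \frac{1}{(2\pi)^d}\int_{[-\pi,\pi]^d} f(\bm{\theta})\, e^{-i\bm{k}\cdot\bm{\theta}}\,d\bm{\theta} = \prod_{j=1}^d \frac{1}{2\pi}\int_{-\pi}^{\pi} f_j(\theta_j)\, e^{-ik_j\theta_j}\,d\theta_j = \prod_{j=1}^d (f_j)_{k_j}
\]
for every $\bm{k} = (k_1,\ldots,k_d) \in \mathbb{Z}^d$.

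Next I would plug this factorization into the compact representation \eqref{T comp} of the Toeplitz matrix, together with the identity $J_{\bm{n}}^{(\bm{k})} = J_{n_1}^{(k_1)}\otimes\cdots\otimes J_{n_d}^{(k_d)}$ stated just before it. This gives
\[
T_{\bm{n}}(f) = \sum_{k_1=1-n_1}^{n_1-1}\cdots\sum_{k_d=1-n_d}^{n_d-1} (f_1)_{k_1}\cdots(f_d)_{k_d}\, \bigl(J_{n_1}^{(k_1)}\otimes\cdots\otimes J_{n_d}^{(k_d)}\bigr).
\]
Now I apply the bilinearity of the Kronecker product together with the scalar-pull-out rule $(\alpha A)\otimes(\beta B) = \alpha\beta (A\otimes B)$. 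Moving each scalar $(f_j)_{k_j}$ inside its corresponding tensor factor and separating the $d$ independent summations (a routine manipulation, since each index $k_j$ appears only in the $j$-th tensor slot), the multiple sum factors as
\[
T_{\bm{n}}(f) = \Bigl(\sum_{k_1=1-n_1}^{n_1-1} (f_1)_{k_1} J_{n_1}^{(k_1)}\Bigr)\otimes\cdots\otimes\Bigl(\sum_{k_d=1-n_d}^{n_d-1} (f_d)_{k_d} J_{n_d}^{(k_d)}\Bigr),
\]
and each factor is, again by \eqref{T comp} in the one-level case, exactly $T_{n_j}(f_j)$, yielding the claim.

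There is really no hard step here; the only delicate point is the bookkeeping needed to justify the factorization of the multiple sum as a tensor product of single sums, which rests on the elementary fact that $\bigotimes_{j=1}^d \bigl(\sum_{k_j} c_{j,k_j} M_{j,k_j}\bigr) = \sum_{k_1,\ldots,k_d} \prod_j c_{j,k_j}\, M_{1,k_1}\otimes\cdots\otimes M_{d,k_d}$. Integrability of the individual factors $f_j$, needed so that each Toeplitz matrix $T_{n_j}(f_j)$ is well defined, is a mild separate issue that can be handled by noting that separability forces each $f_j$ to lie in $L^1([-\pi,\pi])$ up to a nonzero multiplicative constant (absorbable into one of the other factors).
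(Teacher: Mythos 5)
Your proof is correct. The paper does not give an explicit proof of this proposition (it is one of the background facts recalled from \cite[Chapter~3]{garoni2018}), so there is no paper proof to compare against; your argument via the factorization of Fourier coefficients under Fubini, the compact representation $T_{\bm{n}}(f) = \sum_{\bm{k}} f_{\bm{k}} J_{\bm{n}}^{(\bm{k})}$ with $J_{\bm{n}}^{(\bm{k})} = J_{n_1}^{(k_1)}\otimes\cdots\otimes J_{n_d}^{(k_d)}$, and the multilinearity of the Kronecker product is the standard and natural route. Your closing remark on the integrability of the individual factors $f_j$ is the right caveat: the implicit assumption in the statement is that each $f_j \in L^1([-\pi,\pi])$, as otherwise the right-hand side $T_{n_j}(f_j)$ is not defined, and one rules out the degenerate case (one factor non-integrable, another identically zero) exactly as you indicate.
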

If we allow the multi-index $\bm{n} \in \mathbb{N}^d$ instead of keeping it fixed, we obtain a whole sequence of multi-level Toeplitz matrices, denoted by $\{T_{\bm{n}}(f)\}_{\bm{n}}$. There exists a deep and important relationship between the singular values and eigenvalue distributions of this sequence and the properties of the generating function $f$. Over the years, this connection has been established and presented in several different forms within the mathematical literature.

In particular, the result is elegantly stated as in  \cite[Theorem~3.5]{garoni2018}, where the proof relies on the notion of approximation in the sense of almost convergence in the strong sense (a.c.s.). For readers interested in the earliest formulation of this result, reference \cite{tyrtyshnikov1998} provides the original statement, while further generalizations specifically to cases where the generating function $f$ is matrix-valued rather than scalar can be found in \cite{tilli1998}.
\begin{theorem}
With reference to Definition \ref{99}, if $f \in L^1([-\pi, \pi]^d)$, then
\[
\{T_{\bm{n}}(f)\}_{\bm{n}} \sim_\sigma (f,\, [-\pi, \pi]^d).
\]
If, moreover, $f$ is real valued, then
\[
\{T_{\bm{n}}(f)\}_{\bm{n}} \sim_\lambda (f,\, [-\pi, \pi]^d).
\]
\end{theorem}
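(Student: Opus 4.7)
The plan is to reduce the statement to the case of trigonometric polynomials and then transfer the result to general $f \in L^1$ by invoking the approximating class of sequences machinery of Theorem \ref{th: acs} (and Theorem \ref{th: acs eig} for the eigenvalue part when $f$ is real-valued). Accordingly, I would first establish the result for trigonometric polynomials, where the Toeplitz matrix is banded and differs from a multilevel circulant only by a low-rank perturbation; then I would handle the density step, which is the technical core.

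\textbf{Step 1: the polynomial case.} Let $p(\bm\theta) = \sum_{|\bm k| \leq q} c_{\bm k}\, e^{i \bm k \cdot \bm\theta}$ be a trigonometric polynomial of multi-degree at most $q$. Then $T_{\bm n}(p)$ is banded with bandwidth depending only on $q$, and the multilevel circulant matrix $C_{\bm n}(p)$ associated with $p$ differs from $T_{\bm n}(p)$ by a matrix whose rank is $O(q\, N(\bm n)/\min(\bm n))$, hence $o(N(\bm n))$ as $\bm n \to \infty$. Since $C_{\bm n}(p)$ is normal with eigenvalues given explicitly by samples $p(2\pi \bm k / \bm n)$, a direct Riemann-sum argument yields $\{C_{\bm n}(p)\}_{\bm n} \sim_\sigma p$, and $\sim_\lambda p$ when $p$ is real-valued. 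The negligible-rank perturbation does not alter the distribution, so $\{T_{\bm n}(p)\}_{\bm n} \sim_\sigma p$, and $\sim_\lambda p$ in the Hermitian case.

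\textbf{Step 2: density and a.c.s.} By density of trigonometric polynomials in $L^1([-\pi,\pi]^d)$, pick a sequence $\{p_j\}_j$ with $\|f - p_j\|_{L^1} \to 0$ as $j \to \infty$; for the real case choose the $p_j$ real-valued (e.g.\ via Fejér sums). I would then show that $\{\{T_{\bm n}(p_j)\}_{\bm n}\}_j$ is an a.c.s.\ for $\{T_{\bm n}(f)\}_{\bm n}$ in the sense of Definition \ref{def:acs}. By linearity, $T_{\bm n}(f) - T_{\bm n}(p_j) = T_{\bm n}(f - p_j)$. The key ingredient is the trace-norm bound $\operatorname{tr}|T_{\bm n}(g)| \leq (2\pi)^{-d} N(\bm n)\, \|g\|_{L^1}$, valid for all $g \in L^1([-\pi,\pi]^d)$. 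Given this, splitting $T_{\bm n}(f-p_j)$ through its singular value decomposition at threshold $\varepsilon_j := \|f - p_j\|_{L^1}^{1/2}$ produces $T_{\bm n}(f - p_j) = R_{n,j} + N_{n,j}$ with $\operatorname{rank}(R_{n,j}) \leq (2\pi)^{-d} N(\bm n)\,\|f - p_j\|_{L^1}/\varepsilon_j$ and $\|N_{n,j}\| \leq \varepsilon_j$. Setting $c(j) := (2\pi)^{-d}\varepsilon_j$ and $\omega(j) := \varepsilon_j$, both tend to zero, verifying the a.c.s.\ property. Finally, since $p_j \to f$ in $L^1$ implies $p_j \to f$ in measure, combining Step 1 with Theorem \ref{th: acs} yields $\{T_{\bm n}(f)\}_{\bm n} \sim_\sigma f$, and the Hermitian analogue via Theorem \ref{th: acs eig} gives $\{T_{\bm n}(f)\}_{\bm n} \sim_\lambda f$ when $f$ is real-valued.

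\textbf{Main obstacle.} The hard part is the trace-norm inequality $\operatorname{tr}|T_{\bm n}(g)| \leq (2\pi)^{-d} N(\bm n)\,\|g\|_{L^1}$ in the multilevel setting. In the one-dimensional scalar case this follows from the elementary bound $|(T_n(g))_{ij}| \leq (2\pi)^{-1}\|g\|_{L^1}$ combined with a duality argument against bounded unitaries, but a clean multilevel proof requires handling the tensor structure carefully, typically by using the Kronecker representation in equation~\eqref{T comp} together with a Fejér-kernel positivization of $g$ (writing $g = g_+ - g_-$ with each piece approximable by non-negative polynomials). Once this trace-norm bound is in hand, the singular-value splitting in Step 2 and the invocation of the a.c.s.\ convergence theorems are routine and complete the proof.
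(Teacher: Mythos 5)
Your proposal follows the same a.c.s.-based route that the paper attributes to \cite[Theorem 3.5]{garoni2018}: reduce to the banded polynomial case (where circulant comparisons and Riemann sums settle the distribution), then pass to $L^1$ symbols by exhibiting $\{T_{\bm n}(p_j)\}_{\bm n}$ as an a.c.s.\ for $\{T_{\bm n}(f)\}_{\bm n}$ via the Schatten-1 bound and the singular-value splitting, and conclude with Theorem~\ref{th: acs} (resp.~Theorem~\ref{th: acs eig}). This is correct. One remark on what you flag as the ``main obstacle'': the trace-norm estimate $\|T_{\bm n}(g)\|_1 \le (2\pi)^{-d} N(\bm n)\|g\|_{L^1}$ is in fact not harder in the multilevel case than in 1D, and you do not need a Fejér-kernel positivization. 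The duality computation you sketch for $d=1$, namely
\[
\operatorname{tr}\bigl(B\,T_{\bm n}(g)\bigr) \;=\; \frac{1}{(2\pi)^d}\int_{[-\pi,\pi]^d} g(\bm\theta)\, \bm v(\bm\theta)^* B^{\top} \bm v(\bm\theta)\, d\bm\theta ,
\]
generalizes verbatim by taking $\bm v(\bm\theta)$ to be the Kronecker product of the univariate exponential vectors $(e^{i\theta_\ell},\dots,e^{in_\ell\theta_\ell})^{\top}$; since $\|\bm v(\bm\theta)\|^2 = N(\bm n)$, the bound $|\operatorname{tr}(B\,T_{\bm n}(g))| \le (2\pi)^{-d}N(\bm n)\|B\|\,\|g\|_{L^1}$ follows immediately, and taking the supremum over contractions $B$ gives the Schatten-1 estimate directly.
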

This theorem states that, as the size of the multi-level Toeplitz matrices grows, their singular values (and, if $f$ is real, also their eigenvalues) become distributed according to the generating function $f$ over the domain $[-\pi, \pi]^d$.
\section{Circulant matrices}\label{cir mat}
A multi-level (or $d$-level) circulant matrix for $\bm{n} \in \mathbb{N}^d$ is a structured matrix defined as,
\[
C_{\bm{n}} := [a_{(\bm{i}-\bm{j}) \bmod \bm{n}}]_{\bm{i},\bm{j}=\bm{1}}^{\bm{n}} \in M_{N(\bm{n})}(\mathbb{C}),
\]
where $(i - j) \bmod \bm{n}$ denotes the remainder obtained when $(i - j)$ is divided by $\bm{n}$. This means that the entry in position $(\bm{i}, \bm{j})$ depends only on the difference $(\bm{i} - \bm{j})$ mod $\bm{n}$, so the matrix repeats its values cyclically along each diagonal. Each row of the circulant matrix is simply a cyclic shift of the previous row, making circulant matrices a special case of Toeplitz matrices with even higher symmetry.

When $d = 1$ and $\bm{n} = n$, this reduces to a classic circulant matrix, where each column is a one-position downward shift of the previous one:
\[
C_n = \bigr[a_{(i-j) \bmod n}\bigr]_{i,j=1}^n =
\begin{bmatrix}
a_0 & a_{n-1} & a_{n-2} & \cdots & \cdots & a_{1} \\
a_1 & a_0 & a_{n-1} & a_{n-2}&  & \vdots \\
a_2 & a_1 & \ddots &\ddots & \ddots & \vdots \\
\vdots & a_2&\ddots&\ddots&\ddots&a_{n-2}\\
\vdots &  & \ddots & \ddots & \ddots &a_{n-1} \\
a_{n-1} & \cdots & \cdots & a_{2} & a_1&a_{0}
\end{bmatrix}
\in M_n(\mathbb{C}).
\]
$C_n$ is entirely determined by the $n$ coefficients in its first column, $\{a_k\}_{k=0}^{n-1}$. Similarly, each row is a one-element rightward shift of the previous row, so that the matrix is fully specified by its first row.

For $d = 2$ and $\bm{n} = (n_1, n_2)$, we obtain a 2-level circulant matrix:
\[
C_{\bm{n}} = \Bigr[\bigr[ a_{((i_1-j_1) \bmod n_1,\, (i_2-j_2) \bmod n_2)} \bigr]_{i_2,j_2=1}^{n_2}\Bigr]_{i_1,j_1=1}^{n_1}
= \bigr[ A_{(i_1-j_1) \bmod n_1} \bigr]_{i_1,j_1=1}^{n_1}\]
\[
=
\begin{bmatrix}
A_0 & A_{n_1-1} & A_{n_1-2} & \cdots & \cdots & A_{1} \\
A_1 & A_0 & A_{n_1-1} & A_{n_1-2}&  & \vdots \\
A_2 & A_1 & \ddots &\ddots & \ddots & \vdots \\
\vdots & A_2&\ddots&\ddots&\ddots&A_{n_1-2}\\
\vdots &  & \ddots & \ddots & \ddots &A_{n_1-1} \\
A_{n_{1}-1} & \cdots & \cdots & A_{2} & A_1&A_{0}
\end{bmatrix}
\in M_{n_1 n_2}(\mathbb{C}),
\]
in which each block $A_k$ is itself a circulant matrix:
\[
A_k =
\left[ a_{(k,\ (i_2-j_2) \bmod n_2 )} \right]_{i_2, j_2=1}^{n_2}\]
\[
=
\begin{bmatrix}
a_{(k, 0)} & a_{(k, n_2-1)} & a_{(k, n_2-2)} & \cdots & \cdots & a_{(k, 1)} \\
a_{(k, 1)} & a_{(k, 0)} & a_{(k, n_2-1)} & a_{(k, n_2-2)}&  & \vdots \\
a_{(k, 2)} & a_{(k, 1)} & \ddots &\ddots & \ddots & \vdots \\
\vdots & a_{(k, 2)}&\ddots&\ddots&\ddots&a_{(k, n_2-2)}\\
\vdots &  & \ddots & \ddots & \ddots &a_{(k, n_2-1)} \\
a_{(k, n_{2}-1)} & \cdots & \cdots & a_{(k, 2)} & a_{(k, 1)}&a_{(k, 0)}
\end{bmatrix}\in M_{n_2}(\mathbb{C}).
\]
This structure is often called a \emph{block circulant matrix with circulant blocks (BCCB)}.

For $d = 3$, a 3-level circulant matrix is constructed as a block circulant matrix, where each block is itself a 2-level circulant matrix. More generally, a $d$-level circulant matrix can be defined recursively as a block circulant matrix whose blocks have $(d-1)$-level circulant structure. Just as in the classic (uni-level) case, the entire matrix is fully specified by the coefficients in its first column, or, equivalently, by those in its first row. 

A compact expression for a uni-level circulant matrix $C_n$ can be derived by using
\[
[Z_n]_{i,j} := \delta_{(i-j-1) \bmod n} =
\begin{cases}
1 & \text{if } i - j = 1 \mod{n}, \\
0 & \text{otherwise},
\end{cases}
\qquad i, j = 1, \ldots, n,
\]
where $n \in \mathbb{N}$ and $\delta$ is the Dirac delta function.
More explicitly, $Z_n$ is the matrix that has all ones on the first and $(1-n)$-th diagonals, with zeros elsewhere:
\[
Z_n =
\begin{bmatrix}
0      &        &        &        &       &   1    \\
1      & \ddots &        &        &       &        \\
       & \ddots & \ddots &        &       &        \\
       &        & \ddots & \ddots &       &        \\
       &        &        & \ddots & \ddots&        \\
       &        &        &        &1      &   0    \\
\end{bmatrix}
\in M_n(\mathbb{C}).
\]
Raising $Z_n$ to the power $k$, for $k = 1, \ldots, n-1$, cyclically shifts the diagonals by $k$ positions, with entries given by
\[
[Z_n^k]_{i,j} := \delta_{(i-j-k) \bmod n} =
\begin{cases}
1 & \text{if } i-j = k \mod{n}, \\
0 & \text{otherwise},
\end{cases}
\qquad i, j = 1, \ldots, n.
\]

More explicitly,
\[
Z_n^2 =
\begin{bmatrix}
0      &        &        &        &  1    &   0   \\
0      & \ddots &        &        &       &   1   \\
1      & \ddots & \ddots &        &       &       \\
       &  \ddots& \ddots & \ddots &       &      \\
       &        & \ddots & \ddots & \ddots&       \\
       &        &        &    1   &   0   &   0   \\
\end{bmatrix}, \qquad
Z_n^3 =
\begin{bmatrix}
0      &        &        &     1  &    0  &   0   \\
0      & \ddots &        &        &    1  &   0   \\
0      & \ddots & \ddots &        &       &   1   \\
1      & \ddots & \ddots & \ddots &       &      \\
       & \ddots & \ddots & \ddots & \ddots&       \\
       &        &  1      &    0   &   0   &   0   \\
\end{bmatrix}, \quad \cdots
\]
Hence, any uni-level circulant matrix can be expressed as a linear combination of these powers:
\begin{equation}\label{2.2} 
C_n = \sum_{k=0}^{n-1} a_k Z_n^k. 
\end{equation}
Conversely, since $Z_n^n = I_n$ and, more generally, $Z_n^k = Z_n^{k \bmod n}$ for any $k \in \mathbb{Z}$, any finite linear combination of the form
\[
\sum_{k \in K} a_k Z_n^k, \quad a_k \in \mathbb{C},\, K \subset \mathbb{Z},
\]
is a circulant matrix. For this reason, $Z_n$ is commonly referred to as the generator of circulant matrices of order $n$.

The multi-level generalization of circulant matrices is straightforward. For $\bm{n} \in \mathbb{N}^d$ and $\bm{k} \in \mathbb{Z}^d$, we define
\[
Z_{\bm{n}}^{\bm{k}} := Z_{n_1}^{k_1} \otimes Z_{n_2}^{k_2} \otimes \cdots \otimes Z_{n_d}^{k_d}.
\]
 Using this definition, any multi-level circulant matrix can be written as a linear combination of these Kronecker products:
\[
C_{\bm{n}} = \sum_{\bm{k}=\bm{0}}^{\bm{n-1}} a_{\bm{k}} Z_{\bm{n}}^{\bm{k}}.
\]
This formula is directly analogous to the uni-level case and demonstrates that, just as for classic circulant matrices, the structure of multi-level circulant matrices is fully captured by these shift matrices and the set of coefficients $a_{\bm{k}}$. 
\subsection{Diagonalization and computational efficiency of circulant matrices}
Circulant matrices hold a special place in numerical analysis because of their strong relationship with the Discrete Fourier Transform (DFT) and the Fast Fourier Transform (FFT). The DFT is an operation that maps a vector $\bm{x} \in \mathbb{C}^n$ to its product with the Fourier matrix $F_n$, i.e., $F_n \bm{x}$. The FFT is a highly efficient algorithm that computes the DFT in only $O(n \log n)$ operations, which is a substantial improvement over the $O(n^2)$ operations required for a standard matrix-vector multiplication, as discussed in \cite{vanloan1992}. The key feature that enables this efficiency is the fact that every circulant matrix can be diagonalized by the Fourier matrix. As a result, circulant matrices are particularly advantageous for both theoretical analysis and computational tasks, allowing for rapid matrix-vector products and simplified spectral analysis.

Let us begin from the definition of the Fourier matrix of order $n \in \mathbb{N}$:
\[
F_n := \frac{1}{\sqrt{n}}\left[ e^{-2\pi i \frac{rs}{n}} \right]_{r,s=0}^{n-1} \in M_n(\mathbb{C}).
\]
For convenience, this expression involves a slight abuse of notation, as the row and column indices start from $0$ rather than $1$. It is easy to verify that $F_n$ is both symmetric and unitary.

Extending the definition to multiple dimensions, the multi-level Fourier matrix for $\bm{n} \in \mathbb{N}^d$ is given by
\[
F_{\bm{n}} := F_{n_1} \otimes \cdots \otimes F_{n_d}.
\]
$F_{\bm{n}}$ inherits its symmetry and unitarity from its components $F_{n_i}$, $i = 1, \ldots, d$, by virtue of the properties of the Kronecker product.

For any multi-level circulant matrix, we have the following spectral decomposition.
\begin{theorem}\label{2.9}
For all $\bm{n} \in \mathbb{N}^d$,
\begin{equation}\label{2.3}
C_{\bm{n}} = \left[ a_{(\bm{i-j}) \bmod \bm{n}} \right]_{\bm{i,j=1}}^{\bm{n}} = F_{\bm{n}} \Lambda_{\bm{n}} F_{\bm{n}}^*, 
\end{equation}
where
\[
\Lambda_{\bm{n}} := \underset{\bm{j=0,\ldots,n-1}}{\operatorname*{diag}}\left( p\left( \frac{2\pi \bm{j}}{\bm{n}} \right) \right),
\]
and
\[
p(\bm{\theta}) := \sum_{\bm{k=0}}^{\bm{n-1}} a_{\bm{k}} e^{i \bm{k} \cdot \bm{\theta}}, \qquad \bm{k} \cdot \bm{\theta} = k_1 \theta_1 + \cdots + k_d \theta_d.
\]
\end{theorem}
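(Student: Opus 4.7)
My plan is to reduce the multilevel statement to the uni-level one via Kronecker product machinery, and reduce the uni-level one to the single question: what is the spectral decomposition of the shift $Z_n$? Once the columns of $F_n$ are identified as eigenvectors of $Z_n$, equation (\ref{2.2}) and linearity do everything else. So the proof is organized in three steps: diagonalize $Z_n$, diagonalize each $Z_{\bm{n}}^{\bm{k}}$, and sum.

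\textbf{Step 1: diagonalization of $Z_n$.} Using $0$-indexed rows and columns for readability, the $s$-th column of $F_n$ is $\bm{v}_s=\frac{1}{\sqrt n}(1,\omega^s,\omega^{2s},\ldots,\omega^{(n-1)s})^\top$ with $\omega=e^{-2\pi i/n}$. From the definition of $Z_n$ in the excerpt (the cyclic shift sending $\bm{e}_j$ to $\bm{e}_{j+1\,\mathrm{mod}\,n}$), I would compute $Z_n\bm{v}_s$ componentwise and observe that $(Z_n\bm{v}_s)_k=(\bm{v}_s)_{k-1\,\mathrm{mod}\,n}=\omega^{s(k-1)}/\sqrt n=\omega^{-s}(\bm{v}_s)_k$, where the identity $\omega^n=1$ is used to absorb the mod operation. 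Therefore $Z_n\bm{v}_s=e^{2\pi is/n}\bm{v}_s$, so that $Z_n F_n=F_n D_n$ with $D_n=\operatorname{diag}_{s=0,\ldots,n-1}(e^{2\pi is/n})$. Since $F_n$ is unitary, this gives $Z_n=F_n D_n F_n^*$, and hence $Z_n^k=F_n D_n^k F_n^*$ for every $k\in\mathbb{Z}$, where $[D_n^k]_{ss}=e^{2\pi iks/n}$.

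\textbf{Step 2: lifting to multilevel via Kronecker products.} Applying the mixed-product rule $(A\otimes B)(C\otimes D)=(AC)\otimes(BD)$ and $(A\otimes B)^*=A^*\otimes B^*$ factor by factor, I obtain
\[
Z_{\bm{n}}^{\bm{k}}=\bigotimes_{\ell=1}^{d}Z_{n_\ell}^{k_\ell}=\bigotimes_{\ell=1}^{d}\bigl(F_{n_\ell}D_{n_\ell}^{k_\ell}F_{n_\ell}^*\bigr)=F_{\bm{n}}\,D_{\bm{n}}^{(\bm{k})}\,F_{\bm{n}}^*,
\]
with $D_{\bm{n}}^{(\bm{k})}:=D_{n_1}^{k_1}\otimes\cdots\otimes D_{n_d}^{k_d}$ diagonal and entries indexed by $\bm{j}=\bm{0},\ldots,\bm{n-1}$ equal to $\prod_{\ell=1}^{d}e^{2\pi ik_\ell j_\ell/n_\ell}=e^{i\bm{k}\cdot(2\pi\bm{j}/\bm{n})}$.

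\textbf{Step 3: assembly.} Summing as in the circulant representation recalled after (\ref{2.2}),
\[
C_{\bm{n}}=\sum_{\bm{k}=\bm{0}}^{\bm{n-1}}a_{\bm{k}}Z_{\bm{n}}^{\bm{k}}=F_{\bm{n}}\Biggl(\sum_{\bm{k}=\bm{0}}^{\bm{n-1}}a_{\bm{k}}D_{\bm{n}}^{(\bm{k})}\Biggr)F_{\bm{n}}^*,
\]
and the inner matrix is diagonal with $\bm{j}$-th entry $\sum_{\bm{k}=\bm{0}}^{\bm{n-1}}a_{\bm{k}}e^{i\bm{k}\cdot(2\pi\bm{j}/\bm{n})}=p(2\pi\bm{j}/\bm{n})$, i.e.\ exactly $\Lambda_{\bm{n}}$ as defined in the statement.

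The only genuinely non-routine step is Step 1, since Steps 2 and 3 are pure Kronecker-product bookkeeping together with linearity. Even there, the main obstacle is not conceptual but notational: one must align the $0$- versus $1$-indexing used for $F_n$ and $Z_n$ and keep track of signs of the exponentials so that the formula $\Lambda_{\bm{n}}=\operatorname{diag}(p(2\pi\bm{j}/\bm{n}))$ comes out with the stated $+i\bm{k}\cdot\bm{\theta}$ in $p$. A purely computational alternative to Step 1, worth mentioning as a sanity check, is to compute $(F_n D_n^k F_n^*)_{ij}=\frac{1}{n}\sum_{r=0}^{n-1}e^{2\pi ir(k-i+j)/n}=\delta_{(i-j-k)\bmod n,0}$, which matches $[Z_n^k]_{ij}$ directly.
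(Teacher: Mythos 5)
Your proof is correct and takes exactly the route the paper describes: diagonalize the uni-level generator $Z_n$ by the Fourier matrix, deduce the diagonalization of every $Z_n^k$ and hence of $C_n$ from the representation~(\ref{2.2}), and lift to $d$ levels via the Kronecker product. You have merely filled in the details that the paper leaves as a sketch, and the computation in Step~1 (including the sign and indexing bookkeeping) checks out.
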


The proof of this theorem relies on finding a diagonalization of the uni-level generator $Z_n$, which can be obtained explicitly by computing its eigenvalues and eigenvectors. The diagonalization for $C_n$ then follows directly from equation~(\ref{2.2}), and the extension to the multi-level case is achieved by exploiting the properties of the Kronecker product.

Theorem \ref{2.9} offers numerous significant discoveries. 
 All multi-level circulant matrices are normal, as $\Lambda_{\bm{n}}$ is a diagonal matrix with complex entries. Secondly, the eigenvalues of \( C_{\bm{n}} \) are clearly defined by 
 \[ \lambda_{\bm{j}} (C_{\bm{n}}) = \sum_{\bm{k=0}}^{\bm{n-1}} a_{\bm{k}} e^{i \bm{k} \cdot \bm{\theta}_{\bm{j}}}, \quad \bm{\theta_j} := \frac{2\pi \bm{j}}{\bm{n}}, \quad \bm{j} = \bm{0}, \ldots, \bm{n} - \bm{1}, \]
 indicating that they may be calculated by computing the discrete Fourier transform (DFT) of the initial column of $C_{\bm{n}}$.  The associated eigenvectors are only the columns of the Fourier matrix $F_{\bm{n}}$. Third, circulant matrices constitute an algebra and all matrices within this algebra are uniformly diagonalized by $F_{\bm{n}}$.

 Last but not least, many computations involving circulant matrices can be significantly accelerated by using the FFT.  The computational cost of the matrix-vector product $C_{\bm{n}} \bm{x}$, where $\bm{x} \in \mathbb{C}^{N(\bm{n})}$, is reduced from the conventional $O(N(\bm{n})^2)$ operations necessary for dense matrices to only $O(N(\bm{n}) \log\\ N(\bm{n}))$ operations, by using the following approach:
\begin{enumerate}

 \item  Compute \( \bm{y} = F_{\bm{n}}^* \bm{x} \) via the FFT, in \( O(N(\bm{n}) \log N(\bm{n})) \) flops; 
 \item  Compute \( \bm{z} = \Lambda_{\bm{n}} \bm{y} \) in \( O(N(\bm{n})) \) flops; 
 \item  Compute \( F_{\bm{n}} \bm{z} \) via the FFT, in \( O(N(\bm{n}) \log N(\bm{n})) \) flops.
 \end{enumerate}
 In a similar manner, $C_{\bm{n}}^{-1} \bm{x}$ may be calculated as $F_{\bm{n}} \Lambda_{\bm{n}}^{-1} F_{\bm{n}}^* \bm{x}$.  This methodology is applicable to other fundamental matrix operations, including multiplication, inversion, solving linear problems, and computing the spectrum, rendering circulant matrices remarkably efficient and potent in real applications.
 \begin{remark}
This algorithm can be extended to accommodate general multi-level Toeplitz matrices through the use of an immersion (or embedding) technique. The central idea is to embed the original Toeplitz matrix into a larger circulant matrix, and then perform the matrix-vector multiplication using the circulant structure as described above. The required result is then obtained by appropriately extracting the relevant portion from the output. By adopting this strategy, the computational complexity of the Toeplitz matrix-vector product is also reduced to $O(N(\bm{n}) \log N(\bm{n}))$.
\end{remark}
\begin{remark}
Finally, it is important to note that each step of the process is fully parallelizable. Therefore, in a parallel model of computation, the theoretical computational cost of all the aforementioned algorithms can be further reduced to $O(\log N(\bm{n}))$.
\end{remark}
\section{\texorpdfstring{$\omega$-circulant matrices}{omega-circulant matrices}}\label{omega cir mat}
By introducing a nonzero parameter $\omega \in \mathbb{C} \setminus \{0\}$, we obtain a generalization of uni-level circulant matrices known as \emph{$\omega$-circulant matrices}. Unlike the classical circulant case, this concept is typically limited to the uni-level setting rather than being extended to multiple levels. The structure of a $\omega$-circulant matrix is specified by the following rule:
\[
\left[C^{\omega}_n\right]_{i,j} :=
\begin{cases}
a_{(i-j) \bmod n}      & \text{if } i > j, \\
\omega\, a_{(i-j) \bmod n} & \text{if } i \leq j,
\end{cases}
\qquad i, j = 1, \ldots, n.
\]
where $a_k \in \mathbb{C}$ for $k = 0, \ldots, n-1$. Here, the parameter $\omega$ acts as a scaling factor for certain matrix entries, setting these matrices apart from the standard circulant case.

Essentially, all entries above and on the main diagonal are multiplied by the factor $\omega$, resulting in the following structure:
\[
C_n^\omega =
\begin{bmatrix}
a_0 & \omega a_{n-1} & \omega a_{n-2} & \cdots & \cdots & \omega a_{1} \\
a_1 & a_0 &\omega a_{n-1} & \omega a_{n-2}&  & \vdots \\
a_2 & a_1 & \ddots &\ddots & \ddots & \vdots \\
\vdots & a_2&\ddots&\ddots&\ddots& \omega a_{n-2}\\
\vdots &  & \ddots & \ddots & \ddots & \omega a_{n-1} \\
a_{n-1} & \cdots & \cdots & a_{2} & a_1&a_{0}
\end{bmatrix}
\in M_n(\mathbb{C}).
\]
Note that $C_n^{\omega}$ still retains a Toeplitz structure, and that for $\omega = 1$ it reduces to the standard circulant matrix. To generalize equation (\ref{2.2}), we define the $\omega$-circulant generator as
\[
\left[ Z_n^{\omega} \right]_{i,j} :=
\begin{cases}
\delta_{(i-j-1) \bmod n}      & \text{if } i > j, \\
\omega \, \delta_{(i-j-1) \bmod n} & \text{if } i \leq j,
\end{cases}
\qquad i, j = 1, \ldots, n.
\]
Equivalently,
\[
\left[ Z_n^{\omega} \right]_{i,j} =
\begin{cases}
1     & \text{if } i - j = 1, \\
\omega & \text{if } i = 1 \wedge j = n, \\
0     & \text{otherwise},
\end{cases}
\qquad i, j = 1, \ldots, n.
\]
In other words, $Z_n^{\omega}$ has all ones on the first superdiagonal and $\omega$ in the top-right corner:
\[
Z_n^{\omega} =
\begin{bmatrix}
0      &        &        &        &       &   \omega   \\
1      & \ddots &        &        &       &        \\
       & \ddots & \ddots &        &       &        \\
       &        & \ddots & \ddots &       &        \\
       &        &        & \ddots & \ddots&        \\
       &        &        &        &1      &   0    \\
\end{bmatrix}
\in M_n(\mathbb{C}).
\]
In this framework, $C_n^{\omega}$ can be expressed as a linear combination of powers of $Z_n^{\omega}$:
\[
C_n^{\omega} = \sum_{k=0}^{n-1} a_k (Z_n^{\omega})^k.
\]
A diagonalization formula, paralleling equation (\ref{2.3}), naturally extends to the case of $\omega$-circulant matrices:
\[
C_n^{\omega} = F_n^{\omega} \Lambda_n^{\omega} (F_n^{\omega})^{-1},
\]
where the diagonal matrix of eigenvalues is given by
\[
\Lambda_n^{\omega} := \underset{j=0,\ldots,n-1}{\operatorname*{diag}}\left( p\left( \frac{2\pi j}{n} \right) \right)
\]
with
\[
p(\theta) := \sum_{k=0}^{n-1} \omega^{\frac{k}{n}} a_k e^{ik\theta},
\]
and the adapted Fourier matrix is defined as
\[
F_n^{\omega} := D_n^{\omega} F_n, \qquad D_n^{\omega} := \underset{k=0,\ldots,n-1}{\operatorname*{diag}}\left( \omega^{-\frac{k}{n}} \right).
\]
It should be highlighted that, as in the standard case, the eigenvalues of $C_n^{\omega}$ can once again be computed by performing the product $F_n^{\omega}\bm{c}$ with the first column $\bm{c}$ of $C_n^{\omega}$.

Extending the efficient computation of matrix-vector products to $\omega$-circulant matrices, as well as applying the other algorithms described previously, is straightforward:
\begin{enumerate}
    \item Compute $\bm{y} = (D_n^{\omega} F_n)^{-1} \bm{x} = F_n^* (D_n^{\omega})^{-1} \bm{x}$ using the FFT, which requires $O(n \log n)$ flops;
    \item Compute $\bm{z} = \Lambda_n^{\omega} \bm{y}$, in $O(n)$ flops;
    \item Compute $D_n^{\omega} F_n \bm{z}$ via the FFT, again in $O(n \log n)$ flops.
\end{enumerate}
Altogether, this sequence of operations achieves a total computational complexity of $O(n \log n)$ when performed sequentially, or $O(\log n)$ steps in a parallel setting. However, the inclusion of the parameter $\omega$ introduces potential numerical stability issues. Since $F_n$ is unitary, the condition number of the transformation $D_n^{\omega} F_n$ is determined by that of $D_n^{\omega}$. In particular, as $n \to +\infty$, the condition number of $D_n^{\omega}$ tends to $\max\{|\omega|, |\omega|^{-1}\}$, diverging to $+\infty$ if $\omega$ approaches $0$ or becomes unbounded. 
\subsection{\texorpdfstring{ Block $\omega$-circulant matrices}{Block omega-circulant matrices}}
To prepare for practical applications involving the solution of linear systems, it is useful to extend both the definition and the diagonalization procedure for $\omega$-circulant matrices to the block setting. For $m, n \in \mathbb{N}$, we define an $m$-block $\omega$-circulant matrix $C_{n,m}^{\omega} \in M_{mn}(\mathbb{C})$ as follows:
\[
\left[ C_{n,m}^{\omega} \right]_{i,j} :=
\begin{cases}
A_{(i-j) \bmod n}         & \text{if } i > j, \\
\omega\, A_{(i-j) \bmod n} & \text{if } i \leq j,
\end{cases}
\qquad i, j = 1, \ldots, n,
\]
where $A_k \in M_{m}(\mathbb{C})$ for $k = 0, \ldots, n-1$ are $n$ general (not necessarily structured) matrices. Explicitly:

\[
C_{n,m}^\omega =
\begin{bmatrix}
A_0 & \omega A_{n-1} & \omega A_{n-2} & \cdots & \cdots & \omega A_{1} \\
A_1 & A_0 &\omega A_{n-1} & \omega A_{n-2}&  & \vdots \\
A_2 & A_1 & \ddots &\ddots & \ddots & \vdots \\
\vdots & A_2&\ddots&\ddots&\ddots& \omega A_{n-2}\\
\vdots &  & \ddots & \ddots & \ddots & \omega A_{n-1} \\
A_{n-1} & \cdots & \cdots & A_{2} & A_1& A_{0}
\end{bmatrix}.
\]
A concise representation for $C_{n,m}^{\omega}$ is given by
\[
C_{n,m}^{\omega} = \sum_{j=0}^{n-1} (Z_n^{\omega})^j \otimes A_j.
\]
Moreover, $C_{n,m}^{\omega}$ can be block-diagonalized as
\[
C_{n,m}^{\omega} = F_{n,m}^{\omega} \Lambda_{n,m}^{\omega} (F_{n,m}^{\omega})^{-1},
\]
where
\[
F_{n,m}^{\omega} := F_n^{\omega} \otimes I_m,
\]
and
\[
\Lambda_{n,m}^{\omega} :=  \underset{j=0,\ldots,n-1}{\operatorname*{diag}}\left( p\left( \frac{2\pi j}{n} \right) \right)
\]
with
\[
p(\theta) := \sum_{k=0}^{n-1} \omega^{\frac{k}{n}} A_k e^{ik\theta} \in M_m(\mathbb{C}).
\]
It is important to highlight that $C_{n,m}^{\omega}$ is not a two-level $\omega$-circulant matrix, since the blocks $A_k$ are arbitrary and may not possess any special structure, and the function $p(\theta)$ is a matrix-valued function of size $m \times m$, rather than a scalar. However, if $m$ is fixed, the matrix-vector multiplication involving $C_{n,m}^{\omega}$ can still be performed efficiently in $O(n \log n)$ flops for the sequential case, or $O(\log n)$ steps in a parallel setting, with the constant hidden in the complexity depending on $m$. If $m$ is allowed to grow and the individual matrices $A_k$ can each be diagonalized efficiently for instance, in $O(m \log m)$ operations, then the overall complexity for the matrix-vector product becomes $O(mn \log(mn))$ flops for sequential computation, or $O(\log(mn))$ steps in parallel, taking both block size and block count into account.
\section{\texorpdfstring{$\tau$}{tau} matrices}\label{tau-mat}
A uni-level $\tau$ matrix is defined as a square matrix $S_n \in M_n(\mathbb{R})$ whose entries $s_{i,j}$ satisfy the so-called cross-sum condition:
\[
s_{i-1,j} + s_{i+1,j} = s_{i,j-1} + s_{i,j+1}, \qquad i, j = 1, \ldots, n,
\]
where it is understood that $s_{n+1,j} = s_{i,n+1} = s_{0,j} = s_{i,0} = 0$. This condition ensures that $S_n$ is centrosymmetric, meaning it is symmetric with respect to the center of the matrix. In particular, $S_n$ exhibits both symmetry with respect to the main diagonal and persymmetry with respect to the main anti-diagonal. Furthermore, the elements in the first row or column of $S_n$ are sufficient to generate all other entries of the matrix. For further details and additional properties, see~\cite{BiniCapovani1983}.

Much like circulant matrices, there is a generator for the set of $\tau$ matrices, defined by
\[
[W_n]_{i,j} := \delta_{|i-j|-1} =
\begin{cases}
1 & \text{if } i - j = \pm 1, \\
0 & \text{otherwise},
\end{cases}
\qquad i, j = 1, \ldots, n,
\]
or, in explicit matrix form
\[
W_n =
\begin{bmatrix}
0      &     1   &        &        &       &       \\
1      & 0 &  1      &        &       &        \\
       & 1 & \ddots &   \ddots     &       &        \\
       &        & \ddots & \ddots & \ddots      &        \\
       &        &        & \ddots & \ddots& 1       \\
       &        &        &        &1      &   0    \\
\end{bmatrix}
\in M_n(\mathbb{R}).
\]
The $\tau$ algebra is defined as the set of matrices generated by $W_n$, meaning that every $\tau$ matrix can be represented as a linear combination of the powers $W_n^0 = I_n$, $W_n$, $W_n^2$, $\ldots$, $W_n^{n-1}$. As is characteristic of matrix algebras, this set is closed under inversion, a property that follows from the Hamilton–Cayley theorem~\cite{Bhatia1997}. Furthermore, for any $n, k \in \mathbb{N}$, the matrix $W_n^k$ is symmetric and is invertible if and only if $n$ is even.

This construction can be extended to the multi-level setting by defining the multi-level generators for $\bm{n} \in \mathbb{N}^d$ as
\begin{equation}
\begin{aligned}
W_{\bm{n}}^{(1)} &= W_{n_{1}} \otimes \left( \bigotimes_{l=2}^{d} I_{n_{l}} \right), \\[6pt]
W_{\bm{n}}^{(2)} &= I_{n_{1}} \otimes W_{n}^{(2)} \otimes \left( \bigotimes_{l=3}^{d} I_{n_{l}} \right), \\[6pt]
&\;\;\vdots \\[6pt]
W_{\bm{n}}^{(d)} &= \left( \bigotimes_{l=1}^{d-1} I_{n_{l}} \right) \otimes W_{n}^{(d)}.
\end{aligned}
\end{equation}

Matrices $S_{\bm{n}}$ that belong to the multi-level $\tau$ algebra are referred to as multi-level $\tau$ matrices.

\subsection{Spectral properties and the discrete sine transform}
Similar to the approach used for circulant matrices, it is also possible to diagonalize $\tau$ matrices by leveraging a closely related fast transform. The Discrete Sine Transform (DST) associates to each vector $\bm{x} \in \mathbb{C}^n$ the product $Q_n \bm{x}$, where the sine matrix $Q_n$ is defined as
\[
Q_n := \sqrt{\frac{2}{n + 1}} \left[ \sin \left( \frac{r s \pi}{n + 1} \right) \right]_{r,s = 0}^{n-1}.
\]
This transformation is intimately connected to the Fast Sine Transform, which enables the DST to be computed efficiently in $O(n \log n)$ real operations—roughly half the computational cost of the traditional Fast Fourier Transform (FFT)~\cite{vanloan1992}. The matrix $Q_n$ possesses several notable properties: it is real, symmetric, and orthogonal.

The construction extends naturally to the multi-level case: for $\bm{n} \in \mathbb{N}^d$, we define the multi-level sine matrix as
\[
Q_{\bm{n}} := Q_{n_1} \otimes \cdots \otimes Q_{n_d},
\]
and the $d$-level DST is given by the product $Q_n \bm{x}$, where $\bm{x} \in \mathbb{C}^{N(\bm{n})}$. By properties of the Kronecker product, $Q_{\bm{n}}$ remains real, symmetric, and orthogonal, so that $Q_{\bm{n}} = Q_{\bm{n}}^{\top} = Q_{\bm{n}}^{-1}$. This structure allows us to derive a spectral decomposition for multi-level $\tau$ matrices, analogous to what is obtained for circulant matrices.
\begin{theorem}
For all $\bm{n} \in \mathbb{N}^d$, it holds that
\[
S_{\bm{n}} = Q_{\bm{n}} \Lambda_{\bm{n}} Q_{\bm{n}},
\]
where the diagonal entries of $\Lambda_{\bm{n}}$ are given by the $d$-level Discrete Sine Transform (DST) of the first column (or, equivalently, the first row) of $S_{\bm{n}}$.
\end{theorem}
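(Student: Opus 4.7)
The overarching plan is to first diagonalize the uni-level generator $W_n$ by $Q_n$ through an explicit trigonometric computation, then extend automatically to every uni-level $\tau$ matrix by using the fact that the $\tau$ algebra is generated by $W_n$, and finally lift the result to the multi-level setting by exploiting the Kronecker factorization of $Q_{\bm{n}}$ together with the mutual commutativity of the generators $W_{\bm{n}}^{(1)}, \ldots, W_{\bm{n}}^{(d)}$.

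First, I would verify that for each $k \in \{1, \ldots, n\}$ the $k$-th column $\bm{q}_k$ of $Q_n$, whose entries are proportional to $\sin(rk\pi/(n+1))$ for $r = 1, \ldots, n$, is an eigenvector of $W_n$ with eigenvalue $2\cos(k\pi/(n+1))$. Because $W_n$ is tridiagonal with unit sub- and super-diagonals, the $r$-th entry of $W_n \bm{q}_k$ is a sum of two sines which collapses, via the identity $\sin((r-1)\alpha) + \sin((r+1)\alpha) = 2\cos(\alpha)\sin(r\alpha)$, to the required multiple of $\sin(rk\pi/(n+1))$. The ``phantom'' boundary entries at positions $0$ and $n+1$ vanish automatically, since $\sin(0) = \sin(k\pi) = 0$ for integer $k$. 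Combined with the orthogonality and symmetry of $Q_n$ already stated in the excerpt, this produces the decomposition $W_n = Q_n D_n Q_n$ with $D_n$ diagonal whose $k$-th entry is $2\cos(k\pi/(n+1))$.

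By the description of the $\tau$ algebra given in the excerpt, every uni-level $\tau$ matrix $S_n$ can be written as a polynomial $p(W_n)$ of degree at most $n-1$, hence $S_n = Q_n\, p(D_n)\, Q_n = Q_n \Lambda_n Q_n$. For the multi-level case, the mixed-product property of the Kronecker product yields $W_{\bm{n}}^{(l)} = Q_{\bm{n}} D_{\bm{n}}^{(l)} Q_{\bm{n}}$ for a suitable diagonal $D_{\bm{n}}^{(l)}$, and the generators $W_{\bm{n}}^{(1)}, \ldots, W_{\bm{n}}^{(d)}$ pairwise commute because their Kronecker factors do. A multi-level $\tau$ matrix $S_{\bm{n}}$, being a polynomial in these commuting generators, is therefore simultaneously diagonalized as $S_{\bm{n}} = Q_{\bm{n}} \Lambda_{\bm{n}} Q_{\bm{n}}$.

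To identify the diagonal of $\Lambda_{\bm{n}}$ with the $d$-level DST of the first column $\bm{s} := S_{\bm{n}} \bm{e}_{\bm{1}}$, I would apply $Q_{\bm{n}}$ on the left to the identity $S_{\bm{n}} \bm{e}_{\bm{1}} = Q_{\bm{n}} \Lambda_{\bm{n}} Q_{\bm{n}} \bm{e}_{\bm{1}}$ and use $Q_{\bm{n}}^2 = I$ to obtain $Q_{\bm{n}} \bm{s} = \Lambda_{\bm{n}} (Q_{\bm{n}} \bm{e}_{\bm{1}})$. Since $Q_{\bm{n}} \bm{e}_{\bm{1}}$ is a Kronecker product of vectors with strictly positive entries $\sqrt{2/(n_l+1)}\, \sin(j_l \pi/(n_l+1))$, $j_l \in \{1,\ldots,n_l\}$, the eigenvalues can be read off entrywise from the (suitably normalized) DST of $\bm{s}$. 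The equivalent statement in terms of the first row follows from the symmetry of $\tau$ matrices. The main obstacle is the base-case eigenvector verification together with careful handling of the boundary indices; once the sum-to-product identity has been exploited there, the remainder is purely algebraic, resting on polynomial functional calculus and tensor-product bookkeeping.
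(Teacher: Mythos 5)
Your proposal takes essentially the same route as the paper sketches: diagonalize the generator $W_n$ by the DST matrix $Q_n$ via the sum-to-product identity, extend to all uni-level $\tau$ matrices by polynomial functional calculus, and lift to $d$ levels through the Kronecker factorization of $Q_{\bm n}$ and the pairwise commutativity of the generators. One small imprecision in the final step: from $Q_{\bm n}\bm{s} = \Lambda_{\bm n}\,Q_{\bm n}\bm{e}_{\bm 1}$ the diagonal of $\Lambda_{\bm n}$ is obtained by a \emph{componentwise} division of $Q_{\bm n}\bm{s}$ by the (strictly positive) entries of $Q_{\bm n}\bm{e}_{\bm 1}$ — this is not a scalar renormalization as the phrase ``suitably normalized'' might suggest, since unlike the circulant case where $F_{\bm n}\bm{e}_{\bm 1}$ has constant entries, here $Q_{\bm n}\bm{e}_{\bm 1}$ does not; making this explicit would sharpen the identification with the DST of the first column.
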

All observations made after Theorem~\ref{2.9} for circulant matrices carry over to this setting, with appropriate modifications. In particular, every multi-level $\tau$ matrix is real and symmetric, and its eigenvalues are explicitly determined by applying the DST to its first column. The associated eigenvectors are given by the columns of $Q_{\bm{n}}$. All key matrix operations such as multiplication, inversion, solving linear systems, and computing the spectrum can be executed with a computational cost of $O(N(\bm{n}) \log N(\bm{n}))$ sequentially, or $O(\log N(\bm{n}))$ steps in a parallel computation model.
\section{Hankel matrices}\label{hank matr}
Given $\bm{n} \in \mathbb{N}^d$, a matrix of the form
\[
H_{\bm{n}} := \left[ a_{\bm{i}+\bm{j}} \right]_{\bm{i},\bm{j}=\bm{1}}^{\bm{n}}
\in M_{N(\bm{n})}(\mathbb{C}),
\]
where the entry in position $(\bm{i}, \bm{j})$ depends only on the sum $\bm{i} + \bm{j}$, is known as a multi-level (or $d$-level) Hankel matrix.

In practice, such a matrix exhibits a mirrored Toeplitz structure. For example, in the uni-level case with $d=1$ and $\bm{n}=n$, a 1-level (or standard) Hankel matrix is characterized by entries that remain constant along each anti-diagonal:
\[
H_n =
\big[ a_{i+j} \big]_{i,j=1}^{n}
=
\begin{bmatrix}
a_2 & a_3 & a_4 & \cdots & a_{n+1} \\
a_3 & a_4 &  & \reflectbox{$\ddots$} & \vdots \\
a_4 &  & \reflectbox{$\ddots$} &  & a_{2n-2} \\
\vdots & \reflectbox{$\ddots$} &  & a_{2n-2} & a_{2n-1} \\
a_{n+1} & \cdots & a_{2n-2} & a_{2n-1} & a_{2n}
\end{bmatrix}
\in M_n(\mathbb{C}).
\]
Just like Toeplitz matrices, $H_n$ is completely determined by its $2n-1$ coefficients $a_k$ for $k=2,\ldots,2n$, or, equivalently, by its first row and last column.

For $d=2$ and $\bm{n} = (n_1, n_2)$, a 2-level Hankel matrix takes the form
\[
H_{\bm{n}} =
\big[ a_{\bm{i}+\bm{j}} \big]_{\bm{i,j=1}}^{\bm{n}}
=
\bigg[ \left[a_{(i_1+j_1,\, i_2+j_2)} \right]_{i_2,j_2=1}^{n_2}\bigg]_{i_1,j_1=1}^{n_1}
=
\big[ A_{i_1+j_1} \big]_{i_1,j_1=1}^{n_1}
\]
\[
=
\begin{bmatrix}
A_2 & A_3 & A_4 & \cdots & A_{n+1} \\
A_3 & A_4 &  & \reflectbox{$\ddots$} & \vdots \\
A_4 &  & \reflectbox{$\ddots$} &  & A_{2n-2} \\
\vdots & \reflectbox{$\ddots$} &  & A_{2n-2} & A_{2n-1} \\
A_{n+1} & \cdots & A_{2n-2} & A_{2n-1} & A_{2n}
\end{bmatrix}
\in M_{n_{1}n_{2}}(\mathbb{C}).
\]
where each block $A_k$ is itself a Hankel matrix,\\
\[A_k =
\big[ a_{(k,\, i_2+j_2)} \big]_{i_2,j_2=1}^{n_2}\]

\[
=
\begin{bmatrix}
a_{(k,2)} & a_{(k,3)} & a_{(k,4)} & \cdots & a_{(k,n+1)} \\
a_{(k,3)} & a_{(k,4)} &  & \reflectbox{$\ddots$} & \vdots \\
a_{(k,4)} &  & \reflectbox{$\ddots$} &  & a_{(k,2n-2)} \\
\vdots & \reflectbox{$\ddots$} &  & a_{(k,2n-2)} & a_{(k,2n-1)} \\
a_{(k,n+1)} & \cdots & a_{(k,2n-2)} & a_{(k,2n-1)} & a_{(k,2n)}
\end{bmatrix}
\in M_{n_2}(\mathbb{C}).
\]
The general $d$-level Hankel matrix can be regarded as a block Hankel matrix whose blocks themselves possess $(d-1)$-level Hankel structure. Such a matrix is entirely determined by the coefficients in its first row and last column.

To obtain a compact representation of $H_{\bm{n}}$, we introduce, for $n \in \mathbb{N}$ and $k \in \mathbb{Z}$, the matrix
\[
\big[ K_n^{(k)} \big]_{i,j} := \delta_{i+j - k} =
\begin{cases}
1 & \text{if } i + j = k, \\
0 & \text{otherwise},
\end{cases}
\qquad i, j = 1, \ldots, n,
\]
which places ones on the $(k-1)$-th anti-diagonal and zeros elsewhere.

The multi-level generalization for $\bm{n} \in \mathbb{N}^d$ and $\bm{k} \in \mathbb{Z}^d$ is given by
\[
K_{\bm{n}}^{(\bm{k})} := K_{n_1}^{(k_1)} \otimes K_{n_2}^{(k_2)} \otimes \cdots \otimes K_{n_d}^{(k_d)}.
\]
This leads to the following compact expression for $H_{\bm{n}}$:

\begin{equation}\label{2.4}
H_{\bm{n}} = \left[ a_{\bm{i}+\bm{j}} \right]_{\bm{i},\bm{j}=\bm{1}}^{\bm{n}}
= \sum_{\bm{k}=\bm{2}}^{2\bm{n}} a_{\bm{k}} K_{\bm{n}}^{(\bm{k})}. 
\end{equation}
As with the Toeplitz and circulant cases, this representation can be verified directly by checking individual matrix entries.

\chapter{GLT structures in matrix-sequences}\label{GLT}
We are now ready to discuss the main ideas of GLT matrix-sequences. This is an important theory for understanding how structured matrices behave, especially those that come from solving differential equations. 

In this thesis, we consider the general case where both $r, d \geq 1$. While the case of scalar-valued GLT symbols is presented in detail in \cite{garoni2017, garoni2018}, here we focus on matrix-valued GLT symbols, following the more general framework extensively developed in \cite{barbarino2020block1d, barbarino2020blockmulti}. The existence of two books and two extensive research papers on this subject demonstrates the depth and interest of this field of study. For any choice of positive integers $d$ (which tells how many levels of structure we have) and $r$ (which is the size of the basic matrix block), there is a class of GLT matrix-sequences with this $d$-level, $r \times r$ block structure. If we fix $d$ and $r$, the symbols in this theory are measurable functions $\kappa : [0, 1]^d \times [-\pi, \pi]^d \rightarrow M_r(\mathbb{C})$. There are still many open questions, and researchers have also studied other types of GLT sequences, such as the reduced and rectangular cases \cite{Barbarino2022, Barbarino2022RectangularGLT}.

Rather than exploring the full constructive development of the multilevel GLT class, we will focus on the principal operative properties that completely characterize it and are most relevant for practical applications. These properties, commonly known as “GLT axioms”, are presented and labeled as \textbf{GLT No}, in Section~\ref{ast algebra}.

\section{The GLT class}\label{GLT Clas}
Let us first review some basic aspects to better understand GLT sequences. The theory of LT sequences provides a powerful framework for analyzing the asymptotic singular value and eigenvalue distributions of structured matrix-sequences that arise in numerical analysis. In the context of partial differential equation (PDE) discretization, the matrices resulting from numerical approximations often fail to retain the classical Toeplitz structure due to the presence of variable coefficients. Traditionally, Toeplitz matrices have played a central role in problems involving translation-invariant differential operators, where constant coefficients ensure a globally repeating structure. Instead, the resulting matrices exhibit a Toeplitz-like structure that varies smoothly along the diagonals as the matrix size increases, leading to the broader class of LT sequences \cite{Tilli1998, Tyrtyshnikov1996}.\\
A classical Toeplitz matrix generated by a function $f: [-\pi, \pi]^d \to \mathbb{C}^{r \times r}$ belonging to $L^1([-\pi, \pi]^d)$, where the constant diagonal entries correspond to the Fourier coefficients of $f$. However, for differential operators with non-constant coefficients, the resulting matrices do not maintain this global Toeplitz structure but instead belong to the LT class. Specifically, an LT matrix $A_{\bm{n}}$ is obtained by modulating the entries of a Toeplitz matrix using an almost everywhere (a.e.) continuous function $a(x): [0,1]^d \to \mathbb{C}^{r \times r}$, scaling the entries along the same diagonal
$ a\left(\frac{\bm{i}}{\bm{n}}\right)$ for $\bm{i}= \bm{1},\dots,\bm{n}.$ As $\bm{n} \to \infty$, the differences between consecutive diagonal elements vanish due to continuity, making the structure asymptotically Toeplitz-like. More precisely, this structure can be represented as the product of a Toeplitz matrix and a diagonal matrix, where the diagonal entries correspond to evaluations of the function $a(x)$, such that $A_{\bm{n}} = D_{\bm{n}}(a)T_{\bm{n}},$
where $D_{\bm{n}}(a)$ is a diagonal matrix whose entries are given by $D_{\bm{n}}(a) = \mathrm{diag}\left(a\left(\frac{\bm{i}}{\bm{n}}\right)\right)_{\bm{i}=\bm{1}}^{\bm{n}}.$
This formulation allows LT sequences to accurately capture local variations in variable-coefficient differential operators while preserving the structural properties of Toeplitz matrices. Formally, an LT sequence $\{A_{\bm{n}}\}_{\bm{n}}$ can be expressed as the tensor product of the weight function $a(x)$ and the generating function $f(\theta)$, leading to the LT symbol $
a \otimes f: [0,1]^d \times [-\pi, \pi]^d \to \mathbb{C}^{r \times r}$.

Building on this concept, a GLT matrix-sequence is defined as the limit, in the sense of the approximating class of sequences (a.c.s.), as described in Definition~\ref{def:acs}, of a finite sum of LT sequences. The construction of GLT sequences was formalized through a.c.s. limits in \cite{SerraCapizzano2003} and refined to a \textbf{d-level $*$-algebra of structured matrix-sequences} satisfying specific axioms, listed in \cite[Chapter~6, pp.~118--120]{garoni2018}. The theory of GLT sequences provides a fundamental framework for analyzing structured matrix-sequences arising from discretizations of differential equations. 
\section{Matrix-sequences with explicit or hidden (asymptotic) structure}
In this section, we introduce three types of matrix structures that serve as the fundamental building blocks of the GLT $*$-algebras. Specifically, for any positive integers $d$ and $r$, we consider the set of $d$-level $r$-block GLT matrix-sequences. This set forms a $*$-algebra of matrix-sequences, which is both maximal and isometrically equivalent to the maximal $*$-algebra of $2d$-variate $r \times r$ matrix-valued measurable functions (with respect to the Lebesgue measure) that are naturally defined over $[0,1]^d \times [-\pi,\pi]^d$; see~\cite{barbarino2020block1d, barbarino2020blockmulti, garoni2017, garoni2018} and references therein.

The reduced version of this structure plays a crucial role in approximating integro-differential operators, including their fractional versions, particularly when defined over general (non-Cartesian) domains. This concept was initially introduced in~\cite{SerraCapizzano2003,SerraCapizzano2006} and later extensively developed in~\cite{Barbarino2022}, where GLT symbols are again defined as measurable functions over $\Omega \times [-\pi, \pi]^d$, with $\Omega$ being Peano-Jordan measurable and contained within $[0,1]^d$. Additionally, the reduced versions also form maximal $*$-algebras that are isometrically equivalent to their corresponding maximal $*$-algebras of measurable functions.

These GLT $*$-algebras provide a rich framework of hidden (asymptotic) structures, built upon three fundamental classes of explicit algebraic structures: $d$-level $r$-block Toeplitz matrix-sequences and sampling diagonal matrix-sequences (discussed in Sections~\ref{blcktop} and~\ref{blckdiag}), along with asymptotic structures described by zero-distributed matrix-sequences (see Section~\ref{zero dist}). Notably, the latter class serves an analogous role to compact operators in relation to bounded linear operators, forming a two-sided ideal of matrix-sequences within any of the GLT $*$-algebras.

\section{The Generators of the GLT algebras}
For any given $r,d\geq1$, there exists the $*$-algebra of $d$-level $r$-block GLT matrix-sequences. In the following, we describe the generators.
\subsection{Zero-distributed sequences}\label{zero dist}

Zero-distributed sequences are defined as matrix-sequences $\{A_n\}_n$ such that $\{A_n\}_n \sim_\sigma 0$.
Note that, for any $r \geq 1$, $\{A_n\}_n \sim_\sigma 0$ is equivalent to $\{A_n\}_n \sim_\sigma O_r$, where $O_r$ is the $r \times r$ zero matrix.
The following theorem, taken from~\cite{SerraCapizzano2001,garoni2017}, provides a useful characterization for detecting this type of sequence.

\begin{theorem}\label{th 3.1}
Let $\{A_n\}_n$ be a matrix-sequence, with $A_n$ of size $d_n$, and let $p \in [1,\infty]$, with $\|X\|_p$ being the Schatten $p$-norm of $X$, that is, the $\ell^p$ norm of the vector of its singular values. Let $\|\cdot\| = \|\cdot\|_\infty$ be the spectral norm. With the natural convention $1/\infty = 0$, we have:
\begin{itemize}
    \item $\{A_n\}_n \sim_\sigma 0$ if and only if $A_n = R_n + N_n$ with $\mathrm{rank}(R_n)/d_n \to 0$ and $\|N_n\| \to 0$ as $n \to \infty$;
    \item $\{A_n\}_n \sim_\sigma 0$ if there exists $p \in [1,\infty]$ such that
    \[
        \frac{\|A_n\|_p}{(d_n)^{1/p}} \to 0 \quad \text{as } n \to \infty.
    \]
\end{itemize}
\end{theorem}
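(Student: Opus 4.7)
The plan is to prove the two bullets in sequence, reducing the Schatten-norm criterion to an intermediate pointwise characterization that emerges naturally while proving the decomposition statement. For the easy direction of the first bullet (decomposition implies zero-distribution), I would fix $F \in C_c(\mathbb{R})$, set $M := \|F\|_\infty$, and apply Weyl's inequality for singular values to obtain $\sigma_{k_n+\ell}(A_n) = \sigma_{k_n+\ell}(R_n + N_n) \leq \sigma_{k_n+1}(R_n) + \sigma_\ell(N_n) \leq \|N_n\|$ for every $\ell \geq 1$, where $k_n := \mathrm{rank}(R_n)$. Splitting the average $\frac{1}{d_n}\sum_{i=1}^{d_n} F(\sigma_i(A_n))$ at index $k_n$, the first $k_n$ terms contribute at most $M\, k_n/d_n \to 0$, while the remaining $d_n - k_n$ terms evaluate $F$ at arguments that tend uniformly to $0$; by continuity of $F$, the whole average converges to $F(0)$.

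For the harder direction, the plan is to establish first the intermediate claim
\[
\{A_n\}_n \sim_\sigma 0 \iff \forall \epsilon > 0,\quad \frac{\#\{i : \sigma_i(A_n) > \epsilon\}}{d_n} \to 0,
\]
from which the decomposition follows by a diagonal extraction of $\epsilon_n \to 0$ such that $\#\{i : \sigma_i(A_n) > \epsilon_n\}/d_n \to 0$, and then splitting the singular value decomposition $A_n = U_n \Sigma_n V_n^*$ according to whether $\sigma_i(A_n) > \epsilon_n$ or not, yielding $A_n = R_n + N_n$ with $\mathrm{rank}(R_n)/d_n \to 0$ and $\|N_n\| \leq \epsilon_n \to 0$. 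The nontrivial implication in the claim is proven by contradiction. If, along some subsequence, the density of singular values exceeding $\epsilon$ stays above $\delta > 0$, I split into two cases. In the bounded case, a density at least $\delta/2$ falls in some interval $[\epsilon, M]$; testing against $F \in C_c(\mathbb{R})$ with $F(0) = 0$ and $F \equiv 1$ on $[\epsilon, M]$ forces $\frac{1}{d_n}\sum F(\sigma_i(A_n)) \geq \delta/2$, contradicting the limit $F(0) = 0$. In the unbounded case, a density at least $\delta/2$ exceeds every $M$; the complementary bump $F \in C_c(\mathbb{R})$ with $F(0) = 1$, $0 \leq F \leq 1$ and $\mathrm{supp}(F) \subseteq [-M, M]$ gives $\frac{1}{d_n}\sum F(\sigma_i(A_n)) \leq 1 - \delta/2 < F(0)$, a contradiction again.

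For the second bullet, I would apply a Markov-type bound: for $p \in [1,\infty)$,
\[
\frac{\#\{i : \sigma_i(A_n) > \epsilon\}}{d_n} \leq \frac{1}{\epsilon^p \, d_n}\sum_{i=1}^{d_n}\sigma_i(A_n)^p = \frac{1}{\epsilon^p}\left(\frac{\|A_n\|_p}{d_n^{1/p}}\right)^{\!p} \longrightarrow 0,
\]
so the intermediate characterization of the first bullet delivers $\{A_n\}_n \sim_\sigma 0$. For $p = \infty$, the convention $1/\infty = 0$ turns the hypothesis into $\|A_n\| \to 0$, and the trivial decomposition $R_n = 0$, $N_n = A_n$ triggers the already-proven easy direction of the first bullet.

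The main obstacle I expect is the forward direction of the intermediate claim, specifically ruling out a positive-density accumulation of \emph{unbounded} singular values. Test functions in $C_c(\mathbb{R})$ cannot ``see'' arbitrarily large singular values directly, so the straightforward one-sided test function used in the bounded case is insufficient on its own. The resolution is the complementary bump argument sketched above, which exploits $F(0) \neq 0$ to convert runaway singular values into a deficit of the averaged sum relative to $F(0)$. Once this subtlety is handled, the remaining ingredients (Weyl's inequality, SVD truncation, diagonal extraction, Markov) are entirely routine.
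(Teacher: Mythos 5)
Note first that the paper you are working from does not actually prove this statement; it imports it with a citation to the literature (\cite{SerraCapizzano2001,garoni2017}), so there is no in-text proof to compare against. That said, your argument is correct and is the standard route taken in those references: the equivalence with the condition ``for every $\epsilon>0$ the fraction of singular values exceeding $\epsilon$ tends to $0$'' (weak clustering at zero), together with Weyl's singular-value inequality for the easy direction of the first bullet, diagonal extraction plus SVD truncation for the hard direction, and a Markov--Chebyshev bound for the Schatten-norm criterion of the second bullet.

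One simplification worth flagging: the bounded/unbounded case split inside your intermediate claim is unnecessary, and its statement (``a density at least $\delta/2$ exceeds every $M$'') is not quite the negation of the bounded case anyway. A single test function $F\in C_c(\mathbb{R})$ with $F(0)=1$, $0\le F\le 1$ and $\operatorname{supp}F\subseteq(-\epsilon,\epsilon)$ already handles everything at once, because
\[
\frac{1}{d_n}\sum_{i=1}^{d_n} F\bigl(\sigma_i(A_n)\bigr)\;\le\; 1-\frac{\#\{\,i:\sigma_i(A_n)>\epsilon\,\}}{d_n},
\]
and under $\{A_n\}_n\sim_\sigma 0$ the left-hand side must tend to $F(0)=1$, forcing the right-hand fraction to $0$ irrespective of whether the offending singular values are bounded. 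The ``complementary bump'' you invoke in your unbounded branch is precisely this argument and does not actually rely on the singular values running away; your first branch (the $F\equiv 1$ on $[\epsilon,M]$ construction) can therefore be deleted. With that clean-up the proof is both correct and tight.
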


As in Section \ref{acs}, the same definition can be given and the corresponding result (with obvious changes) holds when the involved matrix-sequences show a multilevel structure. In that case, $n$ is replaced by $\bm{n}$ uniformly in $A_n$, $N_n$, $R_n$, $d_n$.

Further results connecting Schatten $p$-norms, unitarily invariant norms (see~\cite{bhatia2007}), variational characterization, and Toeplitz/Hankel structures can be found in~\cite{SerraCapizzanoTilli-LPO}.\\
\begin{remark}
The multilevel block Toeplitz matrix defined below is a class of GLT matrix-sequences with a $d$-level, $r \times r$ block structure. This generalization extends the Toeplitz and BTTB cases discussed in Section \ref{TM} and naturally includes the possibility of block entries at each level.
\end{remark}
\subsection{Multilevel block Toeplitz matrices}\label{blcktop}

Given $\bm{n} \in \mathbb{N}^d$, a matrix of the form
\[
[A_{\bm{i}-\bm{j}}]_{\bm{i},\bm{j}=1}^{\bm{n}} \in M_{N(\bm{n})r} \mathbb{(C)},
\]
with blocks $A_{\bm{k}} \in M_{r \times r}\mathbb{(C)}$, $\bm{k} \in \{- (\bm{n} - 1), \ldots, \bm{n} - 1\}$, is called a \textit{multilevel block Toeplitz matrix}, or, more precisely, a $d$-level $r$-block Toeplitz matrix.

Given a matrix-valued function $f : [ -\pi, \pi ]^d \to \mathbb{C}^{r \times r}$ belonging to $L^1( [ -\pi, \pi ]^d )$, the $\bm{n}$-th Toeplitz matrix associated with $f$ is defined as
\[
T_{\bm{n}}(f) := [ \hat{f}_{\bm{i}-\bm{j}} ]_{\bm{i},\bm{j}=1}^{\bm{n}} \in M_{N(\bm{n})r}\mathbb{(C)},
\]
where
\[
\hat{f}_{\bm{k}} = \frac{1}{(2\pi)^d} \int_{ [ -\pi, \pi ]^d } f(\theta) e^{- i (\bm{k} \cdot \bm{\theta})} d\bm{\theta} \in \mathbb{C}^{r \times r}, \quad \bm{k} \in \mathbb{Z}^d,
\]
are the Fourier coefficients of $f$.

The family $\{ T_{\bm{n}}(f) \}_{ \bm{n} \in \mathbb{N}^d }$ is called the family of (multilevel block) Toeplitz matrices associated with $f$, which is referred to as the \emph{generating function}.
\subsection{Block diagonal sampling matrices}\label{blckdiag}
Given $d \geq 1$, $\bm{n} \in \mathbb{N}^d$ and a function $a : [0,1]^d \to \mathbb{C}^{r \times r}$, we define the multilevel block diagonal sampling matrix $D_{\bm{n}}(a)$ as the block diagonal matrix
\[
D_n(a) = \underset{\bm{i}=1,\ldots,\bm{n}}{\operatorname*{diag}}\left( a\left(\frac{\bm{i}}{\bm{n}}\right) \right) \in M_{N(\bm{n})r}\mathbb{(C)}.
\]
Essentially, $D_{\bm{n}}(a)$ is a diagonal matrix whose entries are the evaluations of $a(x)$ at the points of a uniform mesh over the domain $[0, 1]^d$, with the points arranged in lexicographical order. 
\section{The \texorpdfstring{$\ast$}{*}-algebra of \texorpdfstring{$d$}{d}-level \texorpdfstring{$r$}{r}-block GLT matrix-sequences}\label{ast algebra}
Let $r \geq 1$ be a fixed integer. A multilevel $r$-block GLT sequence, or simply a GLT sequence if $r$ does not need to be specified, is a special multilevel $r$-block matrix-sequence equipped with a measurable function $\kappa : [0, 1]^d \times [-\pi, \pi]^d \to \mathbb{C}^{r \times r}$, $d \geq 1$, called the \emph{symbol}. It is formally expressed as
\[
\{A_n\}_n \sim_{\mathrm{GLT}} \kappa.
\]
We write to denote that $\{A_n\}_n$ is a GLT sequence with symbol $\kappa$. A fundamental property of the GLT symbol is that it is unique: if two symbols are associated with the same matrix-sequence, then they must coincide almost everywhere as a direct consequence of Axiom \textbf{GLT 3}, second item.
\begin{proposition}
If $\{A_{\bm{n}}\}_{\bm{n}} \sim_{\mathrm{GLT}} \kappa$ and $\{A_{\bm{n}}\}_{\bm{n}} \sim_{\mathrm{GLT}} \xi$, then $
\kappa=\xi$ \text{a.e.}
\end{proposition}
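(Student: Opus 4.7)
The plan is to reduce the uniqueness statement to the fact that the identically zero matrix-sequence can only be singular-value-distributed by the zero symbol. This reduction uses two structural ingredients already in force in the excerpt: the $*$-algebra property of the $d$-level $r$-block GLT class (the difference of two GLT sequences is GLT with symbol equal to the difference of the symbols) and the axiom asserting that a GLT symbol is also a singular value symbol, that is, $\{A_{\bm n}\}_{\bm n} \sim_{\mathrm{GLT}} \kappa \Rightarrow \{A_{\bm n}\}_{\bm n} \sim_\sigma \kappa$.

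Concretely, from $\{A_{\bm n}\}_{\bm n} \sim_{\mathrm{GLT}} \kappa$ and $\{A_{\bm n}\}_{\bm n} \sim_{\mathrm{GLT}} \xi$ the $*$-algebra axiom yields
$$\{A_{\bm n} - A_{\bm n}\}_{\bm n} = \{0_{N(\bm n) r}\}_{\bm n} \sim_{\mathrm{GLT}} \kappa - \xi,$$
and therefore $\{0_{N(\bm n) r}\}_{\bm n} \sim_\sigma \kappa - \xi$. Since every singular value of the zero matrix is $0$, Definition~\ref{99} applied on $D = [0,1]^d \times [-\pi,\pi]^d$ forces
$$F(0) = \frac{1}{\mu_{2d}(D)} \int_{D} \frac{1}{r} \sum_{i=1}^{r} F\bigl(\sigma_i\bigl((\kappa-\xi)(\bm x, \bm\theta)\bigr)\bigr)\, d\bm x\, d\bm\theta, \qquad \forall F \in C_c(\mathbb{R}).$$
Choosing any nonnegative $F \in C_c(\mathbb{R})$ with $F(0)=0$ and $F(t)>0$ for $0 < |t| \leq M$, the left-hand side vanishes and the nonnegative integrand must therefore vanish almost everywhere on the subset of $D$ where all singular values of $\kappa - \xi$ lie in $[0, M]$. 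Since a matrix with all singular values equal to zero is the zero matrix, this gives $(\kappa - \xi)(\bm x, \bm\theta) = O_r$ almost everywhere on that subset.

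The only step that requires a little care is the fact that the entries of $\kappa - \xi$ are merely measurable and not necessarily essentially bounded. This is handled by an exhaustion: define the measurable sets $E_M := \{(\bm x, \bm\theta) \in D : \|(\kappa - \xi)(\bm x, \bm\theta)\| \leq M\}$, run the test-function argument above on each $E_M$, and pass to the limit using $D = \bigcup_{M \in \mathbb{N}} E_M$ up to a null set (which is valid since the spectral norm is finite at every point where $\kappa$ and $\xi$ are defined). Conclude that $\kappa = \xi$ almost everywhere on $D$. Apart from this mild exhaustion, the argument is a purely formal consequence of the GLT axioms already stated.
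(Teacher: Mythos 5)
Your proof is correct and follows exactly the route the paper indicates (it simply cites ``a direct consequence of Axiom \textbf{GLT 3}, second item''): subtract to get the zero sequence with symbol $\kappa-\xi$, apply \textbf{GLT 1} to obtain $\{0_{N(\bm n)r}\}_{\bm n}\sim_\sigma\kappa-\xi$, and use the test-function characterization to force $\kappa-\xi=O_r$ a.e.\ via the exhaustion by the sets $E_M$. The exhaustion is a reasonable way to finish; the paper leaves those details implicit.
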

The symbol plays a central role in determining the singular value distribution and, in the Hermitian setting, the eigenvalue distribution of GLT matrix-sequences.

It can be proven that the set of multilevel block GLT sequences is the $\ast$-algebra generated by the three classes of sequences defined in Sections \ref{zero dist}, \ref{blcktop}, \ref{blckdiag}: zero-distributed sequences, multilevel block Toeplitz sequences, and block diagonal sampling matrix-sequences. The GLT class satisfies several algebraic and topological properties, which are discussed in detail in \cite{barbarino2020block1d, barbarino2020blockmulti, garoni2017, garoni2018}. Here, we focus on the main operative properties listed below, which provide a complete characterization of GLT sequences, equivalent to the full constructive definition.

\subsection*{GLT axioms}
\begin{itemize}
    \item \textbf{GLT 1.} In the setting of Definition~\ref{99}, with \( t = 2d \) and $D = [0, 1]^d \times [-\pi, \pi]^d$,  if $\{A_{\bm{n}}\}_{\bm{n}}\sim_{\mathrm{GLT}} \kappa$, then $\{A_{\bm{n}}\}_{\bm{n}} \sim_{\sigma} \kappa$. Moreover, if all the matrices $A_{\bm{n}}$ are Hermitian, then $\{A_{\bm{n}}\}_{\bm{n}} \sim_{\lambda} \kappa,$ again in the sense of Definition~\ref{99}, with \( t = 2d \).
    \item \textbf{GLT 2.} We have
    \begin{itemize}
        \item $\{T_{\bm{n}}(f)\}_{\bm{n}} \sim_{\mathrm{GLT}} \kappa(\bm{x}, \bm{\theta}) = f(\bm{\theta)}$ if $f : [-\pi, \pi]^d \to \mathbb{C}^{r \times r}$ is in $L^1([- \pi, \pi]^d)$;
        \item $\{D_{\bm{n}}(a)\}_{\bm{n}} \sim_{\mathrm{GLT}} \kappa(\bm{x}, \bm{\theta}) = a(\bm{x})$ if $a : [0, 1]^d \to \mathbb{C}^{r \times r}$ is Riemann-integrable;
        \item $\{Z_{\bm{n}}\}_{\bm{n}} \sim_{\mathrm{GLT}} \kappa(\bm{x}, \bm{\theta}) = O_r$ if and only if $\{Z_{\bm{n}}\}_{\bm{n}} \sim_\sigma 0$.
    \end{itemize}
    \end{itemize}
One of the most fundamental properties of the GLT class is that it forms a $\ast$-algebra, meaning it is closed under the natural operations of conjugate transposition, linear combination, and product. While the previous axiom shows that multilevel Toeplitz, diagonal sampling, and zero-distributed sequences are all included in the GLT class, the set GLT of sequences is, in fact, precisely the $\ast$-algebra generated by the union of these three types of sequences. This means that any GLT sequence can be constructed from these basic building blocks through the $\ast$-algebra operations. Furthermore, the map that associates each GLT sequence with its corresponding symbol preserves the operations; in other words, it is an algebra homomorphism.

    \begin{itemize}
    \item \textbf{GLT 3.} If $\{A_{\bm{n}}\}_{\bm{n}} \sim_{\mathrm{GLT}} \kappa$ and $\{B_{\bm{n}}\}_{\bm{n}} \sim_{\mathrm{GLT}} \xi$, then:
    \begin{itemize}
        \item $\{A_{\bm{n}}^*\}_{\bm{n}} \sim_{\mathrm{GLT}} \kappa^*$;
        \item $\{\alpha A_{\bm{n}} + \beta B_{\bm{n}}\}_{\bm{n}} \sim_{\mathrm{GLT}} \alpha \kappa + \beta \xi$ for all $\alpha, \beta \in \mathbb{C}$;
        \item $\{A_{\bm{n}} B_{\bm{n}}\}_{\bm{n}} \sim_{\mathrm{GLT}} \kappa \xi$;
        \item $\{A_{\bm{n}}^\dagger\}_{\bm{n}} \sim_{\mathrm{GLT}} \kappa^{-1}$, provided that $\kappa$ is invertible almost everywhere.
    \end{itemize}
    \item \textbf{GLT 4.} \(\{A_{\bm{n}}\}_{\bm{n}} \sim_{\mathrm{GLT}} \kappa\) if and only if there exist \(\{B_{{\bm{n}},j}\}_{\bm{n}} \sim_{\text{GLT}} \kappa_j\) such that \(\{\{B_{{\bm{n}},j}\}_{\bm{n}}\}_j \xrightarrow{\text{a.c.s.wrt }j} \{A_{\bm{n}}\}_{\bm{n}}\) and \(\kappa_j \rightarrow \kappa\) in measure.
    \item \textbf{GLT 5.} If $\{A_{\bm{n}}\}_n \sim_{\mathrm{GLT}} \kappa$ and $A_{\bm{n}} = X_{\bm{n}} + Y_{\bm{n}}$, where
\begin{itemize}
    \item every $X_{\bm{n}}$ is Hermitian,
    \item $ ||X_{\bm{n}}||, \, ||Y_{\bm{{n}}}|| \leq C$ for some constant $C$ independent of $n$,
    \item $N(\bm{n})^{-1} \|Y_{\bm{n}}\|_1 \to 0$,
\end{itemize}
then $\{A_{\bm{n}}\}_n \sim_\lambda \kappa$.
   \item \textbf{GLT 6.}  If $\{A_{\bm{n}}\}_n \sim_{\mathrm{GLT}} \kappa$ and each $A_{\bm{n}}$ is Hermitian, then $\{f(A_{\bm{n}})\}_n \sim_{\mathrm{GLT}} f(\kappa)$ for every continuous function $f : \mathbb{C} \to \mathbb{C}$.

\end{itemize}
Note that, by \textbf{GLT 1}, it is always possible to obtain the singular value distribution from the GLT symbol, while the eigenvalue distribution can only be deduced either if the involved
matrices are Hermitian or the related matrix-sequence is quasi-Hermitian in the sense of \textbf{GLT 5}.

\chapter{Geometric means in matrix-sequences with hidden structures}\label{GM GL1}
Having established the theoretical foundations of matrix-sequences and their asymptotic properties, this chapter presents the results published in \cite{ahmad2025matrix}. We investigate the spectral distribution of the geometric mean of two or more matrix-sequences formed by HPD matrices, focusing on cases where all sequences belong to the same GLT $\ast$-algebra. 
The geometric mean of two positive numbers, $a$ and $b$, is the square root of their product, $\sqrt{ab}$. We extend this concept to Hermitian positive definite (HPD) matrices by employing the Ando–Li–Mathias (ALM) geometric mean, defined as
\begin{equation}\label{1}
    G(A, B) = A^{1/2} \left( A^{-1/2} B A^{-1/2} \right)^{1/2} A^{1/2},
\end{equation}
where $A$ and $B$ are HPD. Here, $(\cdot)^{1/2}$ denotes the unique Hermitian positive definite square root of a Hermitian positive definite matrix. This approach maintains key properties such as symmetry, invariance under congruence transformations, and consistency with the classical scalar mean in the commutative case.

When analyzing the geometric mean of GLT matrix-sequences, our results show that for the scalar unilevel case (\( r = d = 1 \)), if \(\{A_{\bm{n}}\}_{\bm{n}}\) and \(\{B_{\bm{n}}\}_{\bm{n}}\) are sequences of Hermitian positive definite matrices with GLT symbols \(\kappa\) and \(\xi\), then the sequence formed by their geometric mean, \(\{G(A_{\bm{n}}, B_{\bm{n}})\}_{\bm{n}}\), is itself a scalar unilevel GLT matrix-sequence whose symbol is \((\kappa \xi)^{1/2}\), provided at least one of the symbols is nonzero almost everywhere. We further demonstrate that this result extends to the multilevel scalar case (\( r = 1,\, d > 1 \)), where the same relation holds for the GLT symbols. In the general block multilevel case (\( r,\, d > 1 \)), if at least one of the minimal eigenvalues of the GLT symbols \(\kappa\) or \(\xi\) is nonzero almost everywhere, then the sequence of geometric means remains a GLT matrix-sequence, with its symbol given by the matrix geometric mean \(G(\kappa, \xi)\), defined as
\[
G(\kappa, \xi) = \kappa^{1/2} \left( \kappa^{-1/2} \xi \kappa^{-1/2} \right)^{1/2} \kappa^{1/2}.
\]

For the case of more than two matrices, the geometric mean is generalized by the Karcher mean. Specifically, the Karcher mean of $k$ Hermitian positive definite matrices $A_1, \ldots, A_k$ is defined as the unique positive definite solution $X$ of the matrix equation $\sum_{i=1}^k \log(A_i^{-1} X) = 0$.

In the setting of GLT matrix-sequences, suppose we have $k$ sequences of Hermitian positive definite matrices, each associated with a GLT symbol $\kappa_i$, which by construction are nonnegative definite almost everywhere. We conjecture that the sequence of Karcher means $
\left\{ G\left(A^{(1)}_{\bm{n}}, \ldots, A^{(k)}_{\bm{n}} \right) \right\}_{\bm{n}}$ forms a new GLT matrix-sequence, with its symbol being the geometric mean of the individual symbols. Specifically, when all symbols commute, the resulting symbol is $(\kappa_1 \cdots \kappa_k)^{1/k}$, and in the noncommutative case, the symbol is the Karcher mean $G(\kappa_1, \ldots, \kappa_k)$ defined through the same matrix equation at the symbol level. In this respect, we notice that two main difficulties are present when dealing with Karcher means: The first is the lack of an explicit formula, which prevents the use of the GLT axioms in a direct way; The second relies on the pointwise convergence results, which do not allow us to employ powerful tools such as those in Theorem~\ref{th 3.1}.

To verify these theoretical results, we present a range of numerical experiments, including examples where the input matrix-sequences are of Toeplitz type $r$-block, $d$-level GLT matrix-sequences arising from the discretizations of differential operators by finite differences, finite elements, isogeometric analysis, and Galerkin methods. We address both one- and two-dimensional settings and analyze the asymptotic behavior of minimal eigenvalues and conditioning of the geometric mean sequences.

\section{Geometric mean of GLT matrix-sequences}\label{21}
This section discusses the geometric mean of positive definite matrices, starting with the well-established case of two matrices and then considering  multiple matrices. In particular we give distribution results in the case of GLT matrix-sequences.
\subsection{Means of two matrices}
The geometric mean of two positive numbers \( a \) and \( b \) is simply \( \sqrt{ab} \), a fact well-known from basic arithmetic. However, extending this concept to HPD matrices introduces a number of challenges, as matrix multiplication is not commutative. The question of how to define a geometric mean for matrices in a way that preserves key properties such as congruence invariance, consistency with scalars, and symmetry was solved by ALM \cite{Ando2004}. Their work presented an axiomatic approach to defining the geometric mean of two HPD matrices previously defined in Equation (\ref{1}).
ALM formalized the geometric mean for matrices by establishing a set of ten essential properties, known as the ALM axioms. These axioms ensure the geometric mean behaves appropriately in the matrix setting. Here we report three key properties, even if the first one is a consequence of the second and of the third via the choice of a proper congruence. \\
1. \textit{Permutation invariance}: \( G(A, B) = G(B, A) \) for all \( A, B \in \mathcal{P}_n \).\\
2. \textit{Congruence invariance}: \( G(M^* A M, M^* B M) = M^* G(A, B) M \) for all \( A, B \in \mathcal{P}_n \) and all invertible matrices \( M \in \mathbb{C}^{n \times n} \).\\
3. \textit{Consistency with scalars}: \( G(A, B) = (AB)^{1/2} \) for all commuting \( A, B \in \mathcal{P}_n \) (note that \( AB \in \mathcal{P}_n \) for all commuting \( A, B \in \mathcal{P}_n \) because \( (AB)^* = B^* A^* = B A = AB \) and \( AB \) is similar to the HPD matrix \( A^{1/2} B A^{1/2} = A^{-1/2} (AB) A^{1/2} \)).\\
When considering sequences of matrices, particularly in the framework of GLT sequences, the geometric mean operation is well-preserved under the structure of GLT sequences. If $r=d=1$ and we consider two scalar unilevel GLT matrix-sequences, that is, 
\(\{A_n\}_n \sim_{\text{GLT}}\kappa\) and \(\{B_n\}_n \sim_{\text{GLT}} \xi\), where \(A_n, B_n \in \mathcal{P}_n\) are HPD matrices for every $n$, the matrix-sequence of their geometric mean \(\{G(A_n, B_n)\}_n\) also forms a scalar unilevel GLT matrix-sequence. The symbol of the resulting sequence is the geometric mean of the individual symbols \(\kappa\) and \(\xi\).

\begin{theorem}[\!\!\cite{garoni2017}, Theorem 10.2]\label{4: th:two - r=d=1 GM1}
Let $r=d=1$. Suppose \(\{A_n\}_n \sim_{\mathrm{GLT}} \kappa\) and \(\{B_n\}_n \sim_{\mathrm{GLT}} \xi\), where \(A_n, B_n \in \mathcal{P}_n\) for every $n$. Assume that at least one between \(\kappa\) and \(\xi\) is nonzero almost everywhere. Then
\begin{equation}\label{r=d=1 GLT}
\{G(A_n, B_n)\}_n \sim_{\mathrm{GLT}}(\kappa \xi)^{1/2},
\end{equation}
and
\begin{equation}\label{r=d=1 distributions}
\{G(A_n, B_n)\}_n \sim_{\sigma, \lambda} (\kappa \xi)^{1/2}.
\end{equation}
\end{theorem}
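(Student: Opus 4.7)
The plan is to unfold the ALM definition $G(A_n,B_n) = A_n^{1/2}(A_n^{-1/2} B_n A_n^{-1/2})^{1/2} A_n^{1/2}$ operation by operation and push each factor through the GLT $*$-algebra using the axioms listed in Section~\ref{ast algebra}. By the permutation invariance of the geometric mean, $G(A_n,B_n) = G(B_n,A_n)$, we may assume without loss of generality that $\kappa$ is nonzero almost everywhere. Since each $A_n$ is HPD, \textbf{GLT 1} together with Remark~\ref{rem: range} forces $\kappa\geq 0$ a.e., so the assumption reads $\kappa>0$ a.e.

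The first step applies \textbf{GLT 6} to the Hermitian sequence $\{A_n\}_n$ with the continuous function $f(x)=\sqrt{\max(x,0)}$ (extended in any continuous way to $\mathbb{C}$), yielding $\{A_n^{1/2}\}_n \sim_{\mathrm{GLT}} \kappa^{1/2}$. Because $\kappa^{1/2}$ is scalar and nonzero a.e., the pseudoinverse clause of \textbf{GLT 3} gives $\{A_n^{-1/2}\}_n \sim_{\mathrm{GLT}} \kappa^{-1/2}$, noting that the Moore--Penrose pseudoinverse of $A_n^{1/2}$ coincides with its usual inverse since $A_n$ is HPD. The product clause of \textbf{GLT 3} then delivers $\{A_n^{-1/2} B_n A_n^{-1/2}\}_n \sim_{\mathrm{GLT}} \xi/\kappa$, and these matrices are themselves HPD as congruences of $B_n$ by the invertible Hermitian factor $A_n^{-1/2}$. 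A second application of \textbf{GLT 6} with the same $f$ produces $\{(A_n^{-1/2} B_n A_n^{-1/2})^{1/2}\}_n \sim_{\mathrm{GLT}} (\xi/\kappa)^{1/2}$. Two more uses of the product rule finally give
\[
\{G(A_n,B_n)\}_n \sim_{\mathrm{GLT}} \kappa^{1/2}\,(\xi/\kappa)^{1/2}\,\kappa^{1/2} = (\kappa\xi)^{1/2},
\]
where scalarity ($r=1$) is essential to freely commute factors and collapse the expression. Applying \textbf{GLT 1} converts this GLT identity into the singular-value distribution, and since each $G(A_n,B_n)$ is HPD, \textbf{GLT 1} also delivers the eigenvalue distribution, establishing \eqref{r=d=1 GLT} and \eqref{r=d=1 distributions} simultaneously.

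The main obstacle is the unavailability of \textbf{GLT 6} for the map $x\mapsto x^{-1/2}$, which is discontinuous at the origin: continuous functional calculus cannot be used directly to obtain the GLT symbol of $\{A_n^{-1/2}\}_n$ when $\kappa$ may vanish on a set of positive measure. This is precisely why the hypothesis that at least one of $\kappa,\xi$ is nonzero a.e.\ enters the statement, as it unlocks the pseudoinverse clause of \textbf{GLT 3} and secures the validity of the entire chain. Relaxing this assumption in the fully degenerate regime, where both symbols may vanish on sets of positive measure, requires a genuinely different argument based on approximating classes of sequences, which is deferred to Chapter~\ref{GM GLT2}.
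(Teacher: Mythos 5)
Your proof is correct and follows essentially the same route as the paper's proof of the more general Theorem~\ref{4: th:two - r=1,d general GM1}: both track the GLT symbol through the ALM formula step by step, using the pseudoinverse clause of \textbf{GLT 3} (which is where the nonzero-a.e.\ hypothesis enters), \textbf{GLT 6} for the matrix square roots, the product clause of \textbf{GLT 3}, and finally \textbf{GLT 1} for the distributions. The only cosmetic difference is that you obtain $\{A_n^{-1/2}\}_n$ by first taking the square root and then pseudoinverting, whereas the paper inverts $A_n$ first and then takes the square root; both orderings are equally valid.
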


The previous result is easily extended to the case of matrix-sequence resulting from the geometric mean of multilevel GLT matrix-sequences, thanks to powerful $*$-algebra structures of considered spaces described in Section~\ref{ast algebra}. Indeed the following two generalizations of Theorem \ref{4: th:two - r=d=1 GM1} hold.

\begin{theorem}\label{4: th:two - r=1,d general GM1}
Let $r=1$ and $d\ge 1$.  Suppose \(\{A_{\bm{n}}\}_{\bm{n}} \sim_{\mathrm{GLT}} \kappa\) and \(\{B_{\bm{n}}\}_{\bm{n}} \sim_{\mathrm{GLT}} \xi\), where \(A_{\bm{n}}, B_{\bm{n}} \in \mathcal{P}_{\nu(\bm{n})}\) for every multi-index $\bm{n}$. Assume that at least one between \(\kappa\) and \(\xi\) is nonzero almost everywhere. Then
\begin{equation}\label{r=1, d general GLT}
\{G(A_{\bm{n}}, B_{\bm{n}})\}_{\bm{n}} \sim_{\mathrm{GLT}}(\kappa \xi)^{1/2},
\end{equation}
and
\begin{equation}\label{r=1, d general distributions}
\{G(A_{\bm{n}}, B_{\bm{n}})\}_{\bm{n}} \sim_{\sigma, \lambda} (\kappa \xi)^{1/2}.
\end{equation}
\end{theorem}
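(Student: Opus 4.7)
The plan is to imitate, almost verbatim, the proof of Theorem~\ref{4: th:two - r=d=1 GM1} for the unilevel case, exploiting the fact that the axioms \textbf{GLT 1}--\textbf{GLT 6} of Section~\ref{ast algebra} are stated uniformly for every $d \ge 1$ and $r \ge 1$. Consequently, replacing the scalar index $n$ by the multi-index $\bm{n}$ throughout costs nothing conceptually: everything that was done algebraically in the $d=1$ proof lifts to the multilevel setting as soon as one is careful about Hermiticity and invertibility at the symbol level. As a preliminary reduction, since $A_{\bm{n}}$ and $B_{\bm{n}}$ are HPD, \textbf{GLT 1} together with Remark~\ref{rem: range} forces $\kappa,\xi \ge 0$ almost everywhere on $D=[0,1]^d \times [-\pi,\pi]^d$; by the permutation invariance $G(A,B)=G(B,A)$ of the ALM mean we may then assume without loss of generality that $\kappa \ne 0$ a.e.

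The core computation proceeds in five axiom applications. First, I would invoke \textbf{GLT 6} with a continuous extension of the real square root (for example $f(z)=\sqrt{\max(\mathrm{Re}(z),0)}$) applied to the Hermitian sequence $\{A_{\bm{n}}\}_{\bm{n}}$, obtaining $\{A_{\bm{n}}^{1/2}\}_{\bm{n}} \sim_{\mathrm{GLT}} \kappa^{1/2}$. Second, the pseudoinverse clause of \textbf{GLT 3}, together with the hypothesis $\kappa^{1/2}\ne 0$ a.e., yields $\{A_{\bm{n}}^{-1/2}\}_{\bm{n}} \sim_{\mathrm{GLT}} \kappa^{-1/2}$. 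Third, two products via \textbf{GLT 3} give $\{A_{\bm{n}}^{-1/2} B_{\bm{n}} A_{\bm{n}}^{-1/2}\}_{\bm{n}} \sim_{\mathrm{GLT}} \xi/\kappa$, and this matrix-sequence is HPD as a congruence of the HPD sequence $\{B_{\bm{n}}\}_{\bm{n}}$. Fourth, a second application of \textbf{GLT 6} produces $\{(A_{\bm{n}}^{-1/2} B_{\bm{n}} A_{\bm{n}}^{-1/2})^{1/2}\}_{\bm{n}} \sim_{\mathrm{GLT}} (\xi/\kappa)^{1/2}$. Fifth, closing the sandwich with two more products via \textbf{GLT 3} returns $\{G(A_{\bm{n}},B_{\bm{n}})\}_{\bm{n}} \sim_{\mathrm{GLT}} \kappa^{1/2}(\xi/\kappa)^{1/2}\kappa^{1/2}$, which simplifies to $(\kappa\xi)^{1/2}$ since all the symbols involved are scalar and thus commute. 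The distributional conclusions~(\ref{r=1, d general distributions}) are then free: $\sim_\sigma$ is the singular-value half of \textbf{GLT 1}, while $\sim_\lambda$ follows from the Hermitian half of \textbf{GLT 1} together with the fact that $G(A_{\bm{n}},B_{\bm{n}}) = A_{\bm{n}}^{1/2} X_{\bm{n}} A_{\bm{n}}^{1/2}$ is HPD.

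The one genuinely delicate point is the two uses of \textbf{GLT 6} with the square-root function, which is not smooth at the origin and not even defined on all of $\mathbb{C}$. My preferred resolution is a direct one: apply \textbf{GLT 6} to the continuous extension $f(z) = \sqrt{\max(\mathrm{Re}(z),0)}$, which is continuous on $\mathbb{C}$ and, because the involved Hermitian matrices have nonnegative spectrum, agrees with the HPD functional calculus. Should one wish to avoid any fine print, an alternative route is an a.c.s.\ approximation via \textbf{GLT 4}: replace $\sqrt{x}$ by the regularized $\sqrt{x+1/j}$, for which \textbf{GLT 6} applies trivially; the regularized symbols converge in measure to $\kappa^{1/2}$, and the spectral theorem gives a uniform $o(1)$ norm control so the family is an a.c.s.\ for $\{A_{\bm{n}}^{1/2}\}_{\bm{n}}$. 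I expect this technical handling of the square root to be the only non-routine step; everything else is a transparent algebraic unfolding within the $\ast$-algebra of GLT matrix-sequences.
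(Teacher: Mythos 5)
Your proof is correct and follows essentially the same route as the paper's: reduce by permutation invariance to the case $\kappa\ne 0$ a.e., then unfold the ALM formula through successive applications of \textbf{GLT 3} (inverse, product) and \textbf{GLT 6} (square root), closing with \textbf{GLT 1} for the distributional claims. The only differences are a harmless reordering (you take the square root of $\{A_{\bm n}\}_{\bm n}$ before inverting, the paper inverts first), and your extra care in justifying \textbf{GLT 6} via a globally continuous extension of $\sqrt{\cdot}$ — a point the paper passes over in one phrase but which genuinely deserves the remark you make.
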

{\bf Proof.} \ \ 
Since both $A_{\bm{n}}$ and $B_{\bm{n}}$ are positive definite for every multi-index $\bm{n}$, the matrix-sequence 
$\{G(A_{\bm{n}}, B_{\bm{n}})\}_{\bm{n}}$ is well defined according to formula (\ref{1}) since $A_{\bm{n}}^{-1/2}$ is well defined for every multi-index $\bm{n}$. According to the assumption, we start with the case where \(\kappa\) is nonzero almost everywhere. Hence the matrix-sequence $\{A_{\bm{n}}^{-1}\}_{\bm{n}}$ is a GLT matrix-sequence with GLT symbol $\kappa^{-1}$ by Axiom {\bf GLT 3}, part 3. Since the square root is continuous and well defined over positive definite matrices also the matrix-sequence  $\{A_{\bm{n}}^{-1/2}\}_{\bm{n}}$ is a GLT matrix-sequence with GLT symbol $\kappa^{-1/2}$ by virtue of Axiom {\bf GLT 6}. 

Now using two times \textbf{GLT 3}, part 2, we infer that $\{A_{\bm{n}}^{-1/2} B_{\bm{n}} A_{\bm{n}}^{-1/2}\}_{\bm{n}}$ is a GLT matrix-sequence with GLT symbol $\kappa^{-1} \xi $, where $X_{\bm{n}} = A_{\bm{n}}^{-1/2} B_{\bm{n}} A_{\bm{n}}^{-1/2}$ is positive definite because of the Sylvester inertia law. Hence, the square root of $X_{\bm{n}}$ is well defined and by exploiting again Axiom \textbf{GLT 6} we deduce that $\left\{\left(A_{\bm{n}}^{-1/2} B_{\bm{n}} A_{\bm{n}}^{-1/2}\right)^{1/2}\right\}_{\bm{n}}$ is a GLT matrix-sequence with GLT symbol $\left(\kappa^{-1} \xi \right)^{1/2}$. Finally, by exploiting Axiom \textbf{GLT 3}, part 3, we have $\{A_{\bm{n}}^{1/2}\}_{\bm{n}}$ is a GLT matrix-sequence with GLT symbol $\kappa^{1/2}$, and the application of \textbf{GLT 3}, part 2, two times leads to the desired conclusion
\[
\{G(A_{\bm{n}}, B_{\bm{n}})\}_{\bm{n}} \sim_{\mathrm{GLT}}(\kappa \xi)^{1/2},
\]
where the latter and Axiom \textbf{GLT 3} imply $\{G(A_{\bm{n}}, B_{\bm{n}})\}_{\bm{n}} \sim_{\sigma, \lambda} (\kappa \xi)^{1/2}$.

Finally the other case where \(\xi\) is nonzero almost everywhere has the very same proof since $G(\cdot, \cdot)$ is invariant under permutations and hence
\[
\resizebox{1.0\hsize}{!}{$
G(A, B) = A^{1/2} \left( A^{-1/2} B A^{-1/2} \right)^{1/2} A^{1/2} = G(B,A) = B^{1/2} \left( B^{-1/2} A B^{-1/2} \right)^{1/2} B^{1/2},
$}
\]
so that the the same steps can be repeated by exchanging $A_{\bm{n}}$ and $B_{\bm{n}}$.
\hfill $\bullet$

\begin{theorem} \label{4: th:two - r,d general GM1}
Let $r,d\ge 1$. 
Suppose \(\{A_{\bm{n}}\}_{\bm{n}} \sim_{\mathrm{GLT}} \kappa\) and \(\{B_{\bm{n}}\}_{\bm{n}} \sim_{\mathrm{GLT}} \xi\), where \(A_{\bm{n}}, B_{\bm{n}} \in \mathcal{P}_{\nu(\bm{n})}\) for every multi-index $\bm{n}$. Assume that at least one between the minimal eigenvalue of \(\kappa\) and the minimal eigenvalue of \(\xi\) is nonzero almost everywhere. Then
\begin{equation}\label{r,d general GLT GM1}
\{G(A_{\bm{n}}, B_{\bm{n}})\}_{\bm{n}} \sim_{\mathrm{GLT}} G(\kappa, \xi),
\end{equation}
and
\begin{equation}\label{r,d general distributions}
\{G(A_{\bm{n}}, B_{\bm{n}})\}_{\bm{n}} \sim_{\sigma, \lambda} G(\kappa,\xi).
\end{equation}
Furthermore $G(\kappa,\xi)=(\kappa \xi)^{1/2}$ whenever the GLT symbols  \(\kappa\) and \(\xi\) commute.
\end{theorem}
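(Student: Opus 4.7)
The strategy is to adapt the chain of reductions used in the proof of Theorem~\ref{4: th:two - r=1,d general GM1} to the block setting, checking that each step goes through verbatim via the matrix-valued axioms \textbf{GLT 1}--\textbf{GLT 6}, and then to handle the commuting case as a separate algebraic identity at the symbol level. By the permutation invariance $G(A,B)=G(B,A)$ of the Ando--Li--Mathias mean, I can assume without loss of generality that $\lambda_{\min}(\kappa)>0$ almost everywhere; note that $\kappa$ is already Hermitian positive semidefinite a.e.\ by Remark~\ref{rem: range} applied to the Hermitian matrix-sequence $\{A_{\bm{n}}\}_{\bm{n}}$ via axiom \textbf{GLT 1}, so the assumption amounts to saying that $\kappa$ is HPD a.e.\ and hence invertible a.e.

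With this reduction in place, I would proceed through the following sequence of axiom applications. Since $A_{\bm{n}}$ is HPD, $A_{\bm{n}}^{\dagger}=A_{\bm{n}}^{-1}$, and $\kappa$ is invertible a.e., so \textbf{GLT 3}, part 4, yields $\{A_{\bm{n}}^{-1}\}_{\bm{n}}\sim_{\mathrm{GLT}}\kappa^{-1}$; a similar direct use of \textbf{GLT 6}, with a continuous extension $\hat f:\mathbb{C}\to\mathbb{C}$ of $t\mapsto t^{1/2}$ (the axiom only probes $\hat f$ on the nonnegative spectrum of the HPD matrices involved), gives $\{A_{\bm{n}}^{1/2}\}_{\bm{n}}\sim_{\mathrm{GLT}}\kappa^{1/2}$ and $\{A_{\bm{n}}^{-1/2}\}_{\bm{n}}\sim_{\mathrm{GLT}}\kappa^{-1/2}$. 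Two applications of \textbf{GLT 3}, part 3, then give $\{A_{\bm{n}}^{-1/2}B_{\bm{n}}A_{\bm{n}}^{-1/2}\}_{\bm{n}}\sim_{\mathrm{GLT}}\kappa^{-1/2}\xi\kappa^{-1/2}$, and since this congruence preserves positive definiteness in the matrix setting as well (Sylvester), a further application of \textbf{GLT 6} produces $\{(A_{\bm{n}}^{-1/2}B_{\bm{n}}A_{\bm{n}}^{-1/2})^{1/2}\}_{\bm{n}}\sim_{\mathrm{GLT}}(\kappa^{-1/2}\xi\kappa^{-1/2})^{1/2}$. Two more products via \textbf{GLT 3}, part 3, assemble these factors into $\{G(A_{\bm{n}},B_{\bm{n}})\}_{\bm{n}}\sim_{\mathrm{GLT}} G(\kappa,\xi)$, and \textbf{GLT 1} then gives the singular-value distribution; the eigenvalue distribution follows from \textbf{GLT 1} as well, since $G(A_{\bm{n}},B_{\bm{n}})$ is HPD and hence Hermitian for every $\bm{n}$.

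For the final assertion, when $\kappa$ and $\xi$ commute a.e., they are (a.e.) simultaneously unitarily diagonalizable, and functional calculus applied on each joint eigenspace yields $\kappa^{-1/2}\xi\kappa^{-1/2}=\kappa^{-1}\xi$ and $(\kappa^{-1}\xi)^{1/2}=\kappa^{-1/2}\xi^{1/2}$; substituting into the definition one obtains
\[
G(\kappa,\xi)=\kappa^{1/2}\kappa^{-1/2}\xi^{1/2}\kappa^{1/2}=\xi^{1/2}\kappa^{1/2}=(\kappa\xi)^{1/2}
\]
a.e., as claimed. The main obstacle, in my view, is not conceptual but notational: one must verify carefully that \textbf{GLT 6} is legitimately applicable to $t\mapsto t^{\pm 1/2}$ despite these functions not being globally continuous on $\mathbb{C}$, by exploiting the fact that the axiom only depends on the restriction of the scalar function to the essential range, which lies in $(0,\infty)$ for every matrix in the construction once $\kappa$ is taken HPD a.e. Once this point is settled, the argument reduces to a clean symbolic computation parallel to Theorem~\ref{4: th:two - r=1,d general GM1}, with the block structure entering only through the noncommutativity of the matrix square root in the generic (non-commuting) case.
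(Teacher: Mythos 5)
Your proof is correct and follows essentially the same chain of axiom applications as the paper's proof (reduce to the case $\kappa$ invertible a.e.\ by permutation invariance; apply \textbf{GLT~3} to invert, \textbf{GLT~6} for square roots, \textbf{GLT~3} for products, Sylvester's inertia law to preserve positive definiteness, and \textbf{GLT~1} for the distributional conclusions). Your explicit attention to the technical legitimacy of applying \textbf{GLT~6} to $t\mapsto t^{\pm 1/2}$ (which is only continuous after a suitable extension, and is probed only on the essential range of the HPD matrices) is a useful clarification that the paper glosses over, and your axiom part numbering (\textbf{GLT~3} part~4 for the inverse, part~3 for products) is actually more accurate than the paper's own internal references.
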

{\bf Proof.} \ \ 
The case of $r=1$ is already contained in Theorem \ref{4: th:two - r=1,d general GM1}, so we assume $r>1$ i.e. a true GLT block setting. The proof is in fact a repetition of that of the previous theorem with the only attention to GLT symbol part where the multiplication is noncommutative. \\
Since both $A_{\bm{n}}$ and $B_{\bm{n}}$ are positive definite for every multi-index $\bm{n}$, the matrix-sequence 
$\{G(A_{\bm{n}}, B_{\bm{n}})\}_{\bm{n}}$ is well defined according to formula (\ref{1}) since $A_{\bm{n}}^{-1/2}$ is well defined for every multi-index $\bm{n}$. According to the assumption, we start with the case where \(\kappa\) is invertible almost everywhere so that 
$\{A_{\bm{n}}^{-1}\}_{\bm{n}}\sim_{\mathrm{GLT}} \kappa^{-1}$ by Axiom {\bf GLT 3}, part 3, and  $\{A_{\bm{n}}^{-1/2}\}_{\bm{n}}\sim_{\mathrm{GLT}}\kappa^{-1/2}$ thanks to Axiom {\bf GLT 6}. \\
Now using two times {\bf GLT 3}, part 2, we have $\{A_{\bm{n}}^{-1/2} B_{\bm{n}} A_{\bm{n}}^{-1/2}\}_{\bm{n}} \sim_{\mathrm{GLT}} \kappa^{-1/2} \xi \\ \kappa^{-1/2}$, where 
\[
X_{\bm{n}} = A_{\bm{n}}^{-1/2} B_{\bm{n}} A_{\bm{n}}^{-1/2}
\]
is positive definite because of the Sylvester inertia law. Hence the square root of $X_{\bm{n}}$ is well defined and by exploiting again Axiom {\bf GLT 6}, we obtain 
$\left\{\left(A_{\bm{n}}^{-1/2} B_{\bm{n}} A_{\bm{n}}^{-1/2}\right)^{1/2}\right\}_{\bm{n}} \sim_{\mathrm{GLT}} 
\left(\kappa^{-1/2} \xi \kappa^{-1/2}\right)^{1/2}$. Finally, by exploiting Axiom {\bf GLT 3}, part 3, on the matrix-sequence $\{A_{\bm{n}}^{1/2}\}_{\bm{n}}$ and using {\bf GLT 3}, part 2, two times, we conclude
\[
\{G(A_{\bm{n}}, B_{\bm{n}})\}_{\bm{n}} \sim_{\mathrm{GLT}} \kappa^{1/2}\left(\kappa^{-1/2}\xi\kappa^{-1/2}\right)^{1/2}\kappa^{1/2},
\]
where the symbol $\kappa^{1/2}\left(\kappa^{-1/2}\xi\kappa^{-1/2}\right)^{1/2}\kappa^{1/2}$ is exactly $G(\kappa, \xi)$. Furthermore, relation (\ref{r,d general GLT GM1}) and Axiom {\bf GLT 3} imply $\{G(A_{\bm{n}}, B_{\bm{n}})\}_{\bm{n}} \sim_{\sigma, \lambda} G(\kappa, \xi)$ where $G(\kappa, \xi)=(\kappa \xi)^{1/2}$, whenever \(\kappa\) and \(\xi\) commute.\\
Finally the remaining case where \(\xi\) is invertible almost everywhere has the very same proof since $G(\cdot, \cdot)$ is invariant under permutations and hence
\[
\resizebox{1.0\hsize}{!}{$
G(A, B) = A^{1/2} \left( A^{-1/2} B A^{-1/2} \right)^{1/2} A^{1/2} = G(B,A) = B^{1/2} \left( B^{-1/2} A B^{-1/2} \right)^{1/2} B^{1/2},
$}
\]
so that the the same steps can be repeated by exchanging $A_{\bm{n}}$ and $B_{\bm{n}}$ and by exchanging \(\kappa\) and \(\xi\).
\hfill $\bullet$

\begin{remark}
In Theorem~\ref{4: th:two - r,d general GM1}, we assume that at least one between $\kappa$ and $\xi$ is invertible almost everywhere and the same is true in Theorem~\ref{4: th:two - r=1,d general GM1}. The assumption has in fact only a technical nature for allowing the possibility of using part 4 of Axiom \textbf{GLT 3}, when a GLT matrix-sequence is inverted for guaranteeing that the sequence of inverses is still of GLT nature. However, if we assume commutativity then no inversion is required and the technical assumption is no longer needed. The theoretical problem has been solved in paper \cite{GM2_GLT2025}, as described in Chapter~\ref{GM GLT2}.
\end{remark}

\subsection{Mean of more than two matrices}\label{ssec: Karcher}
If $k>2$ then the ALM-mean is obtained through a recursive iteration process where at each step the geometric means of \( k \) matrices are computed by reducing them to \( k-1 \) matrices. However, a significant limitation of this method is its linear convergence leading to a high computational cost due to the large number of iterations required at each recursive step. As a result, the computation of the ALM-mean using this approach becomes quite expensive. However, despite the elegance of the ALM geometric mean for two matrices, it becomes computationally infeasible to extend this formula to more than two matrices \cite{Poloni2010}.\\
To overcome these limitations, the Karcher mean \cite{Bini2013}, was introduced as a generalization of the geometric mean for more than two matrices. The Karcher mean of HPD matrices $A_1, A_2, \dots, A_k$ is defined as the unique positive definite solution to the matrix equation:
\begin{equation}\label{3}
    \sum_{i=1}^{k} \log(A_i^{-1} X) = 0,
\end{equation}
as established by Moakher \cite[Proposition 3.4]{Moakher2005}. This equation can be equivalently expressed in other forms, such as
\begin{equation}\label{6}
    \sum_{i=1}^{k} \log(XA_i^{-1}) = 0, \quad \text{or} \quad \sum_{i=1}^{k} \log(X^{1/2} A_i^{-1} X^{1/2}) = 0,
\end{equation}
by utilizing the formula $M^{-1} \log(K) M = \log(M^{-1}KM)$, which holds for any invertible matrix $M$ and any matrix $K$ with real positive eigenvalues. This formulation arises from Riemannian geometry, where the space of positive definite matrices forms a Riemannian manifold with non-positive curvature. The Karcher mean represents the center of mass (or barycenter) on this manifold \cite{bhatia2007}. In this manifold, the distance between two positive definite matrices \( A \) and \( B \) is defined as
$
\delta(A, B) = \| \log(A^{-1/2} B A^{-1/2}) \|_F,
$
where \( \| \cdot \|_F \) denotes the Frobenius norm.\\
Several numerical methods have been proposed for solving the Karcher mean equation. Initially, fixed-point iteration methods were used, but these methods suffered from slow convergence, especially in cases where the matrices involved were poorly conditioned \cite{Moakher2005}. Later, methods based on gradient descent in the Riemannian manifold were introduced. A common iteration scheme for approximating the Karcher mean is
\begin{equation}\label{4}
    X_{v+1} = X_v \exp\left( -\theta_{v} \sum_{i=1}^{k} \log(A_i^{-1} X_v) \right), \quad X_{0} \in \mathcal{P}_n, v\ge 0,
\end{equation}
where \( X_v \) is the approximation at the \( v \)-th step, and the exponential and logarithmic functions are matrix operations.
Although this method improves convergence, it can exhibit slow linear convergence in certain cases. The iteration with
$X_0 = A_1$ or $X_0 = I$ and $\theta_v = 1/k$, as considered in \cite{manton2004,Moakher2005}, can fail to converge for some matrices $A_1, \dots, A_k$. Furthermore, similar iterations have been proposed in \cite{barbaresco2009,rathi2007}, but without specific recommendations on choosing the initial value or step length. While an optimal $\theta_v$ could theoretically be determined using a line search strategy, this approach is often computationally expensive. Heuristic strategies for selecting the step size, as discussed in \cite{fletcher2007}, may result in slow convergence in many cases.\\
To further enhance the convergence rate, the Richardson iteration method is employed. Indeed, the considered method improves the convergence by using a parameter \( \theta \), which controls the step size in each iteration \cite{Bini2013}. More precisely, given $X_0\in \mathcal{P}_n$, the Richardson iteration is given by
\begin{equation}\label{5}
   X_{v+1} = X_v - \theta X_v \sum_{i=1}^{k} \log(A_i^{-1} X_v)\equiv T(X_v),\quad \ v\ge 0.
\end{equation}
Any solution of (\ref{6}) is a fixed point of the map $T$ in (\ref{5}). The iterative formula can also be rewritten as
\begin{equation}\label{7}
    X_{\nu+1} = X_{\nu} - \theta X_{\nu}^{1/2} \left( \sum_{i=1}^{k} \log \left( X_{\nu}^{1/2} A_i^{-1} X_{\nu}^{1/2} \right) \right) X_{\nu}^{1/2}, \quad \ v\ge 0,
\end{equation}
provided that all the iterates $X_{\nu}$ remain positive definite. Equation (\ref{7}) further demonstrates that if $X_{\nu}$ is Hermitian, then $X_{\nu+1}$  is also a Hermitian matrix.\\
The parameter $\theta$ plays a crucial role in controlling the step size of each iteration and choosing an optimal value of $\theta$ can significantly influence the convergence behavior of the iteration. If $\theta$ is small enough, the iteration is guaranteed to produce positive definite matrices and converge towards the solution. In particular, when the matrices $A_1, \dots, A_k$ commute, setting $\theta = \frac{1}{k}$ ensures at least quadratic convergence.\\
The explicit value of $\theta$ used in the iteration is given by
\begin{equation}
\theta = \frac{2}{\gamma + \beta},
\label{eq:theta}
\end{equation}
with \(\beta\) and \(\gamma\) computed as
\begin{equation}
[\beta, \gamma] = \left[ \sum_{j=1}^{k} \frac{\log(c_j)}{c_j - 1}, \sum_{j=1}^{k} c_j \frac{\log(c_j)}{c_j - 1} \right],
\label{eq:beta_gamma}
\end{equation}
where $c_j = \frac{\lambda_{\text{max}}(M_j)}{\lambda_{\text{min}}(M_j)}, $ and $M_j = G^{1/2} A_j^{-1} G^{1/2},$ $G$ is the current approximation of the Karcher mean. The closer the eigenvalues of $M_j$ are to 1, the faster the convergence is.

The Richardson-like iteration can be implemented in different equivalent forms
\begin{align}
X &= X - \theta X^{1/2} \left( \sum_{i=1}^{k} \log\left( X^{1/2} A_i^{-1} X^{1/2} \right) \right) X^{1/2} \qquad & (G = X)  \label{10} \\
X &= X^{1/2} \left( I - \theta \sum_{i=1}^{k} \log\left( X^{1/2} A_i^{-1} X^{1/2} \right) \right) X^{1/2} \qquad & (G = X)  \label{11} \\
Y &= Y - \theta Y^{1/2} \left( \sum_{i=1}^{k} \log\left( Y^{1/2} A_i Y^{1/2} \right) \right) Y^{1/2} \qquad & (G = Y^{-1}) \label{12} \\
Y &= Y^{1/2} \left( I - \theta \sum_{i=1}^{k} \log\left( Y^{1/2} A_i Y^{1/2} \right) \right) Y^{1/2} \qquad & (G = Y^{-1}) \label{13}
\end{align}
Among these equivalent formulations, the first one, Equation (\ref{10}), is the most practical for implementation. It avoids matrix inversions, which can introduce numerical instabilities and increase computational complexity. While formulations Equation (\ref{12}) and Equation (\ref{13}) aim to reduce the number of matrix inversions, the final step requires inverting the result, which can lead to inaccuracies, especially for poorly conditioned matrices. Additionally, Equation (\ref{11}) retains the simplicity of direct matrix operations without introducing unnecessary complications.\\
A numerically more efficient approach uses Cholesky factorization to reduce the computational cost of forming matrix square roots at every step, enhancing efficiency as forming the Cholesky factor costs less than computing a full matrix square root \cite{Higham2008}.
Suppose \( X_{\nu} = R_{\nu}^{T} R_{\nu} \) is the Cholesky decomposition of \( X_{\nu} \), where \( R_{\nu} \) is an upper triangular matrix. The iteration step can be rewritten as
\begin{equation}\label{14}
X_{\nu+1} = X_{\nu} + \theta R_{\nu}^{T} \left( \sum_{i=1}^{k} \log\left( R_{\nu}^{-T} A_i R_{\nu}^{-1} \right) \right) R_{\nu}.
\end{equation}
In this formulation, the Cholesky factor \( R_{\nu} \) is updated at each iteration. The condition number of the Cholesky factor \( R_{\nu} \) in the spectral norm is the square root of the condition number of \( X_{\nu} \), thus ensuring good numerical accuracy. For this heuristic to be effective, it is essential that \( X_0 \) provides a good approximation of \( G \). If the initial guess $X_0$ is close enough to the solution and is positive definite, the sequence ${X_v}$ generated by the iteration remains well-defined and converges to the desired solution. However, if the initial iterate is not positive definite, adjusting the value of $\theta$ or modifying the iteration scheme may be necessary to ensure that all iterates remain positive. Therefore, selecting $X_0$ as the Cheap mean is critical. An adaptive version of this iteration has been proposed and implemented in the Matrix Means Toolbox~\cite{BiniIannazzo2010}. \\
Of course the Richardson-like iteration is relevant for computing efficiently the Karcher mean: we also exploit its formal expression for theoretical purposes when dealing with sequences of matrices, particularly those involving GLT sequences. For the theory we come back at relation (\ref{10}) and we consider the associated iteration
\begin{equation}\label{iter-karcher}
X_{\nu+1} = X_{\nu} - \theta X_{\nu}^{1/2} \left( \sum_{i=1}^{k} \log\left( X_{\nu}^{1/2} A_i^{-1} X_{\nu}^{1/2} \right) \right) X_{\nu}^{1/2}, 
\end{equation}
with $X_0$ given positive definite matrix. We know that $X_{\nu}$ converges to the geometric mean of $A_1,\ldots,A_k$ as $\nu$ tends to infinity for every 
fixed positive definite initial guess $X_0$.\\
Fix $r,d\ge 1$. Suppose now that the block multitivel sequence of matrices \( \{ A_{\bm{n}}^{(i)} \}_{\bm{n}}\sim_{\text{GLT}}\kappa_i \) for \( i = 1, \dots, k \) are given, where \( A_{\bm{n}}^{(1)}, \dots, A_n^{(k)} \) are positive definite for every multi-index ${\bm{n}}$. 
Due to the positive definiteness of the matrices $\{A_{\bm{n}}^{(i)}\}_{\bm{n}}$ and because $\{ A_{\bm{n}}^{(i)} \}_{\bm{n}} \sim_{\mathrm{GLT}} \kappa_i$, from Axiom~\textbf{GLT~1} it follows that each $\kappa_i$ is nonnegative definite almost everywhere (see Remark~\ref{rem: range}).

In this setting, it is conjectured that the sequence of Karcher means \( \{ G(A_{\bm{n}}^{(1)},\\ \dots, A_{\bm{n}}^{(k)}) \}_{\bm{n}} \) forms a new GLT matrix-sequence whose symbol is the geometric mean of the individual symbols \( \kappa_1, \dots, \kappa_k \), specifically \( (\kappa_1 \cdots \kappa_k)^{1/k} \) if all symbols commute and $G(\kappa_1, \dots, \kappa_k)$ in the general case.
In order to attack the problem the initial guess matrix-sequence  \( \{X_{\bm{n},0}\}_{\bm{n}} \) must be of GLT type with nonnegative definite GLT symbol. In this way thanks to (\ref{iter-karcher}) and using the GLT axioms in the way it is done in Theorem \ref{4: th:two - r,d general GM1}, we deduce easily that \( \{X_{\bm{n},\nu}\}_{\bm{n}} \) is still a GLT matrix-sequence with symbol $g_\nu$ converging to $G(\kappa_1, \dots, \kappa_k)$.

Finally, Theorem~\ref{th: acs} and Axiom~\textbf{GLT 4} could be applied if we prove that $\{\{X_{\bm{n},\nu}\}_{\bm{n}}\}_{\nu}$ is an a.c.s.\ for the limit sequence $\{ G(A_{\bm{n}}^{(1)}, \dots, A_{\bm{n}}^{(k)}) \}_{\bm{n}}$. This could be proven using Schatten estimates like those in the second item of Theorem~\ref{th 3.1}, but at the moment this is not easy because the known convergence proofs for the Karcher iterations are all based on pointwise convergence, which makes difficult to obtain $O(\nu(\bm{n}))$ bounds for the $p$ power of the Schatten $p$-norm of error matrix-sequences; see Theorem~\ref{th 3.1}. 

\section{Numerical experiments}\label{22}
In this section, we present and critically analyze several selected examples, by considering matrix-sequences of geometric means $k$ HPD matrices for $k=2,3$. In particular our numerics show asymptotical spectral properties of the resulting matrix-sequences in accordance with the theoretical results (and conjectures) of the previous section. We introduce few examples in which the input matrix-sequences are either of Toeplitz type or are general $r$-block $d$-level GLT matrix-sequences, stemming from the approximations of differential operators via local methods such finite differences, finite elements, isogeometric analysis: in the first group we explore the geometric mean of two matrix-sequences and in the second group we consider the Karcher mean of three matrix-sequences, both taking into account one-dimensional (1D) and two-dimensional (2D) settings, i.e., $d=1,2$ and $r=1$; in the final group we deal with $r$-block GLT matrix-sequences with $r=2$. We anticipate the strong agreement of the numerical evidences with the theoretical results in Theorem \ref{4: th:two - r=d=1 GM1}, Theorem \ref{4: th:two - r=1,d general GM1}, Theorem \ref{4: th:two - r,d general GM1}, and with the conjecture regarding the Karcher mean, also when the matrix-sizes are quite moderate. The latter is a nontrivial numerical finding since all the theoretical results have an asymptotic spectral nature. 

\subsection{Example 1 (1D)}\label{30}
Let \( A_n = T_n\left((2 - 2 \cos(\theta))^2\right) \) according to Section~\ref{TM} and let \( B_n \) be the finite difference discretization of the differential operator \( (\alpha(x) u'')'' \) on the interval \( (0, 1) \), with boundary conditions \( u(0) = u(1) = u'(0) = u'(1) = 0 \), where \( \alpha(x) \) is positive on \( (0, 1) \). For the fourth order boundary value problem
\begin{equation*}
     (\alpha(x)u^{''})^{''}=f(x), \quad
u(0)=u(1)=u^{'}(0)=u^{'}(1)=0,
\end{equation*}
we approximate the derivative $u^{(4)}(x)$ by using the second-order central FD scheme characterized by the stencil $(1,-4,6,-4,1)$. 
More specifically, for $\alpha, u$ smooth enough, we have $\left. (\alpha(x)u^{''}(x))^{''} \right|_{x=x_i}$ equal to
\begin{align*}
    \frac{1}{h^{4}}&\bigg
    (\alpha_{i-2}u_{i-2}-2(\alpha_{i-1}+\alpha_{i})u_{i-1}+(\alpha_{i-1} +4\alpha_{i}+\alpha_{i+1})u_{i}\\-&2(\alpha_{i+1} +\alpha_{i})u_{i+1} +\alpha_{i+1}u_{i+2}\bigg) + O(h^2),
\end{align*}
for all $i=2,3,....,n+1$; here, $h=\frac{1}{n+3}$ and $x_{i}=ih$ for $i=0,1,....,n+3$.
\begin{center} % Center the content
\begin{tikzpicture}
% Draw a horizontal line
\draw[thick] (0,0) -- (7,0);
% Blue dots (first two and last two grids)
\fill[blue] (0,0) circle (2pt) node[below] {$x_0$} node[above] {$0$};
\fill[blue] (1,0) circle (2pt) node[below] {$x_1$};
\fill[blue] (6,0) circle (2pt) node[below] {$x_{n+2}$};
\fill[blue] (7,0) circle (2pt) node[below] {$x_{n+3}$} node[above] {$1$};

% Red dots (center grids)
\fill[red] (2,0) circle (2pt) node[below] {$x_2$};
\fill[red] (3,0) circle (2pt) node[below] {$x_3$};
\fill[red] (4,0) circle (2pt) node[below] {$x_n$};
\fill[red] (5,0) circle (2pt) node[below] {$x_{n+1}$};

% Dots between x_2 and x_{n+1}
\node at (2.5, 0) {$\cdot$};
\node at (3.5, 0) {$\cdot$};
\node at (4.5, 0) {$\cdot$};

% Curly brackets above the red grids
\draw [decorate,decoration={brace,amplitude=10pt,raise=4pt},yshift=0pt]
(2,0) -- (5,0) node [midway,yshift=15pt] {};

\end{tikzpicture}
\end{center}
By taking into account the homogeneous boundary conditions and by neglecting the $O(h^2)$ approximation error, we approximate the nodal value $u(x_{i})$ with the value of $u_{i}$ for $i=0,1,....,n+3$, where $u_{0}=u_{1}=u_{n+2}=u_{n+3}=0$ and $u=(u_{2},....,u_{n+1})^{T}$ is the solution of the linear system
\begin{align*}
    \alpha_{i-2}&u_{i-2}-2(\alpha_{i-1}+\alpha_{i})u_{i-1}+(\alpha_{i-1} +4\alpha_{i}+\alpha_{i+1})u_{i}\\-&2(\alpha_{i+1} +\alpha_{i})u_{i+1} +\alpha_{i+1}u_{i+2}=h^{4}f(x),
\end{align*}
for all $i=2,3,....,n+1$.

The structure of the resulting matrix $B_{n}=B_{n}(\alpha)$ is as reported below
\begin{equation}\label{eq: 4th-der-var}
\scalebox{0.74}{$
\begin{pmatrix}
    (\alpha_3 + 4\alpha_2 + \alpha_1) &-2(\alpha_3 + \alpha_2) &\alpha_3&  & & & \\
    -2(\alpha_2 + \alpha_3) & (\alpha_4 + 4\alpha_3 + \alpha_2) & -2(\alpha_4 + \alpha_3) & \alpha_4  & &  &\\

    \alpha_3 & -2(\alpha_3 + \alpha_4) & (\alpha_5 + 4\alpha_4 + \alpha_3) & -2(\alpha_5 + \alpha_4)  &  \alpha_5 & &\\

   \ddots& \ddots & \ddots & \ddots & \ddots &  &  \\
    \alpha_{n-2} & -2(\alpha_{n-2} + \alpha_{n-1}) & (\alpha_{n-2} + 4\alpha_{n-1} + \alpha_{n}) & -2(\alpha_{n} + \alpha_{n-1}) & \alpha_{n} & \\

   & \alpha_{n-1} & -2(\alpha_{n-1} + \alpha_{n}) & (\alpha_{n-1} + 4\alpha_{n} + \alpha_{n+1}) & -2(\alpha_{n+1} + \alpha_{n})  \\
   & & \alpha_{n} & -2(\alpha_{n} + \alpha_{n+1}) & (\alpha_n + 4\alpha_{n+1} + \alpha_{n+2})
\end{pmatrix}.
$}
\end{equation}
Looking at $B_{n}$ in (\ref{eq: 4th-der-var}), we observe that it can be written as 
\[
B_{n}=B_{n}(\alpha)=D_{n}^{+}K_{n}^{+} + D_{n}K_{n} + D_{n}^{-}K_{n}^{-},
\]
with
\[ K_{n}^{+}=
\begin{pmatrix}
1 & -2 & 1 & & & & \\
& 1 & -2 & 1 & & & \\
& & 1 & -2 & 1 & & \\
& & & 1 & -2 & \ddots & \\
& & & & 1 & \ddots & 1 \\
& & & & & \ddots & -2 \\
& & & & & & 1 \\
\end{pmatrix}= T_{n}(1-2e^{-i\theta} +e^{-2i\theta} ),
\]
\[
K_{n}=
\begin{pmatrix}
4 & -2 &  &  &  &  &  \\
-2 & 4 & -2 &  &  &  &  \\
 & -2 & 4 & -2 &  &  &  \\
 &  & -2 & 4 & -2 &  &  \\
 &  &  & \ddots & \ddots & \ddots &  \\
 &  &  &  & -2 & 4 & -2 \\
 &  &  &  &  & -2 & 4 \\
\end{pmatrix}
=T_{n}(4-2e^{-i\theta} -2e^{i\theta} ),
\]
\[
K_{n}^{-}=
\begin{pmatrix}
1 & & & & & & \\
-2 & 1 & & & & & \\
1 & -2 & 1 & & & & \\
& 1 & -2 & 1 & & & \\
& & \ddots & \ddots & \ddots & & \\
& & & 1 & -2 & 1 & \\
& & & & 1 & -2 & 1 \\
\end{pmatrix}
=T_{n}(1-2e^{i\theta} +e^{2i\theta} ).
\]
\begin{eqnarray*}
D_{n}^{+} & = & \underset{i=1,2....,n }{ \text{diag}}\alpha_{i+2}=\underset{i=1,2....,n }{ \text{diag}}
\alpha(x_{i+2}), \\
D_{n}^{}  & = & \underset{i=1,2....,n }{ \text{diag}}\alpha_{i+1} =\underset{i=1,2....,n }{ \text{diag}} \alpha(x_{i+1}), \\
D_{n}^{-} & = &\underset{i=1,2....,n }{ \text{diag}} \alpha_{i} =\underset{i=1,2....,n }{ \text{diag}} \alpha(x_{i}).
\end{eqnarray*}
It is easy to check that grids $\mathcal{G}_{n}^{+}=\{x_{i+2}\}_{i=1,....,n},  \mathcal{G}_{n}=\{x_{i+1}\}_{i=1,....,n}$ and $\mathcal{G}_{n}^{-}=\{x_{i}\}_{i=1,....,n }$ are asymptotically uniform in $[0,1]$, according to the notion given in \cite{glt-au-grids}.
In fact, for $ \mathcal{G}_{n}^{+}=\{x_{i+2}\}_{i=1,....,n}$ we have
\begin{align*}
 \underset{i=1,2,\dots,n}{\text{max}} \left| x_{i,n} - \frac{i}{n} \right|
= &\underset{i=1,2,\dots,n}{\text{max}} \left| x_{i+2} - \frac{i}{n} \right|
= \underset{i=1,2,\dots,n}{\text{max}} \left| (i+2)h - \frac{i}{n} \right| 
\\ =& \underset{i=1,2,\dots,n}{\text{max}} \left| \frac{i+2}{n+3} - \frac{i}{n} \right|
=\underset{i=1,2,\dots,n}{\text{max}} \left| \frac{ni + 2n - i(n+3)}{n(n+3)} \right|\\
=& \underset{i=1,2,\dots,n}{\text{max}} \left| \frac{-i}{n(n+3)} \right|
\leq \frac{n}{n(n+3)} \rightarrow 0.
\end{align*}
\\
For $ \mathcal{G}_{n}=\{x_{i+1}\}_{i=1,....,n}$ we have
\begin{align*}
  \underset{i=1,2,\dots,n}{\text{max}} \left| x_{i,n} - \frac{i}{n} \right|
= &\underset{i=1,2,\dots,n}{\text{max}} \left| x_{i+1} - \frac{i}{n} \right|
= \underset{i=1,2,\dots,n}{\text{max}} \left| (i+1)h - \frac{i}{n} \right|  
\\ =& \underset{i=1,2,\dots,n}{\text{max}} \left| \frac{i+1}{n+3} - \frac{i}{n} \right|
= \underset{i=1,2,\dots,n}{\text{max}} \left| \frac{ni + n - i(n+3)}{n(n+3)} \right|\\
=& \underset{i=1,2,\dots,n}{\text{max}} \left| \frac{-2i}{n(n+3)} \right|
\leq \frac{2n}{n(n+3)} \rightarrow 0.
\end{align*}
For $ \mathcal{G}_{n}^{-}=\{x_{i}\}_{i=1,....,n}$ we have
\begin{align*}
 \underset{i=1,2,\dots,n}{\text{max}} \left| x_{i,n} - \frac{i}{n} \right|
=& \underset{i=1,2,\dots,n}{\text{max}} \left| x_{i} - \frac{i}{n} \right|
= \underset{i=1,2,\dots,n}{\text{max}} \left| ih - \frac{i}{n} \right|  
\\ =& \underset{i=1,2,\dots,n}{\text{max}} \left| \frac{i}{n+3} - \frac{i}{n} \right|
= \underset{i=1,2,\dots,n}{\text{max}} \left| \frac{ni - i(n+3)}{n(n+3)} \right|\\
=& \underset{i=1,2,\dots,n}{\text{max}} \left| \frac{-3i}{n(n+3)} \right|
\leq \frac{3n}{n(n+3)} \rightarrow 0.
\end{align*}
Hence, by~\textbf{GLT~2}, part 2, and by \cite{glt-au-grids}, we deduce that $\{D_{n}^{+}\}_{n}\sim_{\text{GLT}} \alpha(x) $, $\{D_{n}^{}\}_{n}\sim_{\text{GLT}} \alpha(x) $ and $\{D_{n}^{-}\}_{n}\sim_{\text{GLT}} \alpha(x) $. In conclusion, by invoking \textbf{GLT~2}-\textbf{GLT~3}, we infer that
\begin{align*}
    &\{ B_{n}\}_{n} \sim_{\rm GLT} \alpha(x)(1-2e^{-i\theta} +e^{-2i\theta} )+\alpha(x)(4-2e^{-i\theta} -2e^{i\theta} )  + \alpha(x)(1-2e^{i\theta} +e^{2i\theta} ) \\
 &\{ B_{n}\}_{n} \sim_{\rm GLT} \alpha(x)(2-2\cos(\theta))^2.
\end{align*}
\subsection*{Eigenvalue distribution}
%$\{G(A_{\bm{n}}, B_{\bm{n}})\}_{\bm{n}} \sim_{\sigma, \lambda} (\kappa \xi)^{1/2}$

We begin by numerically verifying the eigenvalue distribution of the matrix-sequence  $\{G(A_{{n}}, B_{{n}})\}_{{n}}$ with respect to its GLT symbol
$(\kappa \xi)^{1/2}$ according to Theorem \ref{r=d=1 distributions}, with $A_n=T_n\left((2 - 2 \cos(\theta))^2\right)$, $B_n=B_n(\alpha)$ as in (\ref{eq: 4th-der-var}) with $\alpha(x)=x$, $\kappa(x,\theta)=(2 - 2 \cos(\theta))^2$, $\xi(x,\theta))=\alpha(x)(2-2\cos(\theta))^2$. In Figure \ref{fig:exp1(1D)}, we compare the eigenvalues of the geometric mean with a uniform sampling of the symbol. It is evident that, as \( n \) increases, the symbol provides a better and better approximation of the eigenvalues. Similar results for the two-dimensional case are shown in Example \ref{31}, Figure \ref{fig:exp2(2D)}.
\begin{figure}[H]
   \begin{minipage}{0.54\textwidth}
     \centering
     \includegraphics[width=\linewidth, height=6cm]{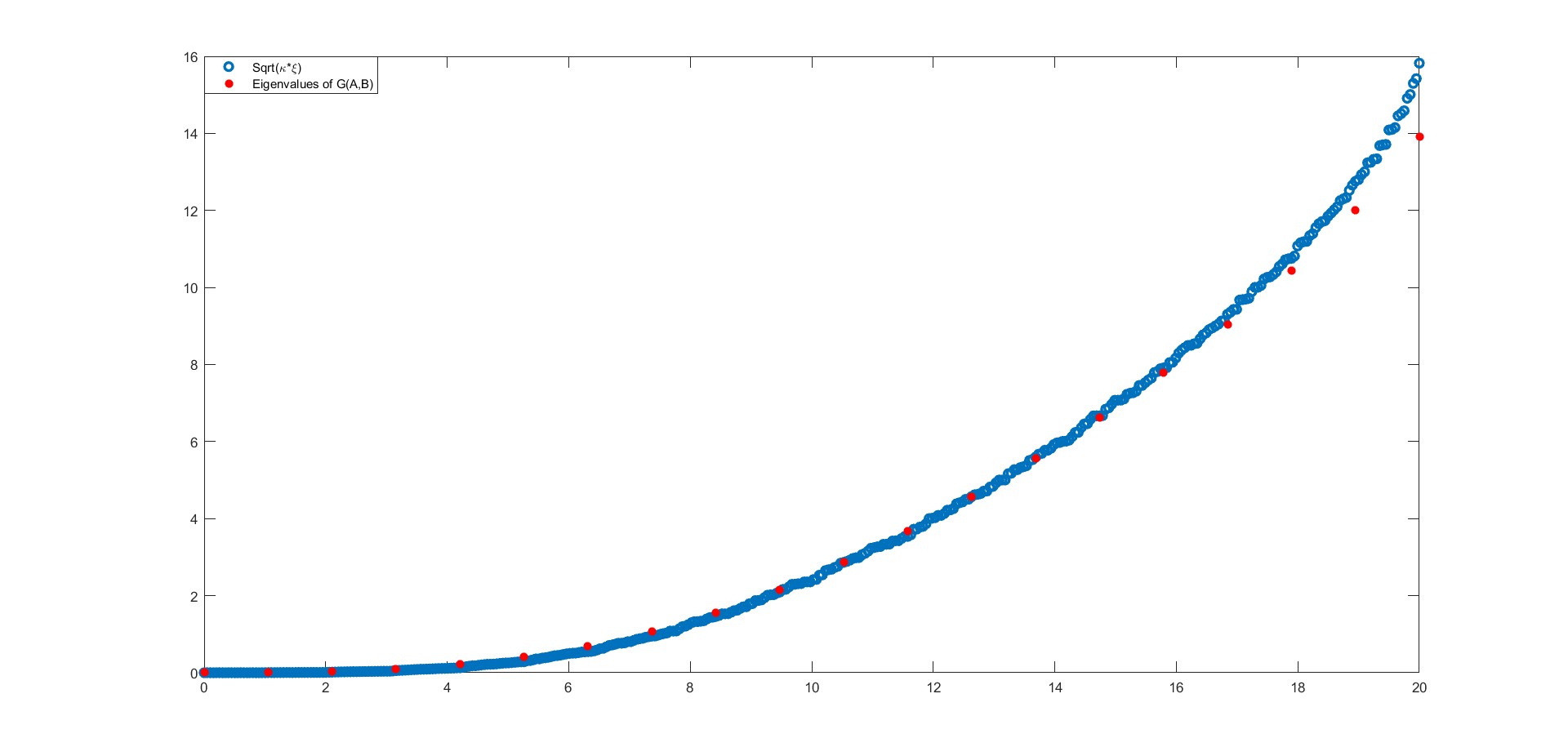} % Replace with your image
     \captionsetup{labelformat=empty}
     \caption*{}
   \end{minipage}\hfill
   \begin{minipage}{0.52\textwidth}
     \centering
     \includegraphics[width=\linewidth, height=6cm]{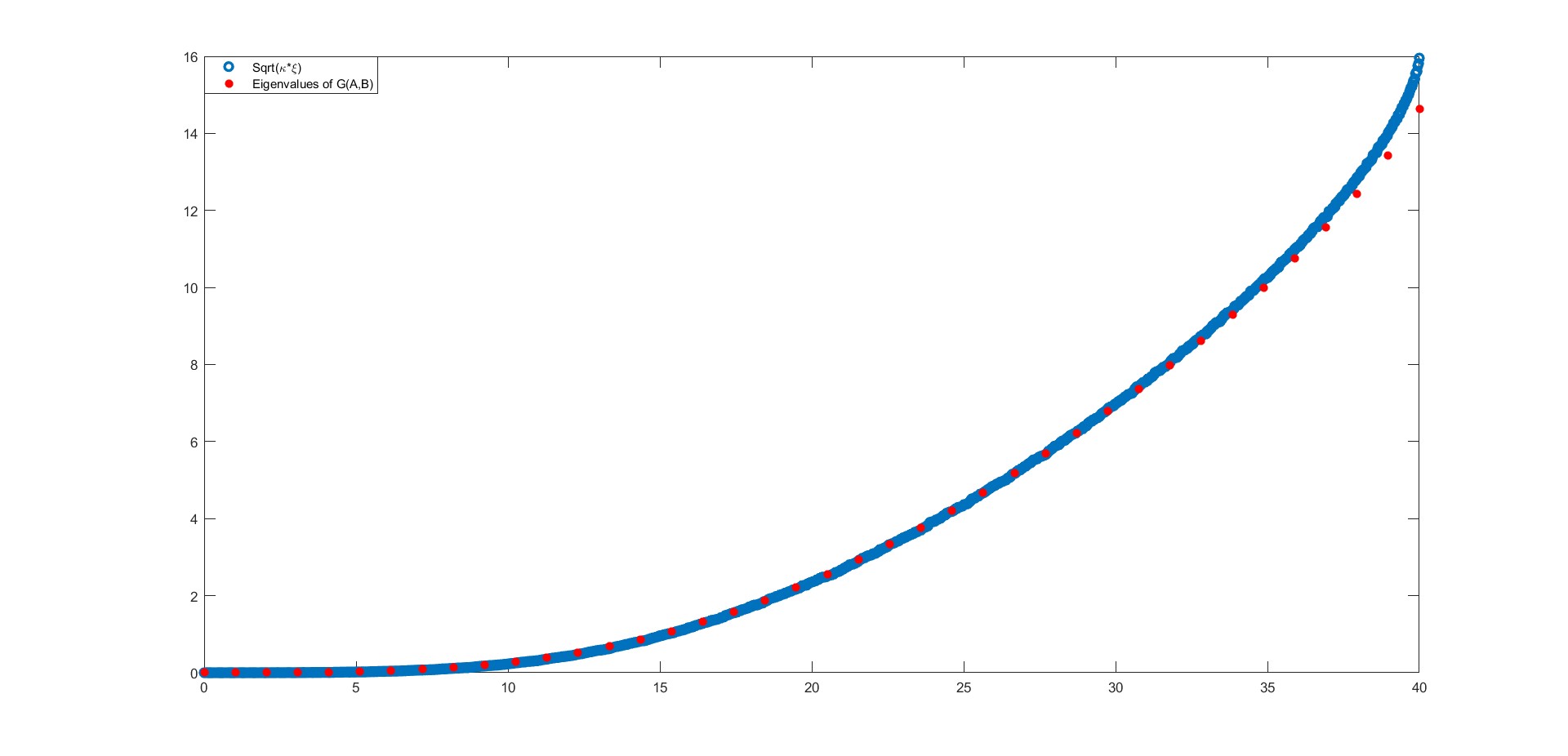} % Replace with your image
     \captionsetup{labelformat=empty}
     \caption*{}
   \end{minipage}
   \caption{{\textbf{ Example 1 (1D).}
Comparison between eigenvalues of \(G(A_n,B_n)\) (red stars; \(n=20,40\))
versus the symbol \( (\kappa\xi)^{\frac{1}{2}} \) (blue circles)}}
\label{fig:exp1(1D)}
\end{figure}
\subsection{Example 2 (2D)}\label{31}
Let $A_{\bm{n}} = T_{\bm{n}}(F(\theta_{1}, \theta_{2}))$ according to the notation in Section~\ref{TM} with $d=2$ and $B_{\bm{n}}$ be the finite difference discretization of the differential operator 
\[
\frac{\partial^2}{\partial x^2}
   \left(a(x,y)\frac{\partial^2}{\partial x^2}\right)+
\frac{\partial^2}{\partial y^2}
   \left(b(x,y)\frac{\partial^2}{\partial y^2}\right),
\]
on the open domain \(\Omega=(0, 1)^2\), with \(a(x, y), b(x,y)\) nonnegative on the closure of $\Omega$, with homogeneous Dirichlet boundary conditions on $u(x,y)$, and zero normal derivatives at $\partial \Omega$.

Regarding $A_{\bm{n}}$, the generating function \(F(\theta_{1}, \theta_{2})\) is given by
\[
F(\theta_{1}, \theta_{2}) = f(\theta_{1}) + f(\theta_{2}),
\]
with $f(\theta) = (2 - 2 \cos(\theta))^2$.
Thus, we have
\[
A_{\bm{n}} = T_{\bm{n}}(f(\theta_{1}) + f(\theta_{2})) = T_{\bm{n}}(8 + 4 \cos^2(\theta_{1}) + 4 \cos^2(\theta_{2}) - 8 \cos(\theta_{1}) - 8 \cos(\theta_{1})).
\]
As in the one-dimensional setting in Example \ref{30}, we apply the second-order central finite difference scheme separately to the
$x$- and $y$-directions, in perfect analogy with the 1D case, and take $a(x,y)=\alpha(x)$, $b(x,y)=\alpha(y)$.
Choosing $\alpha(t)=t$ the related problem is semielliptic, but it is of separable nature. This separable nature is reflected algebraically in a tensor decomposition of the whole approximation and in fact we find that resulting global matrix is given by the Kronecker sum
\[
B_{\bm{n}}=B_{(n_1,n_2)} = B_{n_1}(\alpha) \otimes I_{n_2} + I_{n_1} \otimes B_{n_2}(\alpha),
\]
where $I_n$ denotes the identity matrix of size $n$ and $B_{n}(\alpha)$ is exactly the structure displayed in (\ref{eq: 4th-der-var}). By exploiting the GLT analysis in the one-dimensional case, the symbol for the two-dimensional matrix-sequence can be derived similarly. More precisely, we have
\begin{equation*}
    \{ B_{\bm{n}}\}_{\bm{n}} \sim_{\rm GLT} \alpha(x)(2-2\cos(\theta_{1}))^2 + \alpha(y)(2-2\cos(\theta_{2}))^2.
\end{equation*}
As shown in Figure~\ref{fig:exp2(2D)}, the agreement is remarkable even for the largest eigenvalues for which the absolute discrepancy is higher.
\begin{figure}[H]
   \begin{minipage}{0.52\textwidth}
     \centering
     \includegraphics[width=\linewidth, height=6cm]{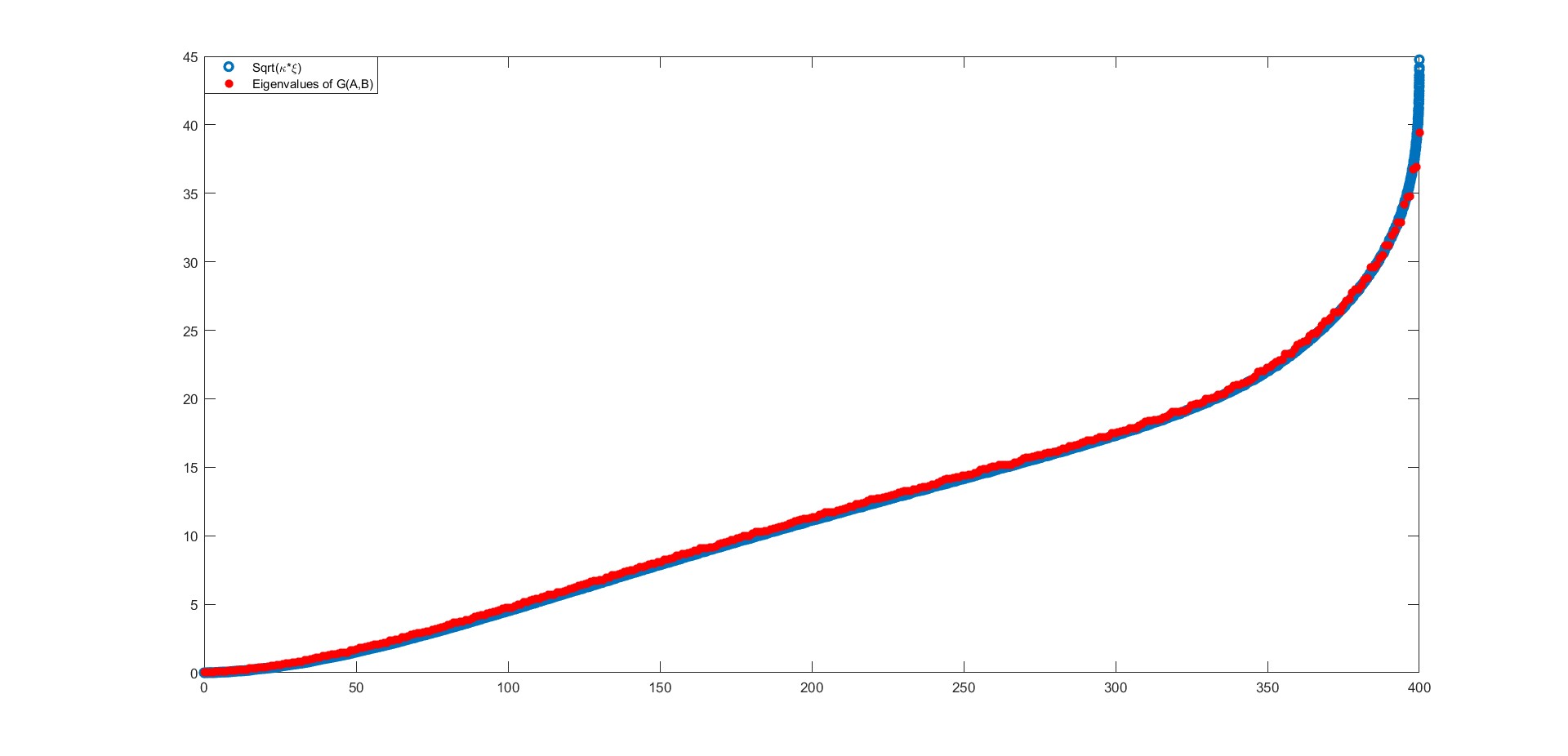} % Replace with your image
     \captionsetup{labelformat=empty}
     \caption*{}
   \end{minipage}\hfill
   \begin{minipage}{0.52\textwidth}
     \centering
     \includegraphics[width=\linewidth, height=6cm]{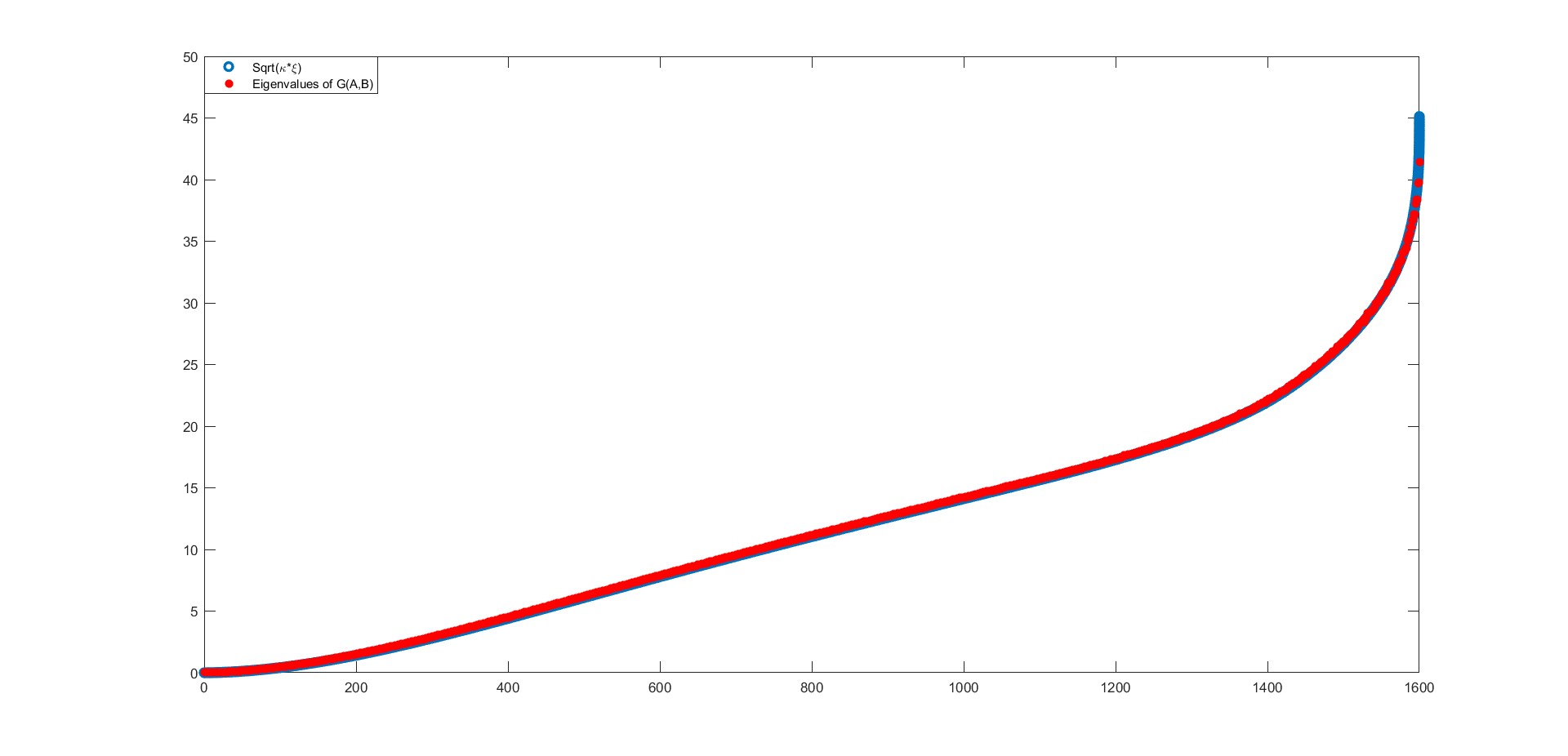} % Replace with your image
     \captionsetup{labelformat=empty}
     \caption*{}
   \end{minipage}
    \caption{{\textbf{ Example 2 (2D).}
Comparison between eigenvalues of \(G(A_n,B_n)\) (red stars; \(n=20,40\))
versus the symbol \( (\kappa\xi)^{\frac{1}{2}} \) (blue circles)}}
\label{fig:exp2(2D)}
\end{figure}
\subsection{Example 3 (1D)}\label{32}
Let \( A_n^{(1)} = T_n(3 + 2 \cos(\theta)) \), \( A_n^{(2)} = D_n(a) \), and \( A_n^{(3)} = D_n(a) T_n(4 - 2 \cos(\theta)) D_n(a) \), where \( T_n(\cdot) \) is the Toeplitz operator for $d=1$ as in Section~\ref{TM}  and \( D_n(a) \) is the diagonal matrix generated by the continuous function \( a(x)=x^2 \), according to the notation in Section~\ref{blckdiag}.
\subsection*{Eigenvalue distribution}
First, we aim to verify the eigenvalue distribution of more than two matrices using the Karcher mean. In Figure~\ref{fig:exp3(1D)}, we compare the eigenvalues of the Karcher mean with a uniform sampling of the resulting limit GLT symbol. It can be observed that the symbol provides a reliable approximation of the eigenvalues, and in fact, as \( n \) increases, the spectral distribution holds asymptotically. Similar results for the two-dimensional case are shown in Example~\ref{33}, Figure~\ref{fig:exp4(2D)}. Both types of result corroborate the conjecture on the GLT nature of the limit matrix-sequence of the Karcher means, as discussed at the end of Section~\ref{ssec: Karcher}.

\begin{figure}[H]
   \begin{minipage}{0.52\textwidth}
     \centering
     \includegraphics[width=\linewidth, height=6cm]{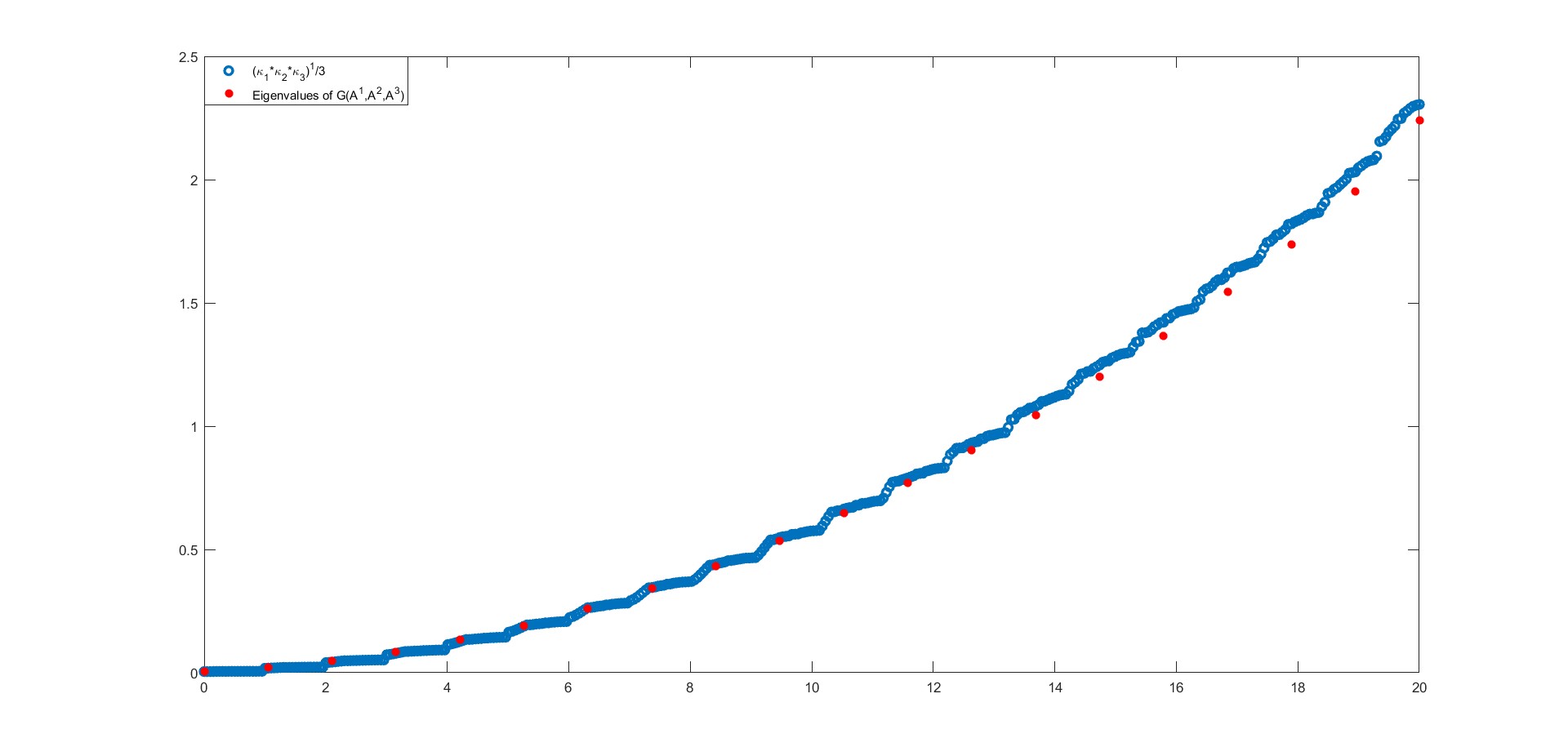} % Replace with your image
     \captionsetup{labelformat=empty}
     \caption*{}
   \end{minipage}\hfill
   \begin{minipage}{0.52\textwidth}
     \centering
     \includegraphics[width=\linewidth, height=6cm]{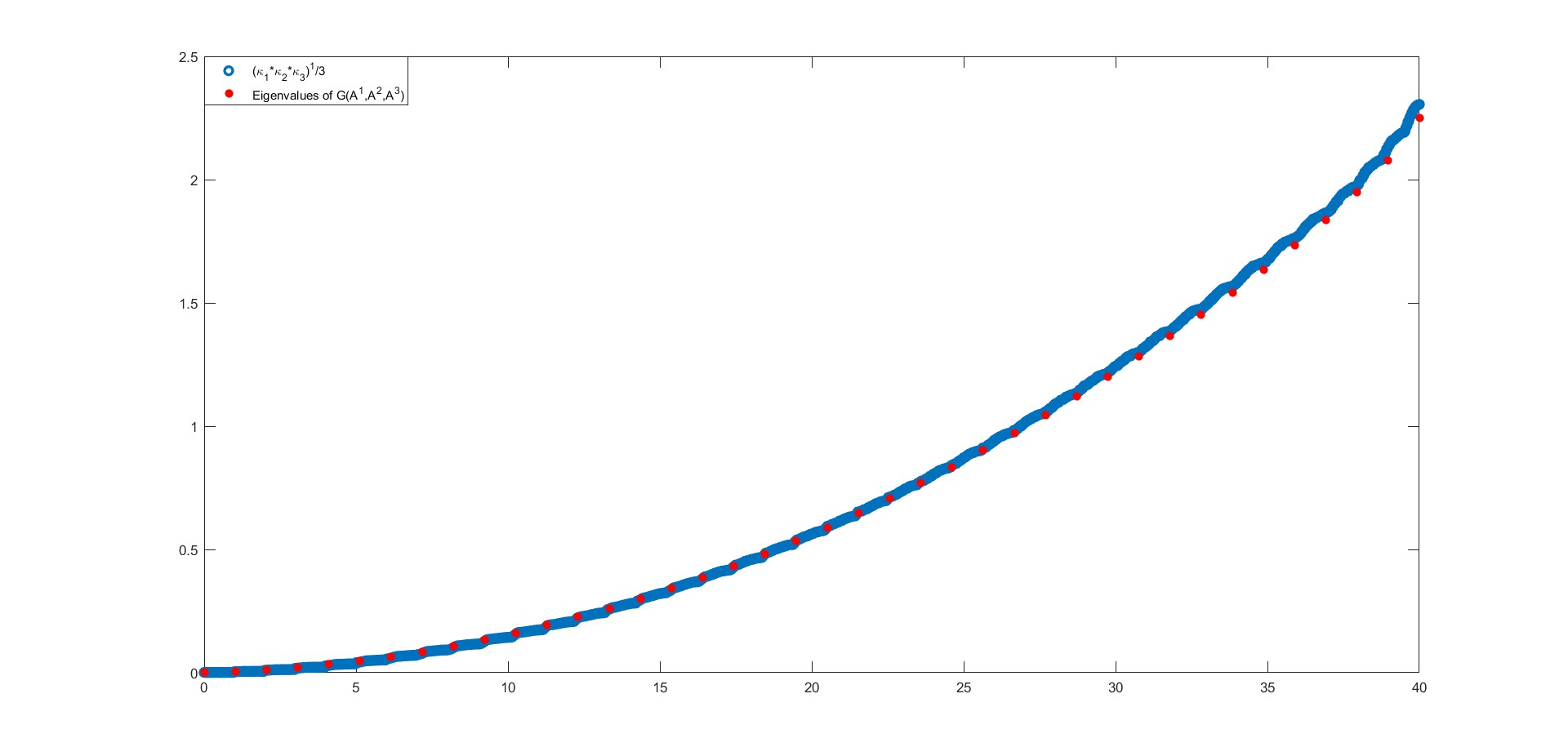} % Replace with your image
     \captionsetup{labelformat=empty}
     \caption*{}
   \end{minipage}
   \caption{{\textbf{ Example 3 (1D).}
Comparison between eigenvalues of \( (G(A_{n}^{(1)},A_{n}^{(2)},\) \(A_{n}^{(3)})) \) (red stars; \(n=20,40\))
versus the symbol \( (\kappa_{1}\kappa_{2}\kappa_{3})^{\frac{1}{3}}  \) (blue circles)}}
\label{fig:exp3(1D)}
\end{figure}
\subsection{Example 4 (2D)}\label{33}
Consider the matrix $A_{\bm{n}}^{(1)} = T_{\bm{n}}(F(\theta_{1}, \theta_{2}))$, where $T_{\bm{n}}(\cdot)$ is the Toeplitz operator as in Section~\ref{TM} with $d=2$, and the function $F(\theta_{1}, \theta_{2})$ is defined as $F(\theta_{1}, \theta_{2}) = f(\theta_{1}) + f(\theta_{2})$, with $f(\theta) = 3 + 2\cos(\theta)$, resulting in $A_{\bm{n}}^{(1)} = T_{\bm{n}}(6 + 2\cos(\theta_{1}) + 2\cos(\theta_{2}))$.

Also, consider the diagonal matrix $A_{\bm{n}}^{(2)} = D_{\bm{n}}(a)$, where, according to the notation in Section~\ref{blckdiag}, $D_{\bm{n}}(a)$ is the diagonal sampling matrix generated by a continuous function $a(x, y)=x^2+y^2$, which is positive on the domain $(0, 1)^2$.\\
We take the matrix $A_{\bm{n}}^{(3)} = D_{\bm{n}}(b) T_{\bm{n}}(G(\theta_{1}, \theta_{2})) D_{\bm{n}}(b)$, where $b(x, y)=1/x+1/y$ is positive and unbounded on the domain $(0, 1)^2$, and where the generating function $G(\theta_{1}, \theta_{2}) = f(\theta_{1}) + f(\theta_{2})$, with $f(\theta) = 4 - 2\cos(\theta)$ implies that $A_{\bm{n}}^{(3)}$ is $T_{\bm{n}}(8 - 2\cos(\theta_{1}) - 2\cos(\theta_{2}))$.

Also in the current example the agreement between the limit GLT symbol and the displayed eigenvalues is remarkably good, so giving ground to the 
conjecture on the Karcher means reported in the final part of Section \ref{ssec: Karcher}. 
%which explode with $n_1, n_2$ due to the unbounded nature of $a(x,y)=1/x$ and  $b(x,y)=1/y$.has unbounded coefficients
\begin{figure}[H]
   \begin{minipage}{0.52\textwidth}
     \centering
     \includegraphics[width=\linewidth, height=6cm]{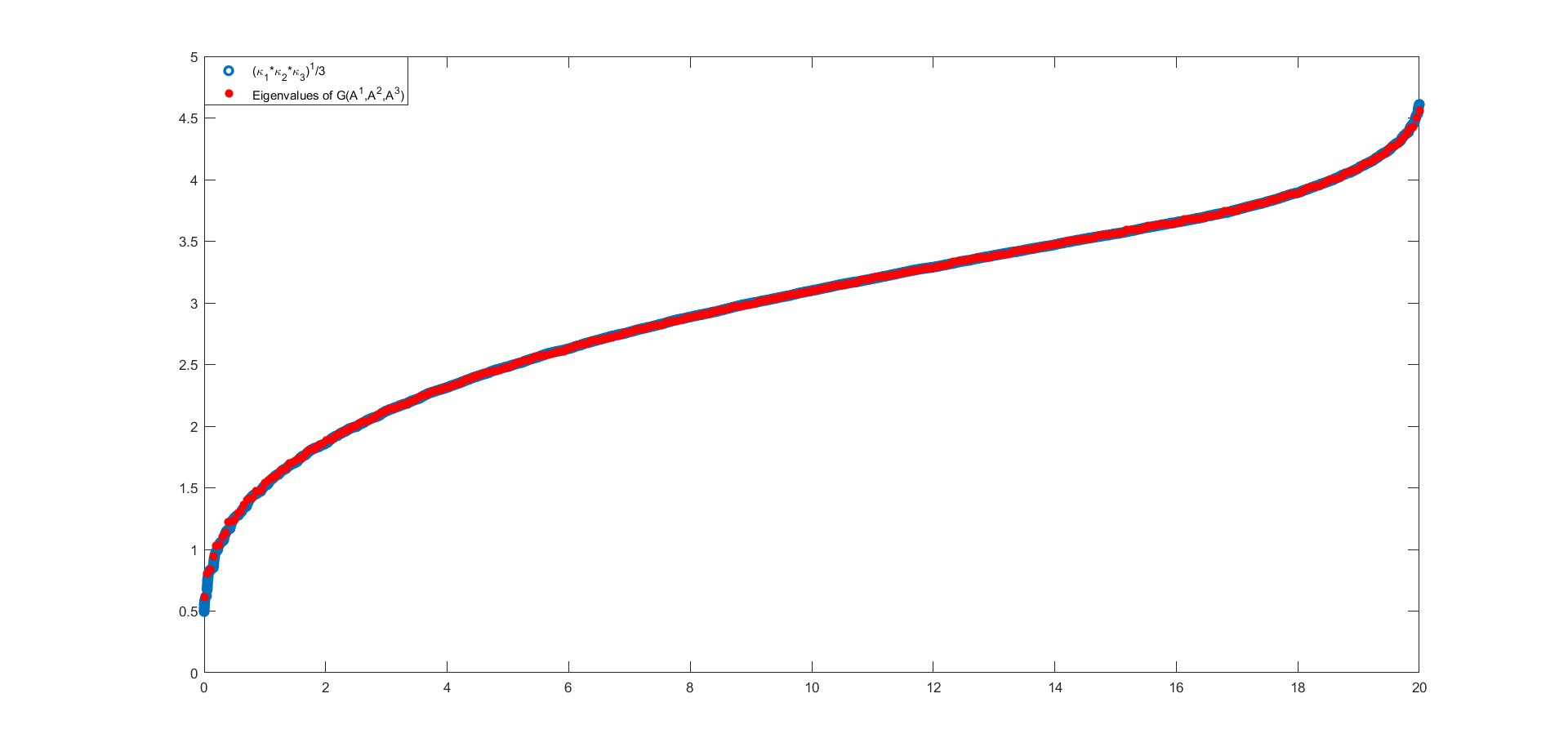} % Replace with your image
     \captionsetup{labelformat=empty}
     \caption*{}
   \end{minipage}\hfill
   \begin{minipage}{0.52\textwidth}
     \centering
     \includegraphics[width=\linewidth, height=6cm]{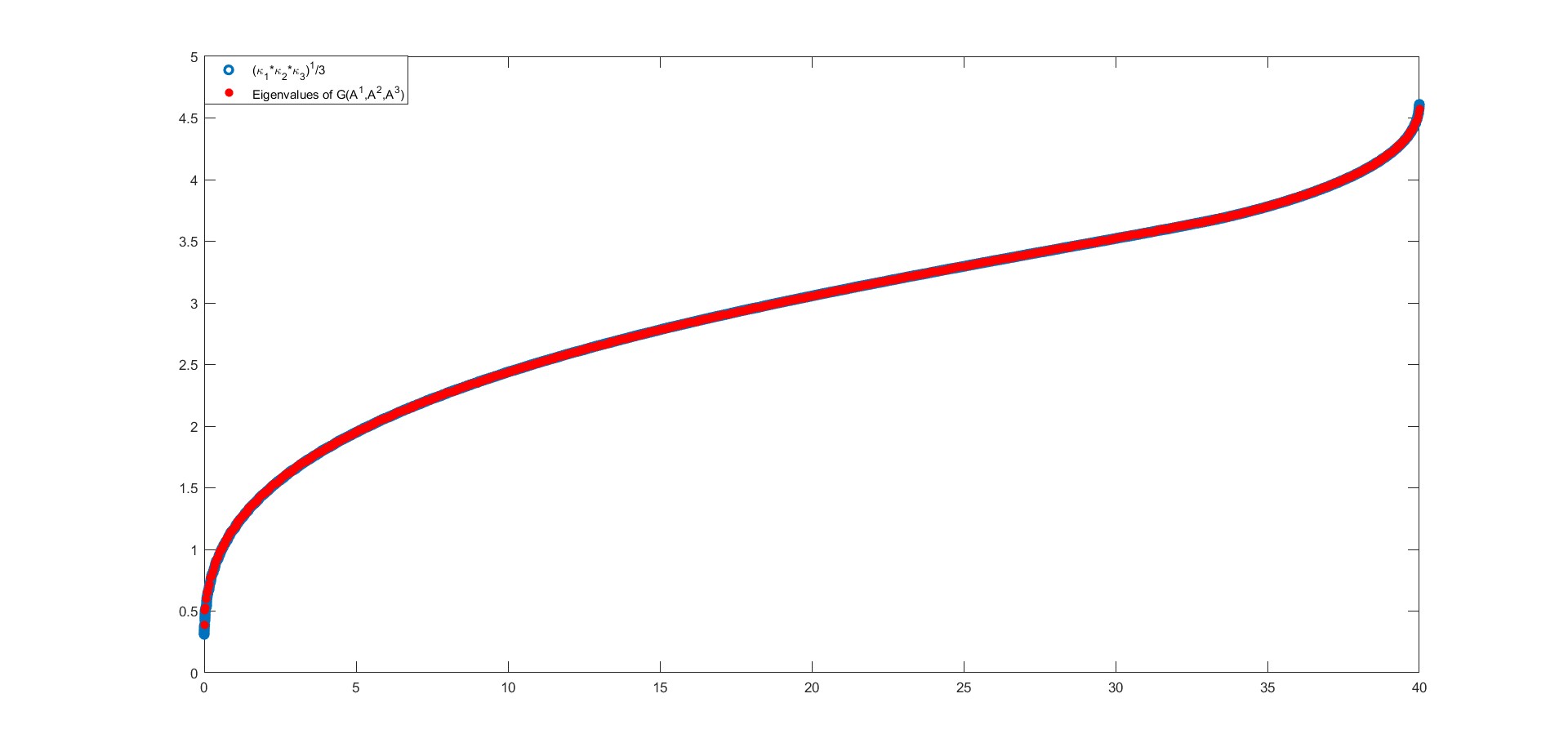} % Replace with your image
     \captionsetup{labelformat=empty}
     \caption*{}
   \end{minipage}
   \caption{{\textbf{ Example 4 (2D).}
Comparison between eigenvalues of \( (G(A_{n}^{(1)},A_{n}^{(2)},\) \(A_{n}^{(3)})) \) (red stars; \(n=20,40\))
versus the symbol \( (\kappa_{1}\kappa_{2}\kappa_{3})^{\frac{1}{3}}  \) (blue circles)}}
\label{fig:exp4(2D)}
\end{figure}
\subsection{Galerkin discretization of the Laplacian eigenvalue problem}\label{101}
The one-dimensional Laplacian eigenvalue problem is given by the differential equation
\begin{equation}\label{102}
  -u_j''(x) = \lambda_j u_j(x), \quad x \in (0,1),  
\end{equation}
with Dirichlet boundary conditions:
\[
u_j(0) = u_j(1) = 0.
\]
The goal is to find the eigenvalues $\lambda_j \in \mathbb{R^{+}}$ and corresponding eigenfunctions $u_j \in H_0^1([0,1])$, for $j=1,2,....\infty$, where
$H_0^1([0,1])$ is the standard Sobolev space of $L^2$ functions with $L^2$ derivatives and vanishing at the boundaries. 
\subsubsection{Weak formulation}
To derive the weak formulation, we multiply both sides of the differential equation by a test function $v \in H_0^1([0,1])$ and integrate over the interval $[0,1]$ i.e. $-\int_0^1 u_j''(x) v(x) \, dx = \int_0^1 \lambda_j u_j(x) v(x) \, dx$. Using integration by parts on the left-hand side and noting that the boundary terms vanish, we deduce $\int_0^1 u_j''(x) v(x) \, dx = \int_0^1 u_j'(x) v'(x) \, dx$, so that the weak form becomes
\[
\int_0^1 u_j'(x) v'(x) \, dx = \lambda_j \int_0^1 u_j(x) v(x) \, dx,
\]
for every test function $v \in H_0^1([0,1])$. The latter is rewritten compactly as
\begin{equation}\label{103}
  a(u_j, v) = \lambda_j (u_j, v),  
\end{equation}
where the bilinear form $a(u_j, v)$ and the $L^2$ inner product $(u_j, v)$ are defined as
\[
a(u_j, v) := \int_0^1 u_j'(x) v'(x) \, dx, \quad (u_j, v) := \int_0^1 u_j(x) v(x) \, dx.
\]
%Here, $u_j$ and $v$ belong to the Sobolev space $H_0^1([0,1])$, which includes functions that are square-integrable, have square-integrable derivatives, and vanish at the boundaries.

\subsubsection{Galerkin approximation}
The weak formulation allows us to use the Galerkin method to approximate the solution. Let $\mathcal{W}_n = \text{span}\{\phi_1, \dots, \phi_{N_n}\}$ be a finite-dimensional subspace of $H_0^1([0,1])$. The weak problem now becomes: find approximate eigenvalues $\lambda_{j,n} \in \mathbb{R^{+}}$ and eigenfunctions $u_{j,n} \in \mathcal{W}_n$, for $j=1,2....,N_n$ such that, for all $v_n \in \mathcal{W}_n$, we have $a(u_{j,n}, v_n) = \lambda_{j,n} (u_{j,n}, v_n)$. By expanding $u_{j,n}$ and $v_n$ in terms of the basis functions $\{u_i\}$, we obtain 
$u_{j,n} = \sum_{i=1}^{N_n} c_i \phi_i$, $v_n = \sum_{k=1}^{N_n} d_k \phi_k$, and substituting into the bilinear forms, the generalized eigenvalue problem 
\[
K_n \mathbf{c}_j = \lambda_{j,n} M_n \mathbf{c}_j
\]
is defined, where the stiffness matrix $K_n$ and mass matrix $M_n$ are defined as
\begin{equation}\label{104}
 K_n= [a(\phi_j, \phi_i)]_{ij=1}^{N_n} = \Bigr[\int_0^1 \phi_j'(x) \phi_i'(x) \, dx\Bigr]_{ij=1}^{N_n},   
\end{equation}
\begin{equation}\label{105}
 M_n= [(\phi_j, \phi_i)]_{ij=1}^{N_n} = \Bigr[\int_0^1 \phi_j(x) \phi_i(x) \, dx\Bigr]_{ij=1}^{N_n}. 
\end{equation}
Both $K_n$ and $M_n$ are symmetric and positive definite, due to the coercive character of the underlying bilinear forms.
\subsection{\texorpdfstring{Quadratic C$^{0}$ B-Spline discretization}{Quadratic C\string^0 B-Spline Discretization}}\label{107}

In the quadratic C$^{0}$ B-spline discretization of the one-dimensional Laplacian eigenvalue problem, the basis functions $\phi_1, \dots, \phi_{N_n}$  are chosen as B-splines of degree 2 defined on a uniform mesh with step size $\frac{1}{n}$. The basis functions are explicitly constructed on the knot sequence $\{0, 0, 0,  \frac{1}{n}, \frac{1}{n}, \frac{2}{n}, \frac{2}{n} \dots, \frac{n-1}{n}, \frac{n-1}{n}, 1, 1,\\ 1\},
$ (excluding the first and last B-splines, which do not vanish on the boundary of $[0,1]$); see \cite{tom}. The resulting normalized stiffness and mass matrices are given by
\begin{equation}\label{108}
\includegraphics[width=0.7\textwidth]{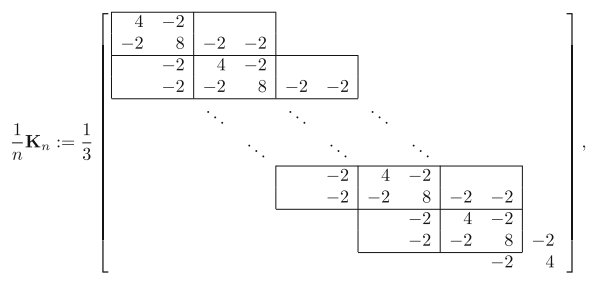} % Replace with your image filename
\end{equation}
\begin{equation}\label{109}
\includegraphics[width=0.7\textwidth]{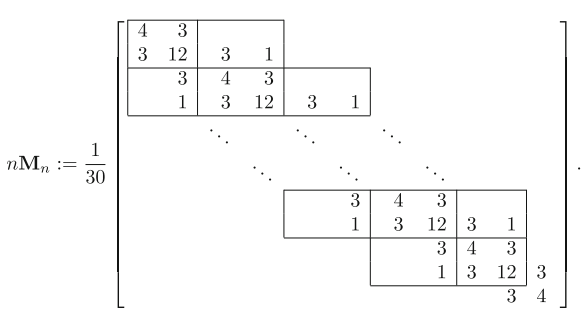} % Replace with your image filename
\end{equation}

The stiffness matrix \(\frac{1}{n} K_n\) and the mass matrix \(n M_n\) contain, as principal submatrices, the unilevel $2$-block Toeplitz matrices \(T_{n-1}(f)\) and \(T_{n-1}(h)\), according to the notation in Section~\ref{blcktop}  with $d=1$ and $r=2$, where the generating functions \(f(\theta)\) and \(h(\theta)\) are given by
\[
f(\theta) := \frac{1}{3} 
\left(
\begin{bmatrix}
0 & -2 \\
0 & -2
\end{bmatrix}
e^{i\theta}
+
\begin{bmatrix}
4 & -2 \\
-2 & 8
\end{bmatrix}
+
\begin{bmatrix}
0 & 0 \\
-2 & -2
\end{bmatrix}
e^{-i\theta}
\right),
\]
\[
f(\theta) = \frac{1}{3}
\begin{bmatrix}
4  & -2 - 2 e^{i\theta} \\
-2 - 2 e^{-i\theta} & 8 - 4 \cos \theta
\end{bmatrix},
\]
and 
\[
h(\theta) := \frac{1}{30} 
\left(
\begin{bmatrix}
0 & 3 \\
0 & 1
\end{bmatrix}
e^{i\theta}
+
\begin{bmatrix}
4 & 3 \\
3 & 12
\end{bmatrix}
+
\begin{bmatrix}
0 & 0 \\
3 & 1
\end{bmatrix}
e^{-i\theta}
\right),
\]
\[
h(\theta) = \frac{1}{30}
\begin{bmatrix}
4  & 3 + 3e^{i\theta} \\
3 + 3e^{-i\theta} & 12 + 2\cos \theta
\end{bmatrix}.
\]
Since \( \{T_n(g)\}_n \sim_{\text{GLT}} g \) for any Lebesgue integrable generating function \(g\) (Axiom \textbf{GLT 2}, part 1), the theory of GLT sequences and a basic use of the extradimensional approach \cite{prequel-extra,extra} lead to
\begin{equation}\label{114}
    \left\{ \frac{1}{n} K_n \right\}_n \sim_{\text{GLT}} f(\theta),
\end{equation}
\begin{equation}\label{115}
    \{n M_n\}_n \sim_{\text{GLT}} h(\theta).
\end{equation}
By the Axiom \textbf{GLT 3}, the linear combination of two GLT sequences is again a GLT sequence, with the symbol being the corresponding linear combination of their symbols. The matrix-sequence  \( \{L_n\}_n\) with
\begin{equation}
    L_n := \frac{1}{n}K_n + nM_n,
\end{equation}
is a linear combination of the sequences 
\(\left\{\frac{1}{n}K_n\right\}_n\) and \(\{nM_n\}_n\). Consequently, 
\[
\{L_n\}_n \sim_{\text{GLT}} f(\theta) + h(\theta)=e(\theta),
\]
where \(f(\theta)\) and \(h(\theta)\) are the symbols of the stiffness matrix \(\frac{1}{n}K_n\) and the mass matrix \(nM_n\), respectively.

\begin{figure}[H]
   \begin{minipage}{0.52\textwidth}
     \centering
     \includegraphics[width=\linewidth, height=6cm]{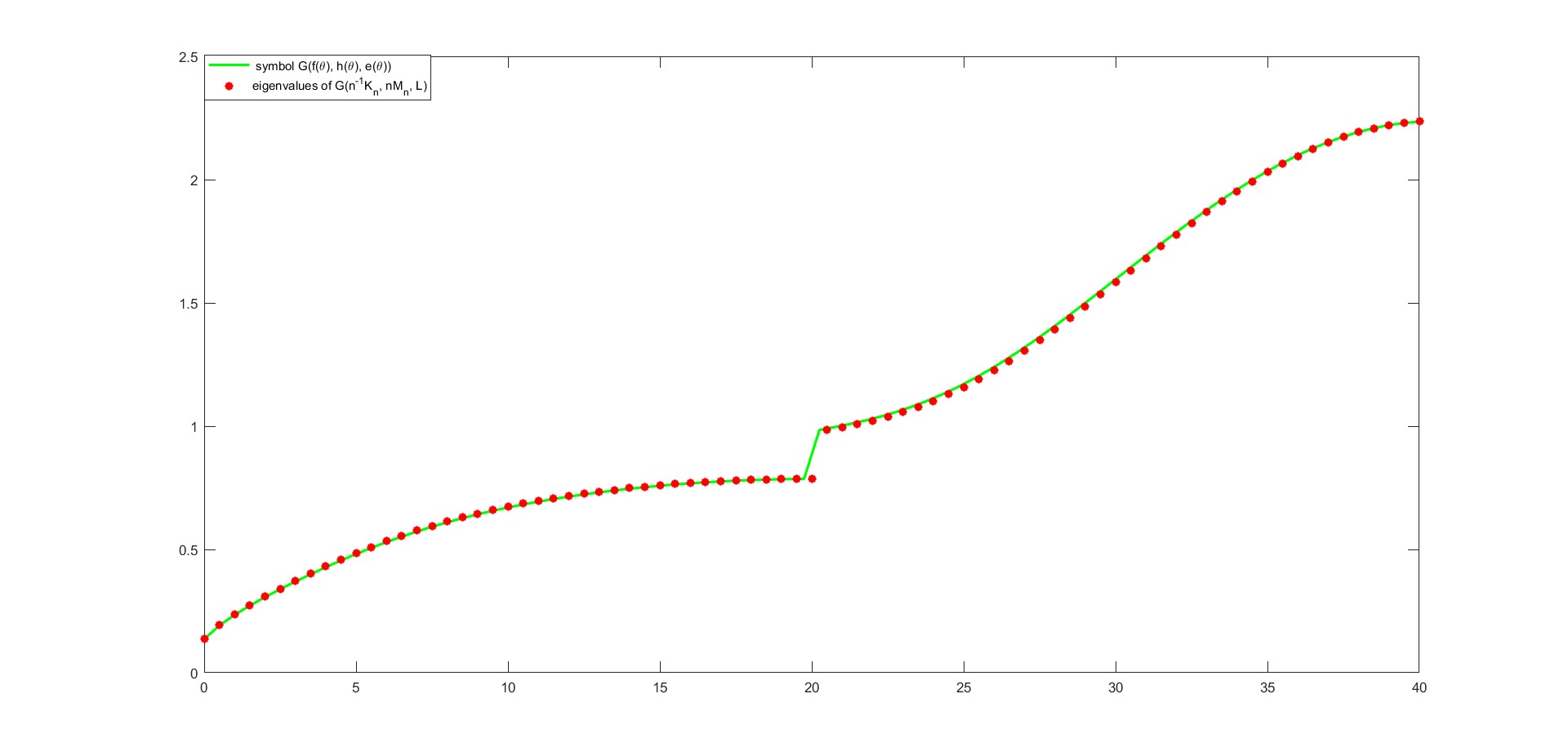} % Replace with your image
     \captionsetup{labelformat=empty}
     \caption*{}
   \end{minipage}\hfill
   \begin{minipage}{0.52\textwidth}
     \centering
     \includegraphics[width=\linewidth, height=6cm]{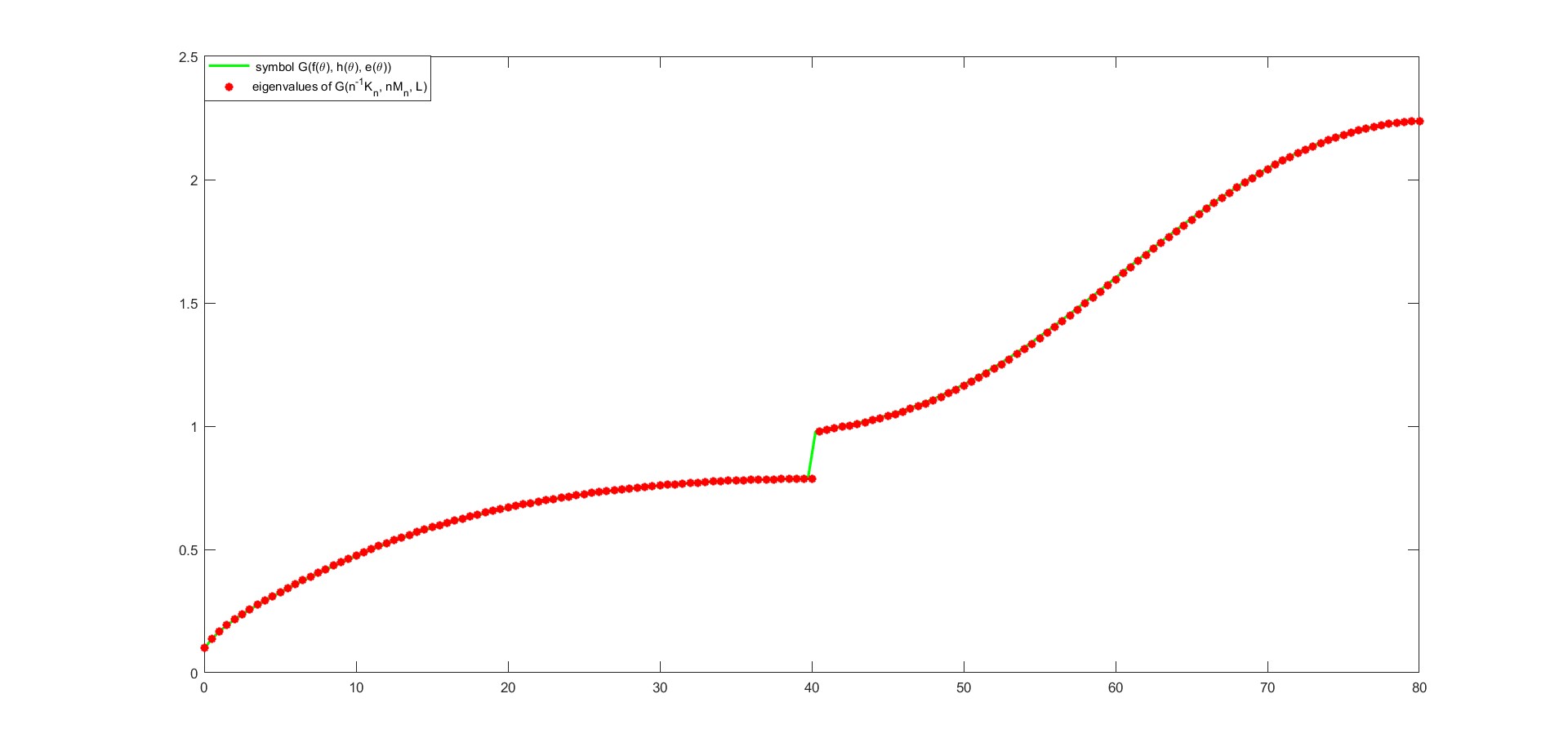} % Replace with your image
     \captionsetup{labelformat=empty}
     \caption*{}
   \end{minipage}
   \caption{{\textbf{For Quadratic case: $r=2, d=1$ .}
Comparison between eigenvalues of \( (G(n^{-1}K_{n}, nM_{n}, L_n)) \) (red stars; \(n=40,80\))
versus the symbol \( G(f(\theta),h(\theta),e(\theta)) \) (blue circles)}}
\label{fig:exp5(1D)}
\end{figure}
The numerical evidence is convincing: furthermore, from the related figure, we observe that there exist two branches of the spectrum in accordance with Theorem \ref{4: th:two - r,d general GM1} with $r=2$. In fact, when a spectral symbol is $r \times r$ matrix-valued, in according with definition~\ref{99}, we observe $r$ branches in the spectra of the associated matrix-sequences.

\subsection{\texorpdfstring{Cubic C$^{1}$ B-Spline discretization}{Cubic C\string^1 B-Spline Discretization}}

In the cubic C$^{1}$ B-spline discretization on a uniform mesh with step size $\frac{1}{n}$, the basis functions $\phi_1, \dots, \phi_{N_n}$ are chosen as the B-splines; see \cite{tom}. The resulting normalized stiffness and mass matrices are given by
\begin{equation}\label{116}
\includegraphics[width=0.7\textwidth]{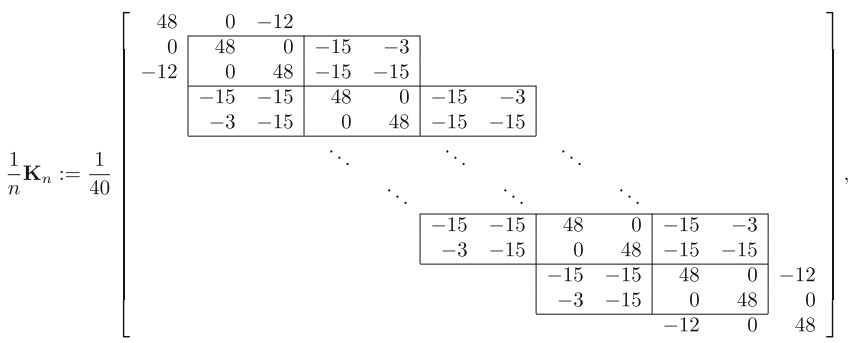} % Replace with your image filename
\end{equation}
\begin{equation}\label{117}
\includegraphics[width=0.7\textwidth]{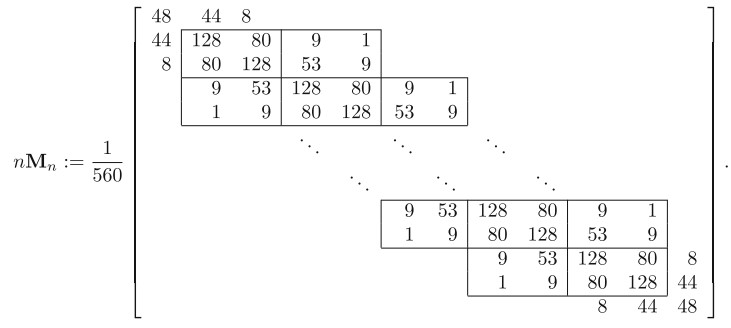} % Replace with your image filename
\end{equation}
According to the notation in Section~\ref{blcktop}  with $d=1$ and $r=2$, the stiffness matrix \(\frac{1}{n} K_n\) and the mass matrix \(n M_n\) contain, as principal submatrices, the unilevel $2$-block Toeplitz matrices \(T_{n-1}(f)\) and \(T_{n-1}(h)\), where the generating functions \(f(\theta)\) and \(h(\theta)\) are $2 \times 2$ matrix-valued functions given by
\[
f(\theta) := \frac{1}{40} 
\left(
\begin{bmatrix}
-15 & -15 \\
-3 & -15
\end{bmatrix}
e^{i\theta}
+
\begin{bmatrix}
48 & 0 \\
0 & 48
\end{bmatrix}
+
\begin{bmatrix}
-15 & -3 \\
-15 & -15
\end{bmatrix}
e^{-i\theta}
\right),
\]
\[
f(\theta) = \frac{1}{40}
\begin{bmatrix}
48-30\cos(\theta)  & -15 -3e^{-i\theta} \\
-3e^{i\theta} - 15e^{-i\theta} & 48 - 30\cos \theta
\end{bmatrix},
\]
and 
\[
h(\theta) := \frac{1}{560} 
\left(
\begin{bmatrix}
9 & 53 \\
1 & 9
\end{bmatrix}
e^{i\theta}
+
\begin{bmatrix}
128 & 80 \\
80 & 128
\end{bmatrix}
+
\begin{bmatrix}
9 & 1 \\
53 & 9
\end{bmatrix}
e^{-i\theta}
\right),
\]
\[
h(\theta) = \frac{1}{560}
\begin{bmatrix}
128+18\cos(\theta)  & 80 + 53e^{i\theta}+e^{-i\theta} \\
80 + 53e^{-i\theta}+e^{i\theta} & 128+18\cos(\theta)
\end{bmatrix}.
\]
As discussed in Section \ref{107}, the same reasoning applies to the matrix-sequence  \( \{L_n\}_n\) with
\[
L_n := \frac{1}{n}K_n + nM_n,
\]
which results in the GLT sequence symbol \( e(\theta) = f(\theta) + h(\theta) \).
The numerical evidence is again strong and we can see from the related figure that there exist two branches of the spectrum in accordance with Theorem \ref{4: th:two - r,d general GM1} with $r=2$.

\begin{figure}[H]
   \begin{minipage}{0.52\textwidth}
     \centering
     \includegraphics[width=\linewidth, height=6cm]{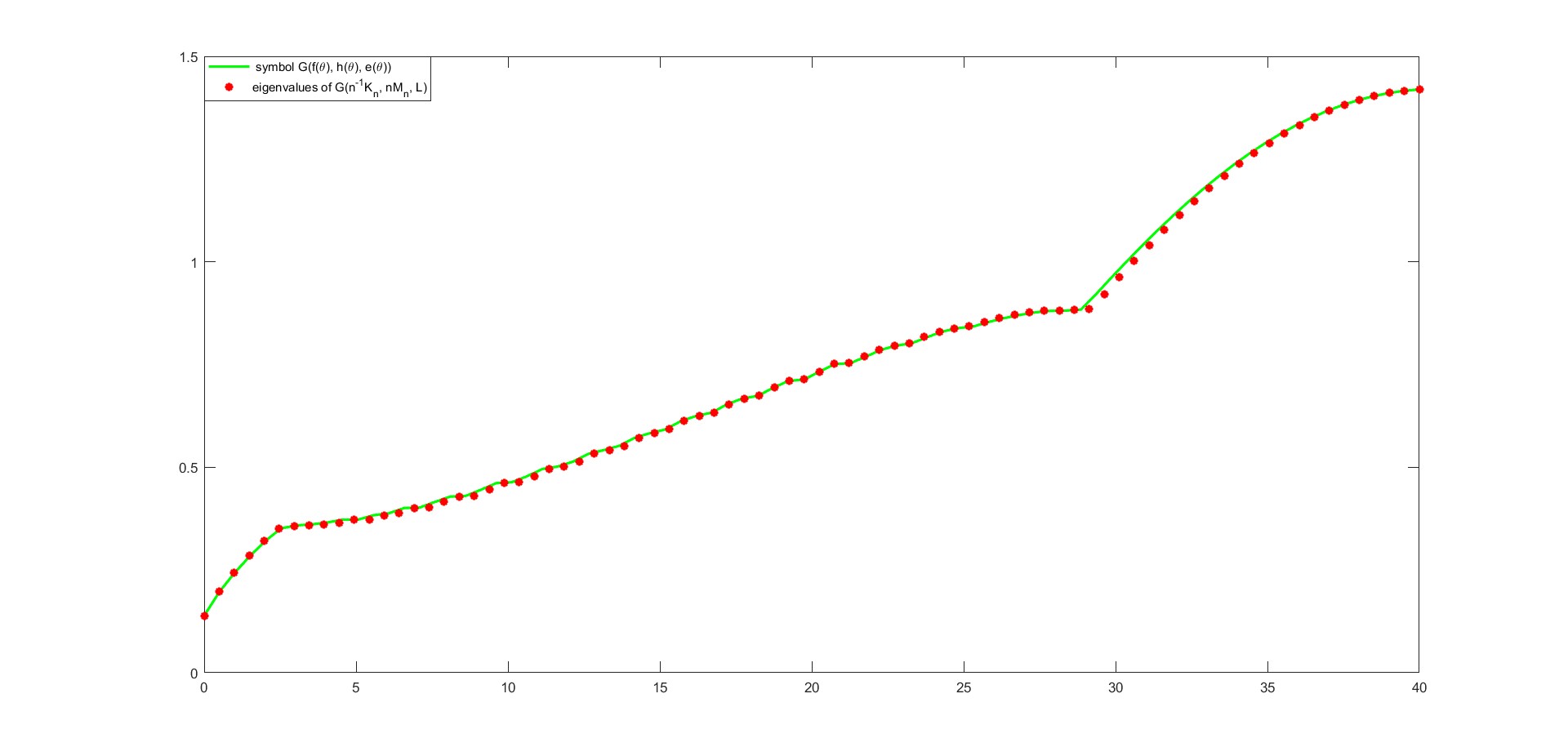} 
     \captionsetup{labelformat=empty}
     \caption*{}
   \end{minipage}\hfill
   \begin{minipage}{0.52\textwidth}
     \centering
     \includegraphics[width=\linewidth, height=6cm]{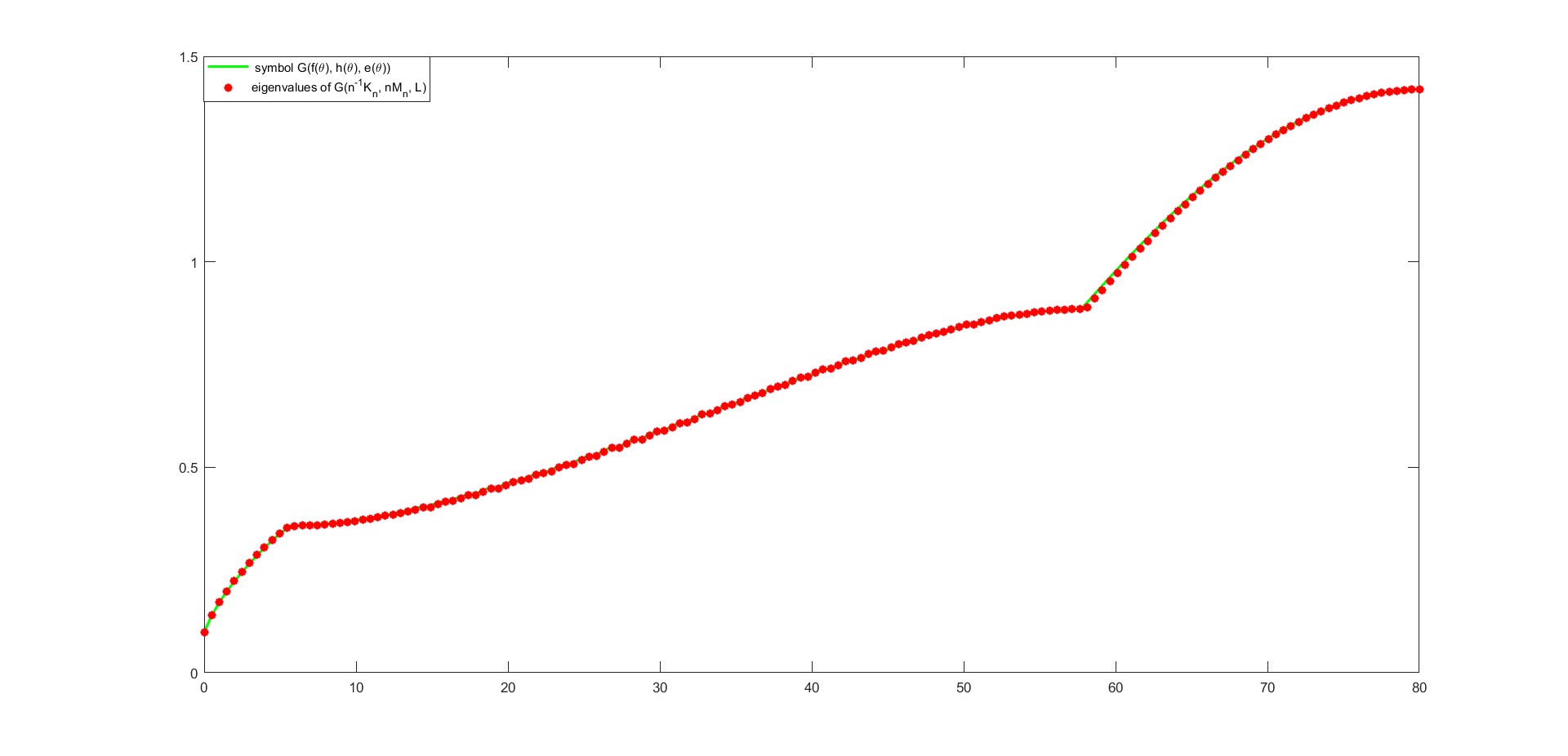} 
     \captionsetup{labelformat=empty}
     \caption*{}
   \end{minipage}
    \caption{{\textbf{For Cubic case: $r=2, d=1$ .}
Comparison between eigenvalues of \( (G(n^{-1}K_{n}, nM_{n}, L_n)) \) (red stars; \(n=40,80\))
versus the symbol \( G(f(\theta),h(\theta),e(\theta)) \) (blue circles)}}
\label{fig:exp6(1D)}
\end{figure}

\subsection{Minimal eigenvalues and conditioning}\label{ssec: extremal}

In the last part of the current numerical section, we consider the problem of understanding the extremal behavior of the spectrum related to the conditioning of
$\{G(A^{(1)}_{n},A^{(2)}_{n},A^{(3)}_{n})\}_n$ in dependence of the analytical features of the corresponding GLT symbol. The idea is borrowed from the literature, where we find papers dealing with the extremal eigenvalues in a Toeplitz setting \cite{extreme1,extreme2,extreme3,extreme4}, in a $r$-block Toeplitz setting \cite{SerraCapizzano1999a,SerraCapizzano1999b}, in a differential setting \cite{extreme diff1,extreme diff2}, including multilevel cases of $d>1$.
Here we restrict our attention to the unilevel scalar setting with $d=r=1$ and we consider again the examples in Section~\ref{30} and in Section~\ref{32}.

\subsubsection{Example 1 (1D): minimal eigenvalue}\label{exp1 (1D)}

We consider the example in Section \ref{30}. The generating function $\kappa(x,\theta)=(2 - 2 \cos(\theta))^2$ of $A_n$ has a unique zero of order $4$ at $\theta=0$, while the matrix-sequence  $\{B_{n}\}_n$ shows the GLT symbol $\xi(x,\theta))=x(2-2\cos(\theta))^2$ with combinations of zeros at $x=0$ of order $1$ and at $\theta=0$ of order $4$. According to the theory, the minimal eigenvalue of $A_n$ is positive and tends to zero as $n^{-4}$ (see \cite{extreme1,extreme2,extreme3,extreme4}). On the other hand, according to \cite{SerraCapizzano1999b,extreme diff1}, we know that  eigenvalue of $B_n$ is positive and tends to zero as $n^{-4}$. 
If we consider the geometric mean of the two symbols, we deduce that it has again zeros of order at most $4$: as a consequence, heuristically we expect that the minimal eigenvalue of $X_{n}=G(A_{n},B_{n})$ is positive and tends to zero as $n^{-4}$ as it is perfectly verified in the table below with $\alpha_j$ tending to $4$.

\begin{itemize}
    \item $X_{n}=G(A_{n},B_{n})$
    \item Take $n_{j}=40*2^{j}, \quad\quad j=0,1,2,3, $
    \item Compute $\tau_{j}= \lambda_{\min}(X_{n_{j}}), \quad \quad j=0,1,2,3, $
    \item Compute $\alpha_{j}=\log_{2}\left(\frac{\tau_{j}}{\tau_{j+1}}\right), \quad \quad j=0,1,2. $ 
\end{itemize}
\vspace{9pt}

\begin{table}[H]
  \centering
  \caption{Numerical behaviour of the minimal eigenvalue \\of Example 1, Section \ref{exp1 (1D)}}
  \label{tab:tau_alpha2}
 
  \begin{tabular}{|c |S[table-format=1.4e+4] |S[table-format=1.4]|}
    \hline
    & {$\tau_j$} & {$\alpha_j$}\\
    \hline
  $n_{0}=40$ &0.00010994 & 3.8698  \\
  \hline
  $n_{1}=80$&0.00000752  & 3.9398  \\
  \hline
  $n_{2}=160$ &0.00000049  &4.0297   \\
  \hline
  $n_{3}=320$ &0.00000003  &    \\
  \hline
  \end{tabular}
\end{table}

\subsubsection{Example 3 (1D): minimal eigenvalue}\label{exp3 (2D)}

We consider the example in Section~\ref{32}. The generating function $\kappa_1$ of $A_n^{(1)}$ is strictly positive, the generating function $\kappa_2(x,\theta)=x^2$ of $A_n^{(2)}$ has a unique zero of order $2$ at $x=0$, while the matrix-sequence  $\{A^{(3)}_{n}\}_n$ shows the GLT symbol $\kappa_3(x,\theta))=x^4(2-2\cos(\theta))^4$ with combinations of zeros at $x=0$ of order $4$ and at $\theta=0$ of order $4$. According to the theory, the minimal eigenvalue of $A_n^{(1)}$ is positive and tends to $1=\min \kappa_1$ as (see \cite{extreme1,extreme2,extreme3,extreme4}). On the other hand, according to \cite{extreme diff1,extreme diff2}, we know that  eigenvalue of $A^{(3)}_{n}$ is positive and tends to zero as $n^{-4}$, while for 
$A^{(2)}_{n}$ it is trivial tp check that the minimal eigenvalue tends to zero as $n^{-2}$. 
If we consider the geometric mean of the two symbols, we observe that it has zeros of order at most $2$: hence, heuristically we expect that the minimal eigenvalue of $X_{n}=G(A^{(1)}_{n},A^{(2)}_{n},A^{(3)}_{n})$ is positive and tends to zero as $n^{-2}$ as it is perfectly verified in the table below with $\alpha_j$ tending to $2$.

\begin{itemize}
\item $X_{n}=G(A^{(1)}_{n},A^{(2)}_{n},A^{(3)}_{n})$
    \item Take $n_{j}=40*2^{j}, \quad\quad j=0,1,2,3, $
    \item Compute $\tau_{j}= \lambda_{\min}(X_{n_{j}}), \quad \quad j=0,1,2,3, $
    \item Compute $\alpha_{j}=\log_{2}\left(\frac{\tau_{j}}{\tau_{j+1}}\right), \quad \quad j=0,1,2. $ 
\end{itemize}
\vspace{9pt}
\begin{table}[H]
  \centering
  \caption{Numerical behaviour of the minimal eigenvalue\\ of Example 3, Section \ref{exp3 (2D)}}
  \label{tab:tau_alpha3}
  \begin{tabular}{|c
                  |S[table-format=1.4e+2]
                  |S[table-format=1.4]|}
                  \hline
    & {$\tau_j$} & {$\alpha_j$}\\
    \hline
 $n_{0}=40$ &0.0014 & 2.2223  \\
  \hline
  $n_{1}=80$&0.00034797  & 2  \\
  \hline
  $n_{2}=160$ &0.000086993  & 2  \\
  \hline
  $n_{3}=320$ & 0.000021748 &    \\
  \hline
  \end{tabular}
\end{table}

%%%%
%\input{Chapter/Preliminaries}
%%%%%%%%%%%
\chapter{Maximal results for geometric means of GLT sequences}\label{GM GLT2}

%%%%%%%

Building upon the results established in Chapter~\ref{GM GL1}, this chapter presents and discusses the main contributions of our recent work~\cite{GM2_GLT2025}. In the previous chapter, we analyzed the spectral distribution of the geometric mean for two HPD GLT matrix-sequences, proving that when the GLT symbols $\kappa$ and $\xi$ are nonzero almost everywhere (or at least one is invertible a.e.), the geometric mean sequence remains GLT, with symbol $(\kappa\xi)^{1/2}$ in the scalar or commuting case, and more generally $G(\kappa,\xi)=\kappa^{1/2}\left(\kappa^{-1/2}\xi\kappa^{-1/2}\right)^{1/2}\kappa^{1/2}$ in the noncommutative case. In this chapter, we address the open question of whether these assumptions can be relaxed. Motivated by the observation that invertibility is primarily required due to non-commutativity, we rigorously prove that when the GLT symbols $\kappa$ and $\xi$ commute, the invertibility condition can be completely removed in the case where $r=1$ and $d \geq 1$.

Furthermore, we investigate the limitations of this maximal result through both theoretical analysis and extensive numerical experiments. We show that, in the general block case ($r > 1$), if the GLT symbols $\kappa$ and $\xi$ are both non-invertible and do not commute, the geometric mean sequence $\{G(A_n, B_n)\}_n$ still forms a GLT matrix-sequence with some symbol $\psi$; however, this symbol may not coincide with the classical geometric mean $G(\kappa, \xi)$, or, in some cases, $G(\kappa, \xi)$ may not be well defined. These findings demonstrate that our main results are indeed maximal: relaxing either the commutativity or invertibility assumptions can lead to fundamentally different or ill-posed spectral behavior.

A significant advance in this paper is the introduction and use of momentary GLT symbols (see \cite{bolten2022toeplitz,bolten2023note,new momentary}), which provide a finer description of the spectral distribution, especially for finite-size matrices and cases where the product $\kappa\xi$ is zero or the symbols are rank-deficient. Momentary symbols incorporate the effects of small-norm perturbations, allowing the observed richer spectral structures such as secondary eigenvalue distributions in numerical experiments to be explained within an extended GLT framework.

The work further investigates the extremal spectral behavior of the geometric mean, with particular focus on the asymptotic rate at which the minimal eigenvalues decay as the matrix size increases. Through detailed numerical experiments, we observe that the decay rate of the minimal eigenvalue is intimately connected to the order of vanishing (i.e., the multiplicity of zeros) of the associated GLT symbol.

\section{Geometric mean of GLT matrix-sequences}\label{sec GM}

We start by recalling Theorem~\ref{4: th:two - r=d=1 GM1} from Chapter~\ref{GM GL1}, which corresponds to \cite[Theorem 4]{ahmad2025matrix} and generalizes \cite[Theorem 10.2]{garoni2017}.

\begin{theorem}[\!\!\cite{ahmad2025matrix}, Theorem 4]\label{th:two - r=1,d general}
Let $r=1$ and $d\ge 1$.  Suppose \(\{A_{\bm{n}}\}_{\bm{n}} \sim_{\mathrm{GLT}} \kappa\) and \(\{B_{\bm{n}}\}_{\bm{n}} \sim_{\mathrm{GLT}} \xi\), where \(A_{\bm{n}}, B_{\bm{n}} \in \mathcal{P}_{\nu(\bm{n})}\) for every multi-index $\bm{n}$,{ with $\mathcal{P}_{\bm{(n)}}$ denoting the set of Hermitian positive definite (HPD) matrices of multi-index $\bm{n}$}. Assume that at least one of between \(\kappa\) and \(\xi\) is nonzero almost everywhere. Then
\begin{equation}\label{r=1, d general GLT}
\{G(A_{\bm{n}}, B_{\bm{n}})\}_{\bm{n}} \sim_{\mathrm{GLT}}(\kappa \xi)^{1/2},
\end{equation}
and
\begin{equation}\label{r=1, d general distributions}
\{G(A_{\bm{n}}, B_{\bm{n}})\}_{\bm{n}} \sim_{\sigma, \lambda} (\kappa \xi)^{1/2}.
\end{equation}
\end{theorem}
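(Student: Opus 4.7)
The plan is to treat this as a clean sequential application of the $\ast$-algebra axioms of the GLT class, exploiting the fact that for $r=1$ the symbol algebra is commutative. By the permutation invariance of the Ando--Li--Mathias mean, $G(A,B)=G(B,A)$, I may assume without loss of generality that $\kappa$ is nonzero almost everywhere, since the case in which $\xi$ is nonzero a.e.\ follows by exchanging the roles of the two matrix-sequences in every step below. The whole proof then amounts to walking through the definition
\[
G(A_{\bm{n}},B_{\bm{n}}) = A_{\bm{n}}^{1/2}\bigl(A_{\bm{n}}^{-1/2} B_{\bm{n}} A_{\bm{n}}^{-1/2}\bigr)^{1/2} A_{\bm{n}}^{1/2},
\]
applying one GLT axiom at each step and tracking the corresponding symbol.

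Concretely, I would first invoke Axiom \textbf{GLT 3}, part 4, to get $\{A_{\bm{n}}^{-1}\}_{\bm{n}} \sim_{\mathrm{GLT}} \kappa^{-1}$; this is the step that uses the assumption that $\kappa \ne 0$ a.e. Since each $A_{\bm{n}}$ is HPD, the map $t\mapsto t^{-1/2}$ on $(0,\infty)$ extends to a continuous function on $\mathbb{R}$, so Axiom \textbf{GLT 6} yields $\{A_{\bm{n}}^{-1/2}\}_{\bm{n}} \sim_{\mathrm{GLT}} \kappa^{-1/2}$, and similarly $\{A_{\bm{n}}^{1/2}\}_{\bm{n}} \sim_{\mathrm{GLT}} \kappa^{1/2}$. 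A double application of Axiom \textbf{GLT 3}, part 2, gives $\{X_{\bm{n}}\}_{\bm{n}} \sim_{\mathrm{GLT}} \kappa^{-1}\xi$ where $X_{\bm{n}}:=A_{\bm{n}}^{-1/2} B_{\bm{n}} A_{\bm{n}}^{-1/2}$; note that each $X_{\bm{n}}$ is HPD by the Sylvester inertia law, so Axiom \textbf{GLT 6} applies a second time and produces $\{X_{\bm{n}}^{1/2}\}_{\bm{n}} \sim_{\mathrm{GLT}} (\kappa^{-1}\xi)^{1/2}$. Two further uses of \textbf{GLT 3}, part 2, to sandwich with $A_{\bm{n}}^{1/2}$ give
\[
\{G(A_{\bm{n}},B_{\bm{n}})\}_{\bm{n}} \sim_{\mathrm{GLT}} \kappa^{1/2}(\kappa^{-1}\xi)^{1/2}\kappa^{1/2}.
\]
Here the scalar assumption $r=1$ becomes decisive: because symbols are just scalar measurable functions, multiplication is commutative and $\kappa^{1/2}(\kappa^{-1}\xi)^{1/2}\kappa^{1/2}$ simplifies almost everywhere to $(\kappa\xi)^{1/2}$, yielding \eqref{r=1, d general GLT}. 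The distributional statement \eqref{r=1, d general distributions} then follows at once from Axiom \textbf{GLT 1}, using that each $G(A_{\bm{n}},B_{\bm{n}})$ is itself HPD, so the singular value and eigenvalue distributions coincide.

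There is essentially no analytic obstacle in this argument; it is a telescoped use of the axioms. The only delicate point is the very first step: Axiom \textbf{GLT 3}, part 4, demands a.e.\ invertibility, which is why I must fix the hypothesis on $\kappa$ (or $\xi$) from the outset. This is precisely the assumption that Chapter~\ref{GM GLT2} is devoted to relaxing under commutativity of the symbols, and here, since $r=1$ forces commutativity of $\kappa$ and $\xi$ automatically, one already anticipates that even this invertibility hypothesis should be removable. That stronger statement, however, is beyond the scope of the present theorem and will require the more refined a.c.s.-based machinery discussed later in the chapter.
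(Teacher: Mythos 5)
Your proof follows the same route as the paper: invert via the last item of \textbf{GLT 3}, take square roots via \textbf{GLT 6}, conjugate and multiply via \textbf{GLT 3}, and close with \textbf{GLT 1} together with the permutation invariance $G(A,B)=G(B,A)$ to cover the case where only $\xi$ is nonzero a.e. One small slip in the justification: $t\mapsto t^{-1/2}$ does \emph{not} extend to a continuous function on $\mathbb{R}$ (it has a pole at $t=0$), so \textbf{GLT 6} cannot be invoked with it directly; what you actually need, having already obtained $\{A_{\bm{n}}^{-1}\}_{\bm{n}}\sim_{\mathrm{GLT}}\kappa^{-1}$, is to apply \textbf{GLT 6} to the Hermitian sequence $\{A_{\bm{n}}^{-1}\}_{\bm{n}}$ with the continuous function $t\mapsto|t|^{1/2}$, which agrees with the positive square root on the spectrum of each HPD $A_{\bm{n}}^{-1}$; the same function handles $\{A_{\bm{n}}^{1/2}\}_{\bm{n}}$. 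With that correction your chain of axioms coincides with the paper's argument.
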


Using the topological a.c.s. notion, we now show that the thesis of the previous theorem holds without assuming that at least one between $\kappa$ and $\xi$ is invertible almost everywhere (a.e.), thus proving \cite[Conjecture 10.1]{garoni2017}. We make use of Axioms \textbf{GLT 1}, \textbf{GLT 2}, \textbf{GLT 3}, \textbf{GLT 4}, \textbf{GLT 5}, \textbf{GLT 6} as reported in Section~\ref{ast algebra}.

\begin{theorem}\label{theorem 1}
Let $r=1$ and $d\ge 1$. 
Assume \(\{A_{\bm{n}}\}_{\bm{n}} \sim_{\mathrm{GLT}} \kappa\) and \(\{B_{\bm{n}}\}_{\bm{n}}\sim_{\mathrm{GLT}} \xi\), where \(A_{\bm{n}}, B_{\bm{n}} \in \mathcal{P}_{\nu(\bm{n})}\) for every multi-index $\bm{n}$.  Then
\[
\{G(A_{\bm{n}}, B_{\bm{n}})\}_{\bm{n}} \sim_{\mathrm{GLT}} (\kappa \xi)^{1/2},
\]
\[
\{G(A_{\bm{n}}, B_{\bm{n}})\}_{\bm{n}} \sim_{\sigma, \lambda} (\kappa \xi)^{1/2}.
\]
\end{theorem}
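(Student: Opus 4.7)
The plan is to remove the invertibility hypothesis in Theorem~\ref{th:two - r=1,d general} via a regularization argument anchored on Axiom \textbf{GLT~4}. For each $j \in \mathbb{N}$, define the shifted matrix-sequences
\[
A_{\bm{n},j} := A_{\bm{n}} + j^{-1} I_{\nu(\bm{n})}, \qquad B_{\bm{n},j} := B_{\bm{n}} + j^{-1} I_{\nu(\bm{n})}.
\]
These remain HPD, and by combining Axioms \textbf{GLT~2} (applied to the constant symbol $j^{-1}$) with \textbf{GLT~3} we obtain $\{A_{\bm{n},j}\}_{\bm{n}} \sim_{\mathrm{GLT}} \kappa + j^{-1}$ and $\{B_{\bm{n},j}\}_{\bm{n}} \sim_{\mathrm{GLT}} \xi + j^{-1}$. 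Both shifted symbols are bounded away from zero by $j^{-1}$ almost everywhere, so Theorem~\ref{th:two - r=1,d general} applies and yields
\[
\{G(A_{\bm{n},j}, B_{\bm{n},j})\}_{\bm{n}} \sim_{\mathrm{GLT}} \psi_j, \qquad \psi_j := \bigl((\kappa + j^{-1})(\xi + j^{-1})\bigr)^{1/2}.
\]
Since $\psi_j \to (\kappa\xi)^{1/2}$ pointwise a.e.\ on $[0,1]^d \times [-\pi,\pi]^d$ and the domain has finite Lebesgue measure, the convergence also holds in measure.

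The remaining ingredient is the a.c.s.\ convergence
\[
\{\{G(A_{\bm{n},j}, B_{\bm{n},j})\}_{\bm{n}}\}_j \xrightarrow{\text{a.c.s. wrt }j} \{G(A_{\bm{n}}, B_{\bm{n}})\}_{\bm{n}};
\]
once this is secured, Axiom \textbf{GLT~4} delivers $\{G(A_{\bm{n}}, B_{\bm{n}})\}_{\bm{n}} \sim_{\mathrm{GLT}} (\kappa\xi)^{1/2}$, and Axiom \textbf{GLT~1} (recalling each $G(A_{\bm{n}}, B_{\bm{n}})$ is HPD and $(\kappa\xi)^{1/2} \geq 0$ is real) gives the sought $\sim_{\sigma,\lambda} (\kappa\xi)^{1/2}$. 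The quantitative backbone of the a.c.s.\ estimate is the scalar bound
\[
0 \le \sqrt{(a+\epsilon)(b+\epsilon)} - \sqrt{ab} \le \sqrt{\epsilon(a+b+\epsilon)}, \qquad a,b \ge 0,\ \epsilon > 0,
\]
obtained by squaring and using $|\sqrt{x} - \sqrt{y}| \le \sqrt{|x-y|}$, lifted to the HPD setting via the operator monotonicity of $t \mapsto \sqrt{t}$. To handle the possibility that $\|A_{\bm{n}}\|$ and $\|B_{\bm{n}}\|$ are not uniformly bounded, I perform a spectral cutoff of both matrices at level $M$: the truncated tail has rank $o(\nu(\bm{n}))$ as $M \to \infty$ by the GLT eigenvalue distribution of Axiom \textbf{GLT~1}, and is absorbed into the rank-type term $R_{\bm{n},j}$ of the a.c.s.\ decomposition, while the bounded part admits a uniform norm bound of the form $\|G(A_{\bm{n},j}, B_{\bm{n},j}) - G(A_{\bm{n}}, B_{\bm{n}})\| \le C_M\, j^{-1/2}$ feeding the small-norm term $N_{\bm{n},j}$.

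The hardest part is the non-commutative lifting of the scalar square-root inequality. For scalar (or commuting) $A, B$ one has $G(A,B)^2 = AB$ and the estimate reduces to a Powers--St\o rmer application, but in the general non-commuting HPD setting $G(A,B)^2 \neq AB$ and one must either pass through an Ando-type integral representation of $G(A,B)$ in terms of resolvents of $A$ and $B$ combined with resolvent perturbation bounds, or leverage the joint operator-concavity of $G$ together with a Schatten-norm continuity estimate for operator-monotone functions. Either strategy yields the required norm control, closes the a.c.s.\ step, and completes the argument.
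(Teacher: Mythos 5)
There is a genuine gap, and you have already put your finger on it: the non-commutative norm estimate for $\|G(A_{\bm{n},j}, B_{\bm{n},j}) - G(A_{\bm{n}}, B_{\bm{n}})\|$ is asserted, not proved. The scalar inequality $\sqrt{(a+\epsilon)(b+\epsilon)} - \sqrt{ab} \le \sqrt{\epsilon(a+b+\epsilon)}$ lifts cleanly only to commuting HPD matrices, and although the GLT \emph{symbols} are scalars here, the matrices $A_{\bm{n}}, B_{\bm{n}}$ need not commute with each other. The two strategies you mention (Ando-type integral representation of the mean, or operator-concavity plus Schatten continuity) both run into the same difficulty: the geometric mean contains a matrix inversion, and its modulus of continuity deteriorates as $A$ or $B$ approaches singularity — exactly the regime the theorem is about, since $\kappa$ or $\xi$ may vanish on a set of positive measure. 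A uniform-in-$\bm{n}$ bound of the form $C_M\,j^{-1/2}$ after cutoff is precisely the content the a.c.s.\ step needs, and neither strategy is carried out. That this is not a routine lacuna is confirmed by the paper itself: the authors state around Conjecture~\ref{conj:GM_glt} that establishing a.c.s.\ convergence of $\{G(A_n+\varepsilon I, B_n+\varepsilon I)\}_n$ to $\{G(A_n, B_n)\}_n$ remains an \emph{open problem} in the non-commuting block case, and your plan would, if completed with dimension-free constants, settle that open problem too.

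The paper avoids this difficulty entirely by a surrogate trick that exploits the commutativity of scalar symbols at the symbol level rather than attacking the matrices directly. Only $A_{\bm{n}}$ is shifted: $A_{\bm{n},\varepsilon} = A_{\bm{n}} + \varepsilon I$, so $\kappa_\varepsilon = \kappa + \varepsilon$ is invertible a.e.\ and Theorem~\ref{th:two - r=1,d general} gives $\{G(A_{\bm{n},\varepsilon}, B_{\bm{n}})\}_{\bm{n}} \sim_{\mathrm{GLT}} (\kappa_\varepsilon\xi)^{1/2}$. The key observation is that, by Axioms \textbf{GLT 3} (product) and \textbf{GLT 6} (with $f(z) = z^{1/4}$), the inversion-free sequence $\{(A_{\bm{n},\varepsilon} B_{\bm{n}}^2 A_{\bm{n},\varepsilon})^{1/4}\}_{\bm{n}}$ has GLT symbol $(\kappa_\varepsilon^2 \xi^2)^{1/4} = (\kappa_\varepsilon\xi)^{1/2}$ — the \emph{same} symbol, but only because scalars commute. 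Hence $\{G(A_{\bm{n},\varepsilon}, B_{\bm{n}}) - (A_{\bm{n},\varepsilon} B_{\bm{n}}^2 A_{\bm{n},\varepsilon})^{1/4}\}_{\bm{n}} \sim_{\mathrm{GLT}} 0$. For the surrogate, which is a polynomial composed with a fractional power and contains no inversion, the a.c.s.\ convergence as $\varepsilon \to 0$ is elementary (Hölder continuity of $t \mapsto t^{1/4}$). One then passes this a.c.s.\ statement across the zero-distributed two-sided ideal to conclude that $\{G(A_{\bm{n}}, B_{\bm{n}})\}_{\bm{n}}$ itself is GLT with symbol $(\kappa\xi)^{1/2}$, and Axiom \textbf{GLT 1} gives the $\sigma,\lambda$-distribution. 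If you want to salvage your direct perturbation approach, this surrogate construction is the missing idea: it trades the hard perturbation bound for $G$ for an easy one for $(A B^2 A)^{1/4}$, at the cost of using the scalar (or, more generally, commuting-symbol) structure in an essential way.
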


\begin{proof}
We have
\begin{equation}\label{kappa equ}
    \{A_{\bm{n}}\}_{\bm{n}} \sim_{\mathrm{GLT}} \kappa,
\end{equation}
\begin{equation}\label{eta equ}
    \{B_{\bm{n}}\}_{\bm{n}} \sim_{\mathrm{GLT}} \xi,
\end{equation}
where $\kappa, \xi: [0,1]^d \times [-\pi,\pi]^d \to \mathbb{C}$ and $A_{\bm{n}}, B_{\bm{n}}$ are Hermitian positive definite matrices $\forall \bm{n}$. 
Therefore $\kappa, \xi: [0,1]^d \times [-\pi,\pi]^d \to \mathbb{R}^+_0$ almost everywhere: furthermore, we assume that $ \mu_{2d}(\kappa \equiv 0) > 0$ and $\mu_{2d}(\xi \equiv 0) > 0$, so that the hypotheses of \cite[Theorem 10.2]{garoni2017} for $d=1$ and of Theorem \ref{th:two - r=1,d general} for $d>1$ are violated. Let $\varepsilon > 0$, let $\kappa_\varepsilon = \kappa + \varepsilon$ and let $A_{\bm{n}, \varepsilon} = A_{\bm{n}} + \varepsilon I_{{\nu}(\bm{n})},$ where $I_{\nu(\bm{n})}$ denotes the identity matrix of size $\nu(\bm{n})$. By the first part of Axiom \textbf{GLT 2} (the identity is a special Toeplitz matrix with GLT symbol $1$), we have  $\{ \varepsilon I_{{\nu}(\bm{n})}=T_{\bm{n}}(\varepsilon) \}_{\bm{n}} \sim_{\mathrm{GLT}}\varepsilon,$ where $T_{\bm{n}}(\varepsilon)$ denotes the Toeplitz matrix generated by the constant function $\varepsilon$. Now, by exploiting linearity, i.e. the second item of Axiom \textbf{GLT 3}, it follows that
\[ 
\{ A_{\bm{n}, \varepsilon} \}_{\bm{n}} \sim_{\mathrm{GLT}} \kappa_\varepsilon, \quad \kappa_\varepsilon \geq \varepsilon \text{ almost everywhere}, 
\]
since $\kappa, \xi \geq 0$ almost everywhere due to \eqref{kappa equ} and \eqref{eta equ}.
Hence, we are again in the framework of Theorem \ref{th:two - r=1,d general}. Therefore, by  Theorem \ref{th:two - r=1,d general} we conclude that the sequence of geometric means satisfies the GLT relation
\[ 
\{G(A_{\bm{n},\varepsilon}, B_{\bm{n}}) \}_{\bm{n}} \sim_{\mathrm{GLT}} (\kappa_{\varepsilon} \xi)^{\frac{1}{2}}.
\]
Furthermore, we apply the full $*$-algebra framework of the GLT matrix-sequences as in \cite[Section 2.9]{ahmad2025matrix}. More precisely, by third item of Axiom \textbf{GLT 3} followed by Axiom \textbf{GLT 6} with the function $f(z)=z^{\frac{1}{4}}$, we obtain
\[ \{ (A_{\bm{n}, \varepsilon} B_{\bm{n}}^{2} A_{\bm{n}, \varepsilon})^{\frac{1}{4}} \}_{\bm{n}} \sim_{\mathrm{GLT}} (\kappa_\varepsilon^2 \xi^{2})^{\frac{1}{4}}=(\kappa_\varepsilon \xi)^{\frac{1}{2}}, \]
which the very same GLT symbol of $ \{G(A_{\bm{n},\varepsilon}, B_{\bm{n}}) \}_{\bm{n}} $. As a result, again by the second item of Axiom \textbf{GLT 3}, the difference between these sequences satisfies the following asymptotic GLT relation
\begin{equation}\label{rw inversion}
\{G(A_{\bm{n}, \varepsilon}, B_{\bm{n}}) - (A_{\bm{n}, \varepsilon} B_{\bm{n}}^{2} A_{\bm{n}, \varepsilon})^{\frac{1}{4}} \}_{\bm{n}} \sim_{{\mathrm{GLT}} } 0.
\end{equation}
The previous relation is the key step since we have found a new GLT matrix-sequence having the same GLT symbol as the geometric mean 
matrix-sequence $\{G(A_{\bm{n}, \varepsilon}, B_{\bm{n}})\}_{\bm{n}}$, but where no inversion is required.
Now given the structure of the performed operations, by virtue of Definition~\ref{def:acs}, the class $$ \{\{ (A_{\bm{n}, \varepsilon} B_{\bm{n}}^{2} A_{\bm{n}, \varepsilon})^{\frac{1}{4}} \}_{\bm{n}},  \varepsilon>0\}$$ is obviously an a.c.s. for $\{ (A_{\bm{n}} B_{\bm{n}}^2 A_{\bm{n}})^{\frac{1}{4}} \}_{\bm{n}} $. As a consequence, by invoking (\ref{rw inversion}), also the class $ \{\{ G(A_{\bm{n}, \varepsilon}, B_{\bm{n}}) \}_{\bm{n}},  \varepsilon>0\} $
is an a.c.s for $\{ (A_{\bm{n}} B_{\bm{n}}^2 A_{\bm{n}})^{\frac{1}{4}} \}_{\bm{n}}, $
with all the involved sequences being GLT matrix-sequences, with symbols $ (\kappa_\varepsilon \xi)^{\frac{1}{2}}, (\kappa\xi)^{\frac{1}{2}}$, $\varepsilon>0$.
Therefore, since  $ \exists \lim_{\varepsilon \to 0} (\kappa_\varepsilon \xi)^{\frac{1}{2}}= (\kappa\xi)^{\frac{1}{2}}$, by using the powerful Theorem~\ref{th: acs}, we deduce that $ \exists \lim_{\varepsilon \to 0} \text{ (a.c.s.) } \{\{ G(A_{\bm{n}, \varepsilon}, B_{\bm{n}}) \}_{\bm{n}}, \varepsilon \}= \{ (A_{\bm{n}} B_{\bm{n}}^2 A_{\bm{n}})^{\frac{1}{4}} \}_{\bm{n}} \sim_{\mathrm{GLT}} (\kappa\xi)^{\frac{1}{2}}$ and this limit coincides with $ \{ G(A_{\bm{n}}, B_{\bm{n}}) \}_{\bm{n}}$ in the a.c.s. topology, that is, 
$\{G(A_{\bm{n}}, B_{\bm{n}}) - (A_{\bm{n}} B_{\bm{n}}^2 A_{\bm{n}})^{\frac{1}{4}} \}_{\bm{n}}\sim_{{\mathrm{GLT}}} 0.$
Finally $\{(A_{\bm{n}} B_{\bm{n}}^2 A_{\bm{n}})^{\frac{1}{4}} \}_{\bm{n}}\sim_{{\mathrm{GLT}}} (\kappa\xi)^{\frac{1}{2}}$. Hence
\[
\{G(A_{\bm{n}}, B_{\bm{n}})\}_{\bm{n}} \sim_{\mathrm{GLT}} (\kappa \xi)^{1/2},
\]
so that, by Axiom \textbf{GLT 1}, we infer
\[
\{G(A_{\bm{n}}, B_{\bm{n}})\}_{\bm{n}} \sim_{\sigma, \lambda} (\kappa \xi)^{1/2}.
\]
\end{proof}
\begin{remark}[Intuition on the generalization]
\label{gen vs barrier}
One of the basic but key ingredients of the previous proof is that scalar-valued functions commute. Hence, it is reasonable to expect that the same proof also works in the case of $d$-level $r$-block GLT matrix-sequences, when assuming that the symbols commute. We collect the result in Theorem \ref{theorem 2} whose proof follows verbatim that of Theorem \ref{theorem 1}, except for minimal changes. On the other hand, when both $\kappa$ and $\xi$ are not invertible almost everywhere (degenerate) and do not commute, the expression $G(\kappa,\xi)$ is not well defined. In fact, we could replace the inversion with the standard pseudoinversion, which is denoted as $X^+$ if $X$ is any complex matrix. However, we stress that the expression
$\kappa^{\frac{1}{2}} ([\kappa^{\frac{1}{2}}]^+ \xi [\kappa^{\frac{1}{2}}]^+)^{\frac{1}{2}} \kappa^{\frac{1}{2}}$ is not the same as $\xi^{\frac{1}{2}} ([\xi^{\frac{1}{2}}]^+ \kappa [\xi^{\frac{1}{2}}]^+)^{\frac{1}{2}} \xi^{\frac{1}{2}}$ in general and this is a serious indication that the commutation between the GLT symbols is essential, when the symbols are both degenrate.
\end{remark}

We now recall the result proven in \cite{ahmad2025matrix}, concerning the general GLT setting with $d,r \ge 1$ and when at least one of the involved GLT symbols is invertible almost everywhere.

\begin{theorem}[\!\!\cite{ahmad2025matrix}, Theorem 5]\label{th:two - r,d general}
Let $r,d\ge 1$. 
Suppose \(\{A_{\bm{n}}\}_{\bm{n}} \sim_{\mathrm{GLT}} \kappa\) and \(\{B_{\bm{n}}\}_{\bm{n}} \sim_{\mathrm{GLT}} \xi\), where \(A_{\bm{n}}, B_{\bm{n}} \in \mathcal{P}_{\nu(\bm{n})}\) for every multi-index $\bm{n}$. Assume that at least one of the minimal eigenvalues of \(\kappa\) and the minimal eigenvalue of \(\xi\) is nonzero almost everywhere. Then
\begin{equation}\label{r,d general GLT}
\{G(A_{\bm{n}}, B_{\bm{n}})\}_{\bm{n}} \sim_{\mathrm{GLT}}G(\kappa, \xi),
\end{equation}
and
\begin{equation}\label{r,d general distributions}
\{G(A_{\bm{n}}, B_{\bm{n}})\}_{\bm{n}} \sim_{\sigma, \lambda} G(\kappa,\xi).
\end{equation}
Furthermore $G(\kappa,\xi)=(\kappa \xi)^{1/2}$ whenever the GLT symbols  \(\kappa\) and \(\xi\) commute.
\end{theorem}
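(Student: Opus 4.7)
The plan is to propagate GLT symbols along the explicit ALM factorization
\[
G(A_{\bm{n}},B_{\bm{n}}) = A_{\bm{n}}^{1/2}\bigl(A_{\bm{n}}^{-1/2} B_{\bm{n}} A_{\bm{n}}^{-1/2}\bigr)^{1/2} A_{\bm{n}}^{1/2}
\]
by successive invocation of the $\ast$-algebra axioms \textbf{GLT 1}--\textbf{GLT 6}. Since $G(A,B)=G(B,A)$, one may reduce by symmetry to the case in which $\kappa$ is invertible a.e.; the remaining case is recovered verbatim after swapping the roles of $\kappa$ and $\xi$ (equivalently, of $A_{\bm{n}}$ and $B_{\bm{n}}$).

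The key steps, in order, are as follows. First I would invoke Axiom \textbf{GLT 3} part 4 on $\{A_{\bm{n}}\}_{\bm{n}}$ to obtain $\{A_{\bm{n}}^{\dagger}\}_{\bm{n}}\sim_{\mathrm{GLT}}\kappa^{-1}$, noting that HPD-ness forces the pseudoinverse to coincide with the ordinary inverse. Next I would apply Axiom \textbf{GLT 6} with the continuous branch $t\mapsto t^{-1/2}$, legitimate because the spectra of the $A_{\bm{n}}$ sit in $\mathbb{R}^+$, to promote this to $\{A_{\bm{n}}^{-1/2}\}_{\bm{n}}\sim_{\mathrm{GLT}}\kappa^{-1/2}$. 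Two consecutive applications of Axiom \textbf{GLT 3} part 2 then produce the inner sandwich $\{A_{\bm{n}}^{-1/2} B_{\bm{n}} A_{\bm{n}}^{-1/2}\}_{\bm{n}}\sim_{\mathrm{GLT}}\kappa^{-1/2}\xi\kappa^{-1/2}$; by Sylvester's law of inertia these matrices are HPD, hence Axiom \textbf{GLT 6} applies again with $t\mapsto t^{1/2}$, giving the square-rooted inner sandwich with symbol $(\kappa^{-1/2}\xi\kappa^{-1/2})^{1/2}$. Finally I would derive $\{A_{\bm{n}}^{1/2}\}_{\bm{n}}\sim_{\mathrm{GLT}}\kappa^{1/2}$ by one more use of \textbf{GLT 6}, and close with two further applications of the product rule to obtain the outer sandwich. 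The resulting symbol is literally $G(\kappa,\xi)=\kappa^{1/2}(\kappa^{-1/2}\xi\kappa^{-1/2})^{1/2}\kappa^{1/2}$, and the singular value and eigenvalue distributions in \eqref{r,d general distributions} follow immediately from Axiom \textbf{GLT 1} together with the Hermitian positive definiteness of each $G(A_{\bm{n}},B_{\bm{n}})$. The commutative specialization $G(\kappa,\xi)=(\kappa\xi)^{1/2}$ is then a pointwise check: whenever $\kappa(x,\theta)$ and $\xi(x,\theta)$ commute a.e.\ they are simultaneously diagonalizable, and the identity reduces on each common eigenvector to the scalar relation $g(a,b)=(ab)^{1/2}$.

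The main delicate point, absent from the $r=1$ argument, is bookkeeping through the noncommutative matrix-valued multiplication: one cannot collapse the ALM sandwich into $(\kappa\xi)^{1/2}$ as in Theorem~\ref{theorem 1} and must keep the outer/inner $\kappa^{\pm 1/2}$ factors in their correct places at every step. The hypothesis that at least one of $\kappa,\xi$ is invertible a.e.\ is precisely the leverage needed to enable Axiom \textbf{GLT 3} part 4; without it, the inverse matrix-sequence is not a priori GLT and the entire axiom chase collapses. Lifting that hypothesis in the genuinely noncommutative regime therefore cannot be done by the same route, and is the task that motivates the a.c.s.-based perturbation techniques recalled in Theorem~\ref{theorem 1} and the results developed in Chapter~\ref{GM GLT2}.
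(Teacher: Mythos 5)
Your proposal follows the paper's proof step for step: reduce by permutation invariance of $G(\cdot,\cdot)$ to the case $\kappa$ invertible a.e., invert via Axiom \textbf{GLT 3} (pseudoinverse rule), extract square roots via Axiom \textbf{GLT 6}, use the product rule to build the inner and outer sandwiches of the ALM formula, and read off both distributions from Axiom \textbf{GLT 1} since $G(A_{\bm{n}},B_{\bm{n}})$ is HPD. The one slip is in the second step: once you have $\{A_{\bm{n}}^{-1}\}_{\bm{n}}\sim_{\mathrm{GLT}}\kappa^{-1}$, the function to feed into \textbf{GLT 6} is $t\mapsto t^{1/2}$ (continuous on $\mathbb{R}_{\geq 0}$ and extendable continuously to $\mathbb{C}$), not $t\mapsto t^{-1/2}$; the latter is unbounded near $0$, hence not admissible under the continuity hypothesis of \textbf{GLT 6}, and avoiding it is precisely the reason the preliminary inversion step via the \textbf{GLT 3} pseudoinverse rule is performed first. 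Correcting $t^{-1/2}$ to $t^{1/2}$ makes your chain identical to the paper's.
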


\begin{theorem}\label{theorem 2}
Let $r>1$ and $d\ge 1$. 
Assume that  \(\{A_{\bm{n}}\}_{\bm{n}} \sim_{\mathrm{GLT}} \kappa\) and \(\{B_{\bm{n}}\}_{\bm{n}} \sim_{\mathrm{GLT}} \xi\), where \(A_{\bm{n}}, B_{\bm{n}} \in \mathcal{P}_{\nu(\bm{n})}\) for every multi-index $\bm{n}$, with $\mathcal{P}_{\nu\bm{(n)}}$ denoting the set of Hermitian positive definite (HPD) matrices of size $r\bm{n}$. Under the assumption that $\kappa$ and $\xi$ commute we infer 
\[
\{G(A_{\bm{n}}, B_{\bm{n}})\}_{\bm{n}} \sim_{\mathrm{GLT}} (\kappa \xi)^{1/2},
\]
\[
\{G(A_{\bm{n}}, B_{\bm{n}})\}_{\bm{n}} \sim_{\sigma, \lambda} (\kappa \xi)^{1/2},
\]
\end{theorem}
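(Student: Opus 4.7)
The plan is to mirror the proof of Theorem~\ref{theorem 1} almost verbatim, with the pointwise commutativity of the matrix-valued symbols $\kappa$ and $\xi$ substituting for the automatic commutativity of scalars used in the case $r=1$. The assumption of commutativity is essential exactly where products of powers must be rewritten as powers of products (namely, to identify $(\kappa_\varepsilon \xi^2 \kappa_\varepsilon)^{1/4}$ with $(\kappa_\varepsilon \xi)^{1/2}$); without it, the argument breaks down as observed in Remark~\ref{gen vs barrier}.

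First I would regularize by setting $A_{\bm{n},\varepsilon} := A_{\bm{n}} + \varepsilon I_{\nu(\bm{n})}$ for $\varepsilon>0$. By Axiom~\textbf{GLT 2} applied to the constant symbol $\varepsilon I_r$ together with Axiom~\textbf{GLT 3} (linearity), $\{A_{\bm{n},\varepsilon}\}_{\bm{n}} \sim_{\mathrm{GLT}} \kappa_\varepsilon := \kappa + \varepsilon I_r$. Since $\kappa$ is Hermitian nonnegative definite almost everywhere (Remark~\ref{rem: range}), the minimum eigenvalue of $\kappa_\varepsilon$ is at least $\varepsilon$ a.e., so $\kappa_\varepsilon$ is HPD and invertible a.e. Moreover, $\kappa_\varepsilon$ commutes with $\xi$ because $\kappa$ does and $I_r$ commutes with anything. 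Theorem~\ref{th:two - r,d general} then applies to the regularized pair and yields
\[
\{G(A_{\bm{n},\varepsilon}, B_{\bm{n}})\}_{\bm{n}} \sim_{\mathrm{GLT}} G(\kappa_\varepsilon, \xi) = (\kappa_\varepsilon \xi)^{1/2},
\]
where the second equality is the commuting-symbol clause of that theorem.

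Next I would build the inversion-free companion sequence $\{(A_{\bm{n},\varepsilon} B_{\bm{n}}^2 A_{\bm{n},\varepsilon})^{1/4}\}_{\bm{n}}$, whose entries are well defined because $A_{\bm{n},\varepsilon} B_{\bm{n}}^2 A_{\bm{n},\varepsilon} = X^* B_{\bm{n}}^2 X$ (with $X = A_{\bm{n},\varepsilon}$) is Hermitian PSD. By Axiom~\textbf{GLT 3} (product closure) and Axiom~\textbf{GLT 6} applied to $z \mapsto z^{1/4}$, this sequence is GLT with symbol $(\kappa_\varepsilon \xi^2 \kappa_\varepsilon)^{1/4}$. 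Commutativity of $\kappa_\varepsilon$ and $\xi$ now gives $\kappa_\varepsilon \xi^2 \kappa_\varepsilon = (\kappa_\varepsilon \xi)^2$ a.e., and since in a simultaneous eigenbasis $\kappa_\varepsilon \xi$ is diagonal with nonnegative entries, $((\kappa_\varepsilon \xi)^2)^{1/4} = (\kappa_\varepsilon \xi)^{1/2}$. Hence by Axiom~\textbf{GLT 3},
\[
\{G(A_{\bm{n},\varepsilon}, B_{\bm{n}}) - (A_{\bm{n},\varepsilon} B_{\bm{n}}^2 A_{\bm{n},\varepsilon})^{1/4}\}_{\bm{n}} \sim_{\mathrm{GLT}} 0,
\]
i.e.\ this difference is zero-distributed.

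Finally I would pass to the a.c.s.\ limit as $\varepsilon \to 0$, exactly as in the proof of Theorem~\ref{theorem 1}. The family $\{\{(A_{\bm{n},\varepsilon} B_{\bm{n}}^2 A_{\bm{n},\varepsilon})^{1/4}\}_{\bm{n}}\}_\varepsilon$ is an a.c.s.\ for $\{(A_{\bm{n}} B_{\bm{n}}^2 A_{\bm{n}})^{1/4}\}_{\bm{n}}$ via H\"older continuity of the matrix fourth root applied to $\|A_{\bm{n},\varepsilon} B_{\bm{n}}^2 A_{\bm{n},\varepsilon} - A_{\bm{n}} B_{\bm{n}}^2 A_{\bm{n}}\| = O(\varepsilon)$, and by the zero-distributed difference above the same is true of $\{\{G(A_{\bm{n},\varepsilon}, B_{\bm{n}})\}_{\bm{n}}\}_\varepsilon$. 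Since $(\kappa_\varepsilon \xi)^{1/2} \to (\kappa \xi)^{1/2}$ in measure as $\varepsilon \to 0$, Axiom~\textbf{GLT 4} (equivalently Theorem~\ref{th: acs}) delivers $\{G(A_{\bm{n}}, B_{\bm{n}})\}_{\bm{n}} \sim_{\mathrm{GLT}} (\kappa \xi)^{1/2}$, and Axiom~\textbf{GLT 1} promotes this to the singular-value and eigenvalue distribution statement. The main obstacle in the block setting is the simultaneous a.c.s.\ convergence of $\{\{G(A_{\bm{n},\varepsilon}, B_{\bm{n}})\}_{\bm{n}}\}_\varepsilon$ to $\{G(A_{\bm{n}}, B_{\bm{n}})\}_{\bm{n}}$ itself: this requires operator-norm continuity of $G(\cdot,\cdot)$ on the HPD cone uniformly in $\bm{n}$, which is precisely the technical step of \cite{ahmad2025matrix} that carries over without change once commutativity of the symbols is assumed.
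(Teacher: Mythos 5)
Your proposal mirrors the paper's proof step for step: the same $\varepsilon$-regularization $A_{\bm{n},\varepsilon}=A_{\bm{n}}+\varepsilon I$, the same inversion-free companion $\{(A_{\bm{n},\varepsilon}B_{\bm{n}}^2A_{\bm{n},\varepsilon})^{1/4}\}_{\bm{n}}$, and the same a.c.s.\ passage to the limit via Theorem~\ref{th: acs} and Axiom~\textbf{GLT 4}. The two small differences are expository rather than substantive: you write the intermediate symbol as $(\kappa_\varepsilon\xi^2\kappa_\varepsilon)^{1/4}$ before invoking commutativity (the paper jumps directly to $(\kappa_\varepsilon^2\xi^2)^{1/4}$), and you name explicitly the operator-norm continuity of $G(\cdot,\cdot)$ underlying the final a.c.s.\ identification, which the paper leaves implicit with ``it is now clear''.
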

\begin{proof}
Since $\{A_{\bm{n}}\}_{\bm{n}} \sim_{\mathrm{GLT}} \kappa$, $\{B_{\bm{n}}\}_{\bm{n}} \sim_{\mathrm{GLT}} \xi$, we deduce that $\kappa, \xi: [0,1]^d \times [-\pi,\pi]^d \to \mathbb{C}^{r\times r}$ are Hermitian nonnegative definite, while, by the assumptions, $A_{\bm{n}}, B_{\bm{n}}$ are Hermitian positive definite matrices $\forall \bm{n}$. 
Therefore, the minimal eigenvalue of $\kappa$ and the minimal eigenvalues of $\xi$ are nonnegative almost everywhere. Here, we assume that $ \mu_{2d}(\lambda_{\min}(\kappa) \equiv 0) > 0$ and $\mu_{2d}(\lambda_{\min}(\xi) \equiv 0) > 0$, in such a way that the hypotheses of Theorem \ref{th:two - r,d general} are violated. 
Let $\varepsilon > 0$, let $\kappa_\varepsilon = \kappa + \varepsilon I_r$, $I_r$ being the identity of size $r$, and let $A_{\bm{n}, \varepsilon} = A_{\bm{n}} + \varepsilon I_{\mathcal{P}(\bm{n})}$. Now $\{ \varepsilon I_{\mathcal{P}(\bm{n})}=T_{\bm{n}}(\varepsilon I_r) \}_{\bm{n}} \sim_{\mathrm{GLT}} \varepsilon I_r$ by the first part of Axiom \textbf{GLT 2} (the identity $I_{r{\nu}(\bm{n})}$ is a special multilevel block Toeplitz matrix with GLT symbol $I_r$). Consequently, by exploiting linearity i.e. the second item of Axiom \textbf{GLT 3}, it follows that
\[ 
\{ A_{\bm{n}, \varepsilon} \}_{\bm{n}} \sim_{\mathrm{GLT}} \kappa_\varepsilon, \quad \kappa_\varepsilon \geq \varepsilon I_r \text{ almost everywhere.} 
\]
Since $\kappa, \xi$ are both nonnegative definite almost everywhere.
Hence, by  Theorem \ref{th:two - r,d general}, we have $\{G(A_{\bm{n},\varepsilon}, B_{\bm{n}}) \}_{\bm{n}} \sim_{\mathrm{GLT}} (\kappa_{\varepsilon} \xi)^{\frac{1}{2}}$, 
since the commutation between $\kappa$ and $\xi$ implies the commutation between $\kappa_\varepsilon = \kappa + \varepsilon I_r$ and $\xi$. 
By exploiting the $*$-algebra features of the GLT matrix-sequences, and specifically the third item of Axiom \textbf{GLT 3} and Axiom \textbf{GLT 6} with the function $f(z)=z^{\frac{1}{4}}$, we deduce
$ \{ (A_{\bm{n}, \varepsilon} B_{\bm{n}}^{2} A_{\bm{n}, \varepsilon})^{\frac{1}{4}} \}_{\bm{n}} \sim_{\mathrm{GLT}} (\kappa_\varepsilon^2 \xi^{2})^{\frac{1}{4}}=(\kappa_\varepsilon \xi)^{\frac{1}{2}}$, 
which is the very same GLT symbol of $ \{G(A_{\bm{n},\varepsilon}, B_{\bm{n}}) \}_{\bm{n}}$. As a result, again by the second item of Axiom \textbf{GLT 3}, the difference between these sequences satisfies the following asymptotic relation
\begin{equation}\label{reducing without inversion}
\{G(A_{\bm{n}, \varepsilon}, B_{\bm{n}}) - (A_{\bm{n}, \varepsilon} B_{\bm{n}}^{2} A_{\bm{n}, \varepsilon})^{\frac{1}{4}} \}_{\bm{n}} \sim_{{\mathrm{GLT}} } 0.
\end{equation}
In other words, we have written a new GLT matrix-sequence having the same symbol as the geometric mean 
matrix-sequence $\{G(A_{\bm{n}, \varepsilon}, B_{\bm{n}})\}_{\bm{n}}$, but where no inversion is required.
By virtue of Definition~\ref{def:acs} , the class $ \{\{ (A_{\bm{n}, \varepsilon} B_{\bm{n}}^{2} A_{\bm{n}, \varepsilon})^{\frac{1}{4}} \}_{\bm{n}},  \varepsilon\\>0\}$ is an a.c.s. for $\{ (A_{\bm{n}} B_{\bm{n}}^2 A_{\bm{n}})^{\frac{1}{4}} \}_{\bm{n}} $.\\ Therefore, by (\ref{reducing without inversion}), also the class $ \{\{ G(A_{\bm{n}, \varepsilon}, B_{\bm{n}}) \}_{\bm{n}},  \varepsilon>0\} $
is an a.c.s for $\{ (A_{\bm{n}} B_{\bm{n}}^2 A_{\bm{n}})^{\frac{1}{4}} \}_{\bm{n}}, $
with all the involved sequences being GLT matrix-sequences, with symbols $ (\kappa_\varepsilon \xi)^{\frac{1}{2}}, (\kappa\xi)^{\frac{1}{2}}$, $\varepsilon>0$. Since  $ \exists \lim_{\varepsilon \to 0} (\kappa_\varepsilon \xi)^{\frac{1}{2}}= (\kappa\xi)^{\frac{1}{2}}$, Theorem~\ref{th: acs}  implies $ \exists \lim_{\varepsilon \to 0} \text{ (a.c.s.) } \{\{ G(A_{\bm{n}, \varepsilon}, B_{\bm{n}}) \}_{\bm{n}}, \varepsilon \}= \{ (A_{\bm{n}} B_{\bm{n}}^2 A_{\bm{n}})^{\frac{1}{4}} \}_{\bm{n}} \sim_{\mathrm{GLT}} (\kappa\xi)^{\frac{1}{2}}$. It is now clear that this limit coincides with $ \{ G(A_{\bm{n}}, B_{\bm{n}}) \}_{\bm{n}} $
in the a.c.s. topology, that is, 
$\{G(A_{\bm{n}}, B_{\bm{n}}) - (A_{\bm{n}} B_{\bm{n}}^2 A_{\bm{n}})^{\frac{1}{4}} \}_{\bm{n}}\sim_{{\mathrm{GLT}}} 0.$
In this manner $\{(A_{\bm{n}} B_{\bm{n}}^2 A_{\bm{n}})^{\frac{1}{4}} \}_{\bm{n}}\\ \sim_{{\mathrm{GLT}}} (\kappa\xi)^{\frac{1}{2}}$. Consequently
$\{G(A_{\bm{n}}, B_{\bm{n}})\}_{\bm{n}} \sim_{\mathrm{GLT}} (\kappa \xi)^{1/2}$, so that, by Axiom \textbf{GLT 1}, we finally obtain
$\{G(A_{\bm{n}}, B_{\bm{n}})\}_{\bm{n}} \sim_{\sigma, \lambda} (\kappa \xi)^{1/2}$.
\end{proof}
\section{Numerical experiments}\label{Num_Exp}

As is well known, many localization, extremal, and distribution results hold when $d$-level, $r$-block Toeplitz matrix-sequences are considered and these results are somehow summarized in specific analytic features of the generating function of the corresponding matrix-sequence. In turn, the generating function is also the $d$-variate, $r\times r$ matrix-valued GLT symbol of the $d$-level, $r$-block Toeplitz matrix-sequence. 

Although the distribution results are also valid for general $d$-level, $r$-block GLT matrix-sequences, this is no longer true in general for the extremal behavior and for the localization results, unless we add supplementary assumptions, like the request that the matrix-sequence is obtained via a matrix-valued linear positive operator (LPO) \cite{SerraCapizzanoTilli-LPO,LPO-rev}. We observe that the geometric mean can be seen as a monotone operator with respect to its two variables, and the monotonicity is implied when we consider an LPO, even if the converse is not true. Hence, it is also interesting to verify which properties are maintained by a geometric mean of two $d$-level, $r$-block GLT matrix-sequences in terms of its GLT symbol when it exists.

According to the previous discussion, the remainder of the section considers numerical experiments in the following directions:

\begin{itemize}
\item the validation of the distribution results in the commuting setting as in Theorem~\ref{theorem 1} and Theorem~\ref{theorem 2}. Notice that the commutation of the GLT symbols do not imply that the matrices commute; in general, in fact, they do not commute.
\item connections of the previous results with the notion of Toeplitz and GLT momentary symbols and the extremal behavior compared to the GLT symbol; 
\item evidence that Theorem~\ref{theorem 1} and Theorem~\ref{theorem 2} are maximal, by taking GLT matrix-sequences with non-commuting symbols which are both not invertible almost everywhere. 
\end{itemize}
  All numerical experiments are performed using MATLAB R2022b on a laptop equipped with an 11th Gen Intel(R) Core(TM) i5-1155G7 CPU running at 2.50 GHz, with 16 GB of RAM. The operating system was Windows 11 Pro (version 23H2, build 22631.5189).
\subsection{Validation of the distribution results} \label{ssec:num1}
  
\subsubsection{Example 1 }
Let $ d = 1 $ and consider the following two matrix-sequences $\{A_{n}\}_{n}$, $\{B_{n}\}_{n}$, with $
A_n = D_n(a) + \frac{1}{n^4} I_n$ and  $ B_n = T_n(3 + 2\cos(\theta)),$
 where $T_n(\cdot) $ denotes the Toeplitz operator for $d=1 $, as introduced in Section~\ref{TM},  $D_n(a) $ represents the diagonal matrix generated by the continuous function
\[
a(x) =
\begin{cases}
0, & x \in [0, \frac{1}{2}), \\
1, & x \in [\frac{1}{2},1].
\end{cases}
\]
Because of Theorem~\ref{theorem 1}, the geometric mean of these two sequences, that is $\{ G(A_n, B_n) \} $, satisfies the GLT relation:
\[
\left\{ G\left(A_n, B_n\right) \right\}_n \sim_{\mathrm{GLT}} \sqrt{a(x)(3+2\cos(\theta))},
\]
where the corresponding GLT symbols are explicitly given by
$
\kappa = a(x)$ and $ \xi=3+2\cos(\theta).
$
\subsubsection*{Eigenvalue distribution}
We numerically analyze the spectral behavior of the geometric mean $G(A_n, B_n) $ from \textbf{Example 1}. The eigenvalues of this geometric mean are computed for various increasing dimensions $n $ and compared with the uniformly sampled points from the GLT symbol $\sqrt{a(x)(3+2\cos(\theta))} $. In Figure \ref{fig:exp1}, numerical results strongly indicate that the GLT symbol accurately characterizes the eigenvalue distribution of the geometric mean. As $ n $ grows, the eigenvalue distribution closely matches the GLT symbol, giving evidence of the theoretical findings in Theorem~\ref{theorem 1}.
\begin{figure}[H]
   \begin{minipage}{0.52\textwidth}
     \centering
     \includegraphics[width=\linewidth, height=6cm]{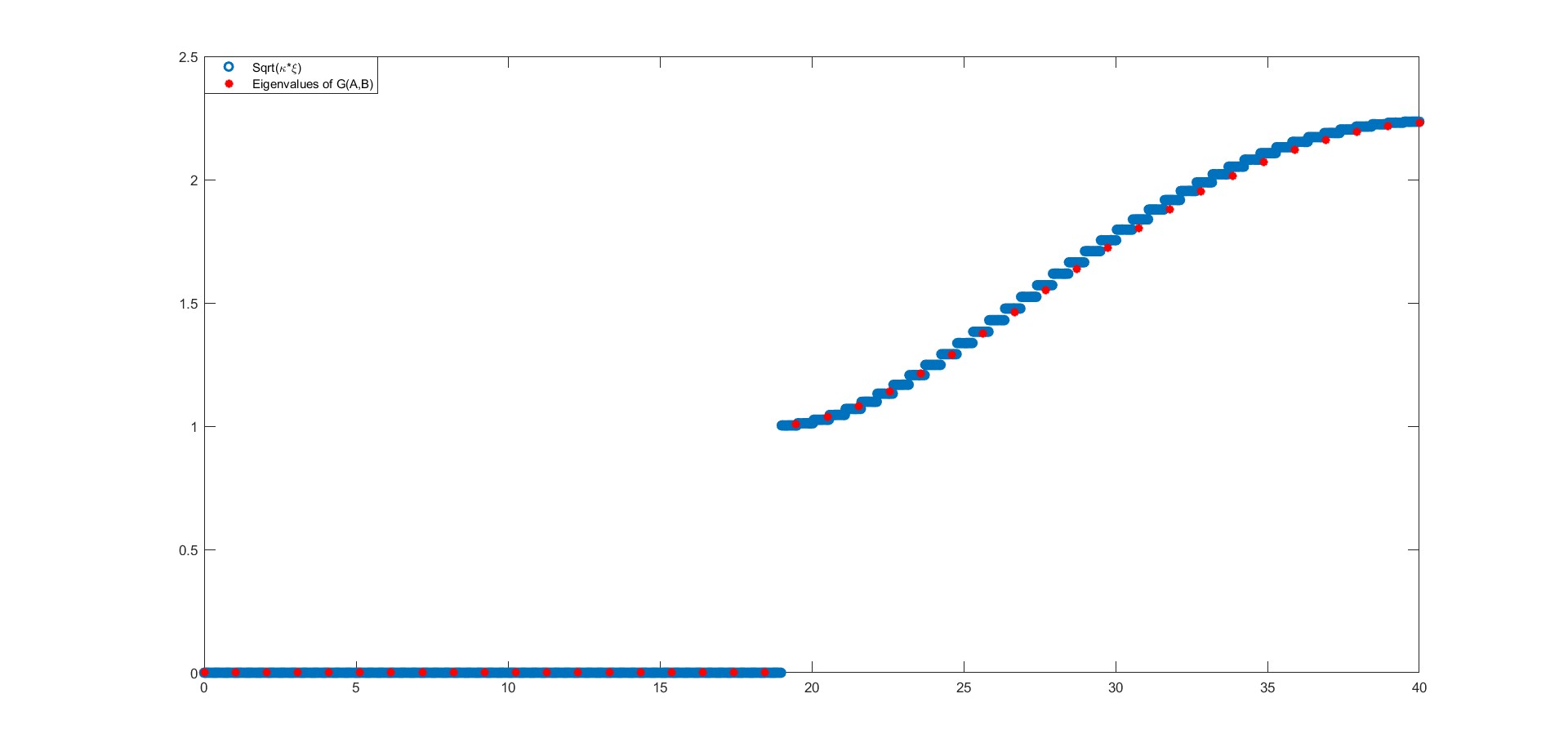} 
     \captionsetup{labelformat=empty}
     \caption*{}
   \end{minipage}\hfill
   \begin{minipage}{0.52\textwidth}
     \centering
     \includegraphics[width=\linewidth, height=6cm]{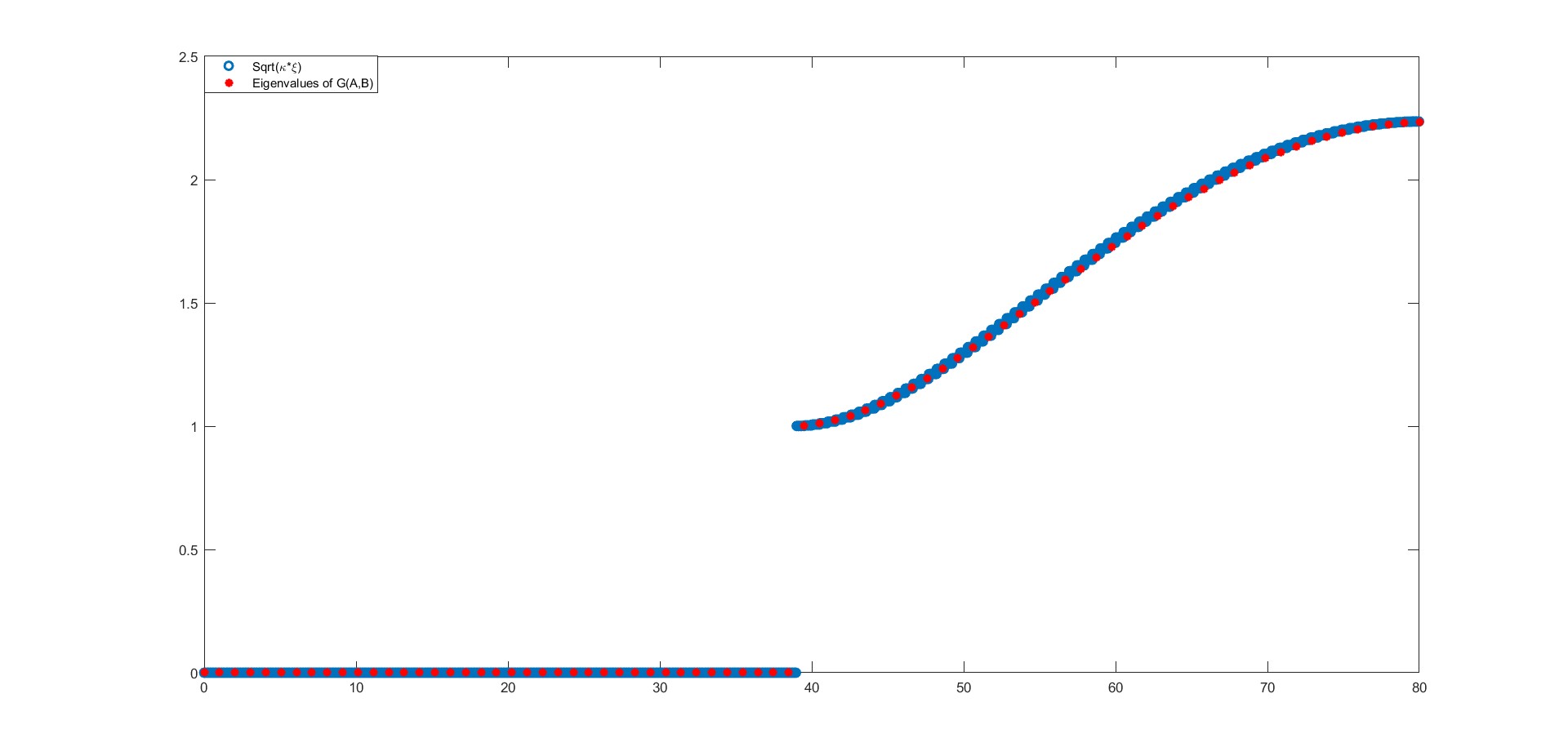} 
     \captionsetup{labelformat=empty}
     \caption*{}
   \end{minipage}
   \caption{{\textbf{ Example 1.}
Comparison between eigenvalues of \(G(A_n,B_n)\) (red stars; \(n=40,80\)) versus the symbol \( (\kappa\xi)^{\frac{1}{2}} \) (blue circles)}}
\label{fig:exp1}
\end{figure}

\subsection{Distribution results and momentary symbols} \label{ssec:num2}

\subsubsection{Example 2 }
Let $d = 1 $ and define the matrix-sequences $\{A_{n}\}_{n}$, $\{B_{n}\}_{n}$, with $
A_n = D_n(a) + \frac{1}{n^4} I_n,$  $ C_n = T_n(3 + 2\cos(\theta))
$ and $
B_n = \left(D_n(1 - a) + \frac{1}{n^4} I_n \right) C_n \left(D_n(1 - a) + \frac{1}{n^4} I_n \right),
$ where \( D_n(a) \) is the diagonal matrix generated by the piecewise continuous function \( a(x) \) given in \textbf{Example 1}. Then, the geometric mean sequence explicitly satisfies the GLT relation:
\begin{align*}
\left\{ G\left(A_n, B_n\right) \right\}_n &\sim_{\mathrm{GLT}} \sqrt{a(x)(1 - a(x))(3 + 2\cos\theta)} \\
&= a(x)(1 - a(x))\sqrt{3 + 2\cos\theta} = 0
\end{align*}
where
\[
\kappa = a(x) \quad \xi=(1-a(x))(3+2\cos(\theta)).
\]
The zero-valued symbol arises naturally due to the definition of the piecewise function \( a(x) \):
\begin{itemize}
    \item For \( x \in [0, \frac{1}{2}) \), we have \( a(x) = 0 \). Thus, the term \( a(x)(1 - a(x)) \) becomes zero.
    \item For \( x \in [\frac{1}{2},1] \), we have \( a(x) = 1 \). In this interval, the factor \( 1 - a(x) \) becomes zero, making \( a(x)(1 - a(x)) = 0 \).
\end{itemize}
\subsubsection*{Eigenvalue distribution}
The eigenvalue distribution of \( \{G(A_n, B_n)\}_{n} \) is numerically analyzed for increasing matrix dimensions \( n \). Since the GLT symbol for this example is zero, by Theorem~\ref{theorem 1} we expect the eigenvalues to concentrate along the zero line. Interestingly, numerical results reveal an additional spectral structure beyond Theorem~\ref{theorem 1}.
{For smaller values of $ n $, eigenvalues align closely with zero, as predicted by the GLT symbol. However, some eigenvalues appear above this level, forming a secondary symbol. This phenomenon is attributed to {momentary symbols}, as discussed in the introduction (see \cite{bolten2022toeplitz,bolten2023note,new momentary}). As \( n \) increases, these elevated eigenvalues shift upwards reaching approximately \( 10^{-3} \) for \( n = 40 \) (shown in the Figure \ref{fig:exp2}) and \( 10^{-4} \) for \( n = 80 \).}
\begin{figure}[H]
   \begin{minipage}{0.52\textwidth}
     \centering
     \includegraphics[width=\linewidth, height=6cm]{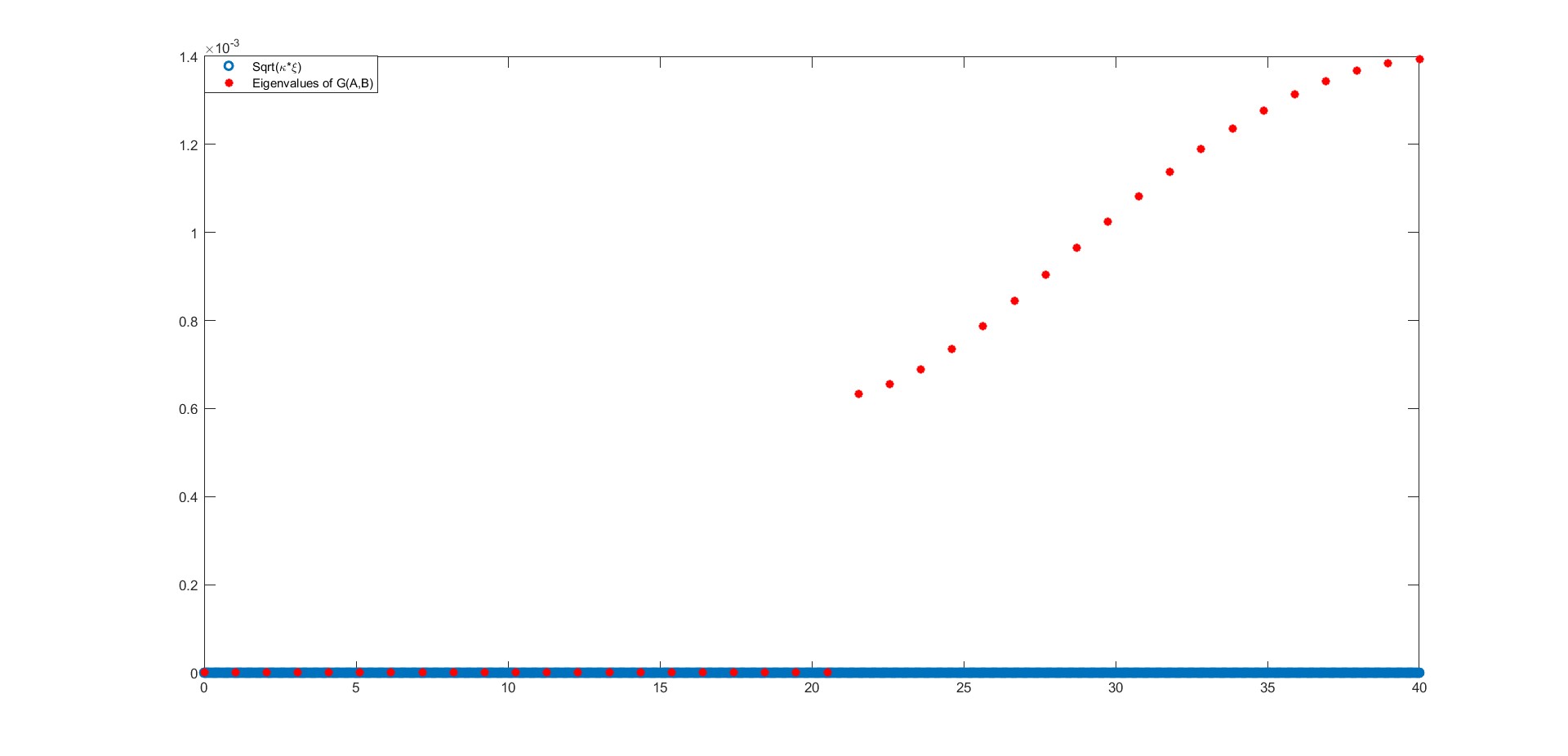} 
     \captionsetup{labelformat=empty}
     \caption*{}
   \end{minipage}\hfill
   \begin{minipage}{0.52\textwidth}
     \centering
     \includegraphics[width=\linewidth, height=6cm]{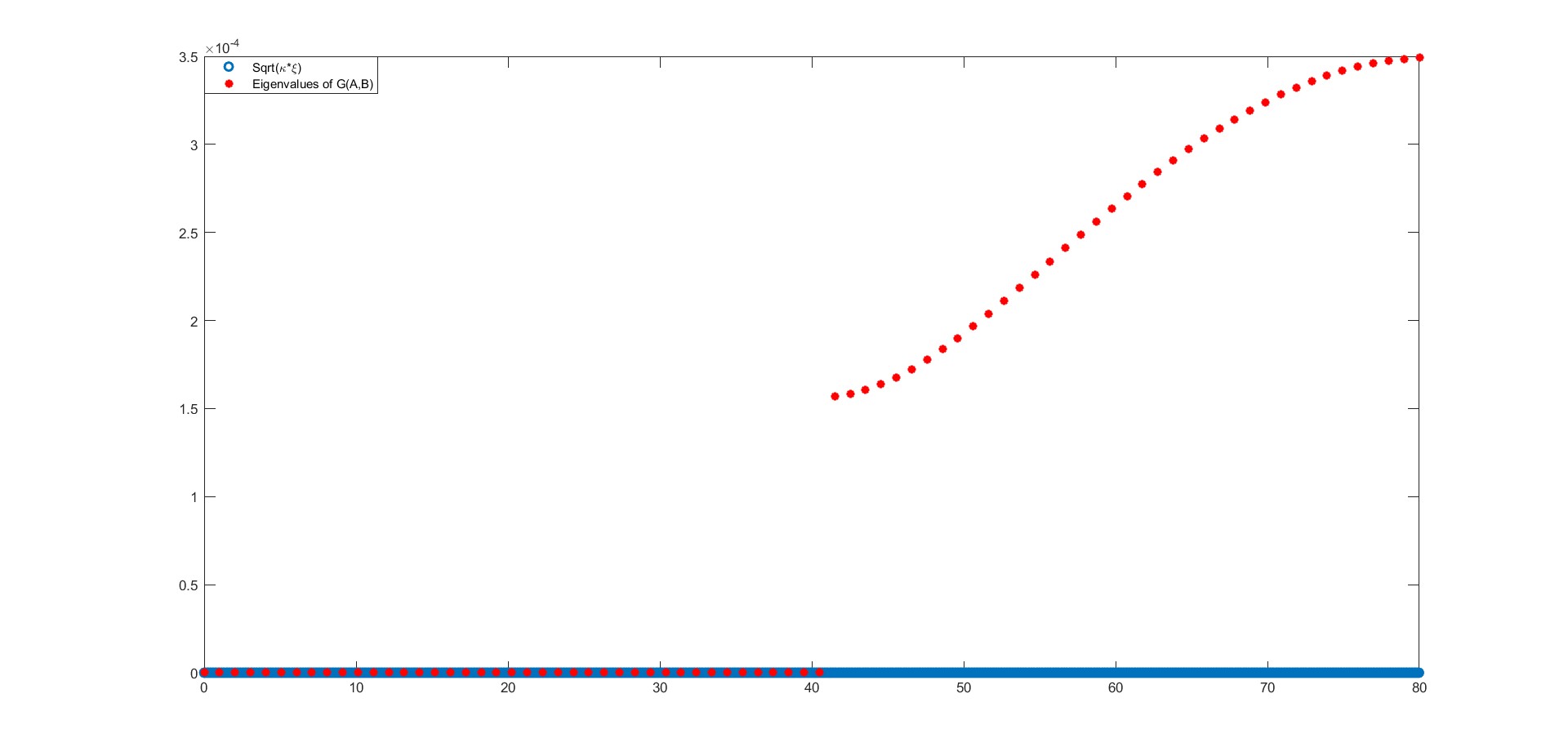} 
     \captionsetup{labelformat=empty}
     \caption*{}
   \end{minipage}
    \caption{{\textbf{ Example 2.}
Comparison between eigenvalues of \(G(A_n,B_n)\) (red stars; \(n=40,80\)) versus the symbol \( (\kappa\xi)^{\frac{1}{2}} \) (blue circles)}}
\label{fig:exp2}
\end{figure}

\subsection{Minimal eigenvalues and conditioning} \label{ssec:num3}

In this section, we analyze the extremal spectral behavior and conditioning of the geometric mean sequence \( \{ G(A_n, B_n) \}_n \), focusing on its dependence on the analytical properties of the corresponding GLT symbol. This approach follows prior studies on extremal eigenvalues in structured matrix settings, particularly in Toeplitz matrices \cite{extreme3,extreme1,extreme2,extreme4} and block Toeplitz matrices \cite{SerraCapizzano1999a,SerraCapizzano1999b}, as well as variable coefficient differential operators, including multilevel cases with \( d > 1 \) \cite{extreme diff1,extreme diff2,extr2-glt}. In the case of variable coefficient differential operators it is worth noticing that extremal spectral results do not stem from the GLT theory but from a combination of GLT tools and properties which are typical of linear positive operators 
\cite{SerraCapizzanoTilli-LPO,LPO-rev}.\\
Here, we restrict our attention to the unilevel scalar setting with \( d = r = 1 \), considering \textbf{Example 1} and \textbf{Example 2}.
\subsubsection{Example 1: minimal eigenvalue}
\begin{itemize}
    \item $X_{n}=G(A_{n},B_{n})$
    \item Take $n_{j}=40.2^{j}, \quad\quad j=0,1,2,3, $
    \item Compute $\tau_{j}= \lambda_{\min}(X_{n_{j}}), \quad \quad j=0,1,2,3, $
    \item Compute $\alpha_{j}=\log_{2}(\frac{\tau_{j}}{\tau_{j+1}}), \quad \quad j=0,1,2. $ 
\end{itemize}
\vspace{9pt}

\begin{table}[H]
  \centering
  \caption{Numerical behaviour of the minimal \\eigenvalue.}
  \label{tab:tau_alpha}
  \begin{tabular}{|c
                  |S[table-format=1.4e+2]
                  |S[table-format=1.4]|}
    \hline
    {$n$} & {$\tau_j$} & {$\alpha_j$}\\
   \hline
    40  & 6.3265e-04 & 2.0132 \\
    80  & 1.5673e-04 & 2.0064 \\
    160 & 3.9093e-05 & 2.0074 \\
    320 & 9.7675e-06 & \multicolumn{1}{c|}{}     \\
    \hline
  \end{tabular}
\end{table}
As it can be seen, the quantity $\alpha_j$ stabilizes around $2$ as $n$ increases. This is in perfect agreement with the fact that the minimal eigenvalue of $A_n$ converges to zero as $n^{-4}$, while the minimal eigenvalue of $B_n$ converges monotonically from above to $\min \xi=1$. The key point is that the minimal eigenvalue of the geometric mean behaves asymptotically as the geometric mean of the minimal eigenvalues of $A_n$ and $B_n$, respectively. Similar remarks can be made in the subsequent case with respect to \textbf{Example 2}.
 
 The observed numerical evidences are not implied by the theoretical derivations and this is an interesting fact that deserves to be investigated theoretically in the future. In particular, we would like to prove formally that that $\alpha_j$ converges to $2$ for the first example, Table~\ref{tab:tau_alpha2}, and it converges to 4 in the second example, Table~\ref{tab:tau_alpha3}, even from the preliminary
numerical results the convergence does not look monotonic.

\subsubsection{Example 2: minimal eigenvalue}
\begin{itemize}
    \item $X_{n}=G(A_{n},B_{n})$
    \item Take $n_{j}=40.2^{j}, \quad\quad j=0,1,2,3, $
    \item Compute $\tau_{j}= \lambda_{\min}(X_{n_{j}}), \quad \quad j=0,1,2,3, $
    \item Compute $\alpha_{j}=\log_{2}(\frac{\tau_{j}}{\tau_{j+1}}), \quad \quad j=0,1,2. $ 
\end{itemize}
\vspace{9pt}
\begin{table}[H]
  \centering
  \caption{Numerical behaviour of the minimal\\ eigenvalue.}
  \label{tab:tau_alpha1}
  \begin{tabular}{|c
                  |S[table-format=1.4e+2]
                  |S[table-format=1.4]|}
    \hline
     {$n$} & {$\tau_j$} & {$\alpha_j$} \\
   \hline
    40  & 3.9177e-07 & 4.0003 \\
    80  & 2.4480e-08 & 4.0040 \\
    160 & 1.5259e-09 & 4.0009 \\
    320 & 9.5367e-11 & \multicolumn{1}{c|}{}      \\
    \hline
  \end{tabular}
\end{table}

%====================================================================
% Inizio della sezione numerica non-commuting. L'introduzione si può rendere più specifica
%====================================================================

\subsection{Numerical study: non-commuting, rank-deficient symbols}
\label{sec:non_commuting_rank_def}

In this section, through numerical test, we investigate the eigenvalue distribution of the geometric means $\{G(A_{n}, B_{n})\}_{n}$ with
\[
G(A_n,B_n)\;=\;
A_n^{1/2}\!\bigl(A_n^{-1/2}B_nA_n^{-1/2}\bigr)^{1/2}\!A_n^{1/2},
\]
where $\{A_n\}_n$ and $\{B_n\}_n$ are HPD GLT matrix-sequences with $r\times r$ matrix-valued symbols $\kappa,\xi$.
The GLT matrix-sequences are specifically chosen with GLT symbols so that:

\begin{enumerate}
\item they are rank-deficient on a subset of positive Lebesgue measure; and
\item they do not commute pointwise on a set of positive measure.
\end{enumerate}

These hypotheses clearly set the problem outside the scope of commuting or almost everywhere invertible cases treated in Theorems~\ref{theorem 1} and in Theorem~\ref{theorem 2}. \\
Our tests provide evidence for two phenomena:
\begin{itemize}
\item[\textbf{(i)}]
The support of the asymptotic eigenvalue distribution of $\{G(A_n,B_n)\}_{n}$ coincides with the intersection set $\operatorname{ess\,Ran}(\kappa)$ $\cap$ $\operatorname{ess\,Ran}(\xi)$ see Section~\ref{subsubsec:GM_conjecture}.
\item[\textbf{(ii)}] The eigenvalues appear to converge in distribution to a candidate symbol which is rank-deficient and, in general, distinct from the classical geometric mean $G(\kappa,\xi)$.
\end{itemize}
Clearly, the relation \( G(A_n, B_n) \sim_{\mathrm{GLT}} G(\kappa, \xi) \) cannot hold under hypotheses (1.)-(2.), and the experiments show the substantial differences in this case compared to the commuting or invertible scenarios. If this case can be studied within the GLT framework, it requires a more technical application of GLT and a.c.s. theory, along with a deeper functional calculus of matrix means.

\subsubsection{Setup of the numerical experiments}
\label{sec:NumSetup}
Below, we provide the setup of the numerical tets. \\
We construct four pairs of unilevel GLT matrix-sequences $\{A_n\}_n, \{B_n\}_n,$ whose symbols
simultaneously satisfy the hypothesis stated at the beginning of
Section~\ref{sec:non_commuting_rank_def} (rank deficiency and
non-commutation). These examples are organized according to the property of rank loss:

\begin{itemize}\setlength\itemsep{4pt}
\item \textbf{Case 1} - each symbol is full rank on a set of positive
measure;
\item \textbf{Case 2} - each symbol is rank-deficient almost everywhere.
\end{itemize}

%---------------------------------------------------------------------------
\medskip\noindent
\textbf{Case 1, Example 1.}

Define
\[
f(\theta)\; =\; \begin{cases}
0, & \theta\in[-\pi,0],\\[4pt]
\theta, & \theta\in(0,\pi],
\end{cases}
\qquad
g(\theta) = f(-\theta),
\]
and set
\[
F(\theta)\;=\;f(\theta)\otimes
\begin{bmatrix}2&1\\[2pt]1&2\end{bmatrix},\qquad
G(\theta)\;=\;g(\theta)\otimes
\begin{bmatrix}3&1\\[2pt]1&1\end{bmatrix}.
\]
With the $2n\times2n$ Toeplitz matrices $T_n(F)$ and $T_n(G)$, define
\begin{equation}\label{eq:case1_ex1}
A_n=T_n(F)+\frac{1}{n^{3}}I_{2n},\qquad
B_n=T_n(G)+\frac{1}{n^{3}}I_{2n}.
\end{equation}

%---------------------------------------------------------------------------
\medskip\noindent
\textbf{Case 1, Example 2.}

Let $\chi_{[-a,a]}$ be the characteristic function of $[-a,a]$, $0<a<\pi$, and set
\[
f(\theta)\,=\,\chi_{[-1/2,\,1/2]}(\theta),\qquad
g(\theta)\,=\,\chi_{[-1/4,\,1/4]}(\theta),
\]
\[
A\;=\;\begin{bmatrix}2&1\\[2pt]1&2\end{bmatrix},\qquad
B\;=\;\begin{bmatrix}3&1\\[2pt]1&1\end{bmatrix},
\]
\[
F(\theta)\;=\;f(\theta)\otimes A,\qquad
G(\theta)\;=\;g(\theta)\otimes B.
\]
Define
\begin{equation}\label{eq:case1_ex2}
A_n=T_n(F)+\frac{1}{n^{3}}I_{2n},\qquad
B_n=T_n(G)+\frac{1}{n^{3}}I_{2n}.
\end{equation}

%---------------------------------------------------------------------------
\medskip\noindent
\textbf{Case 2, Example 1.}

With $f(\theta)=2-\cos\theta$, $g(\theta)=3+\cos\theta$ and
rank one blocks
\[
A\;=\;\begin{bmatrix}1&1\\[2pt]1&1\end{bmatrix},\qquad
B\;=\;\begin{bmatrix}1&2\\[2pt]2&4\end{bmatrix},
\]
let
\(F(\theta)\;=\;f(\theta)\otimes A\) and
\(G(\theta)\;=\;g(\theta)\otimes B\).
Set
\[
A_n=T_n(F)+\frac{1}{n^{2}}I_{2n},\qquad
B_n=T_n(G)+\frac{1}{n^{2}}D_n(b),\qquad
b(x)=(1+x) I_2.
\]
Here, both symbols are rank $1$ almost everywhere.

%---------------------------------------------------------------------------
\medskip\noindent
\textbf{Case 2, Example 2.}

Define piecewise-linear scalar functions in $[0,1]$:
\[
a(x)\;=\;\begin{cases}
1-2x, & 0\le x\le\tfrac12,\\[2pt]
0, & \tfrac12<x\le1,
\end{cases}
\qquad
b(x)\;=\;\begin{cases}
0, & 0\le x\le\tfrac13\text{ or } \tfrac23\le x\le1,\\[2pt]
x-\tfrac13, & \tfrac13<x\le\tfrac12,\\[2pt]
\tfrac23-x, & \tfrac12<x<\tfrac23,
\end{cases}
\]
and let $D_n(a)$, $D_n(b)$ be the corresponding sampling matrices.
With $f(\theta)=2+\cos\theta$, $g(\theta)=3+\cos\theta$ and
\[
A\;=\;\begin{bmatrix}2 & 0 & 1\\[2pt]
0 & 2 & 1\\[2pt]
1 & 1 & 1\end{bmatrix},\qquad
B\;=\;\begin{bmatrix}2 & 1 & 0\\[2pt]
1 & 1 & 1\\[2pt]
0 & 1 & 2\end{bmatrix},
\]
define
\(F(\theta)\;=\;f(\theta)\otimes A\) and
\(G(\theta)\;=\;g(\theta)\otimes B\), and set
\begin{align}\label{eq:case2_ex2}
A_n &\;=\; \bigl(D_n^{1/2}(a)\,\otimes\, I_3\bigr)\,
T_n(F)\, \bigl(D_n^{1/2}(a)\,\otimes\, I_3\bigr)
+\frac{1}{5n}\,I_{3n}, \\
B_n&\;=\; \bigl(D_n^{1/2}(b)\,\otimes\, I_3\bigr)\,
T_n(G)\,
\bigl(D_n^{1/2}(b)\,\otimes\, I_3\bigr)
+\frac{1}{5n}\,I_{3n}.
\end{align}

\medskip
In every example we denote by $\kappa$ and $\xi$ the GLT symbols of
$\{A_n\}_n$ and $\{B_n\}_n$, respectively.

\subsubsection{A candidate symbol for the geometric mean}
\label{subsubsec:GM_conjecture}

Let $\{A_n\}_n$ and $\{B_n\}_n$ be GLT matrix-sequences whose matrix-valued symbols
\[
\kappa,\;\xi : [0,1]^d\times[-\pi,\pi]^d \;\longrightarrow\; \mathbb{C}^{\,r\times r},
\]
are positive-semidefinite. \\
For any $\varepsilon>0$, define the strictly positive symbols
\(
\kappa_\varepsilon=\kappa+\varepsilon I_r,\;
\xi_\varepsilon=\xi+\varepsilon I_r.
\)
Since the matrix geometric mean is monotone continuous in each argument, the limit below exists point-wise {and it is unique} (\cite[p.\,3]{Ando2004}):
\begin{definition}[Candidate symbol]\label{def:candidate}
For almost every $(x,\theta)$ set
\begin{equation}\label{eq:cand_symbol}
\widetilde G(\kappa,\xi)(x,\theta)
:=\lim_{\varepsilon\rightarrow 0}
G\bigl(\kappa_\varepsilon(x,\theta),\,
\xi_\varepsilon (x,\theta)\bigr).
\end{equation}
\end{definition}
The matrix-valued symbol $\widetilde G(\kappa,\xi)$ is \emph{essentially} positive semidefinite (see \cite{Ando2004}) and satisfies the essential support identity
\[
\operatorname{ess\,Ran}\,\widetilde G(\kappa,\xi)\;=\;
\operatorname{ess\,Ran}\!\bigl[(x,\theta)\mapsto
\operatorname{ran}\kappa(x,\theta)\;\cap\;\operatorname{ran}\xi(x,\theta)\bigr].
\]

Here $\operatorname{ran}M$ denotes the column space of an individual matrix $M$, whereas
$\operatorname{ess\,Ran}(.)$ is the essential range of a Grassmannian valued measurable map. The essential formulation of the subspace condition can also be formulated as the pre-image of the essential numerical range in the style of [\cite{nomi-mazza}, Definition\, 1].

Taking into account the Definition \ref{def:candidate} of the candidate symbol, we propose the following conjecture:
\begin{conjecture}[GLT closure under geometric mean]
\label{conj:GM_glt}
Let $\{A_n\}_n$ and $\{B_n\}_n$ be Hermitian positive definite
GLT matrix-sequences with
\[
\{A_n\}_n\sim_{\mathrm{GLT}}\kappa,
\qquad
\{B_n\}_n\sim_{\mathrm{GLT}}\xi.
\]
Then the geometric mean matrix-sequence is again GLT and
\[
\{\,G(A_n,B_n)\,\}_n \;\sim_{\operatorname{GLT}}\; \widetilde G(\kappa,\xi),
\]
where $\widetilde G(\kappa,\xi)$ is the candidate symbol
defined in~\eqref{eq:cand_symbol}.
\end{conjecture}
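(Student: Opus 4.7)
The plan is to mimic the regularisation-plus-a.c.s.\ strategy already used for Theorem~\ref{theorem 1} and Theorem~\ref{theorem 2}, but replacing the algebraic detour through $(A_{\bm n,\varepsilon}B_{\bm n}^{2}A_{\bm n,\varepsilon})^{1/4}$ (which carries the wrong GLT symbol as soon as the symbols fail to commute) by a direct appeal to the invertible case Theorem~\ref{th:two - r,d general}. First I would set $A_{\bm n,\varepsilon}:=A_{\bm n}+\varepsilon I_{r\nu(\bm n)}$ for every $\varepsilon>0$; by the first item of Axiom \textbf{GLT 2} and the second item of Axiom \textbf{GLT 3} one has $\{A_{\bm n,\varepsilon}\}_{\bm n}\sim_{\mathrm{GLT}}\kappa_\varepsilon:=\kappa+\varepsilon I_r$, whose minimal eigenvalue is $\ge\varepsilon$ almost everywhere, so that Theorem~\ref{th:two - r,d general} yields
\[
\{G(A_{\bm n,\varepsilon},B_{\bm n})\}_{\bm n}\sim_{\mathrm{GLT}} G(\kappa_\varepsilon,\xi),\qquad\forall\,\varepsilon>0.
\]
Definition~\ref{def:candidate} supplies the in-measure convergence $G(\kappa_\varepsilon,\xi)\to \widetilde G(\kappa,\xi)$ as $\varepsilon\to 0^{+}$ for free, so the conjecture reduces to verifying the a.c.s.\ relation
\[
\{\{G(A_{\bm n,\varepsilon},B_{\bm n})\}_{\bm n}\}_\varepsilon\xrightarrow{\text{a.c.s.\ wrt }\varepsilon\to 0^{+}}\{G(A_{\bm n},B_{\bm n})\}_{\bm n},
\]
after which Axiom \textbf{GLT 4} delivers the GLT membership and Axiom \textbf{GLT 1} the associated singular-value and eigenvalue distributions.

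The a.c.s.\ step is the heart of the argument and the one I expect to be the main obstacle. The natural analytic input is the Kubo--Ando integral representation
\[
G(A,B)=\frac{1}{\pi}\int_{0}^{\infty} t^{-1/2}\,A(tA+B)^{-1}B\,dt,
\]
combined with the resolvent identity
\[
(tA_{\bm n,\varepsilon}+B_{\bm n})^{-1}-(tA_{\bm n}+B_{\bm n})^{-1}=-t\varepsilon\,(tA_{\bm n,\varepsilon}+B_{\bm n})^{-1}(tA_{\bm n}+B_{\bm n})^{-1}
\]
and the operator-monotone bounds $(tA+B)^{-1}\le B^{-1}$ and $(tA+B)^{-1}\le t^{-1}A^{-1}$. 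Together with the $\tfrac12$-H\"older character of the geometric mean in each of its arguments, these ingredients should produce an estimate of the form $\|G(A_{\bm n,\varepsilon},B_{\bm n})-G(A_{\bm n},B_{\bm n})\|\lesssim\sqrt{\varepsilon\,\|B_{\bm n}\|}$. In the scalar commuting case such a bound is already elementary, since $|\sqrt{(a+\varepsilon)b}-\sqrt{ab}|\le\sqrt{\varepsilon b}$ uniformly in $a\ge 0$; the task is to lift this to the non-commuting block setting by operator-theoretic functional calculus.

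The genuine difficulty in the general case is that $\|B_{\bm n}\|$ need not be uniformly bounded in $\bm n$ even when the symbol $\xi$ is essentially bounded, so a pure norm bound cannot by itself produce the uniform-in-$\bm n$ decomposition required by Definition~\ref{def:acs}. I would therefore perform a rank/norm splitting in the spirit of the second item of Theorem~\ref{th 3.1}: choose a truncation level $M(\varepsilon)\to+\infty$ slowly enough, cut off the subspaces where $A_{\bm n}$ or $B_{\bm n}$ has eigenvalues exceeding $M(\varepsilon)$, and use Axiom \textbf{GLT 1} applied to the Hermitian sequences $\{A_{\bm n}\}_{\bm n}$ and $\{B_{\bm n}\}_{\bm n}$ to ensure that the corresponding rank fractions vanish with $\varepsilon$; on the well-conditioned complement the H\"older bound would then provide a uniform $\|N_{\bm n,\varepsilon}\|\lesssim\sqrt{\varepsilon M(\varepsilon)}$, which is made arbitrarily small by a joint choice of $M$ against $\varepsilon$. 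Monotonicity of the Kubo--Ando mean, which gives $G(A_{\bm n},B_{\bm n})\le G(A_{\bm n,\varepsilon},B_{\bm n})\downarrow G(A_{\bm n},B_{\bm n})$ as $\varepsilon\to 0^{+}$, together with the permutation symmetry $G(A,B)=G(B,A)$ (which also permits the symmetric regularisation $B_{\bm n}\mapsto B_{\bm n}+\varepsilon I$), would close the argument and confirm that $\widetilde G(\kappa,\xi)$ is independent of which of the two matrix-sequences is regularised.
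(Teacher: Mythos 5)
The statement you are trying to prove is stated in the paper as a \emph{conjecture} with only a heuristic justification; the paper says explicitly that ``a full proof when $\kappa$ and $\xi$ do not commute point-wise remains open''. Your outline --- regularise to $A_{\bm n,\varepsilon}=A_{\bm n}+\varepsilon I$, invoke the invertible-case Theorem~\ref{th:two - r,d general} for each $\varepsilon$, then pass to the limit via \textbf{GLT~4} --- is the same plan the authors sketch; the content you add is the attempt to actually establish the a.c.s.\ step, which is exactly what the paper leaves open. Unfortunately, that step as you have designed it has a concrete structural gap.

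The rank/norm splitting you invoke does not carry through the geometric mean. In the settings where the argument in Definition~\ref{def:acs} works (sums, products, resolvents), if you write $B_{\bm n}=B'_{\bm n}+B''_{\bm n}$ with $\operatorname{rank}(B''_{\bm n})$ small, the resulting perturbation of the object of interest is again a small-rank correction plus a small-norm one. The geometric mean destroys this. Using the Kubo--Ando representation and the resolvent identity
\[
(tA_{\bm n}+B_{\bm n})^{-1}-(tA_{\bm n}+B'_{\bm n})^{-1}
=(tA_{\bm n}+B'_{\bm n})^{-1}(B'_{\bm n}-B_{\bm n})(tA_{\bm n}+B_{\bm n})^{-1},
\]
the integrand at each fixed $t$ is indeed of rank at most $\operatorname{rank}(B''_{\bm n})$, but the one-parameter family of rank-$k$ matrices you are integrating over $t\in(0,\infty)$ has a $t$-dependent range, so the integral $\int_0^\infty t^{-1/2}(\cdots)\,dt$ is not of controlled rank. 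Thus truncating the large eigenvalues of $B_{\bm n}$ does \emph{not} decompose $G(A_{\bm n},B_{\bm n})-G(A_{\bm n},B'_{\bm n})$ into a low-rank plus small-norm piece, and the Definition~\ref{def:acs} bookkeeping fails. This is precisely why the authors could prove Theorems~\ref{theorem 1} and~\ref{theorem 2} only in the commuting case: there they were able to sidestep all norm/rank estimates by replacing $G(A_{\bm n,\varepsilon},B_{\bm n})$ with $(A_{\bm n,\varepsilon}B_{\bm n}^2 A_{\bm n,\varepsilon})^{1/4}$, whose GLT membership follows purely algebraically from \textbf{GLT~3} and \textbf{GLT~6}. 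When the symbols do not commute, $(\kappa\xi^2\kappa)^{1/4}\ne G(\kappa,\xi)$ and that algebraic shortcut is unavailable.

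Two further points are asserted but not established. First, the $\tfrac12$-H\"older bound $\|G(A_{\bm n,\varepsilon},B_{\bm n})-G(A_{\bm n},B_{\bm n})\|\lesssim\sqrt{\varepsilon\|B_{\bm n}\|}$ is elementary for scalars, but for non-commuting HPD matrices this is a nontrivial operator-norm perturbation estimate for a two-variable operator mean; your proposal gestures at the ingredients (operator monotonicity, resolvent bounds, functional calculus) but does not derive the inequality, and it is not in the paper's toolbox either. Second, Definition~\ref{def:candidate} defines $\widetilde G(\kappa,\xi)$ via the \emph{symmetric} regularisation $\lim_{\varepsilon\to 0}G(\kappa_\varepsilon,\xi_\varepsilon)$, whereas your chain of GLT identities produces $\lim_{\varepsilon\to 0}G(\kappa_\varepsilon,\xi)$; that these one-sided and two-sided limits coincide almost everywhere on the degenerate set is plausible by joint monotone continuity but would need its own short lemma, which you do not supply. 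In sum the proposal correctly identifies the outline and the obstacle, but it does not close the gap that makes this a conjecture.
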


The heuristic justification of the conjecture is the following. For each fixed $\varepsilon>0$ the shifted sequences
$\{A_n+\varepsilon I\}_n$ and $\{B_n+\varepsilon I\}_n$ are GLT sequences symbols
$\kappa_\varepsilon$ and $\xi_\varepsilon$ and their geometric mean sequences satisfy
\(
\{\,G(A_n+\varepsilon I,B_n+\varepsilon I)\,\}_n
\sim_{\operatorname{GLT}}
G(\kappa_\varepsilon,\xi_\varepsilon).
\)
{Properties of monotone functions and the matrix geometric mean axioms described in \cite{Ando2004}}, combined with closeness of the GLT $\ast$-algebra (Axiom \textbf{GLT 4}) under a.c.s convergence, suggest that letting $\varepsilon\rightarrow0$ the limit of $ \{\,G(A_n+\varepsilon I,B_n+\varepsilon I)\,\}_n$ is well defined in the {a.c.s topology and this limit is $ \{\,G(A_n,B_n)\,\}_n$, up to zero-distributed perturbations }, leading to the GLT algebraic part of Conjecture~\ref{conj:GM_glt}, a full proof when
$\kappa$ and $\xi$ do not commute point-wise remains open. Meanwhile, all
numerical evidence below supports the conjecture on the spectral distribution part of our hypothesis. 
%%%%%%%%%%%%%%%%%%%%%%%%%%%%%%%%%%%%%%%%%%%%%%%%%%%%%%%%%%%%%%%%%%%%%%%%%%%%%%
\subsubsection*{Candidate symbols predicted by Conjecture~\ref{conj:GM_glt}}

For the four test pairs introduced in Section \ref{sec:NumSetup}, Conjecture~\ref{conj:GM_glt} predicts
the following GLT symbols for $\{G(A_n,B_n)\}_n$.
Throughout
\(
\mathbf 1=(1,1,1)^{\mathsf *},\;
J=\mathbf 1\,\mathbf 1^{\mathsf *}\in\mathbb{C}^{3\times3}.
\)

\paragraph{Case 1, Example 1.}
\[
\widetilde G(\kappa,\xi)(x,\theta)=0_{2\times2}.
\]

\paragraph{Case 1, Example 2.}
\[
\widetilde G(\kappa,\xi)(x,\theta)=
\chi_{[-\frac14,\frac14]}(\theta)\;
\otimes C,
\]
\[
C \; =\;
\frac{1}{6^{1/4}\sqrt{\,2+\sqrt6\,}}
\begin{bmatrix}
2\sqrt2+3\sqrt3 & \sqrt2+\sqrt3\\[4pt]
\sqrt2+\sqrt3 & 2\sqrt2+\sqrt3
\end{bmatrix}.
\]

\paragraph{Case 2, Example 1.}
\[
\widetilde G(\kappa,\xi)(x,\theta)=0_{2\times2}.
\]

\paragraph{Case 2, Example 2.}
\[
\widetilde G(\kappa,\xi)(x,\theta)=
\sqrt{\,\bigl(2+\cos\theta\bigr)\,\bigl(3+\cos\theta\bigr)\,h(x)\,k(x)}\;
\otimes J.
\]

\subsubsection{Numerical verification of the Conjecture}

Let
\[
G_n := G(A_n,B_n)\in\mathbb{C}^{d_n\times d_n},
\qquad d_n=\text{size}(G_n).
\]
We test whether the empirical eigenvalue distribution of $\{G_n\}_n$
converges to the distribution induced by the
candidate symbol
$\widetilde G(\kappa,\xi)(x,\theta)$ defined in
Section~\ref{subsubsec:GM_conjecture}.
The comparison follows the standard rearrangement strategy.

\begin{itemize}\setlength\itemsep{6pt}

\item[\textbf{1.}] \textbf{Sampling the symbol.}
Evaluate $\widetilde G(\kappa,\xi)$ on a tensor grid
$\{x_j\}_{j=1}^{M_x}\!\subset[0,1]$,
$\{\theta_i\}_{i=1}^{M_\theta}\!\subset[-\pi,\pi]$ with a total number of
$M_x \times M_\theta\simeq2000$ samples.
For each point $(x_j,\theta_i)$ compute the
$r$ eigenvalues of the function; then merge and sort in non-decreasing order all the computed values.
This yields the empirical quantile function of the eigenvalues.

\item[\textbf{2.}] \textbf{Eigenvalue computation.}
Matrices $A_n,B_n$ are generated for
$n\in\{40,80,160,320\}$, and eigenvalues are ordered in non-decreasing order, to obtain the quantile spectral distribution function of $\{G_n\}_n$. \\
We also record
$\lambda_{\min}(G_n)$, $\lambda_{\max}(G_n)$ and the condition
number~$\mathrm{cond}_2(G_n)$ for later stability analysis.

\item[\textbf{3.}] \textbf{Quantile plot matching.}
For each $n$ we plot the sorted eigenvalues
$\bigl(\lambda_i(G_n)\bigr)$ against the corresponding rearranged sample values of $\widetilde G$.
A superposition of the two plots for growing values of $n$ gives an experimental indication of the asymptotic convergence of the empirical spectral distribution of $\{G_n\}_n$ to that of the symbol.

\item[\textbf{4.}] \textbf{Support analysis.}
To estimate the measure of the zero set
$\{(x,\theta):\widetilde G=0\}$, we count the fraction of eigenvalues below the threshold $0.1$  compared to the matrix size; the value is an estimate of the complement of the predicted measure of the support as $n$ grows.
\end{itemize}
All tests for this section are run on an Intel Core Ultra 7 155H CPU (22 threads, 30.9 GiB RAM) under Ubuntu 24.10, Linux 6.11.0-29, with Python~3.12.7, NumPy~1.26.4 and SciPy~1.13.1 linked against OpenBLAS.
\medskip
\begin{remark}[Measure-theoretic meaning of the rearrangement test]
\label{rearrangement}
As anticipated implicitly, the comparison of sorted eigenvalue versus sorted symbol has a precise
measure-theoretic foundation based on the quantile approximation theory \cite{Bogoya2016}. For completeness, we report the construction in \cite{Bogoya2016} for the
unilevel setting $d=1$; the multilevel case is analogous.

Let $r$ denote the size of the matrix
$\widetilde G(\kappa,\xi)(x,\theta)$ and put
$D=[0,1]\times[-\pi,\pi]$ (so $\mu(D)=2\pi$).
Define the probability space defined by the triple
\[
(\Omega,\mathcal F,\mathbb P),\qquad
\Omega=D\times\{1,\dots,m\},\qquad
\mathcal F=\mathcal B(D)\otimes2^{\{1,\dots,m\}}.
\]
Here, $\mathcal B(D)$ denotes the Borel $\sigma$-algebra on $D$, and
$2^{\{1,\dots,m\}}$ denotes the power set of $\{1,\dots,m\}$,
with product measure
\(
\mathbb P(A\times B)=\dfrac{\mu(A)}{2\pi}\,\dfrac{|B|}{{r}}.
\)\\
Introduce the random variable
\[
X(x,\theta,i):=\lambda_i\!\bigl(\widetilde G(\kappa,\xi)(x,\theta)\bigr),
\qquad (x,\theta,i)\in\Omega,
\]
where the eigenvalues are ordered non-decreasingly.
The quantile function of $X$ is precisely the \emph{non-decreasing rearrangement} $\widetilde G^{\dagger}$ of the matrix-valued symbol.\\
Because spectral distributions are defined only up to
measure-preserving rearrangements, we deduce that
\[
\{G_n\}_n\sim_{\lambda} \widetilde G
\;\Longrightarrow\;
\{G_n\}_n\sim_{\lambda} \widetilde G^{\dagger},
\]
so verifying convergence to the quantile $\widetilde G^{\dagger}$ is equivalent and numerically simpler than verifying convergence to the symbol
$\widetilde G(\kappa,\xi)$ itself. \\
Furthermore, whenever $\widetilde G^{\dagger}$ is continuous at $t\in(0,1)$, the sorted
eigenvalues satisfy
\(
\lambda_{\lceil t d_n\rceil}(G_n)\to \widetilde G^{\dagger}(t)
\)
as $n\to\infty$, and the empirical proportion of eigenvalues below a small threshold
$t_0$ (we take $t_0=0.1$) converges to
\(
\mathbb P[X\le t_0],
\)
therefore it can be used as an estimate of the measure of the rank-deficient subset
$\{(x,\theta):\widetilde G(\kappa,\xi)(x,\theta)=0\}$.
\end{remark}

%====================================================================
\subsubsection{Numerical results}
\label{sec:numerical-results}
%====================================================================
In this section, we compare the sorted eigenvalues of each geometric mean matrix
\(G(A_n,B_n)\) with the rearranged eigenvalue distribution predicted by the
candidate symbol \(\widetilde{G}(\kappa,\xi)\). Plots are provided for each block size.
\paragraph{Case 1.}
Figures~\ref{fig:case1_ex1}--\ref{fig:case1_ex2} display the ordered
eigenvalues of \(G(A_n,B_n)\) (colored markers) together with the ordered
samples of \(\widetilde{G}(\kappa,\xi)\) (solid red line).

\smallskip
\emph{Example 1.}
Because the supports of \(\kappa\) and \(\xi\) are essentially disjoint,
\(\widetilde{G}(\kappa,\xi)\equiv 0\).
Most of the eigenvalues match the zero line, showing convergence to the zero-distribution. Only a small number of positive outliers remain above the line. For small values of $n$, the eigenvalue plot appears to be governed by a GLT {momentary} symbol that, when rearranged, follows a power law that rapidly collapses to the zero function as \(n \to \infty\).

\smallskip
\emph{Example 2.}
Here, the supports of \(\kappa\) and \(\xi\) intersect on \(\theta \in [-0.25,0.25]\). As a result, \(\widetilde{G}(\kappa,\xi)\) takes two values: zero outside the overlap and a positive constant matrix inside it. The eigenvalues reflect exactly this behavior: they form a long plateau at zero, followed by two shorter clusters at the positive eigenvalues determined by \(\widetilde{G}(\kappa,\xi)\). As we will see, the length of this zero plateau matches the respective Lebesgue measure of the complement of the support of \(\widetilde G\).
\begin{figure}[H]
\centering
% First row
\begin{subfigure}[t]{0.49\textwidth}
\centering
\includegraphics[width=\textwidth]{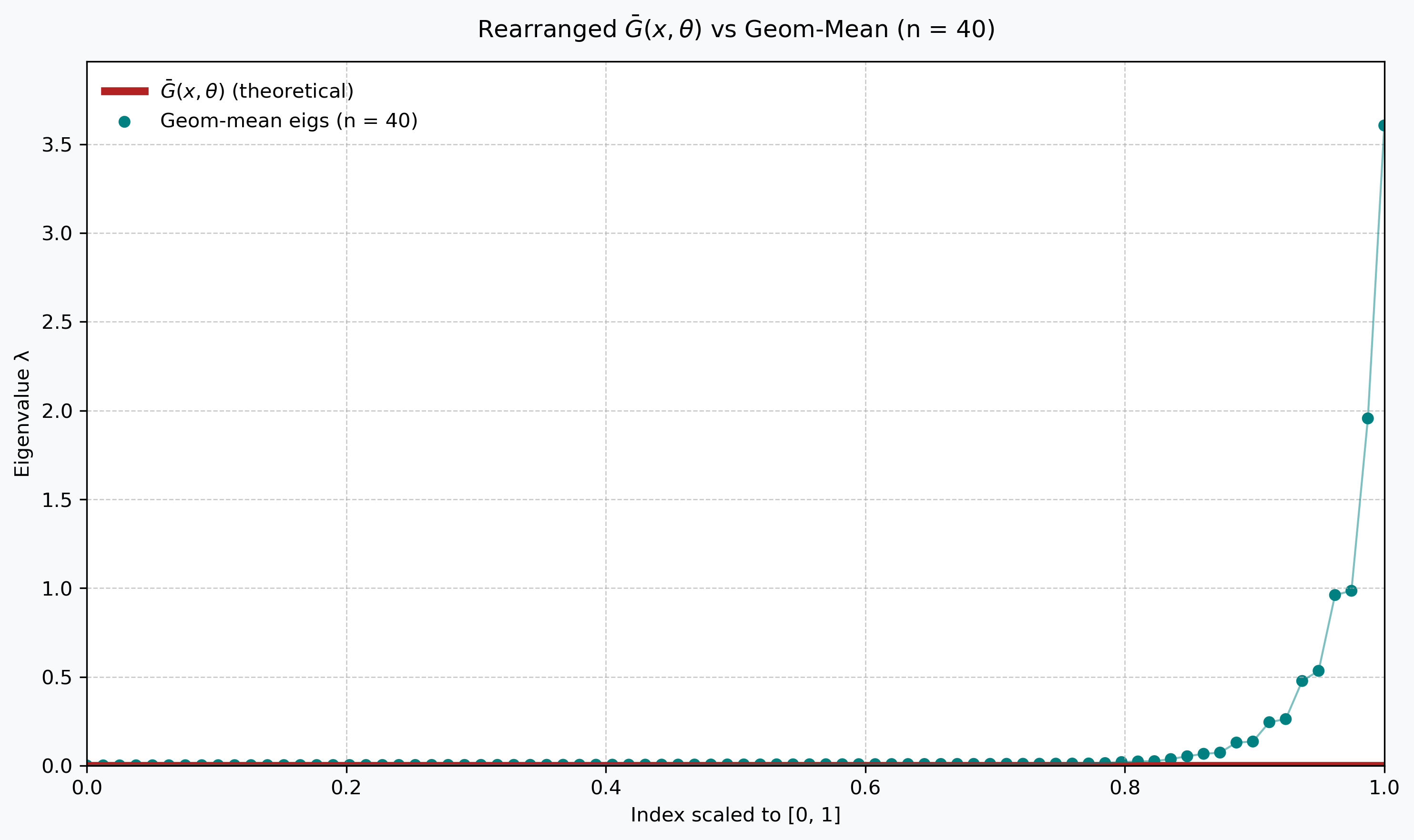}
\end{subfigure}
\hfill
\vspace{0.19cm}
\begin{subfigure}[t]{0.49\textwidth}
\centering
\includegraphics[width=\textwidth]{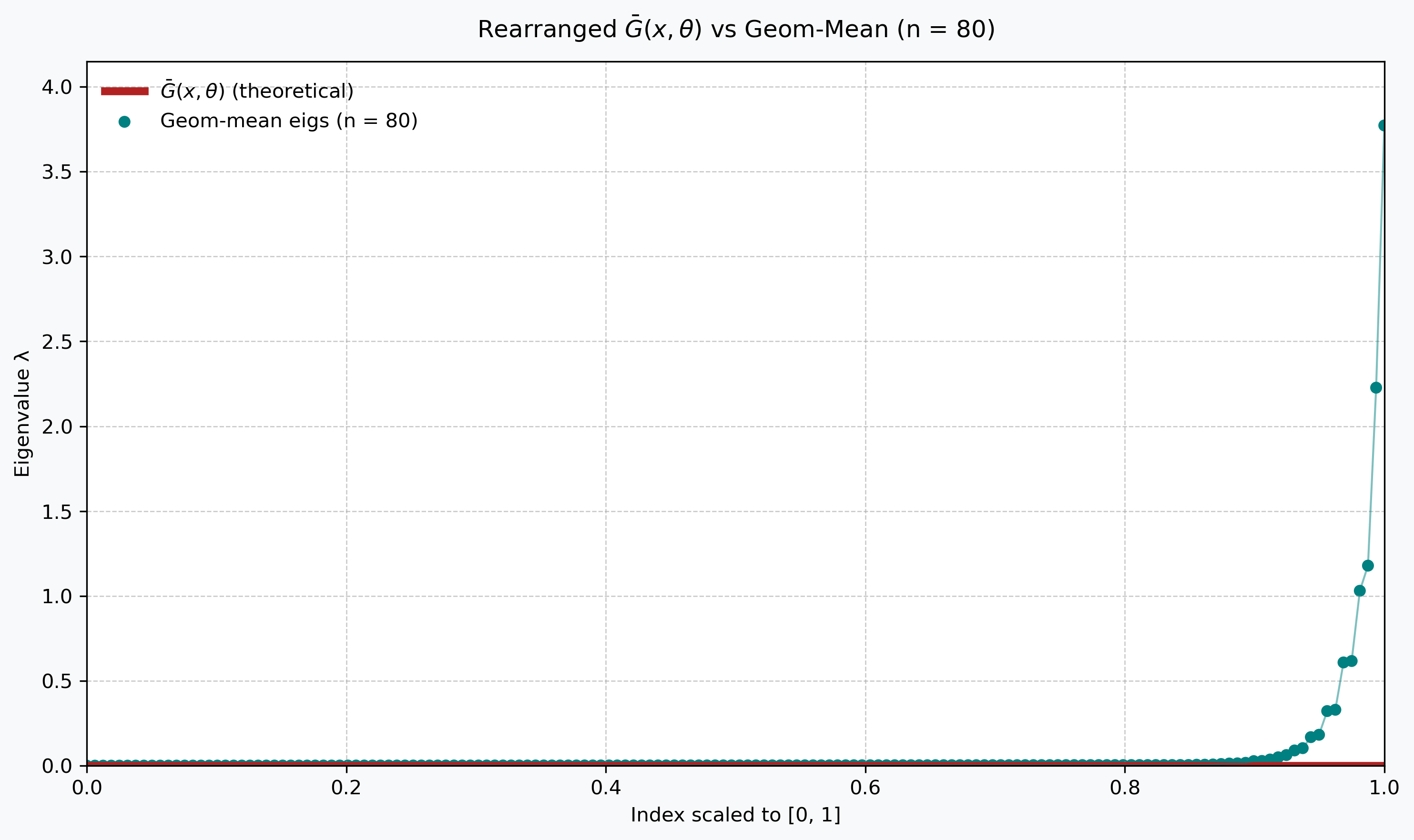}
\end{subfigure}
\\[-1em] % Tighten vertical space

% Second row
\begin{subfigure}[t]{0.49\textwidth}
\centering
\includegraphics[width=\textwidth]{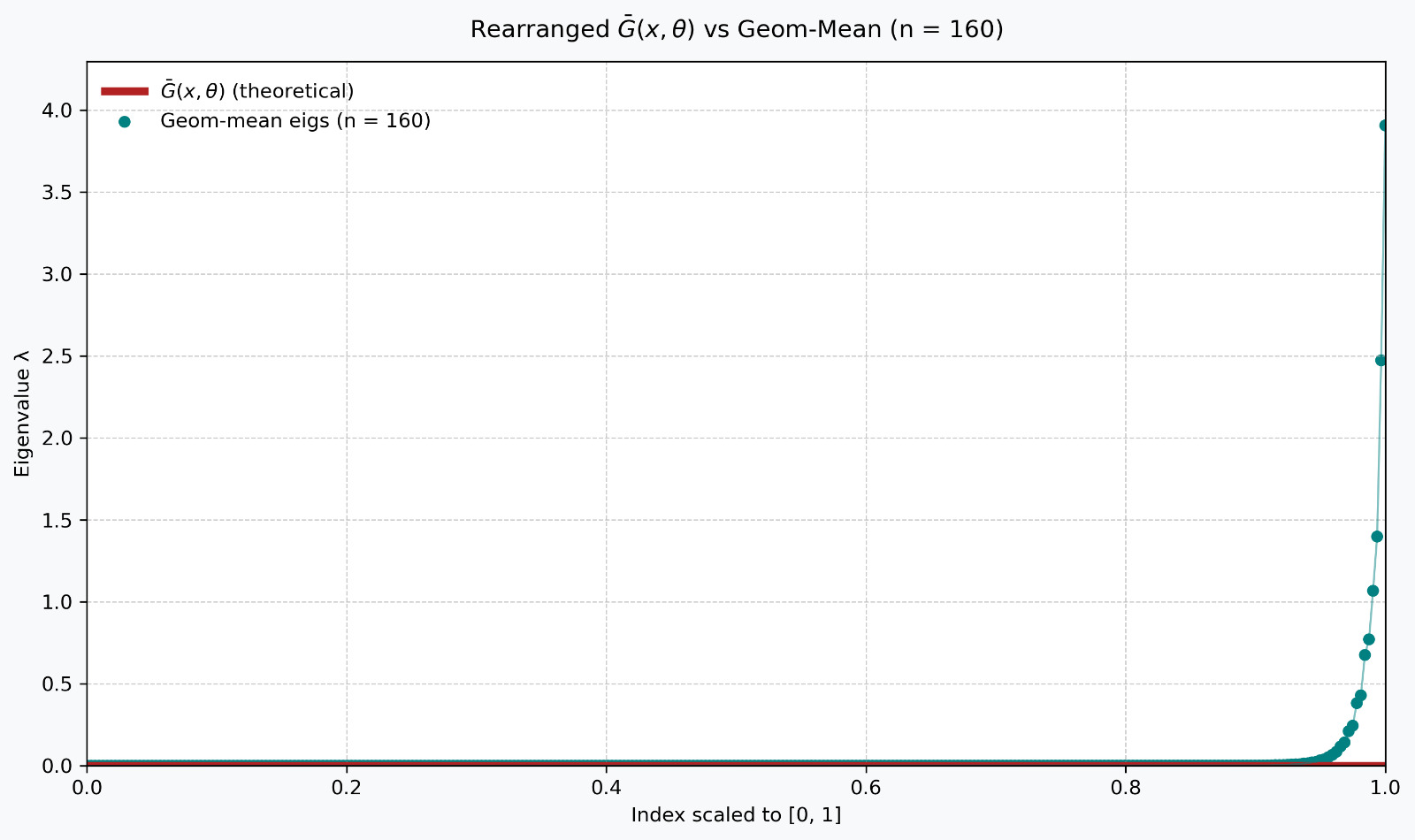}
\end{subfigure}
\hfill
\begin{subfigure}[t]{0.49\textwidth}
\centering
\includegraphics[width=\textwidth]{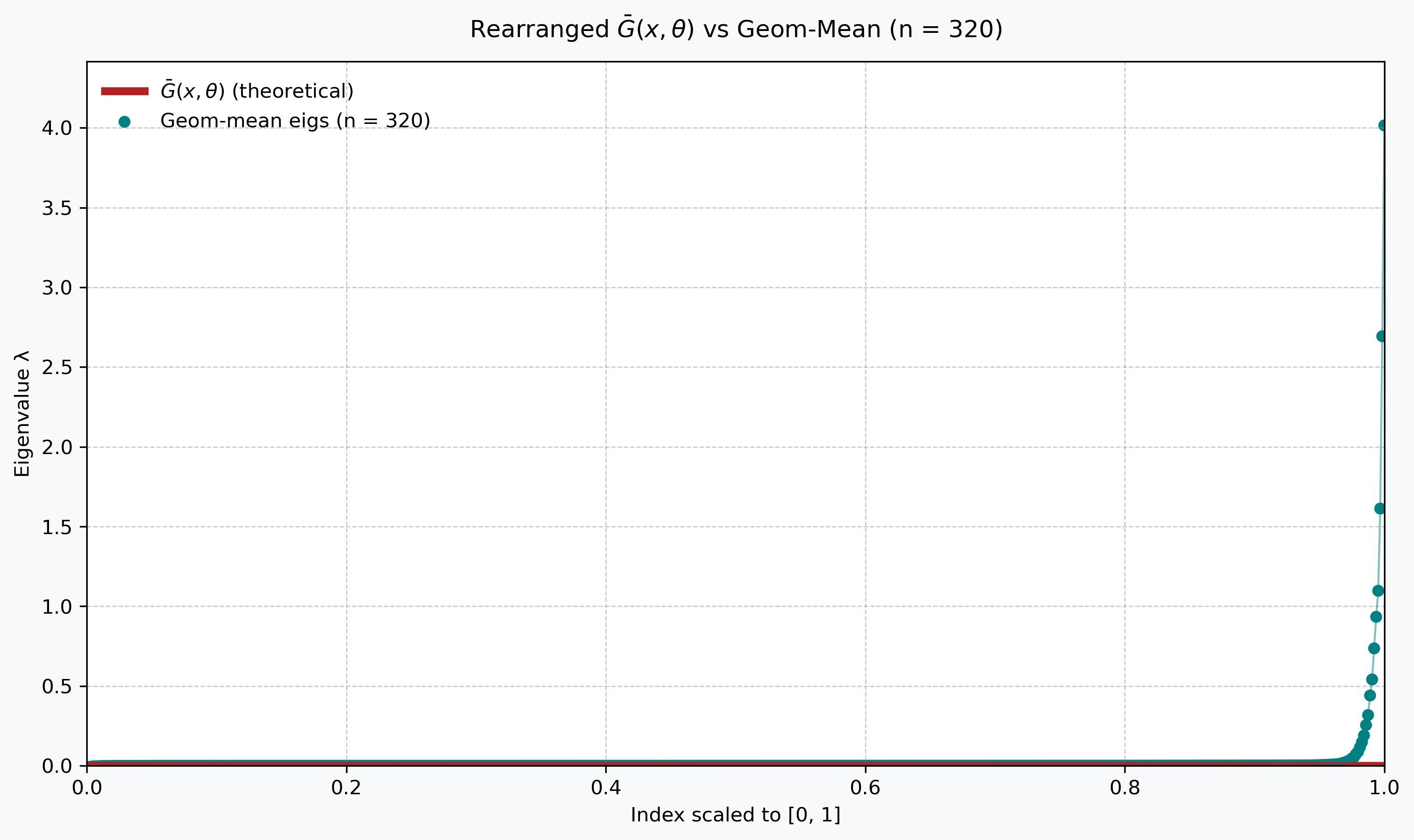}
\end{subfigure}
\caption{{\textbf{Case 1 - Example 1.}
Sorted eigenvalues of \(G(A_n,B_n)\) (colored markers; \(n=40,80,160,320\))
versus the rearranged distribution of the symbol
\(\widetilde G(\kappa,\xi)\) (solid red line)}}.
\label{fig:case1_ex1}
\end{figure}

\begin{figure}[H]
\centering
% First row
\begin{subfigure}[t]{0.49\textwidth}
\centering
\includegraphics[width=\textwidth]{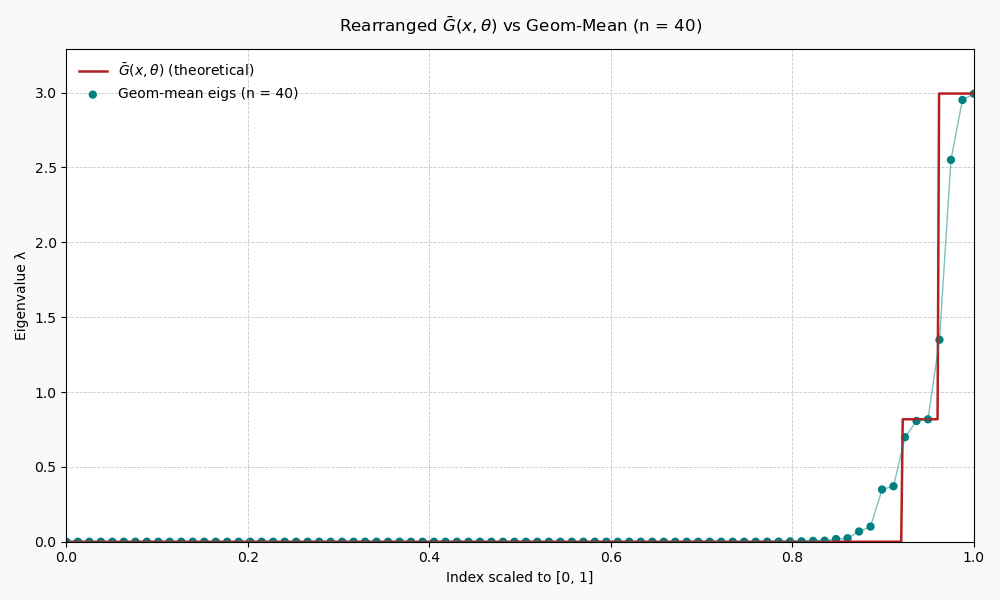}
\end{subfigure}
\hfill
\begin{subfigure}[t]{0.49\textwidth}
\centering
\includegraphics[width=\textwidth]{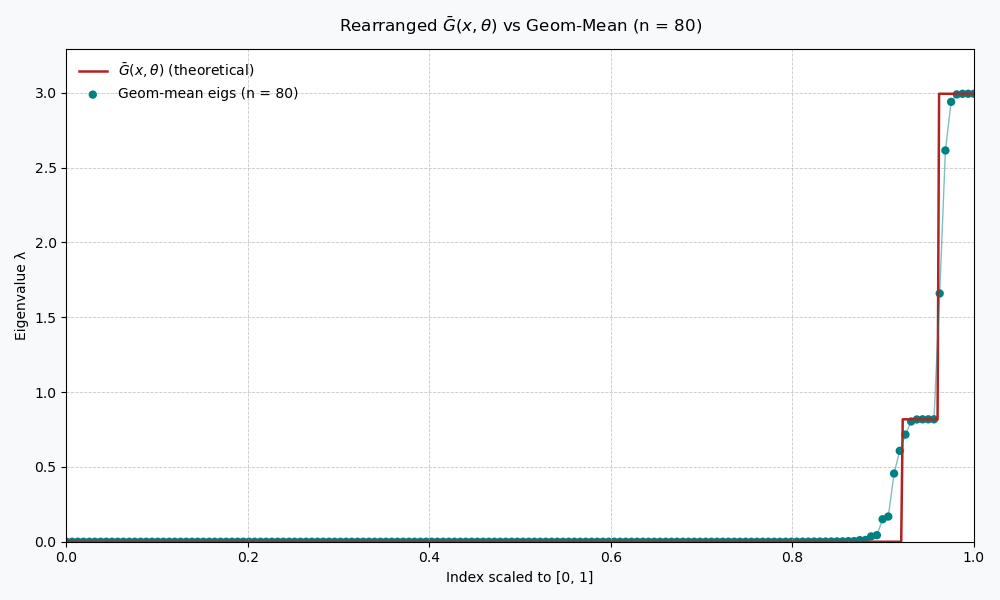}
\end{subfigure}

% Second row
\begin{subfigure}[t]{0.49\textwidth}
\centering
\includegraphics[width=\textwidth]{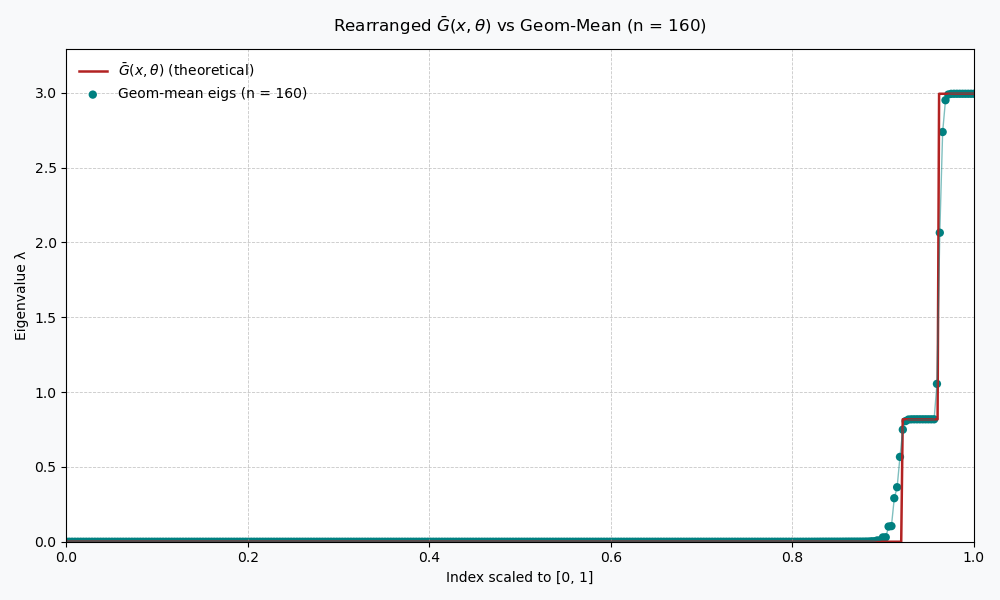}
\end{subfigure}
\hfill
\begin{subfigure}[t]{0.49\textwidth}
\centering
\includegraphics[width=\textwidth]{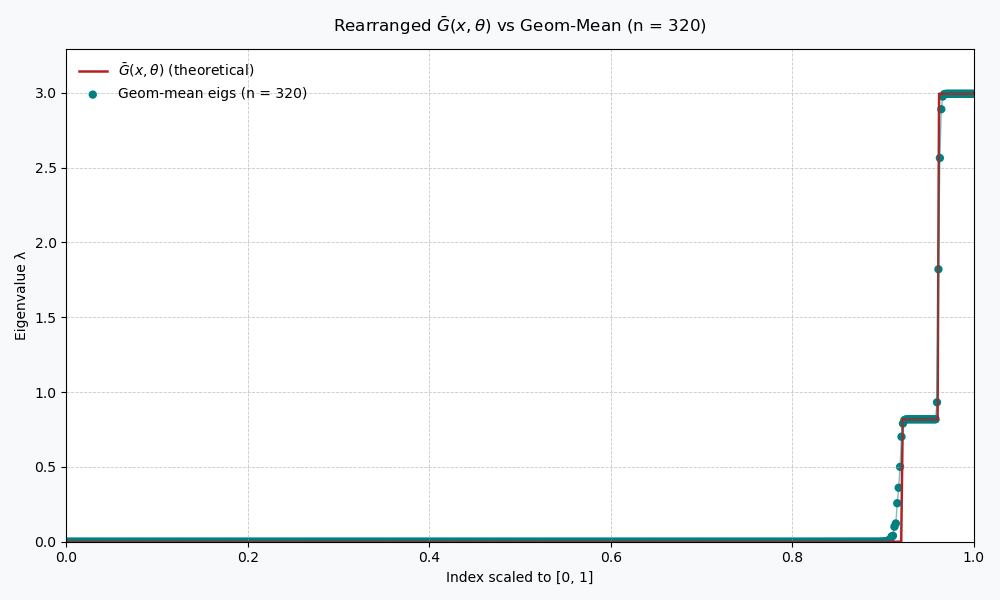}
\end{subfigure}

\caption{{\textbf{Case 1 - Example 2.}
Comparison of the ordered eigenvalues of \(G(A_n,B_n)\)
(colored markers) with the rearranged values of
\(\widetilde G(\kappa,\xi)\) (solid red line) for
\(n=40,80,160,320\).}}
\label{fig:case1_ex2}
\end{figure}
\FloatBarrier

\paragraph{Case 2.}
Figures~\ref{fig:case2_ex1}--\ref{fig:case2_ex2} repeat the experiment for
symbols that are rank-deficient on sets of full measure.

\smallskip
\emph{Example 1.}
Because the supports are disjoint, we have \(\widetilde{G}(\kappa,\xi)\equiv 0\).
The eigenvalues converge to the zero symbol. Despite the momentary symbol may seem to cause a rougher perturbation, we remark that, unlike in \textbf{Case 1 - Example 1}, this perturbation does not result in proper outliers since all perturbed eigenvalues converge to zero. In practice, we observe a better convergence.

\smallskip
\emph{Example 2.}
Here, the intersection set has rank one, leading to exactly one positive eigenvalue
in \(\widetilde{G}(\kappa,\xi)\).
The plot shows convergence despite the rank deficiency of the support. Because the norm of the diagonal perturbation introduced in \eqref{eq:case2_ex2} decays slowly, a momentary GLT perturbation \cite{new momentary,bolten2023note} is still observed, even in this non-zero case.

\begin{figure}[H]
\centering
% First row
\begin{subfigure}[t]{0.49\textwidth}
\centering
\includegraphics[width=\textwidth]{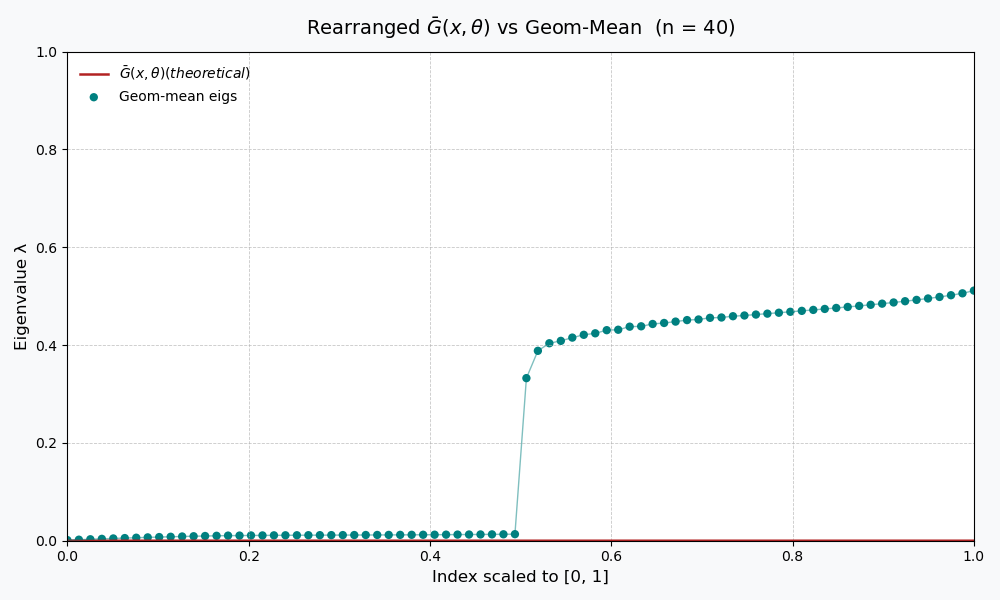}
\end{subfigure}
\hfill
\begin{subfigure}[t]{0.49\textwidth}
\centering
\includegraphics[width=\textwidth]{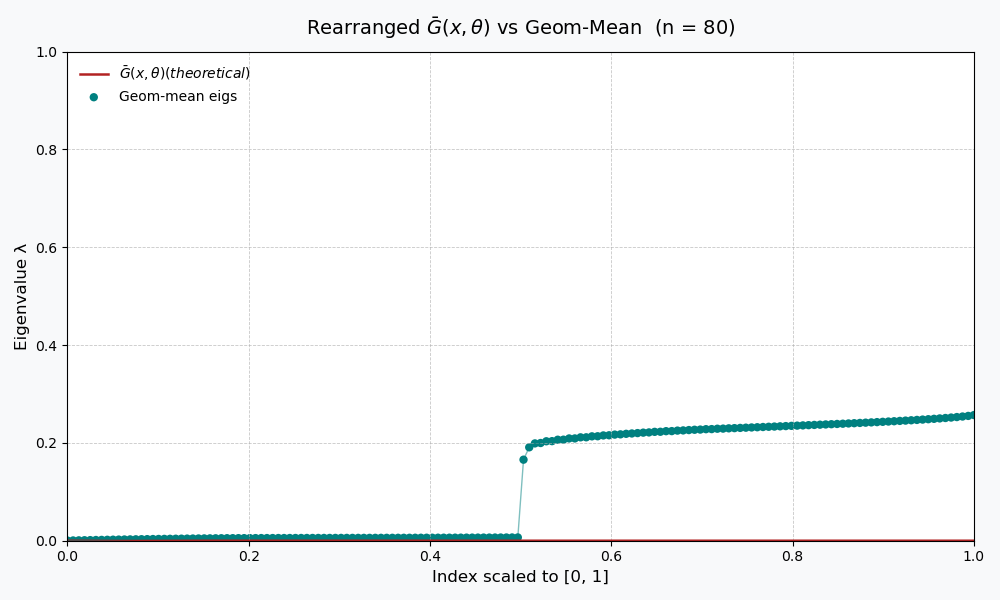}
\end{subfigure}
\\[-1em]

% Second row
\begin{subfigure}[t]{0.49\textwidth}
\centering
\includegraphics[width=\textwidth]{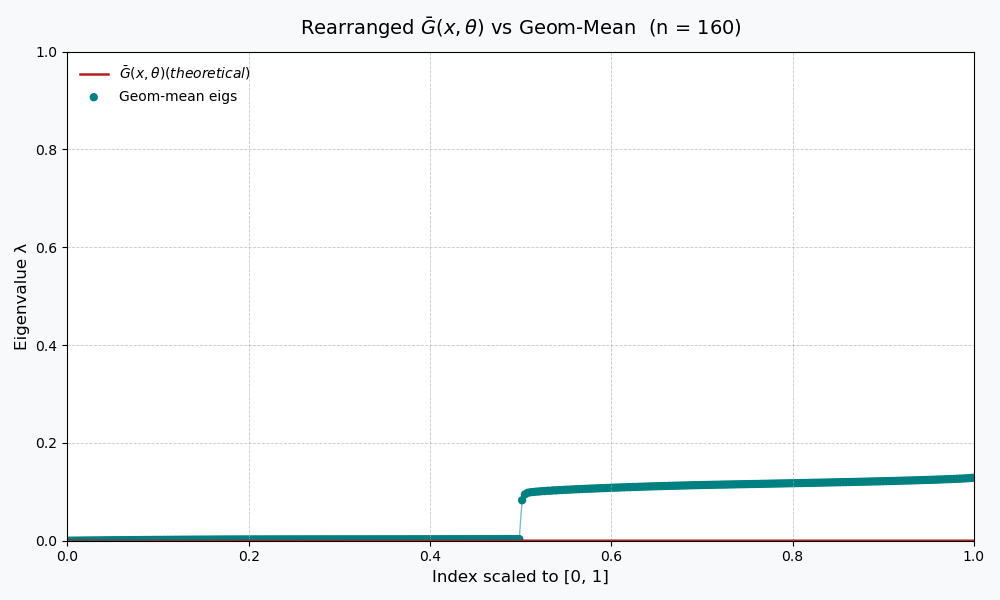}
\end{subfigure}
\hfill
\begin{subfigure}[t]{0.49\textwidth}
\centering
\includegraphics[width=\textwidth]{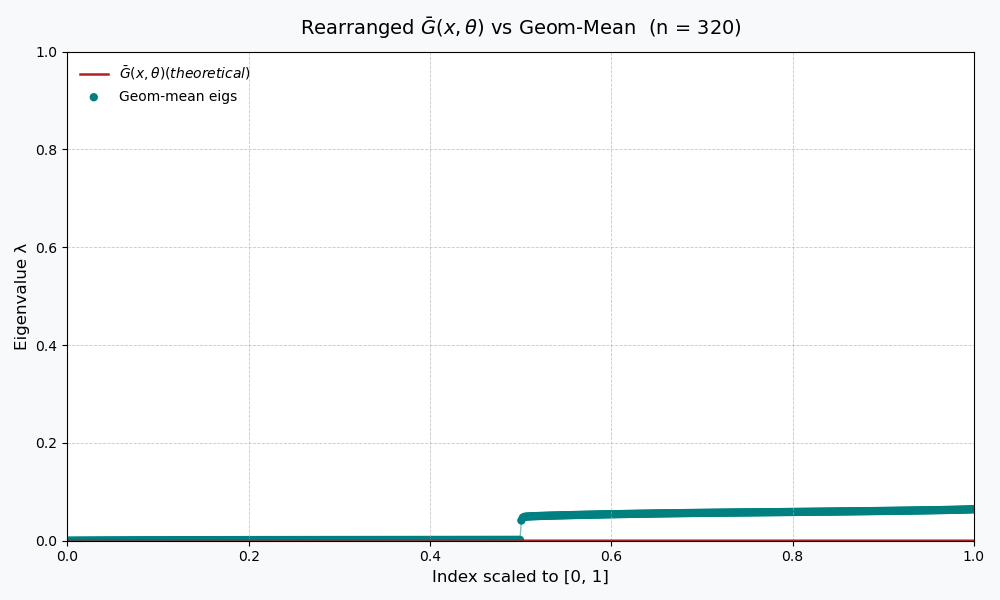}
\end{subfigure}
\caption{{\textbf{Case 2, Example 1.} 
Sorted eigenvalues of \(G(A_n,B_n)\) (colored markers) versus the rearranged eigenvalue distribution predicted by \(G(\kappa,\xi)\) (solid red line), for
\(n=40,80,160,320\).}}
\label{fig:case2_ex1}
\end{figure}

\begin{figure}[H]
\centering
% First row
\begin{subfigure}[t]{0.49\textwidth}
\centering
\includegraphics[width=\textwidth]{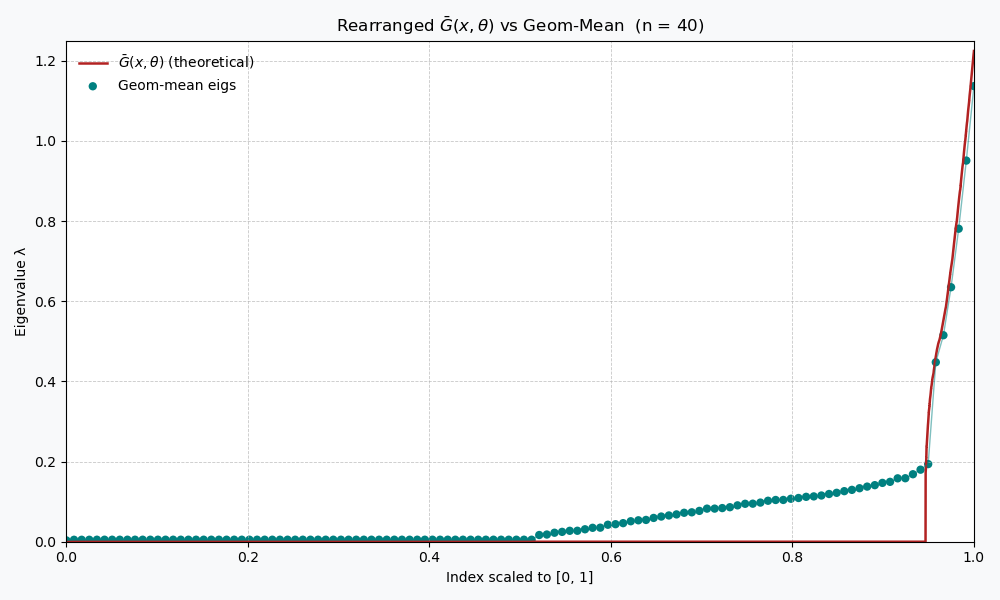}
\end{subfigure}
\hfill
\begin{subfigure}[t]{0.49\textwidth}
\centering
\includegraphics[width=\textwidth]{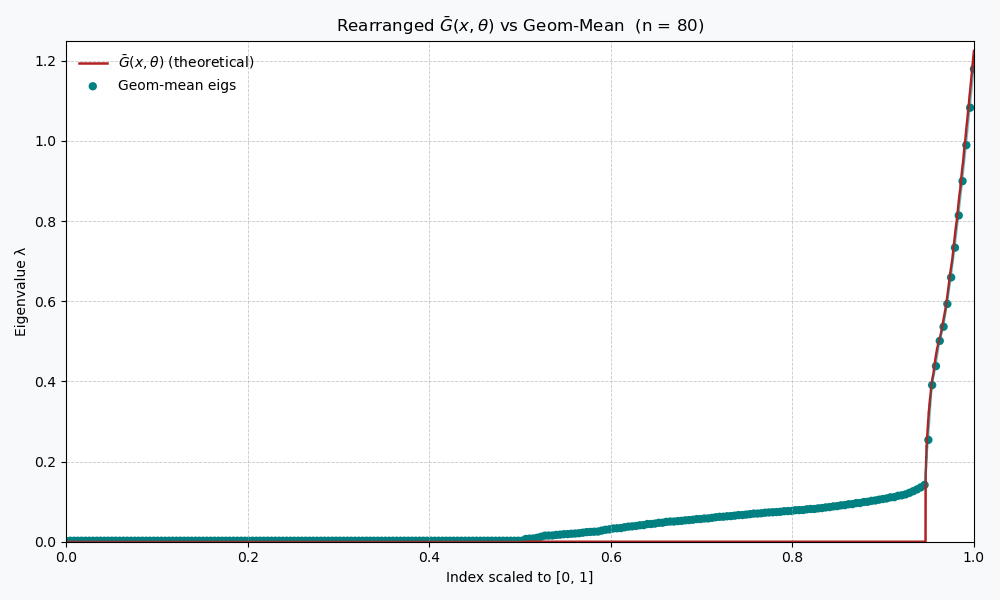}
\end{subfigure}
\\[-1em]

% Second row
\begin{subfigure}[t]{0.49\textwidth}
\centering
\includegraphics[width=\textwidth]{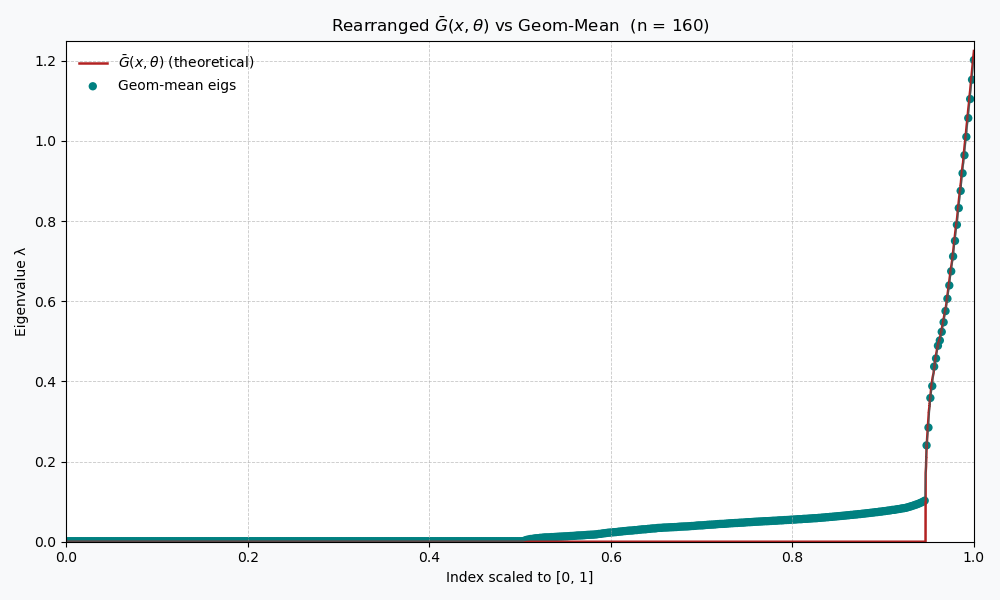}
\end{subfigure}
\hfill
\begin{subfigure}[t]{0.49\textwidth}
\centering
\includegraphics[width=\textwidth]{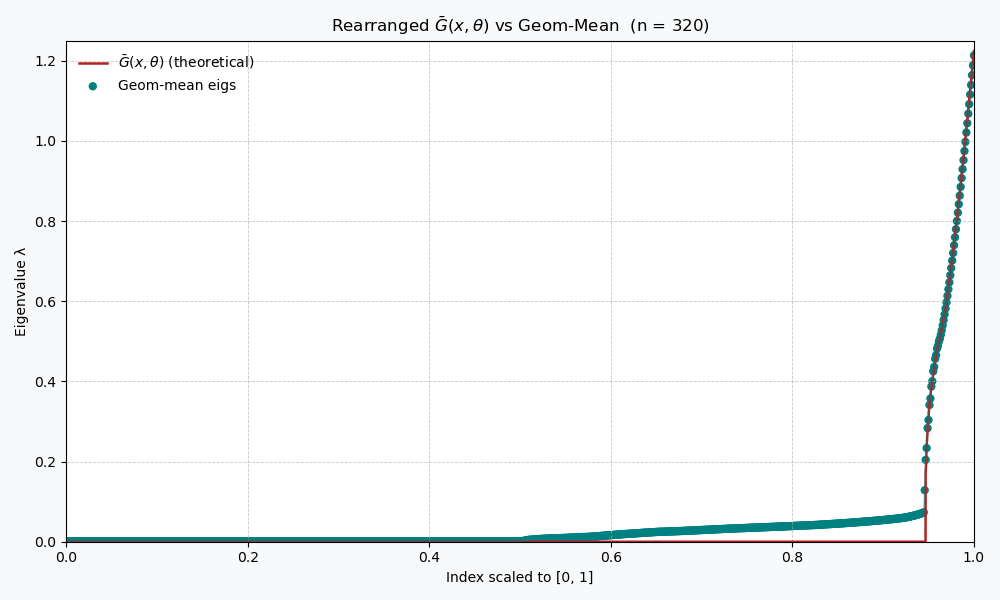}
\end{subfigure}

\caption{{\textbf{Case 2, Example 2.} Sorted eigenvalues of \(G(A_n,B_n)\) (colored markers) versus the rearranged eigenvalue distribution from \(G(\kappa,\xi)\) (solid red line), for
\(n=40,80,160,320\).}}
\label{fig:case2_ex2}
\end{figure}
\FloatBarrier
\paragraph{Extremal eigenvalues.}
For each block size \(n\) we report
\[
\lambda_{\min}(G_n), \qquad
\lambda_{\max}(G_n), \qquad
\mathrm{cond}_2(G_n)=\frac{\lambda_{\max}(G_n)}{\lambda_{\min}(G_n)},
\]
together with the essential infimum and supremum of the conjectured
symbol, \(\widetilde G_{\min}\) and \(\widetilde G_{\max}\).
In this way, we can observe how rapidly the minimum eigenvalue decays and, in most cases, how the maximum eigenvalues converge to the essential supremum of the spectral symbol.
This provides a clear measure of the conditioning behaviour as \(n\) grows.

% ------------------------------------------------------------------
% -------------------- CASE 1 ------------------------------------
% ------------------------------------------------------------------
\medskip\noindent

\begin{table}[H]
\centering
\caption{Extremal eigenvalues and condition numbers for
\textbf{Case 1 - Example 1} (symbol zero almost everywhere).}
\label{tab:case1_ex1_range}
\renewcommand{\arraystretch}{1.2}
\setlength{\tabcolsep}{8pt}
\begin{tabular}{|c
|>{\centering\arraybackslash}m{1.6cm}
|>{\centering\arraybackslash}m{1.6cm}
|>{\centering\arraybackslash}m{2.4cm}
|>{\centering\arraybackslash}m{2.3cm}
|>{\centering\arraybackslash}m{2.4cm}|}
\hline
{$n$} & {\(\widetilde G_{\min}\)}
& {\(\widetilde G_{\max}\)}
& {Min.\ eig.}
& {Max.\ eig.}
& {\(\mathrm{cond}_2(G_n)\)}\\
\hline
40  & 0 & 0 & \(8.714 \times 10^{-3}\) & 3.912 & \(4.490 \times 10^{2}\) \\
80  & 0 & 0 & \(3.684 \times 10^{-3}\) & 4.036 & \(1.095 \times 10^{3}\) \\
160 & 0 & 0 & \(1.492 \times 10^{-3}\) & 4.133 & \(2.769 \times 10^{3}\) \\
320 & 0 & 0 & \(5.833 \times 10^{-4}\) & 4.211 & \(7.220 \times 10^{3}\) \\
\hline
\end{tabular}
\end{table}
\paragraph{Case 1 - Example 2.}
Here \(\widetilde G_{\max}=2.9939\) is strictly positive. In this example, we observe a fast convergence of \(\lambda_{\max}(G_n)\) and an exponential decay of
\(\lambda_{\min}(G_n)\), which clearly causes the condition number \(\mathrm{cond}_2(G_n)\) to escalate, reaching
\(\mathcal{O}(10^{8})\) at \(n=320\).

\begin{table}[H]
\centering
\caption{Extremal eigenvalues and condition numbers for
\textbf{Case 1 - Example 2} (symbol positive on an overlap set).}
\label{tab:case1_ex2_range}
\renewcommand{\arraystretch}{1.2}
\setlength{\tabcolsep}{8pt}
\begin{tabular}{|c
|>{\centering\arraybackslash}m{1.6cm}
|>{\centering\arraybackslash}m{1.8cm}
|>{\centering\arraybackslash}m{2.4cm}
|>{\centering\arraybackslash}m{1.8cm}
|>{\centering\arraybackslash}m{2.4cm}|}
\hline
{$n$} & {\(\widetilde G_{\min}\)}
& {\(\widetilde G_{\max}\)}
& {Min.\ eig.}
& {Max.\ eig.}
& {\(\mathrm{cond}_2(G_n)\)}\\
\hline
40  & 0 & 2.993 & \(1.562 \times 10^{-5}\) & 2.992 & \(1.915 \times 10^{5}\) \\
80  & 0 & 2.993 & \(1.953 \times 10^{-6}\) & 2.993 & \(1.533 \times 10^{6}\) \\
160 & 0 & 2.993 & \(2.441 \times 10^{-7}\) & 2.993 & \(1.226 \times 10^{7}\) \\
320 & 0 & 2.993 & \(3.051 \times 10^{-8}\) & 2.993 & \(9.811 \times 10^{7}\) \\
\hline
\end{tabular}
\end{table}

\paragraph{Case 2 - Example 1.}
The minimum eigenvalue appear to decay to 0 at a quadratic rate, while the largest eigenvalue converges to zero linearly, as determined by the momentary symbol. As a consequence, the growth of
\(\mathrm{cond}_2(G_n)\) is moderate.
\medskip
\smallskip

\begin{table}[H]
\centering
\caption{Extremal eigenvalues and condition numbers for
\textbf{Case 2 - Example 1} (candidate symbol
\(\widetilde G \equiv 0\)).}
\label{tab:case2_ex1_range}
\renewcommand{\arraystretch}{1.2}
\setlength{\tabcolsep}{8pt}
\begin{tabular}{|c
|>{\centering\arraybackslash}m{1.6cm}
|>{\centering\arraybackslash}m{1.6cm}
|>{\centering\arraybackslash}m{2.4cm}
|>{\centering\arraybackslash}m{1.8cm}
|>{\centering\arraybackslash}m{2.4cm}|}
\hline
{$n$} & {\(\widetilde G_{\min}\)}
& {\(\widetilde G_{\max}\)}
& {Min.\ eig.}
& {Max.\ eig.}
& {\(\mathrm{cond}_2(G_n)\)}\\
\hline
40  & 0 & 0 & \(1.261 \times 10^{-3}\) & 0.511 & \(4.053 \times 10^{2}\) \\
80  & 0 & 0 & \(3.210 \times 10^{-4}\) & 0.256 & \(8.002 \times 10^{2}\) \\
160 & 0 & 0 & \(8.099 \times 10^{-5}\) & 0.128 & \(1.589 \times 10^{3}\) \\
320 & 0 & 0 & \(2.034 \times 10^{-5}\) & 0.064 & \(3.169 \times 10^{3}\) \\
\hline
\end{tabular}
\end{table}

\medskip
\smallskip

\paragraph{Case 2 - Example 2.}
Here \(\widetilde G_{\max}\simeq1.22\).
\(\lambda_{\max}(G_n)\) converges to this value and is included in the range of $\widetilde G$, whereas
\(\lambda_{\min}(G_n)\) decays linearly. This better conditioning compared to \textbf{Example~2} of the first case is reasonable, since the positive perturbations we introduced to the Toeplitz matrices of this example (Equation \ref{eq:case2_ex2} decay in norm more slowly than the ones in \textbf{Case 1 - Example 1}).

\medskip

% ------------------------------------------------------------------
% Case 2 - Example 2 (symbol positive on overlap)
% ------------------------------------------------------------------
\begin{table}[H]
\centering
\caption{Extremal eigenvalues and condition numbers for
\textbf{Case 2 - Example 2} (symbol positive on an overlap set).}
\label{tab:case2_ex2_range}
\renewcommand{\arraystretch}{1.2}
\setlength{\tabcolsep}{8pt}
\begin{tabular}{|c
|>{\centering\arraybackslash}m{1.6cm}
|>{\centering\arraybackslash}m{1.8cm}
|>{\centering\arraybackslash}m{2.4cm}
|>{\centering\arraybackslash}m{1.8cm}
|>{\centering\arraybackslash}m{2.4cm}|}
\hline
{$n$} & {\(\widetilde G_{\min}\)}
& {\(\widetilde G_{\max}\)}
& {Min.\ eig.}
& {Max.\ eig.}
& {\(\mathrm{cond}_2(G_n)\)}\\
\hline
40  & 0 & 1.224 & \(3.24 \times 10^{-3}\) & 1.136 & \(3.50 \times 10^{2}\) \\
80  & 0 & 1.224 & \(1.46 \times 10^{-3}\) & 1.179 & \(8.10 \times 10^{2}\) \\
160 & 0 & 1.224 & \(5.65 \times 10^{-4}\) & 1.201 & \(2.13 \times 10^{3}\) \\
320 & 0 & 1.224 & \(8.19 \times 10^{-5}\) & 1.213 & \(1.48 \times 10^{4}\) \\
\hline
\end{tabular}
\end{table}

%--------------------------------------------------------------------
\paragraph{Zero-related statistics.}
For each block size \(n\), we report the fraction of eigenvalues whose
modulus does not exceed the fixed cutoff \(0.1\); see
Tables~\ref{tab:case1_ex1_zero}-\ref{tab:case2_ex2_zero}.
Each table lists the empirical proportion
(\emph{Prop.\(\le0.1\)}), the theoretical measure of the zero set of the
rearranged symbol (\emph{Target}), and the absolute difference (\emph{Error}).

% ------------------------------------------------------------------
% -------------------- CASE 1 ------------------------------------
% ------------------------------------------------------------------
\smallskip
\paragraph{Case 1 - Example 1.}
The conjectured symbol is the zero function, so the target measure is~1. The empirical value increases monotonically with~\(n\), remaining  within a small error.
Because the number of outliers is at most \(o(n)\), we expect that the proportion will converge to 1 as $n \to \infty $.
\paragraph{Case 1 - Example 2.}
Here, the zero set of the rearranged spectral symbol has a measure
\[
1-\frac{1}{4\pi}\approx 0.9204.
\]%
The measured proportions appear to approach this value with an
\(\mathcal O(n^{-1})\) rate of decay, with the error falling below \(10^{-3}\) by \(n=160\).

% ------------------------------------------------------------------
% -------------------- CASE 2 ------------------------------------
% ------------------------------------------------------------------
\medskip
\paragraph{Case 2 - Example 1.}
Here the expected spectral symbol is zero, but the momentary symbol perturbation,
as observed in Figure~\ref{fig:case2_ex1}, produces an apparent
plateau \(0.5\) up to \(n=320\), where the proportion jumps to~1 and the measured error vanishes.
Since the momentary perturbation does not create proper outliers, as observed in \textbf{Case 1 - Example 2}, a faster convergence can reasonably be expected in this case.

\paragraph{Case 2 - Example 2.}
In this case, the complement of the support of the rearranged distribution has measure
\[
\frac{17}{18}\approx 0.9444.
\]
The empirical measure of the zero cluster converges to this value, with the error appearing to follow the same power law observed in \textbf{Case 2 - Example 1}.

\medskip
\begin{table}[H]
\centering
\caption{Case 1, Example 1: proportion of eigenvalues below \(0.1\);\\ target \(0\).}
\label{tab:case1_ex1_zero}
\begin{tabular}{|c|S|S|S|}
\hline
{$n$} & {Prop.\(\le0.1\)} & {Target} & {Error} \\
\hline
40 & 0.6375 & 1.0000 & 0.3625 \\
80 & 0.8938 & 1.0000 & 0.1062 \\
160 & 0.9438 & 1.0000 & 0.0562 \\
320 & 0.9688 & 1.0000 & 0.0312 \\
\hline
\end{tabular}
\end{table}

\begin{table}[H]
\centering
\caption{Case 1, Example 2: proportion of eigenvalues below \(0.1\);\\
target \(1-\frac{1}{4\pi}\approx0.92042\).}
\label{tab:case1_ex2_zero}
\begin{tabular}{|c|S|S|S|}
\hline
{$n$} & {Prop.\(\le0.1\)} & {Target} & {Error} \\
\hline
40 & 0.8750 & 0.9204 & 0.0454 \\
80 & 0.8938 & 0.9204 & 0.0266 \\
160 & 0.9031 & 0.9204 & 0.0173 \\
320 & 0.9109 & 0.9204 & 0.0095 \\
\hline
\end{tabular}
\end{table}

\begin{table}[H]
\centering
\caption{Case 2, Example 1: proportion of eigenvalues below \(0.1\);\\ target \(0\).}
\label{tab:case2_ex1_zero}
\begin{tabular}{|c|S|S|S|}
\hline
{$n$} & {Prop.\(\le0.1\)} & {Target} & {Error} \\
\hline
40 & 0.5000 & 1.0000 & 0.5000 \\
80 & 0.5000 & 1.0000 & 0.5000 \\
160 & 0.5156 & 1.0000 & 0.4844 \\
320 & 1.0000 & 1.0000 & 0.0000 \\
\hline
\end{tabular}
\end{table}

\begin{table}[H]
\centering
\caption{Case 2, Example 2: proportion of eigenvalues below \(0.1\);\\
target \(\tfrac{17}{18}\approx0.9444\).}
\label{tab:case2_ex2_zero}
\begin{tabular}{|c|S|S|S|}
\hline
{$n$} & {Prop.\(\le0.1\)} & {Target} & {Error} \\
\hline
40 & 0.7667 & 0.9444 & 0.1777 \\
80 & 0.8833 & 0.9444 & 0.0611 \\
160 & 0.9438 & 0.9444 & 0.0006 \\
320 & 0.9448 & 0.9444 & 0.0004 \\
\hline
\end{tabular}
\end{table}

\chapter{\texorpdfstring{Hidden GLT structures in quantum spin models}
                          {Hidden GLT Structures in Quantum Spin Models}}
\label{chapter:quantum}

This chapter presents important results about the connection between mathematical models and physical systems, as published in \cite{GLTSpin2025}. The authors thoroughly investigate structured matrix-sequences originating from mean-field quantum spin systems, employing the GLT algebra framework to deepen the theoretical understanding of these systems. Specifically, the study delves into the quantum Curie-Weiss (CW) model, an important paradigm for mean-field interactions, recognized in quantum statistical mechanics due to its ability to illustrate phenomena such as spontaneous symmetry breaking and critical phase transitions.

The research systematically addresses two variants of the Curie-Weiss model: the unrestricted and restricted versions. For the general model, the paper identifies the normalized Curie-Weiss Hamiltonian as a zero-distributed GLT sequence. This finding indicates that the sequence exhibits trivial asymptotic spectral and singular value distributions, effectively characterized by a zero symbol within GLT theory.

In contrast, the restricted version of the Curie-Weiss model presents a significantly richer structure. The corresponding normalized matrix-sequences are demonstrated to form GLT sequences with a nontrivial symbol. This GLT symbol, explicitly derived in the paper, describes the intricate asymptotic behavior of eigenvalues and singular values for these matrix-sequences.

Numerical experiments are rigorously conducted to validate theoretical assertions, including precise visualizations of spectral distributions. These numerical analyses underscore a remarkable consistency between theoretical predictions and observed numerical results, further confirming the robustness of the GLT approach.

\section{Curie-Weiss model}\label{sec: CW-model}
We consider the Hamiltonian for the quantum Curie-Weiss model for ferromagnetism, which takes the form
\begin{align}
    H_{\Lambda_N}^{\text{CW}}=-\frac{\Gamma}{2|\Lambda_N|}\sum_{x,y\in \Lambda_N}\sigma_3(x)\sigma_3(y)-B\sum_{x\in\Lambda_N}\sigma_1(x), \label{eq:curieweiss}
\end{align}
where $\Lambda_N$ is an arbitrary finite subset of $\mathbb{Z}^\ell$, $\Gamma>0$ scales the spin-spin coupling, and $B$ is an external magnetic field. This model describes a chain of $N$ immobile spin-$1/2$ particles with ferromagnetic coupling in a transverse magnetic field. The Hamiltonian acts on the Hilbert space $\mathcal{H}_{\Lambda_N}=\otimes_{x\in\Lambda_N}H_x$, where $H_x=\mathbb{C}^2$.
The operator $\sigma_i(x), \dots (i=1,2,3)$ acts as the Pauli matrix $\sigma_i$ on $H_x$ and acts as the unit matrix $\mathbb{1}_2$ elsewhere. The single Pauli matrices are explcitly given by
\begin{align}\label{basic matrices}
    \sigma_1=\begin{pmatrix}
    0 & 1 \\ 1 & 0
\end{pmatrix},\ \ \ \sigma_2=\begin{pmatrix}
    0 & -i \\ i & 0
\end{pmatrix},\ \ \ \sigma_3=\begin{pmatrix}
    1 & 0 \\ 0 & -1
\end{pmatrix}.
\end{align}
\noindent
In contrast to locally interacting quantum spin models, the spatial dimension of this model does not influence the behaviour. This follows from the fact that for the averages
\begin{align}\label{spinss}
    S_i^{\Lambda_N}=\frac{1}{|\Lambda_N|}\sum_{x\in\Lambda_N}\sigma_i(x), \ \ \ (i=1,2,3),
\end{align}
we can write the Hamiltonian \eqref{eq:curieweiss} (see e.g. \cite{Ven_2024}) as
\begin{align}\label{hamcw}
    H_{\Lambda_N}^{\text{CW}}=-|\Lambda_N|\bigg{(}\frac{\Gamma}{2}(S_3^{\Lambda_N})^2+BS_1^{\Lambda_N})\bigg{)}=
    |\Lambda_N|\bigg{(}h_0^{\text{CW}}({\bf S}^{\Lambda_N})\bigg{)},
\end{align}
for ${\bf S}^{\Lambda_N}=(S_1^{\Lambda_N},S_2^{\Lambda_N},S_3^{\Lambda_N})$, and the choice
\begin{align}\label{classical CW}
B^3\ni (x,y,z)\mapsto h_0^{\text{CW}}(x,y,z)=-\bigg{(}\frac{\Gamma}{2}z^2+Bx\bigg{)},
\end{align}
with $B^3=\{(x,y,z)\ | \ x^2+y^2+z^2\leq 1\}$ the unit three-dimensional sphere in $\mathbb{R}^3$.
%We have used the Landau $\mathcal{O}$-notation, i.e., the error $\mathcal{O}(1/|\Lambda_N|)$ -which arises from the fact that $\sigma_i^2=\mathbbm{1}_2$- is a multiple of the identity and bounded by $C/N$ with a constant $C$ which only depends on the coefficients defining the polynomial $h_0^{\text{CW}}$.

Using spherical coordinates ${\bf e}(\Omega)=(\sin{\vartheta}\cos{\varphi},\sin{\vartheta}\sin{\varphi},\cos{\vartheta})$,  with $\vartheta\in [0,\pi]$ and $\varphi\in [0,2\pi)$; for radius $u\in [0,1]$, we may now rewrite
 \begin{align}\label{classicalCWmodel}
 h_0^{CW}(u\cdot {\bf e}(\Omega))=-\bigg{(}\frac{\Gamma}{2}(u\cos{\vartheta})^2+Bu\sin{\vartheta}\cos{\varphi}\bigg{)},
\end{align}
as being a polynomial function on $[0,1]\times\mathbb{S}^2$.

As a result, we may as well consider the quantum Curie–Weiss
Hamiltonian \eqref{eq:curieweiss} in $\ell=1$, so that we may simply write $|\Lambda_N|=N$ and $H_{N}^{\text{CW}}:= H_{\Lambda_N}^{\text{CW}}$.

\subsection{The CW model as GLT-sequence}
Here we show that the real symmetric matrix-sequence $\{\bar{H}_{N}^{\text{CW}}\}_N$, $d_N=2^N$, where $\bar{H}_{N}^{\text{CW}}:= H_{N}^{\text{CW}}/N$, is a basic GLT matrix-sequence with GLT symbol $0$, i.e., a zero-distributed matrix-sequence (Axiom \textbf{GLT 2}, third item).

\begin{theorem}\label{main result}
The normalized  CW Hamiltonian $\bar{H}_{N}^{\text{CW}}:= H_{N}^{\text{CW}}/N$ defines $\{\bar{H}_{N}^{\text{CW}}\}_N$ as a zero-distributed GLT sequence.
\end{theorem}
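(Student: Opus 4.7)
The plan is to apply the Schatten-norm characterization of zero-distributed matrix-sequences in Theorem \ref{th 3.1}, together with the third item of Axiom \textbf{GLT 2} which identifies zero-distributed matrix-sequences with GLT matrix-sequences of symbol $0$. Using the identity $\bar H_N^{\mathrm{CW}} = H_N^{\mathrm{CW}}/N = -\tfrac{\Gamma}{2}(S_3^N)^2 - B\,S_1^N$ obtained in (\ref{hamcw}) and the fact that the class of zero-distributed matrix-sequences is closed under linear combinations (this follows at once from either item of Theorem \ref{th 3.1}), the statement reduces to showing $\{(S_3^N)^2\}_N \sim_\sigma 0$ and $\{S_1^N\}_N \sim_\sigma 0$, where the matrix size is $d_N = 2^N$.

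The first relation is obtained by an explicit trace computation exploiting the tensor-product structure. Since $\sigma_3^2 = \mathbb{1}_2$ and $\operatorname{tr}(\sigma_3) = 0$, multiplicativity of the trace under Kronecker products yields $\operatorname{tr}\bigl(\sigma_3(x)\sigma_3(y)\bigr) = 2^N\,\delta_{xy}$, hence
\[
\|(S_3^N)^2\|_1 \;=\; \operatorname{tr}\bigl((S_3^N)^2\bigr) \;=\; \frac{1}{N^2}\sum_{x,y=1}^{N} \operatorname{tr}\bigl(\sigma_3(x)\sigma_3(y)\bigr) \;=\; \frac{2^N}{N},
\]
so $\|(S_3^N)^2\|_1 / d_N = 1/N \to 0$. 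For the second relation, the global Hadamard conjugation $U^{\otimes N}$ sends $\sigma_3(x)$ to $\sigma_1(x)$ pointwise, so $S_1^N$ is unitarily equivalent to $S_3^N$ and in particular $\|S_1^N\|_2^2 = 2^N/N$; Cauchy–Schwarz then gives $\|S_1^N\|_1 \leq \sqrt{d_N}\,\|S_1^N\|_2 = 2^N/\sqrt{N}$. Combining the two estimates,
\[
\frac{\|\bar H_N^{\mathrm{CW}}\|_1}{d_N} \;\leq\; \frac{\Gamma}{2N} + \frac{|B|}{\sqrt{N}} \;\longrightarrow\; 0,
\]
so the second part of Theorem \ref{th 3.1} applied with $p=1$ yields $\{\bar H_N^{\mathrm{CW}}\}_N \sim_\sigma 0$, and the third item of Axiom \textbf{GLT 2} upgrades this to $\{\bar H_N^{\mathrm{CW}}\}_N \sim_{\mathrm{GLT}} 0$, as required.

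The principal obstacle — and the reason the authors stress that this proof proceeds by an explicit computation — is that the spectral norm of $\bar H_N^{\mathrm{CW}}$ is uniformly bounded away from zero by roughly $\Gamma/2 + |B|$, so the low-norm half of the first characterization in Theorem \ref{th 3.1} fails on its own. What rescues the argument is the exponential gap between $d_N = 2^N$ and the Schatten $1$-norm of the mean-field operators: the eigenvalues of $S_3^N$ concentrate near $0$ along the binomial distribution, and recasting this concentration-of-measure phenomenon as a trace bound is exactly the input the GLT axioms need to read off the vanishing symbol.
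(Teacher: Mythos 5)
Your proof is correct and takes a genuinely different — and considerably more elementary — route than the paper's. The paper establishes the eigenvalue distribution relation $\{\bar H_N^{\mathrm{CW}}\}_N\sim_\lambda 0$ via a polynomial approximation argument built on the $SU(2)$ representation-theoretic block decomposition of the Hamiltonian, the resolution of the identity by spin-coherent states, the Berezin-quantization estimate from \cite{Manai_Warzel}, and a Chernoff concentration bound on the binomial multiplicities $C(J,N)$; it then reads off $\sim_{\mathrm{GLT}} 0$ from the Hermitian structure and Axiom \textbf{GLT 2}, third item. You bypass eigenvalue distribution entirely and go for the Schatten-norm criterion: a two-line trace computation using $\sigma_3^2 = \mathbb{1}_2$ and $\operatorname{tr}\sigma_3 = 0$ gives $\operatorname{tr}\bigl((S_3^N)^2\bigr) = 2^N/N$, the Hadamard conjugation together with Cauchy--Schwarz handles the linear term, and the second item of Theorem \ref{th 3.1} with $p = 1$ delivers $\sim_\sigma 0$. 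Both proofs are sound, and yours is appreciably shorter and requires no representation theory. What the paper's route buys is the structural identification of the spectral measure with the pushforward of $\delta_0\times\frac{1}{4\pi}\mu_{\mathbb{S}^2}$ under $h_0^{CW}$, which feeds directly into the analysis of the restricted model in Section~\ref{sec: our-pb}, whereas your computation certifies zero-distribution without exposing that concentration mechanism. A minor shortening of your own argument: since the second item of Theorem \ref{th 3.1} allows any $p\in[1,\infty]$, you could treat $\{S_1^N\}_N$ directly at $p=2$ (where $\|S_1^N\|_2^2/2^N = 1/N \to 0$) and then combine the two zero-distributed sequences by linearity of the zero-distributed ideal, dispensing with the Cauchy--Schwarz step to get down to $p=1$.
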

\begin{proof}
The Curie-Weiss Hamiltonian is self-adjoint and in fact all the considered matrices are real symmetric. Thus in view of Definition~\ref{99} (with  $d_N=2^N$, $D=[0,1]\times\mathbb{S}^2$), it suffices to prove that  for all real-valued $F\in C_c(\mathbb{R})$
\begin{align}\label{limit1}
    \lim_{N\to\infty}\frac{1}{2^N}Tr_{2^N}[F(\bar{H}_N^{\text{CW}})]=F(0),
\end{align}
where $Tr_{2^N}$ denotes the trace on $\mathcal{H}_N=\bigotimes_{i=1}^N\mathbb{C}^2$.
The convergence in \eqref{limit1} for all possible test functions $F$ implies that the sequence $(\bar{H}_{\Lambda_N}^{\text{CW}})_N$ is a zero-distributed GLT sequence.

Since the spectrum of the normalized Curie-Weiss Hamiltonian $\bar{H}_N^{CW}$ is uniformly bounded in $N$ and contained in a connected compact subset $C \subset \mathbb{R}$ (see also \cite{Ven_2020,Ven_2022} for further details), it suffices to consider functions $F$ supported on $C$. By the Stone-Weierstrass theorem, any continuous function $F$ on $C$ can be uniformly approximated by polynomials  restricted to $C$: in fact this argument is standard and was used e.g. in \cite{GoSe} in the context of the zero distribution of orthogonal polynomials, when the Jacobi operator is perturbed by a non self-adjoint compact operator. Moreover, since the (normalized) trace operation is linear and continuous with respect to uniform convergence of continuous functions on the spectrum, it suffices to prove \eqref{limit1} for all polynomials $P$ restricted to $C$. Thus, we will establish \eqref{limit1} for all such polynomials $P$, which by approximation will imply the result for any general $F \in C_c(\mathbb{R})$ supported on $C$.
%Moreover, we only focus on the principal part $h_0^{CW}({\bf S}^N)$, since, if \eqref{limit1} holds true for $h_0^{CW}({\bf S}^N)$, it also holds also true for $\bar{H}_N^{\text{CW}}=h_0^{CW}({\bf S}^N)+\mathcal{O}(1/N)$, by continuity.

To do so, we stress that the Hamiltonian is homogeneously decomposable \cite[Section~2]{Cegla_Lewis_Raggio_1988},
implying that the model is block diagonal with the following tracial decomposition
\begin{align*}
   Tr_{2^N}[\bar{H}_N^{\text{CW}}]=\frac{1}{2^N}\sum_{J\in\mathbb{J}_N}C(J,N)Tr_{2J+1}[\bar{H}_N^{\text{CW}}(J)],
\end{align*}
where, by \cite{Mihailov_1977}, the quantities
$$C(J,N)=\frac{2J+1}{N+1}\binom{N+1}{\frac{N}{2}+J+1},$$
are the multiplicities of the $(2J+1)$-dimensional irreducible unitary representations arising in the decomposition of the $N$-fold tensor product representation of $SU(2)$ onto $\mathbb{C}^2$ with itself. Here, $\mathbb{J}_N=\{0,1,...,N/2\}$ if $N/2$ is an integer, and equals $\{1/2,3/2,...,N/2
\}$ if $N/2$ is a half-integer.
The $(2J+1)\times (2J+1)$-dimensional matrix $\bar{H}_N^{\text{CW}}(J)$ is  defined as \begin{align}\label{restricted CW}
\bar{H}_N^{\text{CW}}(J):=
h_0^{CW}({\bf S}^N)|_{2J+1}.%+\mathcal{O}(1/N),
\end{align}
%where $\mathcal{O}(1/N)$ does not depend on $J$. As mentiond above, in the sequel we omit the $\mathcal{O}(1/N)$-term and we only focus on the principal part $h_0^{CW}({\bf S}^N)|_{2J+1}$.

Define the function
\begin{align}\label{newfunction}
    P_N(J):=P(\bar{H}_N^{\text{CW}}(J))=P(h_0^{CW}({\bf S}^N)|_{2J+1})=P(h_0^{CW}({\bf S}^N))|_{2J+1},
\end{align}
so that
\begin{align}\label{id0}
\frac{1}{2^N}Tr_{2^N}[P(h_0^{CW}({\bf S}^N))]=\frac{1}{2^N}\sum_{J\in\mathbb{J}_N}C(J,N)Tr_{2J+1}(P_N(J)).
\end{align}

This allows us to introduce a sequence of probability measures $\nu_N$, defined for Borel measurable sets $E\subset [0,1]$, by
\begin{align}\label{idprob}
    \nu_N(E)=\frac{1}{2^N}\sum_{\{J\in\mathbb{J}_N\ |\ 2J/N\in E\}}C(J,N)(2J+1).
\end{align}
Note that indeed $2^N=\sum_{\{J\ |\ 2J/N\in [0,1]\}}C(J,N)(2J+1)$, so that $\nu_N([0,1])=1$.
We recall the well-known {\em resolution of the identity} \cite{Ven_2020}, that is,
\begin{align}\label{resolution identity}
    \mathbb{1}_{2J+1}=\frac{2J+1}{4\pi}\int_{\mathbb{S}^2}d\mu_{\mathbb{S}^2}(\Omega)Pr(J,\Omega),
\end{align}
where $d\mu_{\mathbb{S}^2}$ is the uniform measure on the 2-sphere, and $Pr(J,\Omega)$ is the one-dimensional projection onto the linear span of the $(2J+1)$-dimensional spin-coherent state vector labeled by the point $\Omega\in\mathbb{S}^2$.
Let $$p_N\bigg{(}\frac{2J}{N},\Omega\bigg{)}:=Tr_{2J+1}[P_N(J)Pr(J,\Omega)].$$

By inserting the identity \eqref{resolution identity}, we can write
\begin{align}\label{id1}
Tr_{2J+1}(P_N(J))=&\frac{2J+1}{4\pi}\int_{\mathbb{S}^2}d\mu_{\mathbb{S}^2}(\Omega)Tr_{2J+1}[P_N(J)Pr(J,\Omega)]\nonumber\\=& \frac{2J+1}{4\pi}\int_{\mathbb{S}^2}d\mu_{\mathbb{S}^2}(\Omega)p_N\bigg{(}\frac{2J}{N},\Omega\bigg{)}.
\end{align}

% Let $r_N^P$ be a sequence of continuous functions on $[0,1]\times\mathbb{S}^2$ such that for all $N\in\mathbb{N}_+, J\in\mathbb{J}$ and $\Omega\in\mathbb{S}^2$, it holds $r_N^P\bigg{(}\frac{2J}{N},\Omega\bigg{)}=p_N\bigg{(}\frac{2J}{N},\Omega\bigg{)}$.
If we set $K_N(du,d\Omega):=\nu_N(du)\times \frac{1}{4\pi}\mu_{\mathbb{S}^2}(d\Omega)$, equations \eqref{id0}, \eqref{idprob} and \eqref{id1} imply that
$$\frac{1}{2^N}Tr_{2^N}[P(h_0^{CW}({\bf S}^N))]=\int_{[0,1]\times \mathbb{S}^2}K_N(du,d\Omega)p_N(u,\Omega),$$
where the integral over $[0,1]$ should be interpreted as the discrete integral with  respect to $\nu_N$. It is clear that $K_N$ is a probability meausure for each $N$.
Using the previous preparatory statements, we are now in a position to prove the following facts.
\begin{itemize}
    \item[(i)]
    For all $N\in\mathbb{N}_+$, it holds
\begin{align}\label{onemli1}
\sup_{J\in\mathbb{J}_N}\sup_{\Omega\in\mathbb{S}^2}\bigg{|}p_N\bigg{(}\frac{2J}{N},\Omega\bigg{)} - P\bigg{(}h_0^{\text{CW}}\bigg{(}\frac{2J}{N}{\bf e}(\Omega)\bigg{)}\bigg{)}\bigg{|}\leq \frac{C}{N},
\end{align}
    the constant $C$ being independent of $J$, $\Omega$ and $N$.
    %The sequence $(r_N^F(u,\Omega))_N$ converges uniformly as $N\to\infty$ towards the function $F(h_0^{\text{CW}}(u,\Omega))$;
    \item[(ii)] The sequence of probability measures $(K_N)_N$ converges setwise to the probability measure $\delta_0\times \frac{1}{4\pi}\mu_{\mathbb{S}^2}$, with $\delta_0$ the Dirac measure concentrated at $0\in [0,1]$.
\end{itemize}
% The conditions (i) and (ii) are sufficient to conclude that:
% $$\lim_{N\to\infty}\frac{1}{2^N}Tr_{2^N}[P(\bar{H}_N^{\text{CW}})] =\frac{1}{4\pi}\int_{\mathbb{S}^2}\mu_{\mathbb{S}^2}(d\Omega) P(h_0^{CW}(0\cdot \Omega))=P(0),$$
% since $h_0^{CW}(0\cdot\Omega)=0$ for all $\Omega\in\mathbb{S}^2$. This is precisely what we needed to show, cf. \eqref{limit1}.
% \\\\
\noindent
For (i), we first rely on the following fact obtained in \cite{Manai_Warzel}. For any  non-commuting self-adjoint polynomial $H(J):=P_0({\bf S}^N)|_{2J+1}$ on $\mathbb{C}^{2J+1}$, it holds
\begin{align}\label{onemli2}
    \sup_{J\in \mathbb{J}_N}\sup_{\Omega\in\mathbb{S}^2}\bigg{|}Tr_{2J+1}[Pr(J,\Omega)H(J)]-P_0\bigg{(}\frac{2J}{N}{\bf e}(\Omega)\bigg{)}\bigg{|}\leq \frac{C}{N},
    %&\sup_{J\in \mathbb{J}_N}\bigg{|}\bar{H}(J)-Q_{J}\bigg{(}P\bigg{(}\frac{2J}{N},\cdot\bigg{)}\bigg{)}\bigg{|}\leq \frac{C}{N},
\end{align}
where the constant $C>0$ only depends on the chosen $P_0$, not on $J$, $\Omega$ and $N$.

To prove \eqref{onemli1}, we stress that for any given polynomial $P$, the operator $P\circ h_0^{CW}({\bf S}^N)$ is again a polynomial in three non coummuting self-adjoint spin operators. As a matter of fact, the statement \eqref{onemli1} follows from \eqref{onemli2} for the choice $P_0=P\circ h_0^{CW}$.

In order to see that (ii) holds, we rewrite
\begin{align}\label{nu}
    \nu_N(E)&=\frac{1}{2^N}\sum_{\{J\ |\ 2J/N\in E\}}\frac{(2J+1)^2}{N+1}\binom{N+1}{N/2+J+1}.
\end{align}
Define $X_N\sim Bin(N+1,1/2)$ and write $J=X_N-\frac{N}{2}-1$. Use the binomial probability mass function (pmf) $BinProb$,
$$BinProb(X_N=k)=\binom{N+1}{k}\frac{1}{2^{N+1}},$$ so that \eqref{nu} becomes
\begin{align*}
    \nu_N(E)&=2\sum_{\{X_N\ |\ \frac{2(X_N-\frac{N}{2}-1)}{N}\in E\}}\frac{(2(X_N-\frac{N}{2}-1)+1)^2}{N+1}BinProb(X_N).
\end{align*}
Consider $\epsilon>0$ arbitrary and let $E\subset [0,1]$ be a measurable set such that $E\cap [0,\epsilon]=\emptyset$.
We deduce that
$$\frac{2J}{N}\in E\ \ \ \implies \ \ \  X_N \in \bigg{(} \frac{N(\epsilon+1)+2}{2}, N+1\bigg{]}.$$
As a result,
\begin{align}\label{concentration}
&BinProb\bigg{(}\frac{2(X_N-\frac{N}{2}-1)}{N}\in E\bigg{)}\leq BinProb\bigg{(}X_N\geq \frac{N(\epsilon+1)+2}{2}\bigg{)}\nonumber\\&\leq \exp{\bigg{(}\frac{-(\frac{N\epsilon+1}{N+1})^2\frac{N+1}{2}}{3}\bigg{)}}\nonumber\\&=
\exp{\bigg{(}-\frac{(N\epsilon+1)^2}{6(N+1)}\bigg{)}},
\end{align}
where, in the second inequality, we have exploited Chernoff's inequality for the binomial distribution \cite[Theorem 4.4]{Mitz}. To derive this, consider $\mu=\frac{N+1}{2}$ and $p=1/2$, so that on account of the standard Chernoff inequality with $0<\delta< 1$, we find
$$BinProb(X_N\geq (1+\delta)\mu)\leq e^{-\frac{\delta^2}{3}\mu}.$$
If $X_N\sim Bin(N+1,1/2)$, then $\mu=\frac{N+1}{2}$, so that for the choice $\delta=(a-\mu)/\mu$ with $a=(N(\epsilon+1)+2)/2$ one indeed obtains \eqref{concentration}, as certainly $0<\delta< 1$.
The inequality \eqref{concentration} implies that the pmf $BinProb$ concentrates around zero, as $N\to\infty$, as long as $E$ is bounded away from zero.

It follows that for all $E$ with $E\cap [0,\epsilon]=\emptyset$,
\begin{align}\label{measures}
\nu_N(E)\leq 2(N+1)^2e^{-(N\epsilon+1)^2/6(N+1)}.
\end{align}
As a result, $(\nu_N)_N$ converges setwise to the Dirac mass concentrated at zero. Since the measure $\mu_{\mathbb{S}^2}$ does not depend on $N$, the same statement holds true for $K_N=\nu_N\times  \frac{1}{4\pi}\mu_{\mathbb{S}^2}$, i.e. $(K_N)_N$ converges setwise to $\delta_0\times  \frac{1}{4\pi}\mu_{\mathbb{S}^2}$. This shows the validity of $(ii)$.

We are finally in a position to prove \eqref{limit1}. To this avail, we notice that $P\circ h_0^{CW}$ is uniformly continuous. Hence, given $\varepsilon>0$ there is $\delta>0$ such that for all $u,u'\in [0,1]$ and ${\bf e}(\Omega), {\bf e}(\Omega')\in \mathbb{S}^2$, for which $||u{\bf e}(\Omega)-u'{\bf e}(\Omega')||< \delta$, it holds
\begin{align}\label{uniformly cts}
    |P(h_0^{CW}(u\cdot {\bf e}(\Omega))-P(h_0^{CW}(u'\cdot {\bf e}(\Omega'))|<\varepsilon.
\end{align}
We now estimate
\begin{align*}
    &\bigg{|}\frac{1}{4\pi}\int_{[0,1]\times \mathbb{S}^2}K_N(du,d\Omega)p_N(u,\Omega)-P(0)\bigg{|}\leq\\&\frac{1}{4\pi}\int_{[0,\delta)\times \mathbb{S}^2}K_N(du,d\Omega)|p_N(u,\Omega)-P(0)|+\frac{1}{4\pi}\int_{[\delta, 1]\times \mathbb{S}^2}K_N(du,d\Omega)|p_N(u,\Omega)-P(0)|\\& \leq\underbrace{\sup_{\substack{J\in\mathbb{J}_N \\ 0\leq \frac{2J}{N}< \delta}}\sup_{\Omega\in\mathbb{S}^2}\bigg{|}p_N\bigg{(}\frac{2J}{N},\Omega\bigg{)}-P(0)\bigg{|}}_{\text{(I)}} + \underbrace{\sup_{\substack{J\in\mathbb{J}_N \\ \delta \leq \frac{2J}{N}\leq 1}}\sup_{\Omega\in\mathbb{S}^2}\bigg{|}p_N\bigg{(}\frac{2J}{N},\Omega\bigg{)}-P(0)\bigg{|}\nu_N([\delta,1])}_{\text{(II)}},
\end{align*}
To estimate term (II), we notice that following bound holds, i.e.
$$ \sup_{\substack{J\in\mathbb{J}_N \\ \delta\leq \frac{2J}{N}\leq 1}}\sup_{\Omega\in\mathbb{S}^2}\bigg{|}p_N\bigg{(}\frac{2J}{N},\Omega\bigg{)}-P(0)\bigg{|}\leq 2\|P\|_\infty,$$
since  for all $J$, $N$ and $\Omega$, one has
$$p_N\bigg{(}\frac{2J}{N},\Omega\bigg{)}\leq \|P\|_\infty.$$
On account of \eqref{measures}, it holds $\nu_N([\delta,1])< 2(N+1)e^{-(N\delta+1)^2/6(N+1)}$. It follows that there exists $N$ sufficiently large, such that $(II)$ can be made smaller than $\varepsilon$.

For (I), we first exploit the triangle inequality: for all $N\in\mathbb{N}_+$, $J\in\mathbb{J}_N$ and $\Omega\in\mathbb{S}^2$, it holds
\begin{align}\label{triangle}
    \bigg{|}p_N\bigg{(}\frac{2J}{N},\Omega\bigg{)}-P(0)\bigg{|}\leq& \bigg{|}p_N\bigg{(}\frac{2J}{N},\Omega\bigg{)}-P\bigg{(}h_0^{CW}\bigg{(}\frac{2J}{N}{\bf e}(\Omega)\bigg{)} \bigg{)} \bigg{|}\nonumber\\+&\bigg{|}P\bigg{(}h_0^{CW}\bigg{(}\frac{2J}{N}{\bf e}(\Omega)\bigg{)} \bigg{)}-P(0)\bigg{|}.
\end{align}
If we now apply the supremum over $J\in\mathbb{J}_N$, for which $0\leq\frac{2J}{N}< \delta$ and the supremum over $\Omega\in\mathbb{S}^2$, then, for $N$ large enough, the first summand in \eqref{triangle} is bounded by $\varepsilon$ on account of \eqref{onemli1}. The second summand is bounded by $\varepsilon$ due to uniform continuity  of $P\circ h_0^{CW}$, cf. \eqref{uniformly cts}, which is applied at  the zero vector, i.e., at the point $0\cdot {\bf e}(\Omega))={\bf 0}$, for which
it holds $h_0^{CW}(0\cdot {\bf e}(\Omega))=0$.

This shows the validity of \eqref{limit1}, thereby concluding the proof of the theorem.

\end{proof}

\begin{remark}
 We note that the previous result readily generalizes to any (normalized) mean-field quantum spin model expressed as a polynomial in the total spin operator. This follows from \cite[Proposition II.2]{Raggio_Werner_1989}, which states that the continuous functional calculus of a (normalized) mean-field model, representing a so-called quasi-symmetric sequence, remains quasi-symmetric. A detailed exploration of this generalization is left for future work.
 \hfill$\blacksquare$
\end{remark}

\section{Restricted Curie-Weiss model}\label{sec: our-pb}
Through direct inspection, the Curie-Weiss model $H_{N}^{\text{CW}}$ preserves the symmetric subspace $\text{Sym}^N(\mathbb{C}^2)$, which has dimension $N+1$. Consequently, the model can be restricted this subspace. With a slight abuse of notation, we denote this restricted $(N+1)\times (N+1)$ matrix by $H^{s}_N$, and, as before,  normalize it by the factor $1/N$, yielding $\Bar{H}^{s}_N$. This restricted matrix represents a single quantum spin system with spin quantum number $J=N/2$, a setting commonly analyzed in the classical limit  $J\to\infty$ \cite{Moretti_vandeVen_2020,Ven_2020}.  Specifically, the matrix aligns with the choice $J=N/2$ in \eqref{restricted CW}.
As demonstrated in the proof of Theorem~\ref{main result}, the resulting family of matrices $\{\Bar{H}^{s}_N\}_N$  a Berezin-Toeplitz operator with symbol $h_0^{CW}\in C(\mathbb{S}^2)$, corresponding to the case $u=1$ in \eqref{classicalCWmodel}.
\begin{remark}\label{change variables}
    By making the following change of variables
$$\cos{\vartheta}\mapsto 2x-1;$$
$$\varphi\mapsto \theta:=\varphi-\pi,$$
it follows that the symbol (for $u=1$) reads
\begin{equation}\label{symbol restricted}
h_0^{CW}(x,\varphi)=-\frac{\Gamma}{2}(2x-1)^2 {-}2B\sqrt{(1-x)x}\cos{\theta}, \ \ \ x\in [0,1], \ \ \ \theta\in [-\pi,\pi].
\end{equation}
\hfill$\blacksquare$
\end{remark}
In the sequel, we step-by-step prove that the Berezin-Topelitz operator $\Bar{H}^{s}_N$ is indeed a GLT sequence. For achieving this, we recall that there exists a basis in which $\Bar{H}^{s}_N$ is represented by the following matrix
\begin{align}\label{seq restricted}
    \Bar{H}^{s}_N =& \underset{1 \leq k \leq N+1}{\text{diag}} \left( -\frac{\Gamma}{2} \left( \frac{2k}{N} - 1 \right)^2 \right)
+ \text{tridiag} \bigg( -B \sqrt{1-\frac{(k-1)}{N}} \sqrt{\frac{k}{N}} \quad 0 \quad \nonumber\\-&B \sqrt{1-\frac{k}{N}} \sqrt{\frac{k+1}{N}} \bigg),
\end{align}
as proven in \cite{Ven_Groenenboom_Reuvers_Landsman}.
\begin{theorem}\label{main result-bis}
With reference to the setting in (\ref{symbol restricted}) and (\ref{seq restricted}), we have
\[
\{\Bar{H}^{s}_N \}_N \sim_{\rm{GLT},\sigma,\lambda} h_0^{CW}(x,\varphi(\theta)).
\]
\end{theorem}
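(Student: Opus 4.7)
The plan is to decompose $\bar{H}^{s}_N$ as the sum of its diagonal part and its Hermitian tridiagonal off-diagonal part, to identify each summand as a GLT matrix-sequence by direct application of Axioms \textbf{GLT 2}--\textbf{GLT 3}, and to absorb in a zero-distributed remainder the small mismatch coming from the choice of sampling grid. Since $\bar{H}^{s}_N$ is real symmetric for every $N$, once the GLT relation $\{\bar{H}^{s}_N\}_N \sim_{\mathrm{GLT}} h_0^{CW}(x,\varphi(\theta))$ is proved, Axiom \textbf{GLT 1} automatically delivers both the singular value and the eigenvalue distributions, and the theorem is complete.

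Concretely, I would write $\bar{H}^{s}_N = \Delta_N + E_N$ with $\Delta_N$ the diagonal part and $E_N$ the Hermitian tridiagonal off-diagonal part, and introduce the two symbols $a(x) := -\tfrac{\Gamma}{2}(2x-1)^2$ and $g(x) := -B\sqrt{x(1-x)}$, both continuous on $[0,1]$ and therefore Riemann integrable. Axiom \textbf{GLT 2}, second item, gives $\{D_{N+1}(a)\}_N \sim_{\mathrm{GLT}} a(x)$; the residual $\Delta_N - D_{N+1}(a)$ has diagonal entries $a(k/N)-a(k/(N+1))$, which are $O(1/N)$ uniformly in $k$ by the mean value theorem, so its spectral norm vanishes and it is zero-distributed by Theorem~\ref{th 3.1}, hence a GLT sequence with symbol $0$ by Axiom \textbf{GLT 2}, third item. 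Linearity (Axiom \textbf{GLT 3}) then yields $\{\Delta_N\}_N \sim_{\mathrm{GLT}} a(x)$. For the off-diagonal part I set
\[
\tilde{E}_N := D_{N+1}(g)\,T_{N+1}(e^{i\theta}) + T_{N+1}(e^{-i\theta})\,D_{N+1}(g),
\]
which is Hermitian because $g$ is real-valued and which, by combining Axioms \textbf{GLT 2}--\textbf{GLT 3}, is a GLT matrix-sequence with symbol $g(x)(e^{i\theta}+e^{-i\theta}) = -2B\sqrt{x(1-x)}\cos\theta$.

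The heart of the argument is then to show that $\{E_N - \tilde{E}_N\}_N$ is zero-distributed. An entrywise comparison, using the elementary identity $\sqrt{A}-\sqrt{B} = (A-B)/(\sqrt{A}+\sqrt{B})$, yields that each nonzero off-diagonal entry of $E_N - \tilde{E}_N$ is bounded in modulus by a constant times $1/N$, uniformly in $k$ and $N$: in the interior of $[0,1]$ this follows from a first-order Taylor expansion of $g$, whereas at the endpoints $k\approx 1$ and $k\approx N+1$ the degeneracy $\sqrt{A}+\sqrt{B} = O(N^{-1/2})$ is dominated by the sharper cancellation $|A-B| = O(N^{-2})$ produced by the vanishing of $g$ at $0$ and $1$. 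Since a Hermitian tridiagonal matrix has spectral norm at most three times its largest entry in modulus, one deduces $\|E_N - \tilde{E}_N\| = O(1/N) \to 0$, so $\{E_N - \tilde{E}_N\}_N$ is zero-distributed by Theorem~\ref{th 3.1} and therefore a GLT matrix-sequence with symbol $0$ (Axiom \textbf{GLT 2}, third item). A final application of Axiom \textbf{GLT 3} to $\bar{H}^{s}_N = \Delta_N + \tilde{E}_N + (E_N - \tilde{E}_N)$ gives
\[
\{\bar{H}^{s}_N\}_N \,\sim_{\mathrm{GLT}}\, a(x) - 2B\sqrt{x(1-x)}\cos\theta \,=\, h_0^{CW}(x,\varphi(\theta)),
\]
and Axiom \textbf{GLT 1} closes the proof. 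The only genuinely delicate step is the uniform boundary estimate on $E_N - \tilde{E}_N$, where the Hölder--$1/2$ behaviour of $g$ near $0$ and $1$ could, a priori, spoil the small-norm perturbation argument; everything else is a mechanical combination of the GLT axioms.
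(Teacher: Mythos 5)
Your proof is correct and follows essentially the same route as the paper's: the same Toeplitz-plus-diagonal-sampling decomposition, the same appeals to \textbf{GLT~2}, \textbf{GLT~3}, and \textbf{GLT~1}. The one place where you go beyond the paper is in treating the grid mismatch explicitly: the paper presents
\[
\bar H^s_N = -\tfrac{\Gamma}{2}D_{N+1}\bigl((2x-1)^2\bigr) - B\,D_{N+1}\bigl(\sqrt{(1-x)x}\bigr)T_{N+1}(e^{i\theta}) - B\,T_{N+1}(e^{-i\theta})D_{N+1}\bigl(\sqrt{(1-x)x}\bigr)
\]
as an exact identity, whereas from \eqref{seq restricted} the diagonal entries are sampled at $k/N$ and the off-diagonal square-root products at shifted nodes $\bigl((k-1)/N,\,k/N\bigr)$, neither of which is literally the node $k/(N+1)$ implicit in $D_{N+1}(\cdot)$. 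Your remainder estimate — in particular the observation that near $x=0,1$ the $O(N^{-2})$ cancellation in $A-B$ compensates the $O(N^{-1/2})$ degeneracy of $\sqrt{A}+\sqrt{B}$, so the worst case $O(1/N)$ occurs in the interior — closes this gap rigorously via Theorem~\ref{th 3.1} and \textbf{GLT~2}, third item. An equivalent fix, which the paper uses in a different example but does not invoke here, is the asymptotically-uniform-grid extension of \textbf{GLT~2} from \cite{glt-au-grids}; your small-norm argument is a self-contained alternative.
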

\begin{proof}
By considering $d=r=1$ it is immediate to see that the matrix $\Bar{H}^{s}_N$ can be written as the sum of diagonal sampling matrices as in Section~\ref{blckdiag} and two products of sampling matrices and very basic unilevel Toeplitz matrices as in Section~\ref{TM}. In fact we have
\begin{align*}
 \Bar{H}^{s}_N =& -\frac{\Gamma}{2}D_{N+1}\Big(\Big(2x-1\Big)^2\Big)-B D_{N+1}(\sqrt{1-x}\sqrt{x})T_{N+1}(e^{\iota \theta})\\-&B T_{N+1}(e^{-\iota \theta}) D_{N+1}(\sqrt{1-x}\sqrt{x}).
\end{align*}
Now by Axiom \textbf{GLT 2}, part 1, we have $\{T_{N+1}(e^{\pm \iota \theta})\}_N\sim_{\text{GLT}} e^{\pm \iota \theta}$ and by Axiom \textbf{GLT 2}, part 2, we deduce
\[
\left\{D_{N+1}\left(\left(2x-1\right)^2\right)\right\}_N\sim_{\text{GLT}}\left( 2x-1 \right)^2,\
\{D_{N+1}(\sqrt{1-x}\sqrt{x})\}_N\sim_{\text{GLT}}  \sqrt{(1-x)x},
\]
simply because both functions $\left( 2x-1 \right)^2, \sqrt{1-x}\sqrt{x}$ are continuous on $[0,1]$ and a fortiori Riemann integrable.

Hence, using the $*$-algebra structure of the GLT sequences and more precisely Axiom \textbf{GLT 3}, part 2, Axiom \textbf{GLT 3}, part 3, we infer $\, \{\Bar{H}^{s}_N\}_N \sim_{\text{GLT}} -\frac{\Gamma}{2}  \left( 2x-1 \right)^2 -2B\cos(\theta)\sqrt{(1-x)x}$, which is compatible with the symbol
$h_0^{CW}(x,\varphi(\theta))$ indicated in Remark \ref{change variables}.
Finally \textbf{GLT 1} implies
\begin{itemize}
    \item $\, \{\Bar{H}^{s}_N\}_N \sim_{\sigma} -\frac{\Gamma}{2}  \left( 2x-1 \right)^2 -2B\cos(\theta)\sqrt{(1-x)x}$.
    \begin{itemize}
        \item Furthermore, since $\Bar{H}^{s}_N$ is real and symmetric for any $N$, again by \textbf{GLT 1}, we deduce
    \end{itemize}
    \item $\, \{\Bar{H}^{s}_N\}_N \sim_{\lambda} -\frac{\Gamma}{2}  \left( 2x-1 \right)^2 -2B\cos(\theta)\sqrt{(1-x)x}$.
\end{itemize}

Notice that $\, \{\Bar{H}^{s}_N+U_N\}_N \sim_{\text{GLT}} -\frac{\Gamma}{2}  \left( 2x-1 \right)^2 -2B\cos(\theta)\sqrt{(1-x)x}$ for any
$\{U_{n}\}_N$ such that  $\{U_{n}\}_N \sim_{\text{GLT}} 0$  by Axiom \textbf{GLT 3}, part 2, and  Axiom \textbf{GLT 2}, part 3, so that
$\, \{\Bar{H}^{s}_N+U_N\}_N \sim_{\sigma} -\frac{\Gamma}{2}  \left( 2x-1 \right)^2 -2B\cos(\theta)\sqrt{(1-x)x}$. Finally, as it happens for compact non-Hermitian perturbations of Jacobi matrix-sequences \cite{GoSe}, $\, \{\Bar{H}^{s}_N+U_N\}_N \sim_{\lambda} -\frac{\Gamma}{2}  \left( 2x-1 \right)^2 -2B\cos(\theta)\sqrt{(1-x)x}$ either if all $U_N$ are Hermitian or if the assumption in Axiom \textbf{GLT 5} is satisfied by the non-Hermitian matrix-sequence perturbation $\{U_N\}_N$.
\end{proof}
% $+ U_{N}$ with$\{U_{n}\} \sim_{\text{GLT}} 0;$

%\subsection{Relation with a discretized Schr\"{o}dinger operator}
\begin{remark}\label{Schr\"{o}dinger operator}
It is interesting to comment the relations between a standard second order centered finite difference (FD) discretization of the Schr\"{o}dinger operator and Theorem \ref{main result-bis}.
Let us define
$a(x):=2B\sqrt{(1-x)x}$ and $c(x):=-\frac{\Gamma}{2}(2x-1)^2-2B\sqrt{(1-x)x}$, i.e. we have set $\cos{\theta}=1$ in the definition of the symbol. For $N\in\mathbb{N}_+$, consider the Schr\"{o}dinger operator on $L^2(0,1)$:
$$H(N)u(x)=-\frac{1}{(N+1)^2}a(x)u''(x)+c(x)u(x),$$
with $u(0)=u(1)$. If we discretize the domain in uniform steps of width $h=1/(N+1)$, it follows that the resulting FD discretization matrix is a GLT sequence with symbol $a(x)(2-2\cos{\theta})+c(x)$. The factor $1/N$ simultaneously plays the role of the discretization step-size as well as the semi-classical parameter. It is clear that the ensuing GLT matrix-sequence is equivalent to the restricted Curie-Weiss model. Hence, the restricted Curie-Weiss model defines a Schr\"{o}dinger operator with potential $c$. In the parameter regime  $\Gamma=1$ and $B=1$, $c$  has the shape of a single well, cf. Section~\ref{harmregime}, whilst for the choices  $\Gamma=1$ and $B\in (0,1)$, $c$  has the shape of a double wel, cf. Section~\ref{schrregime}.
However, the fact that the parameter factor $1/N$ plays  simultaneously the role of the discretization step-size and of the semi-classical parameter is non-standard from the numerical analysis viewpoint and the whole potential of a related theoretical study has still to be explored further.
\end{remark}

\section{Numerical results}\label{sec: num}

In the present section, we give various visualizations of the spectral features of the matrices $\Bar{H}^{s}_N$, confirming the derivations in Theorem \ref{main result-bis}. We fix the value of $\Gamma$ and $B$ and, for these fixed values, we consider the matrix-size parameter $N= 40, 80, 160, 320$. We recall that Theorem \ref{main result-bis} is an asymptotic one, as all the GLT results, but the really impressive fact is that the spectrum of  $\Bar{H}^{s}_N$ adheres to the spectral symbol already for $N=40$.

\subsection{\texorpdfstring
  {Asymptotic spectral behavior of $\Bar{H}^{\,s}_{N}$ with $\Gamma=B=1$}
  {Asymptotic spectral behavior of H \string^s\string_N with Gamma=B=1}}
\label{harmregime}

As already mentioned, we consider the matrix
\begin{align*}
 \Bar{H}^{s}_N =& -\frac{\Gamma}{2}D_{N+1}\Big(\Big(2x-1\Big)^2\Big) {-}B D_{N+1}(\sqrt{1-x}\sqrt{x})T_{N+1}(e^{\iota \theta})\\{-}&B T_{N+1}(e^{-\iota \theta}) D_{N+1}(\sqrt{1-x}\sqrt{x}),
\end{align*}
whose associated matrix-sequence has proven to have a GLT nature in Section \ref{sec: our-pb} in Theorem \ref{main result-bis}.

We show the comparison between symbol $h_0^{CW}$ (for $u=1$), cf. \eqref{classicalCWmodel},  and the eigenvalues of  $\Bar{H}_N^s$. This comparison is carried out using the concept of {\em monotone rearrangement}, which enables the interpretation of the symbol as a single-variable function
\cite[Definition 2.1]{BarBianGar}.

We notice that the agreement is very good, even for moderate matrix-sizes, which is nontrivial given the asymptotic nature of the GLT distributional results.
A further remarkable fact is that the range of the spectral symbol $h_0^{CW}$ contains all the spectra, i.e., no outliers are observed. Again, this is a highly nontrivial matter and cannot be deduced directly from distributional results. In fact, the interplay between spectral localization and distributional properties is characteristic of linear positive operators (LPOs), such as Toeplitz operators and various classes of variable-coefficient coercive differential operators (see \cite[Corollary 6.2]{garoni2017} for the Toeplitz case and \cite{LPO-rev} for a broader discussion).

\begin{figure}[H]
    \centering
    \begin{minipage}[b]{0.47\textwidth}
        \centering
        \includegraphics[width=\linewidth, height=6cm]{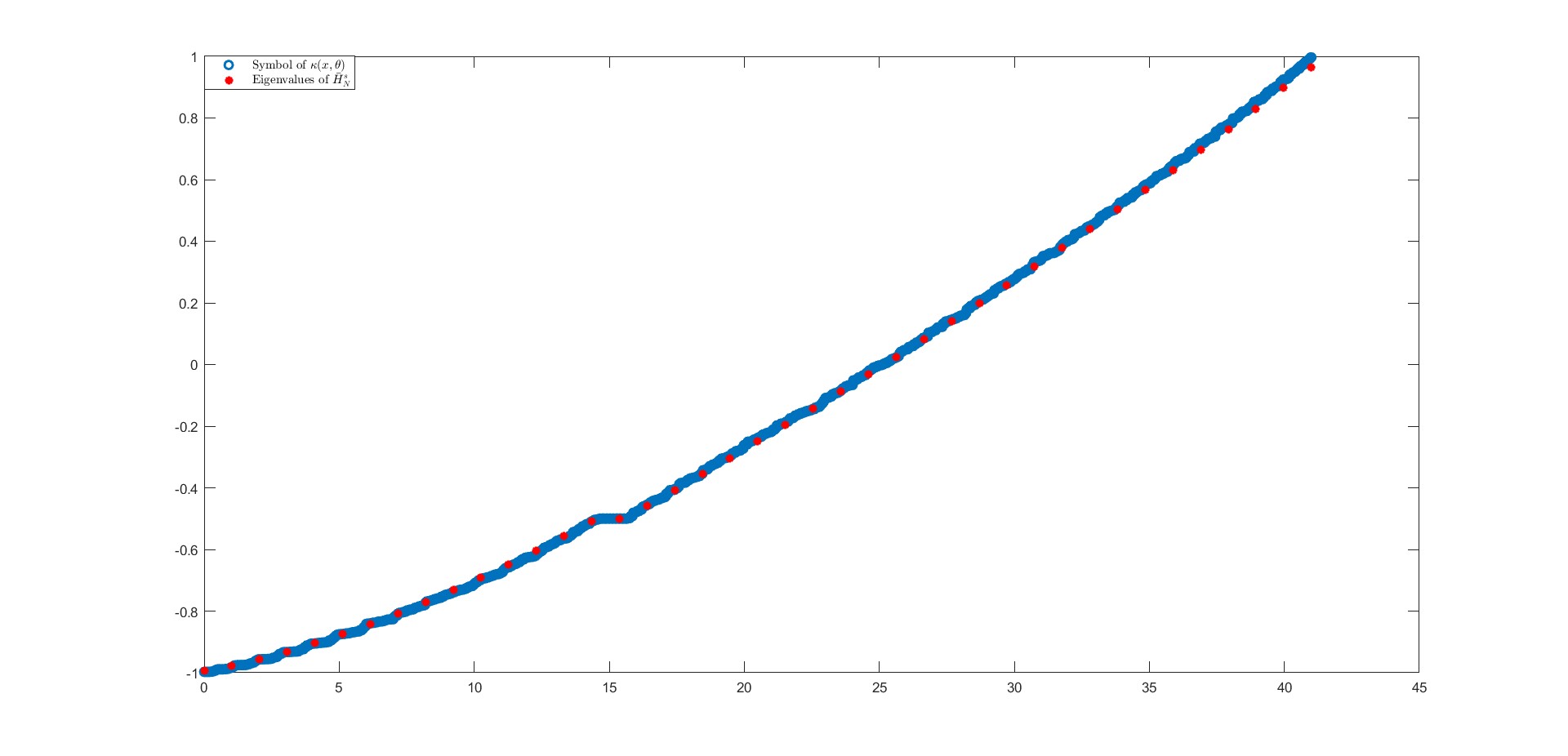}
        \caption*{$N=40$}
    \end{minipage}
    \hfill
    \begin{minipage}[b]{0.47\textwidth}
        \centering
        \includegraphics[width=\linewidth, height=6cm]{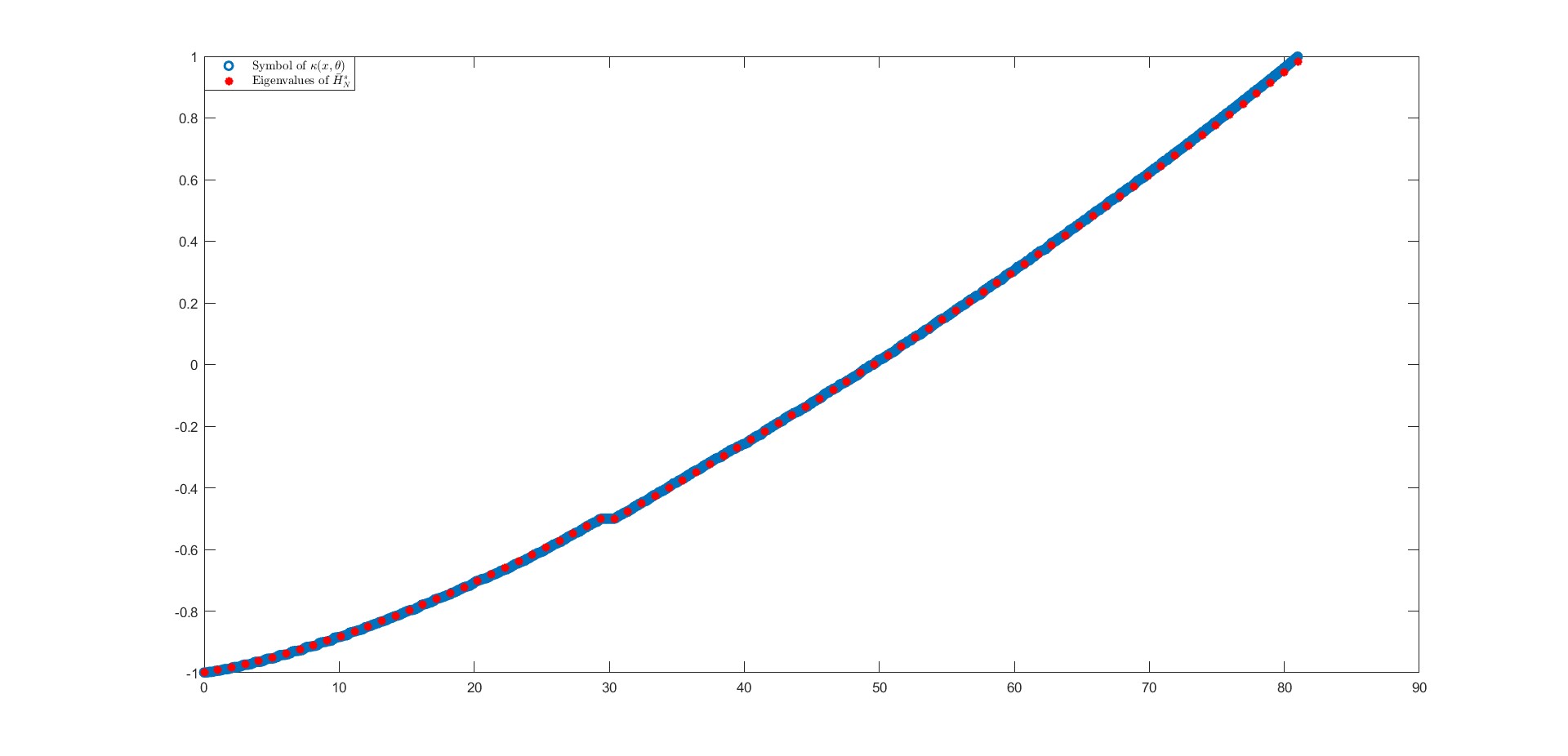}
        \caption*{$N=80$}
    \end{minipage}
    \caption{Comparison between the symbol $h_0^{CW}$ (blue circle; \(N=40,80\)), and the eigenvalues of $\Bar{H}_N^s$ (red stars).}

    \label{fig:N40-N80}
\end{figure}

\vspace{0.7em} % (optional: space between figures)

\begin{figure}[H]
    \centering
    \begin{minipage}[b]{0.47\textwidth}
        \centering
        \includegraphics[width=\linewidth, height=6cm]{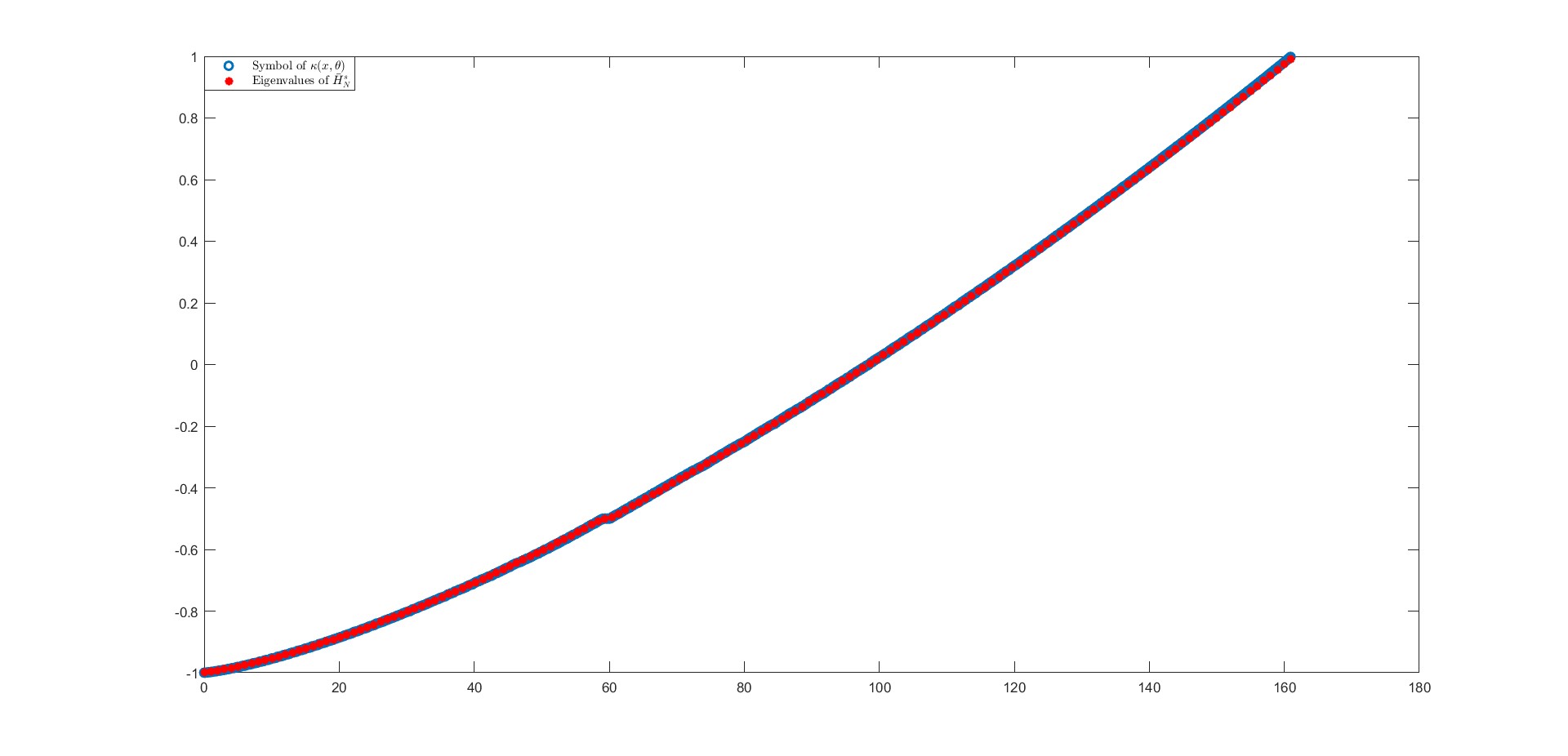}
        \caption*{$N=160$}
    \end{minipage}
    \hfill
    \begin{minipage}[b]{0.47\textwidth}
        \centering
        \includegraphics[width=\linewidth, height=6cm]{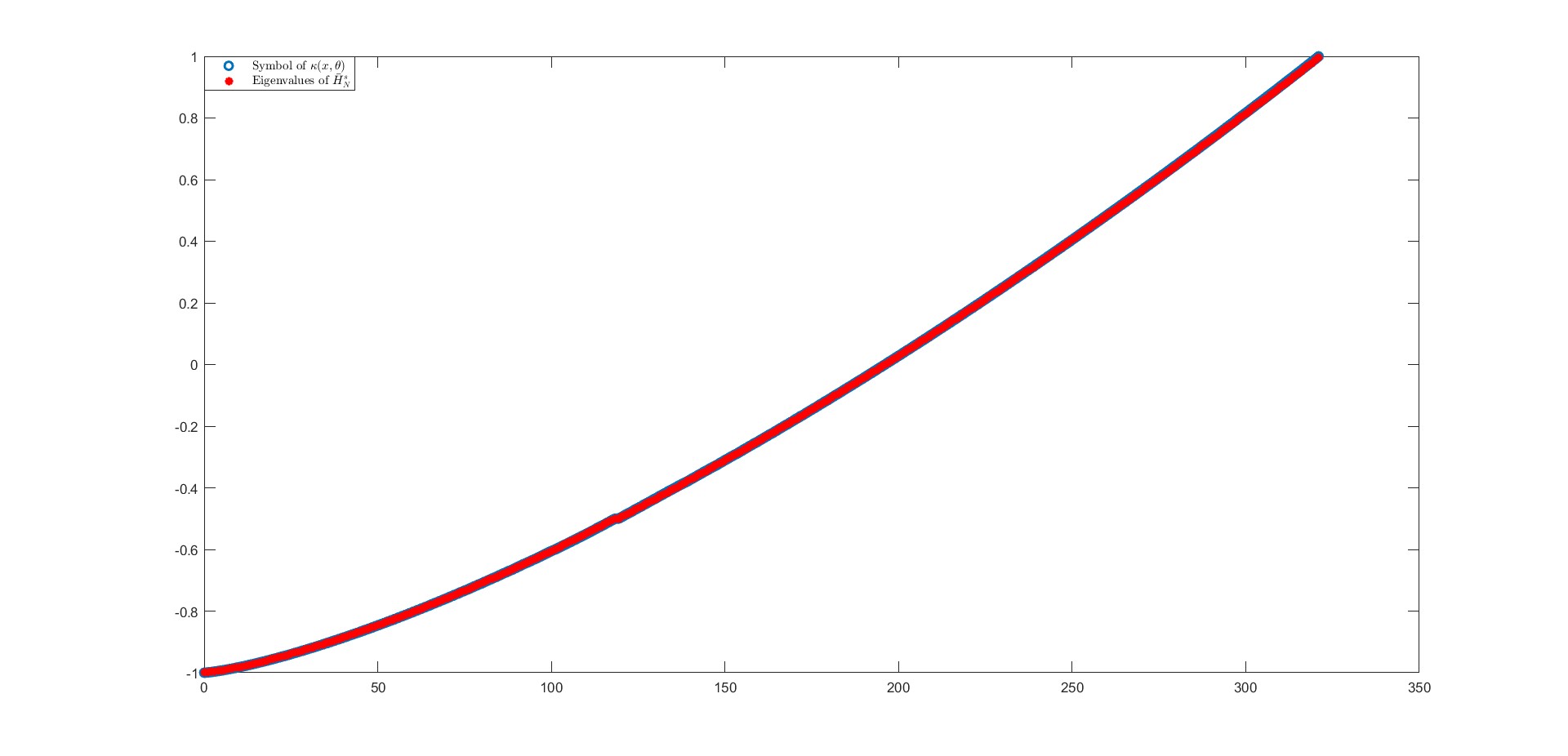}
        \caption*{$N=320$}
    \end{minipage}
    \caption{Comparison between the symbol $h_0^{CW}$ (blue circle; \(N=160,320\)), and the eigenvalues of $\Bar{H}_N^s$ (red stars).}
    \label{fig:N160-N320}
\end{figure}

\subsection{\texorpdfstring
  {Asymptotic spectral behavior of $\Bar{H}^{\,s}_{N}$ with $\Gamma=2B=1$}
  {Asymptotic spectral behavior of H \string^s\string_N with Gamma=2B=1}}
\label{schrregime}

{For $\Gamma=1$ and $B=\frac{1}{2}$ we show again the comparison between symbol and the eigenvalues.} The comments already provided in the case $\Gamma=B=1$ can be repeated verbatim also in this setting.

\begin{figure}[H]
    \centering
    \begin{minipage}[b]{0.47\textwidth}
        \centering
        \includegraphics[width=\linewidth, height=6cm]{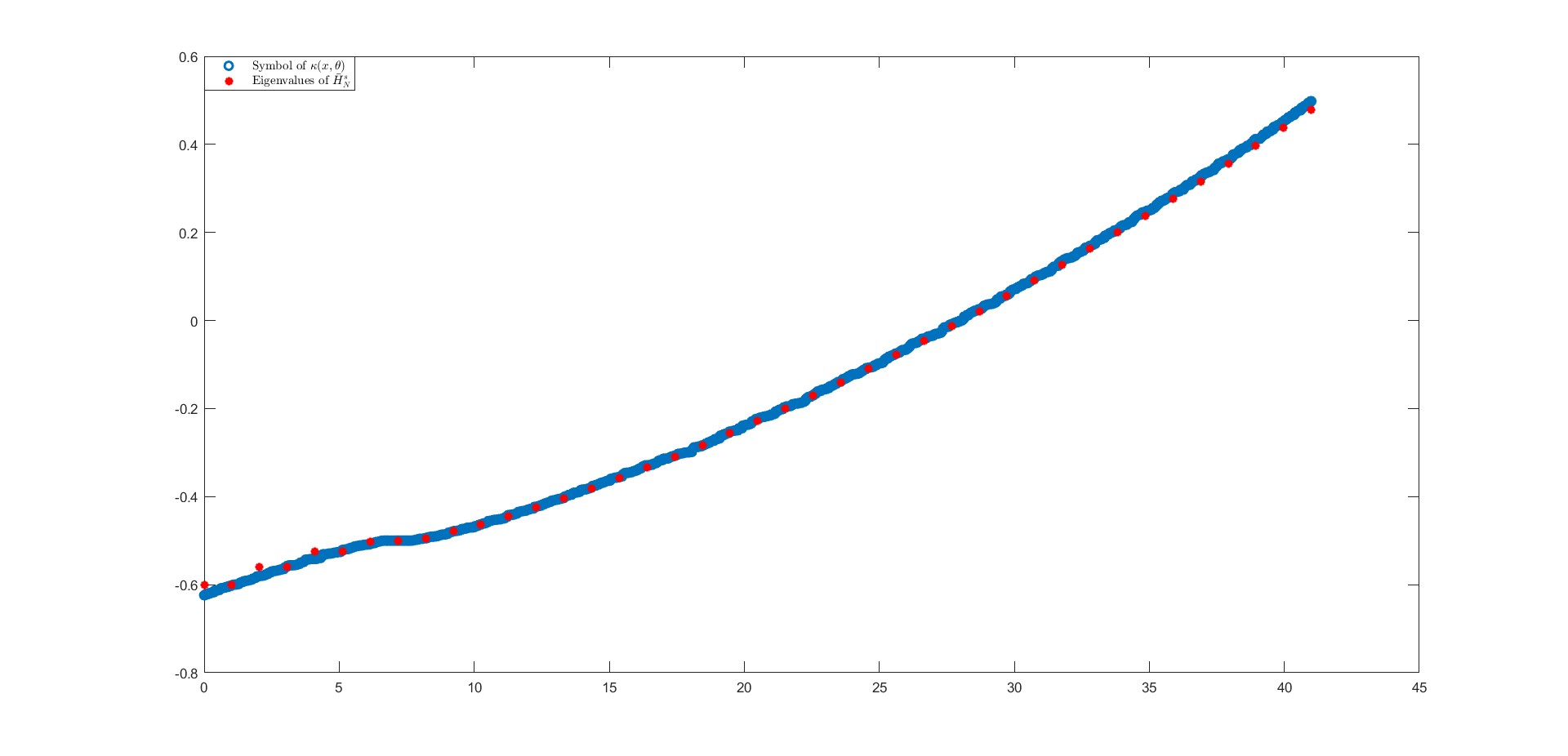}
        \caption*{$N=40$}
    \end{minipage}
    \hfill
    \begin{minipage}[b]{0.47\textwidth}
        \centering
        \includegraphics[width=\linewidth, height=6cm]{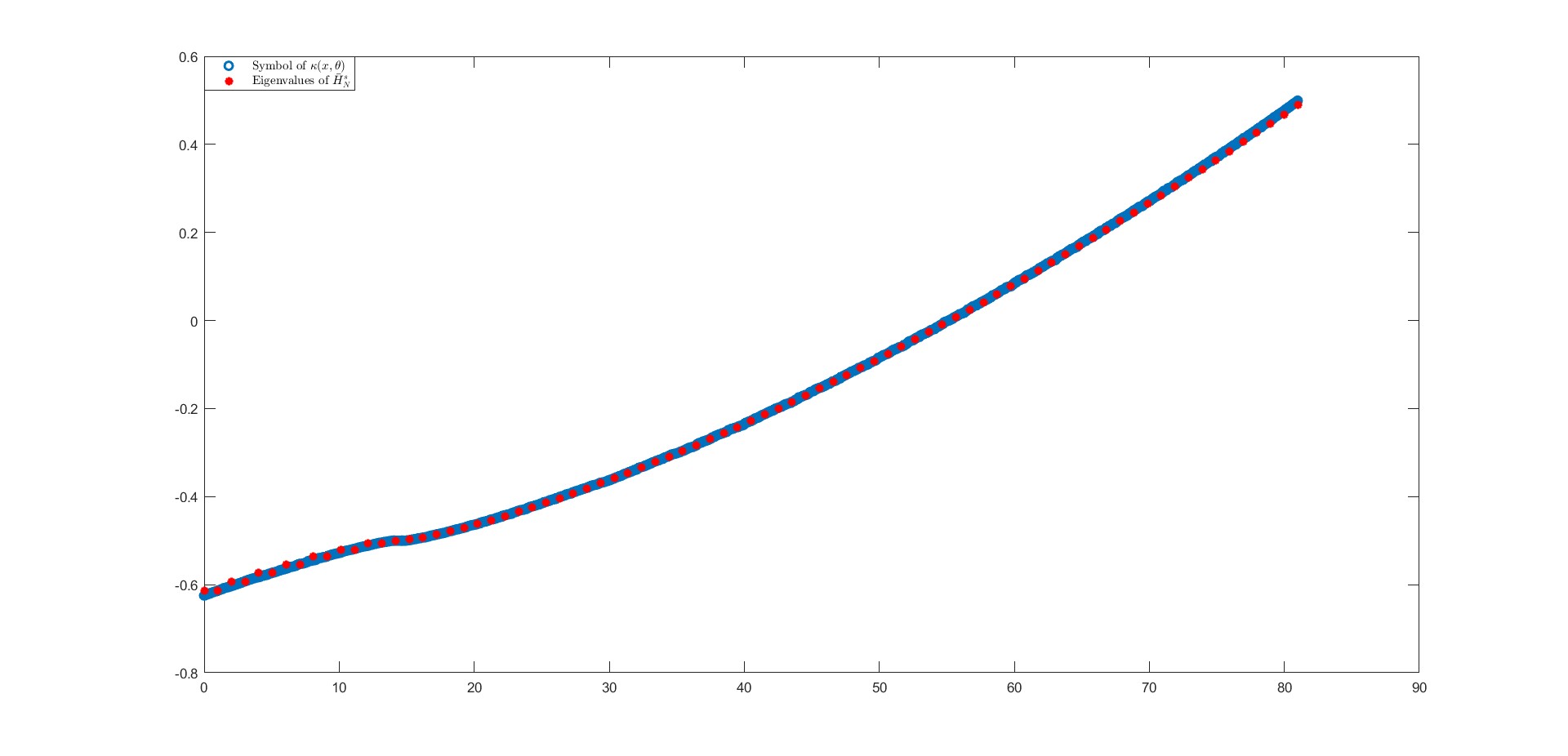}
        \caption*{$N=80$}
    \end{minipage}
    \caption{Comparison between the symbol $h_0^{CW}$ (blue circle; \(N=40,80\)), and the eigenvalues of $\Bar{H}_N^s$ (red stars).}
    \label{fig:N40-N80 U}
\end{figure}

\vspace{0.7em} % (optional: space between figures)

\begin{figure}[H]
    \centering
    \begin{minipage}[b]{0.47\textwidth}
        \centering
        \includegraphics[width=\linewidth, height=6cm]{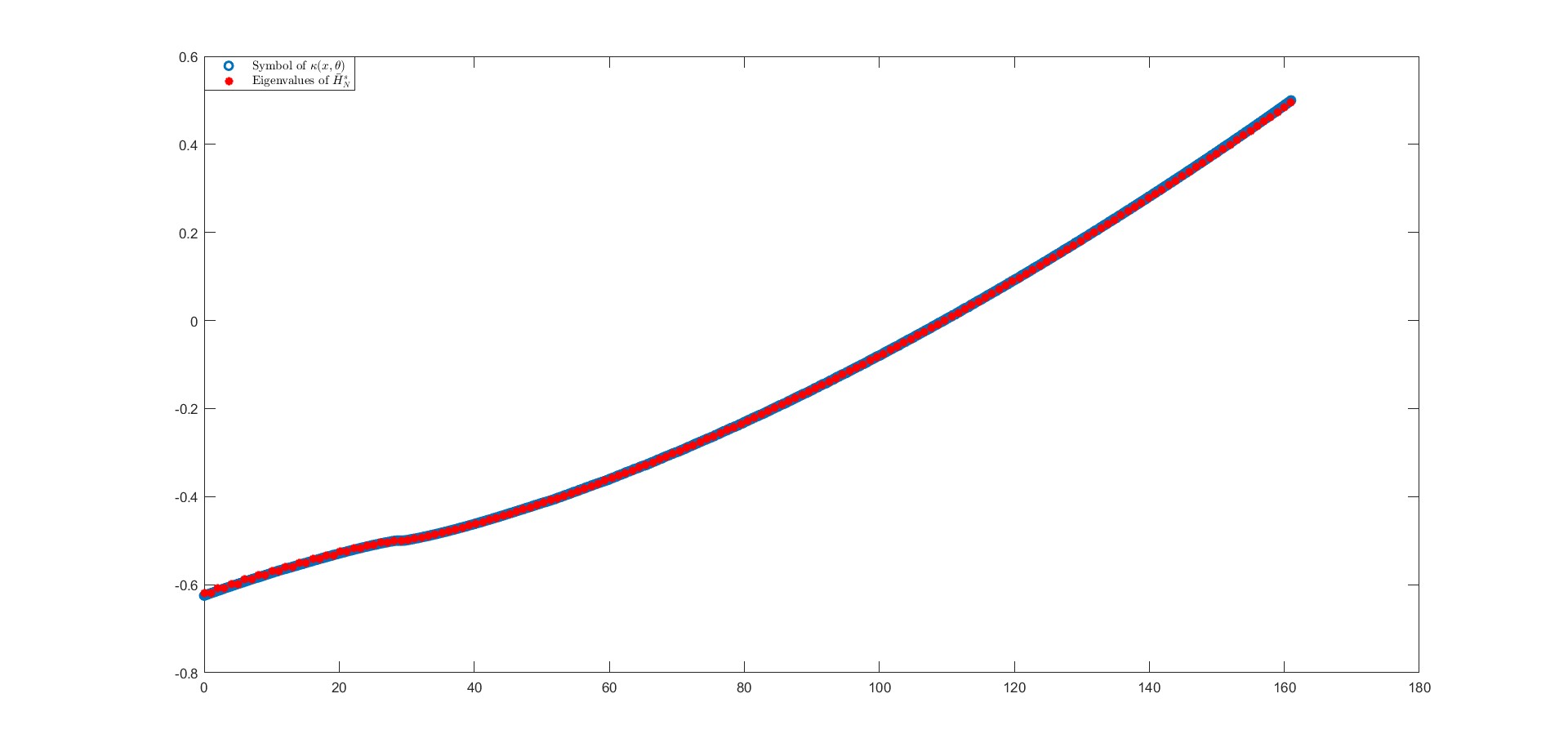}
        \caption*{$N=160$}
    \end{minipage}
    \hfill
    \begin{minipage}[b]{0.47\textwidth}
        \centering
        \includegraphics[width=\linewidth, height=6cm]{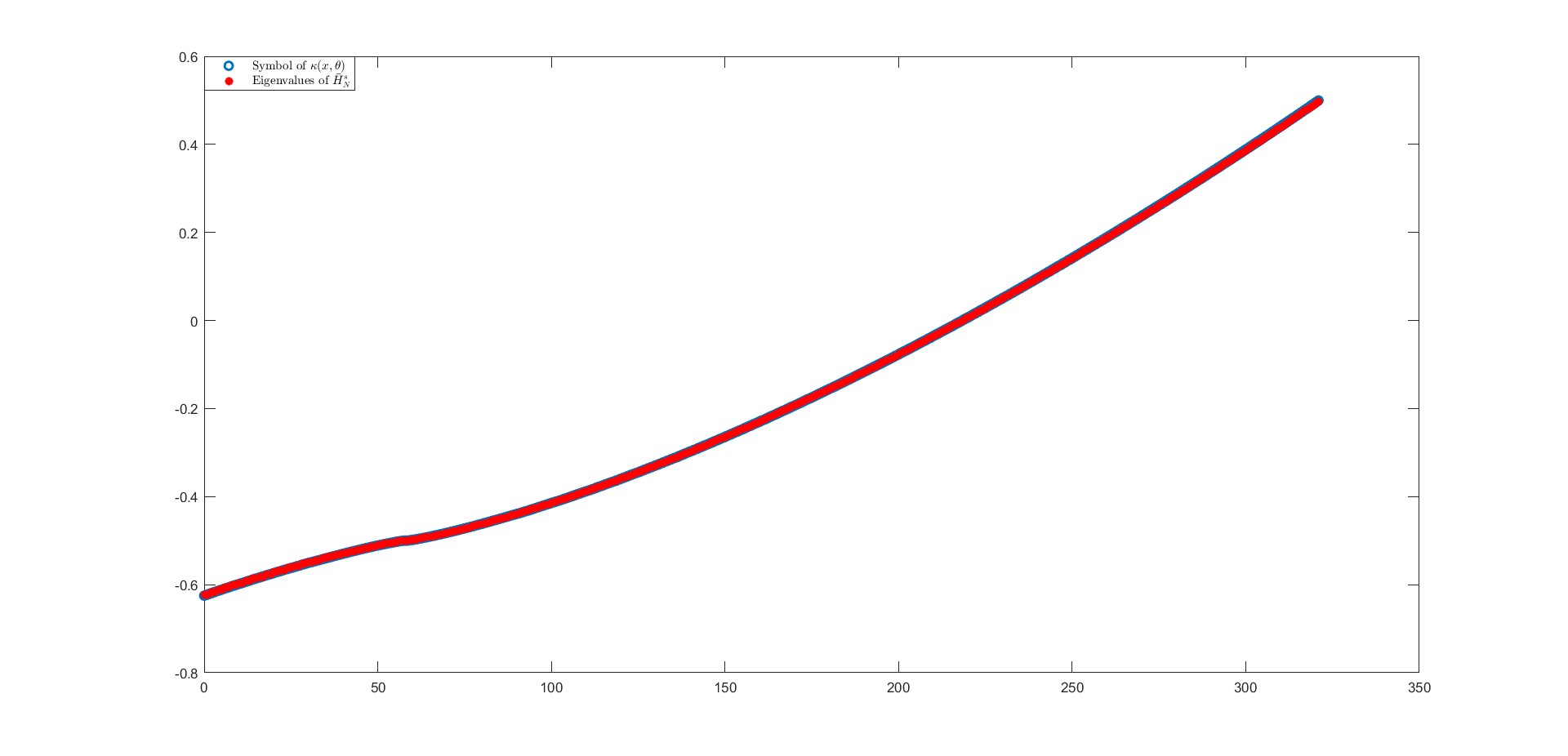}
        \caption*{$N=320$}
    \end{minipage}
    \caption{Comparison between the symbol $h_0^{CW}$ (blue circle; \(N=160,320\)), and the eigenvalues of $\Bar{H}_N^s$ (red stars).}
    \label{fig:N160-N320 U}
\end{figure}

The figures provide clear evidence that the lowest eigenvalues become nearly doubly degenerate,  a characteristic feature of  a Schr\"{o}dinger operator with a double-well potential. For a more detailed discussion on the approximation of this Schr\"{o}dinger operator by the Curie-Weiss model, we refer the reader to \cite{Ven_Groenenboom_Reuvers_Landsman}.

\subsection{\texorpdfstring
  {Extremal spectral behavior of $\Bar{H}^{\,s}_{N}$}
  {Extremal spectral behavior of H\string^s\string_N}}

The figures reported in the previous section inform of localization results, beyond the proven distributional results. Here we show that in the two considered setting of parameters ($\Gamma=B=1$ and  $\Gamma=2B=1$), not only the spectrum of $\Bar{H}^{s}_N$ is contained in the interior of the range of the GLT symbol, but the behavior of the extreme eigenvalues is very regular. In fact, both the minimal and the maximal eigenvalues converge monotonically to the minimum and to maximum of the symbol, respectively: again this phenomenon is typical of matrix-valued LPOs \cite{LPO-rev}.

For the minimum/maximum eigenvalue analysis, for $\Gamma=B=1$ and we proceed as follows.

\begin{itemize}
    \item The GLT symbol is $\kappa(x,\theta)=-\frac{\Gamma}{2}(2x-1)^{2}- 2B\cos(\theta)\sqrt{(1-x)x}$.
    \item We consider $m=\underset{(x,\theta) \in [0,1]\times [0,\pi]}{\text{min}} \kappa(x,\theta)$,  $M=\underset{(x,\theta) \in [0,1]\times [0,\pi]}{\text{max}} \kappa(x,\theta)$.
\end{itemize}
%\subsubsection{}
\begin{itemize}
    \item Take $N_{j} +1=40*2^{j}, \quad\quad j=0,1,2,3, $
    \item Compute $\tau_{j}= \lambda_{\min}(\bar{H}_{N_{j}}^{s})-m, \quad \quad j=0,1,2,3, $
    \item Compute $\alpha_{j}=\log\left(\frac{\tau_{j}}{\tau_{j+1}}\right), \quad \quad j=0,1,2. $
\end{itemize}
\vspace{9pt}
\begin{table}[H]
\centering
\caption{Numerical behavior of the minimal eigenvalue for \( \Gamma = 1 \),\\ \( B = 1 \), and \( m = -1 \).}
\begin{tabular}{|c|c|c|c|}
\hline
   & $\lambda_{\min}(\bar{H}_{N_{j}}^{s})$ & $\tau_{j}$ & $\alpha_{j}$ \\
  \hline
 $N_{j} +1=40$ & -0.9936 & 0.0064 & 0.4082 \\
  \hline
  $N_{j} +1=80$& -0.9975 & 0.0025 & 0.3979 \\
  \hline
  $N_{j} +1=160$ & -0.9990 & 0.001 & 0.3979 \\
  \hline
  $N_{j} +1=320$ & -0.9996 & 0.0004 &  \\
  \hline
\end{tabular}
\end{table}

%\subsubsection{}
\begin{itemize}
    \item Take $N_{j} +1=40*2^{j}, \quad\quad j=0,1,2,3, $
    \item Compute $\hat{\tau}_{j}= M-\lambda_{\max}(\bar{H}_{N_{j}}^{s}), \quad \quad j=0,1,2,3, $
    \item Compute $\beta{j}=\log\left(\frac{\hat{\tau}_{j}}{\hat{\tau}_{j+1}}\right), \quad \quad j=0,1,2. $
\end{itemize}
\vspace{9pt}
\begin{table}[H]
\centering
\caption{Numerical behavior of the maximal eigenvalue for \( \Gamma = 1 \),\\ \( B = 1 \), and \( M = -1 \).}
\begin{tabular}{|c|c|c|c|}
\hline
   & $\lambda_{\max}(\bar{H}_{N_{j}}^{s})$ & $\hat{\tau}_{j}$ & $\beta_{j}$ \\
  \hline
 $N_{j} +1=40$ & 0.9654 & 0.0346 & 0.2960 \\
  \hline
  $N_{j} +1=80$& 0.9825 & 0.0175 & 0.2985 \\
  \hline
  $N_{j} +1=160$ & 0.9912 & 0.0088 & 0.3010 \\
  \hline
  $N_{j} +1=320$ & 0.9956 & 0.0044 &   \\
  \hline
\end{tabular}
\end{table}

The interpretation of the above tables is very informative. In fact, if we consider $\psi$ the nondecreasing rearrangement of the GLT symbol $\kappa(x,\theta)$ defined on the standard interval $[0,1]$, then the numerical tests suggest that
\[
\lambda_{\min}\!\bigl(\Bar{H}_{N_j}^{\,s}\bigr) - m \sim \frac{1}{N^{0.4}}, \qquad
M - \lambda_{\max}\!\bigl(\Bar{H}_{N_j}^{\,s}\bigr) \sim \frac{1}{N^{0.3}}.
\]

It remains to investigate analytically if the corresponding analytic behavior holds that is
\[
\psi(t)-m \sim {t^{0.4}}, \ \ \ \ M-\psi(t)  \sim {(1-t)^{0.3}}.
\]

We now continue, along the previous reasoning, {with the computation of the minimum/maximum eigenvalues analysis, we proceed as follows in the case where $\Gamma=2B=1$.}
\begin{itemize}
    \item {The GLT symbol is $\kappa(x,\theta)=-\frac{\Gamma}{2}(2x-1)^{2}-2B\cos(\theta)\sqrt{(1-x)x}$}
    \item {We consider $m=\underset{(x,\theta) \in [0,1]\times [0,\pi]}{\text{min}} \kappa(x,\theta)$  $;$  $M=\underset{(x,\theta) \in [0,1]\times [0,\pi]}{\text{max}} \kappa(x,\theta)$}
\end{itemize}

%\subsubsection{{}}
\begin{itemize}
    \item {Take $N_{j} +1=40*2^{j}, \quad\quad j=0,1,2,3, $}
    \item {Compute $\tau_{j}= \lambda_{\min}(\Bar{H}_{N_{j}}^{s})-m, \quad \quad j=0,1,2,3, $}
    \item {Compute $\alpha_{j}=\log\left(\frac{\tau_{j}}{\tau_{j+1}}\right), \quad \quad j=0,1,2. $}
\end{itemize}

\vspace{9pt}
\begin{table}[H]
\centering
\caption{Numerical behavior of the minimal eigenvalue for \( \Gamma = 1 \),\\ \( B = \frac{1}{2} \), and \( m = -0.6241 \).}
\begin{tabular}{|c|c|c|c|}
\hline
   & {$\lambda_{\min}(\Bar{H}_{N_{j}}^{s})$} & {$\tau_{j}$} & {$\alpha_{j}$} \\
  \hline
 {$N_{j} +1=40$} &{-0.6007} & {0.0234} & {0.3521} \\
  \hline
  {$N_{j} +1=80$} & {-0.6137} & {0.0104} & {0.3542} \\
  \hline
  {$N_{j} +1=160$} & {-0.6195} & {0.0046} & {0.4074} \\
  \hline
  {$N_{j} +1=320$} & {-0.6223} & {0.0018} &  \\
  \hline
\end{tabular}
\end{table}

%\subsubsection{\textcolor{red}{}}
\begin{itemize}
    \item {Take $N_{j} +1=40*2^{j}, \quad\quad j=0,1,2,3, $}
    \item {Compute $\hat{\tau}_{j}= M-\lambda_{\max}(\Bar{H}_{N_{j}}^{s}), \quad \quad j=0,1,2,3, $}
    \item {Compute $\beta{j}=\log\left(\frac{\hat{\tau}_{j}}{\hat{\tau}_{j+1}}\right), \quad \quad j=0,1,2. $}
\end{itemize}

\vspace{9pt}
\begin{table}[H]
\centering
\caption{{Numerical behavior of the maximal eigenvalue for \( \Gamma = 1 \),\\ \( B = \frac{1}{2} \), and \( M = 0.4982 \).}}
\begin{tabular}{|c|c|c|c|}
\hline
   & {$\lambda_{\max}(\Bar{H}_{N_{j}}^{s})$} & {$\hat{\tau}_{j}$} & {$\beta_{j}$} \\
  \hline
 {$N_{j} +1=40$} & {0.4789} & {0.0193} & {0.3361} \\
  \hline
  {$N_{j} +1=80$} & {0.4893} & {0.0089} & {0.3930} \\
  \hline
  {$N_{j} +1=160$} & {0.4946} & {0.0036} & {0.4010} \\
  \hline
  {$N_{j} +1=320$} & {0.4973} & {0.0009} & {} \\
  \hline
\end{tabular}
\end{table}

The interpretation of the numerical results is of interest. Again, as observed for the previous case, both the minimal and the minimal eigenvalues converge monotonically to the minimum and to maximum of the symbol, respectively, in line with results which are usually observed for matrix-valued LPOs.

%\blinddocument % Genera paragrafi e contenuti placeholder
\chapter*{Conclusion}
In the present thesis, we analyzed several classes of structured matrix-sequences that arise from various contexts, including also the discretization of differential operators. In Chapter~\ref{chap:Structured matrices}, we introduced the fundamental concepts of structured matrices, including Toeplitz, circulant, $\omega$-circulant, $\tau$-algebra, and Hankel matrices, focusing on their spectral properties and efficient computational techniques.

In Chapter~\ref{GLT}, we presented the framework of GLT sequences, which extends the classical theory of structured matrices and matrix-sequences with hidden structure. We studied the fundamental properties of GLT sequences, including their symbol representation, spectral distribution, and algebraic closure under addition, multiplication and inversion. In particular, we discussed the role of zero-distributed sequences and their connection to negligible spectral components, as well as the characterization of diagonalizable approximations within the GLT setting. These results form the theoretical foundation for the subsequent chapters, where GLT tools are exploited to analyze the spectral distribution of matrix functions, the geometric means of HPD GLT sequences, and the hidden GLT structures arising in quantum spin models.

Chapter~\ref{GM GL1} has studied the spectral distribution of the geometric mean of two or more matrix-sequences constituted by HPD matrices, under the assumption that $k$ input matrix-sequences belong to the same $d$-level, $r$-block GLT $*$-algebra, where $k\ge 2$, $d,r\ge 1$. For $k=2$ an explicit formula exists and this has allowed to prove that the new matrix-sequence is of GLT type for any fixed $d,r$, with GLT symbol being the corresponding geometric mean of the $k=2$ input symbols, according to Theorems \ref{4: th:two - r=1,d general GM1} and \ref{4: th:two - r,d general GM1}. As a consequence the spectral distribution of the geometric mean of $k=2$ matrix-sequences is formally demonstrated. For $k>2$, an explicit formula is not available and we have considered the Karcher mean, for which efficient iterative procedures available in the literature can be used for computational purposes. Using the same tools as in the proof of Theorems \ref{4: th:two - r=1,d general GM1} and \ref{4: th:two - r,d general GM1}, we think that it is possible to prove that all the matrix-sequences made by the Karcher mean iterates are still GLT matrix-sequences with symbols converging the geometric mean of the $k$ symbols, provided that the matrix-sequence of the initial input matrices is a GLT matrix-sequence made by HPD matrices. The conjecture has been confirmed through various numerical experiments, where we compared the eigenvalues of the geometric mean with a uniform sampling of the geometric mean of the symbols. A very good agreement has been observed in all the numerical tests and aven for very moderate matrix-sizes. In fact, as the matrix-size increases, the symbol provides a better and better approximation of the eigenvalues, both in one-dimensional and two-dimensional cases, and both with scalar and block matrices, i.e. using GLT matrix-sequences taken from the differential world with $k=2,3$, $d,r=1,2$ and including finite difference, finite element, isogeometric analysis approximations of constant and variable differential operators.

Chapter~\ref{GM GLT2} have studied the spectral distribution of the geometric mean matrix-sequence of two $d$-level $r$-block GLT matrix-sequences 
$\{G(A_n, B_n)\}_n$  formed by HPD matrices, where $\{A_n\}_n, \{B_n\}_n$ have GLT symbols $\kappa, \xi$, respectively.
In Theorem \ref{theorem 1} and in Theorem \ref{theorem 2}, we have shown that the assumption that at least one of the input GLT symbols is invertible almost everywhere is not necessary when the symbols commute so that 
\[
\{G(A_n, B_n)\}_n \sim_{\mathrm{GLT}} G(\kappa,\xi)\equiv (\kappa \xi)^{1/2}.
\]
In this way, we have positively solved Conjecture 10.1 in \cite{garoni2017}.
 
On the other hand, the statement is generally false or even not well posed when the symbols are not invertible almost everywhere and do not commute, as shown in detail in several numerical experiments. This shows that Theorem \ref{theorem 2} is maximal, so answering this time in the negative to the second item in the conclusion of \cite{ahmad2025matrix} and to [\cite{ahmad2025matrix}, Remark 2], when both symbols are degenerate and do not commute. Further numerical experiments are presented and critically commented in connection with extremal spectral features, linear positive operators, and in connection with the notion of Toeplitz and GLT momentary symbols. The numerical evidences open the door to further theoretical studies, which we will consider in the near future. Finally, the study of the spectral distribution when we consider the geometric mean of more than two matrix-sequences is completely open. In this chapter we have proven \cite[Conjecture 10.1]{garoni2017}: however, when dealing with several matrix-sequences, \cite[Conjecture 10.2]{garoni2017} is still open and for that we need normwise error estimates for using the tools in \cite{SerraCapizzanoTilli-LPO}, while till now the convergence results in the literature have a entrywise nature (see \cite{bini2024survey} and references therein). 

Chapter~\ref{chapter:quantum} has studied the theory of GLT $\ast$-algebras for dealing with structured matrix-sequences arising from the modelling of mean-field quantum spin systems. In this chapter, we expressed the related matrix-sequences within the GLT formalism and analyzed two specific cases in detail. For both cases, we determined the spectral distributions in the context of GLT theory, and the theoretical results were confirmed through visualizations and numerical experiments. Despite these findings, several open problems remain. One important direction is the study of extremal eigenvalues and the expansion of the considered matrix-sequences using momentary GLT symbols~\cite{bolten2022toeplitz, new momentary}, which would provide a finer description of the spectrum compared to Theorem~\ref{main result-bis}. In particular, in the non-reduced case, the GLT symbol equal to zero in Theorem~\ref{main result} may hide a richer structure: there may exist a numerical sequence $\alpha_N$ converging to zero such that the scaled sequence, obtained by dividing by $\alpha_N$, is still a GLT sequence with a nonzero GLT symbol. In such a scenario, the emerging GLT structure could be multilevel (due to the geometry on the sphere as in \eqref{classical CW}, block-structured (due to the basic blocks in \eqref{basic matrices})~\cite{barbarino2020block1d, barbarino2020blockmulti, garoni2017, garoni2018}, or even reduced, since the terms in the general model have varying sizes $C(J,N)$; see~\cite[pp.~398--399]{SerraCapizzano2003}, \cite[Section~3.1.4]{SerraCapizzano2006}, and~\cite{Barbarino2022} for a complete treatment of reduced GLT matrix-sequences. Finally, it would be desirable to explore GLT theory in connection with locally interacting quantum spin models, such as the quantum Ising or quantum Heisenberg models, which are widely studied in condensed matter physics. Progress in this direction would significantly improve our understanding of the spectral properties and the fundamental structure of these realistic interacting systems.

Despite these advances, several fundamental open problems and future directions remain:
\begin{itemize}
    \item  A formal proof is still lacking for the GLT nature of the Karcher mean matrix-sequence when applied to $k > 2$ HPD GLT matrix-sequences, even under the assumption that the initial guess is itself a HPD GLT matrix-sequence. From a computational viewpoint, initializing the iteration with a GLT sequence whose symbol is the geometric mean of the input symbols may accelerate convergence, but a comprehensive theoretical justification is needed.

   \item The previous item is related to the convergence estimates in terms of the Schatten $p$-norms, so that the results in Theorem~\ref{th 3.1} could be used to show that the error term matrix-sequences are zero-distributed, i.e., they are GLT with zero GLT symbol. This type of results is still lacking and it will be the subject of future investigations.
    
    \item  A completely open problem concerns the study of the extremal eigenvalues of the geometric means of GLT matrix-sequences as a function of the analytic features of the geometric means of the GLT symbols: in this direction it should be recalled that a rich literature exists regarding the extremal eigenvalues in a Toeplitz setting~\cite{extreme1, extreme2, extreme3, extreme4}, in an $r$-block Toeplitz setting~\cite{SerraCapizzano1999a,SerraCapizzano1999b}, in a differential setting~\cite{extreme diff1,extreme diff2}, so involving all the types of examples considered in the numerical experiments. Preliminary numerical experiments in the unilevel scalar setting with $k = 2, 3$ have been performed in Section~\ref{ssec: extremal}, and the results are quite promising. Interestingly enough, and substantially mimicking the cases already studied in the literature, it seems that the order of the zeros of the GLT symbol decides the asymptotic behavior of the minimal eigenvalues and hence of the conditioning of $G(A_{1}^{n}, \ldots, A_{k}^{n})$, at least for $d = r = 1$, $k = 2, 3$.

    \item The GLT algebraic Conjecture~\ref{conj:GM_glt} for non-commuting symbols remains unresolved. While numerical experiments support the spectral distribution aspect of the conjecture, a rigorous proof in the general non-commuting case is still an open challenge.
\end{itemize}

\addcontentsline{toc}{chapter}{Conclusion}

% ----------------------
% ---- BIBLIOGRAPHY ----
% ----------------------
\backmatter
\sloppy % it serves to keep the links from going beyond the margins
%\printbibliography[heading=bibintoc, title=Bibliograhy]
% \printbibliography[type=online, heading=bibintoc, title=Sitografia]
%%%%%%%%%%%%%%%%
\bibliographystyle{plain} % or another style, like abbrv, unsrt, alpha, etc.

% ----------------------
% ---- DOCUMENT END ----
% ----------------------
\end{document}